\numberwithin{equation}{section}
\newtheorem{thm}{Theorem}[section]
\newtheorem{prop}[thm]{Proposition}
\newtheorem{lem}[thm]{Lemma}
\newtheorem{cor}[thm]{Corollary}
\newtheorem{conjecture}[thm]{Conjecture}
\theoremstyle{definition}
\newtheorem{definition}[thm]{Definition}
\theoremstyle{remark}
\newtheorem{remark}[thm]{Remark}
\newcommand{\beq}{\begin{equation}}
\newcommand{\eeq}{\end{equation}}
\newcommand{\beqq}{\begin{equation*}}
\newcommand{\eeqq}{\end{equation*}}
\DeclareMathOperator{\re}{Re}
\DeclareMathOperator*{\res}{Res}
\newcommand{\al}{\alpha}
\newcommand{\De}{\Delta}
\newcommand{\om}{\omega}
\newcommand{\tn}[1]{\textnormal{#1}}
\def\fddto{\xrightarrow{\textit{f.d.d.}}}
\newcommand{\hide}[1]{}
\newcommand{\mr}{\mathrm}
\newcommand{\mc}{\mathcal}
\newcommand{\mb}{\mathbf}
\newcommand{\ms}{\mathsf}
\newcommand{\bs}{\boldsymbol} 
\renewcommand{\d}{\text{d}}
\def\abs#1{\left|#1\right|}
\newcommand{\N}{{\mathbb N}}
\newcommand{\Z}{{\mathbb Z}}
\newcommand{\B}{\mathbb{B}^{\mathrm{br}}}
\newcommand{\R}{{\mathbb R}}
\newcommand{\PP}{{\mathbb P}}
\newcommand{\C}{{\mathbb C}}
\renewcommand{\i}{\text{i}}
\newcommand{\LPP}{\mathcal{L}}
\newcommand{\lv}{\ell}
\newcommand{\prob}{\mathbb{P}}
\newcommand{\eqind}{\stackrel{d}{=}}
\newcommand{\eqinp}{\stackrel{p}{=}}
\newcommand{\eqinfdd}{\stackrel{f.d.d.}{=\!=\!=}}
\newcommand{\wh}[1]{\widehat{#1}}
\DeclareMathOperator{\TW}{TW}
\def \ii {\mathrm{i}}
\newcommand{\dd}{\mathrm{d}}
\def\law{{\rm Law}}
\def\fddto{\xrightarrow{\textit{f.d.d.}}}
\newcommand{\mcG}{\mr G} 
\newcommand{\veca}{\mathbf{a}}
\newcommand{\xd}{x}
\newcommand{\td}{t}
\newcommand{\hd}{h}
\newcommand{\LL}{L}
\newcommand{\Ql}{\ms P(\mathbf{\td}, \mb \xd, \mb \hd)}  
\DeclareMathOperator{\dist}{dist}
\newcommand{\At}{\mr A}
\newcommand{\Bt}{\mr B}
\newcommand{\lt}{-}		
\newcommand{\rt}{+}		
\newcommand{\lrt}{\pm}		
\newcommand{\fLi}{\ff_{L,i}}		
\newcommand{\ab}{\mathsf{\sigma}}	
\newcommand{\Exp}{\mathbb{E}}
\newcommand{\cd}{\mathsf{c}}	
\newcommand{\vecb}{\mb b}
\newcommand{\vecu}{\mb u}
\newcommand{\vecv}{\mb v}
\newcommand{\vecn}{\mb{n}}
\newcommand{\bn}{\vecn}
\newcommand{\bsigma}{{\bs \sigma}}
\newcommand{\btau}{{\bs \tau}}
\newcommand{\listset}{\mc S}
\newcommand{\listn}{\listset_{\bn}}
\newcommand{\lista}{\mc{S}}
\newcommand{\mcH}{\mr H}
\newcommand{\ing}{\mr I}
\DeclareMathOperator{\type}{\mathbf{type}} 
\newcommand{\GG}{\mr G}
\newcommand{\FF}{{\mathsf{F}}}
\newcommand{\bM}{\mathbf{M}}
\newcommand{\bN}{\mathbf{N}}
\newcommand{\bT}{\mathbf{T}}
\newcommand{\bz}{\mathbf{z}}
\newcommand{\bsxi}{\bs{\xi}}
\newcommand{\bseta}{\bs{\eta}}
\newcommand{\bone}{\mathbf{1}} 
\newcommand{\bu}{{\vecu}}
\newcommand{\bv}{{\vecv}}
\newcommand{\K}{\mathrm{K}}
\newcommand{\slope}{\mr m}
\newcommand{\ST}{\mathrm{S}}
\newcommand{\bfr}{\mathbf{r}}
\newcommand{\bfs}{\mathbf{s}}
\newcommand{\bw}{\mathbf{w}}
\newcommand{\bxo}{\bs{\xi}^1}
\newcommand{\beo}{\bs{\eta}^1}
\newcommand{\bxt}{\bs{\xi}^2}
\newcommand{\bet}{\bs{\eta}^2}
\newcommand{\bxr}{\bs{\xi}^3}
\newcommand{\ber}{\bs{\eta}^3}
\newcommand{\bxot}{\bs{\xi}^{12}}
\newcommand{\beot}{\bs{\eta}^{12}}
\newcommand{\bxtr}{\bs{\xi}^{23}}
\newcommand{\betr}{\bs{\eta}^{23}}
\newcommand{\bxa}{\bs{\xi}^{123}}
\newcommand{\bea}{\bs{\eta}^{123}}
\newcommand{\sd}{\mathsf{d}}
\newcommand{\dc}{\mathsf{A}}
\newcommand{\ff}{{\mathsf{f}}}
\newcommand{\pt}{\mathsf{q}}
\newcommand{\w}{\pt}
\newcommand{\rg}{\mathsf{R}} 
\newcommand{\rgo}{\rg_1}
\newcommand{\rgt}{\rg_2}
\newcommand{\rgth}{\rg_3}
\newcommand{\rgf}{\rg_4}
\newcommand{\rgfi}{\rg_5}
\newcommand{\rgs}{\rg_6}
\newcommand{\rgse}{\rg_7}
\newcommand{\equind}{\stackrel{d}{=}}
\newcommand{\Brb}{\mathbb{B}^{\mathrm{br}}}
\newcommand{\con}{\Sigma}
\newcommand{\HH}{\mr H}
\newcommand{\EE}{\mr E}
\newcommand{\cp}{z_c}
\newcommand{\J}{\mathsf{J}}
\newcommand{\cK}{\mathsf{Z}}
\newcommand{\QQ}{\mathsf{Q}} 
\newcommand{\DD}{\mathsf{D}}
\newcommand{\sa}{a} 
\newcommand{\mv}{h} 
\newcommand{\DDz}{\widetilde{D}} 
\newcommand{\Qt}{\mr Q}
\newcommand{\pp}{\mathsf{z}_c}
\newcommand{\pq}{\mathsf{w}_c}
\title{Conditional exponential directed last passage percolation under a one-point upper large deviation event}
\author{Jinho Baik\footnote{Department of Mathematics, University of Michigan,
Ann Arbor, MI, 48109, USA, \texttt{baik@umich.edu}} \and Dylan Cordaro\footnote{Department of Mathematics, University of Michigan,
Ann Arbor, MI, 48109, USA, \texttt{dcordaro@umich.edu}}
\and Tejaswi Tripathi\footnote{Department of Mathematics, University of Michigan,
Ann Arbor, MI, 48109, USA, \texttt{tejaswit@umich.edu}}}
\date{\today}
\begin{document}

\maketitle

\begin{abstract} 
Under typical scaling, the last passage time field of the directed last passage percolation model with exponential site distributions converges to the KPZ fixed point.
In this paper, we consider an atypical scenario in which the last passage time to a specific site is unusually large, and we explore how the last passage time field changes under this one-point upper large deviation event. We prove a conditional law of large numbers and compute the limiting fluctuations in certain regimes.
Our proofs rely on an analysis of explicit multi-point distributions. 
\end{abstract}



\section{Introduction and main results} \label{sec:intro}

\subsection{Introduction}

Let $\N$ denote the set of natural numbers and set $\N_0=\N\cup \{0\}$. 
For two points $\mb p= ( p_1,  p_2)$  and $\mb q= ( q_1,  q_2)$ in $\N^2$ satisfying $p_1\le q_1$ and $p_2\le  q_2$, 
an up/right path from $\mb p$ to $\mb q$  is a sequence 
$\pi= \{ \mb v_i \}_{i=1}^r$ with $r=q_1+q_2-p_1-p_2+1$, where 
$\mb v_1=\mb p$, $\mb v_r= \mb q$, and $\mb v_{i+1}-\mb v_{i}\in \{ (1,0), (0,1)\}$ for every $i$. 

\begin{definition}[Exponential LPP]\label{def:explpp}
Let $\{ \om_{\mb v} : \mb v \in \N^2\}$ be a collection of i.i.d.\@ exponential random variables of mean $1$. 
The last passage time from $\mb p$ to $\mb q$ is 
\begin{equation*}
    \LPP_{\mb p}(\mb q) = \max_{\pi: \mb p \to \mb q} E(\pi), 
    \qquad E(\pi)= \sum_{\mb v\in \pi} \om_{\mb v}, 
\end{equation*}
where the maximum is taken over all up/right paths from $\mb p$ to $\mb q$. 
When $\mb p=(1,1)$, we write $\LPP_{(1,1)}(\mb q)= \LPP(\mb q)$. 
We call the 2-dimensional random field $\LPP= \{\LPP(\mb q) : \mb q \in \N^2\}$ the exponential last passage percolation, or simply exponential LPP.  
For $(\alpha, \beta)\in \R_+^2$, we define
\beq \label{eq:LPPforreal}
    \LPP(\alpha, \beta)  = \LPP(\lceil\alpha\rceil, \lceil\beta\rceil)
\eeq
where $\lceil\alpha \rceil$ denotes the smallest integer that is greater than or equal to $\alpha$. 
\end{definition}

The exponential LPP is equivalent to several fundamental models in probability and statistical physics, including the corner growth model with wedge initial condition, the continuous-time totally asymmetric simple exclusion process (TASEP) with step initial condition, and the tandem queues model.
It is also an archetypal example of the KPZ universality class. Many results have been established for exponential LPP:

\begin{itemize}
\item The law of large numbers was obtained by \cite{Ros81}: For every $(x,y)\in \R_+^2$, 
\begin{equation} \label{eq:LLN}
    \lim_{N\to \infty} \frac{\mathcal{L}(xN, yN)}{N} = 
    \bar{\LPP}(x,y):=(\sqrt{x} + \sqrt{y})^2
\end{equation}
almost surely and also in probability. 
We also set $\bar{\LPP}_{(x',y')}(x,y)=(\sqrt{x-x'}+\sqrt{y-y'})^2$ so that $\bar{\LPP}(x,y)= \bar{\LPP}_{(0,0)}(x,y)$.

\item Johansson proved the convergence of the one-point distributions  in \cite{Joh00}: 
\beq \label{eq:onepoint}
    \lim_{N\to \infty} \frac{\LPP(xN, yN)-  \bar{\LPP}(x,y) N}{ (xy)^{-1/6}(\sqrt{x}+\sqrt{y})^{4/3}  N^{1/3}} \eqind \TW_2 
\eeq
for every $(x,y)\in \R_+^2$, where $\TW_2$ is the GUE Tracy-Widom distribution. 
Here, $\eqind$ denotes convergence in distribution. 

\item The two-dimensional random field also converges  \cite{MQR21}. In particular, for given $(x,y)\in \R^2_+$, 
\beq \label{eq:fdc}
    \lim_{N \to \infty} \frac{\LPP \left(t x N + s \frac{2 x^{2/3} y^{1/6}}{(\sqrt{x}+\sqrt{y})^{1/3}}  N^{2/3}, {t} y N - s \frac{2 x^{1/6} y^{2/3}}{(\sqrt{x}+\sqrt{y})^{1/3}}  N^{2/3}\right) - \bar{\LPP}(tx,ty)N}
    {(xy)^{-1/6}(\sqrt{x}+\sqrt{y})^{4/3}N^{1/3}} 
    \eqinfdd \mathsf{H}_{\tn{step}}(s,t )
\eeq
in the sense of finite-dimensional distributions for $(s,t) \in \R\times \R_+$,
where $\mathsf{H}_{\tn{step}}$ denotes the KPZ fixed point with narrow wedge initial condition. 

\item The upper large deviation result was obtained by  \cite{Joh00}: 
\beq	 \label{eq:upperld} 
    \lim_{N\to \infty} \frac1{N} \log \prob( \LPP(aN, bN) > \lv N) = - \J(\lv) \qquad \text{for $\lv> \bar{\LPP}(a,b)$}
\eeq
where\footnote{The paper \cite{Joh00} gives a variational formula for the rate function $\J(\lv)$. The variational formula can be solved to give the explicit formula. For example, see \cite[(46)]{LeDoussalMajumdarSchehr16} for the case when $a=b$.}
\begin{equation} \label{eq:large_deviation_fn}
    \J(\lv)= \sqrt{D} + a \log \bigg( \frac{\lv+a-b-\sqrt{D}}{\lv+a-b+\sqrt{D}}\bigg) + b \log \bigg( \frac{\lv-a+b-\sqrt{D}}{\lv-a+b+\sqrt{D}}\bigg) 
\end{equation}
with 
\begin{equation} \label{eq:Ddefn}
	D = \lv^2 - 2(a+b)\lv + (a-b)^2.
\end{equation}
The same paper also obtained the corresponding lower large deviation result. 
A hydrodynamic upper large deviation result was established in \cite{QT25} (in the TASEP setting), extending works of \cite{Jensen, Varadhan04}. 
For a comprehensive list of works on large deviations in KPZ universality class models, see \cite{DDV24}.
\end{itemize}

The goal of this paper is to investigate the behavior of exponential LPP conditioned on the event that the last passage time at a specific site is larger than expected.
Let $a,b>0$, and suppose that $\mc L(aN, bN) = \lv N$ for some $\lv > \bar{\LPP}(a,b)$. 
Given this conditioning, what does the field  $\{ \mc L(xaN, ybN) \}_{(x,y)\in \R_+^2}$ look like when $N$ is large? 
Which sites are affected by the conditioning at $(aN, bN)$? 
How does this conditioning influence the law of large numbers and the fluctuation behavior of the last passage time field?

This question has been considered recently for several models.
The conditional law of large numbers was obtained for the KPZ equation in \cite{LinTsai25} and for the directed landscape in \cite{DasTsai24}.
These works also considered conditioning at multiple points.
Conditional fluctuations were obtained for the KPZ fixed point in \cite{LW24, NZ24, LiuZhang25} and for the periodic KPZ fixed point in \cite{BL24}.
In this paper, we examine a different model—the exponential LPP—and prove conditional law of large numbers and conditional fluctuation results.
Regarding the fluctuation results, we extend the works of \cite{LW24, NZ24, LiuZhang25, BL24} to a regime that was not considered before.
Furthermore, we propose several conjectures concerning both the conditional law of large numbers and the limiting fluctuations in the full two-dimensional regime.


\medskip 

For comparison, consider the one-dimensional random field $\mathcal{S}= \{\mathcal{S}_n: n \in \N\}$, 
where $\mathcal{S}_n= X_1+\cdots+ X_n$ is the sum of i.i.d.\@ exponential random variables with mean $\mu=1$. 
For $\lv> 1$, it is straightforward to show that  
\beq 
    \law \left( \left\{ \frac{\mathcal{S}_{[tN]}-t\lv N}{\frac1{\sqrt{\Lambda''(\lv)}} N^{1/2}} \right\}_{t \in (0,1)} \bigg| \mc S_N =\ell N  \right) \fddto  \law \left( \{ \Brb(t)\}_{t \in (0,1)} \right)
\eeq
as $N \to \infty$, where $\Brb$ denotes a standard Brownian bridge, and $\Lambda(\alpha)$ is the rate function for the large deviations of the sum of independent exponential random variables. 
Explicitly,  
$\Lambda(\alpha)= \sup_\lambda (\alpha \lambda + \log (1-\lambda)) = \alpha -1- \log \alpha$ for $\alpha>0$, and thus $\frac1{\sqrt{\Lambda''(\lv)}} = \lv$. 
See, for example, \cite{Bor95} for the case where $X_i$ are general random variables. 
Note that $\mathcal{S}$ can be viewed as the  restriction of the exponential LPP on the first row, since $\mathcal{S}_n \equind \LPP(n, 1)$.

\subsection{Conditional law of large numbers}

Throughout this paper, the conditional probability $\prob(E \, | \, \mc L(a,b)=c)$ is understood as 
\begin{equation} \label{eq:conddefLPP}
    \prob(E \, | \,\mc L(a,b)=c) = \lim_{\epsilon\downarrow 0} \frac{\prob( E\cap \{ \mc L(a,b)\in (c-\epsilon, c+\epsilon)\})}{\prob(\mc L(a,b)\in (c-\epsilon, c+\epsilon) )}
    = \frac{\frac{\partial}{\partial c} \prob( E\cap \{ \mc L(a,b)\le c\})}{\frac{\partial}{\partial c}  \prob(\mc L(a,b)\le c )} . 
\end{equation}
The first result of this paper is a conditional law of large numbers.  
Compare the result with \eqref{eq:LLN}. 

\begin{thm}[Conditional Law of Large Numbers]\label{thm:LLN} 
Fix $\sa,b>0$ and $\lv> \bar{\LPP}(a,b)$. 
Let $D=\lv^2 - 2(a+b)\lv + (a-b)^2$ 
as in \eqref{eq:Ddefn} and define the function 
\beq \label{eq:mvxyshaded}
    \mv(x,y)=  \frac12 \left[ (\lv +a-b)x + (\lv -a+b)y -  |x-y| \sqrt{D} \right].
\eeq
Then, for every $\epsilon>0$, 
\beq \label{eq:CLLN}
    \lim_{N\to \infty} \prob\left[ \left| \frac{\LPP(xaN,ybN)}{N}- \mv(x,y)\right|>\epsilon \,\, \bigg| \, \LPP(aN,bN)=\lv N \right] =0 
\eeq
for $(x,y)\in (0,1)^2$ satisfying 
\beq \label{eq:critical_slope} 
    \frac{1}{\slope} < \frac{y}{x} < \slope  \qquad \text{where} \quad \slope := \frac{\lv-a-b+\sqrt{D}}{\lv-a-b-\sqrt{D}}.
\eeq
\end{thm}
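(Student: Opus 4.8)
The plan is to prove the conditional law of large numbers by combining the unconditional upper large deviation rate function $\J$ with an additive, "geodesic-splitting" decomposition of the last passage time, and then verifying a matching lower bound via superadditivity. The key heuristic is that on the rare event $\{\LPP(aN,bN)=\lv N\}$, the optimal path is forced through a thin corridor, and along that corridor the field concentrates; for a point $(xaN,ybN)$ inside the corridor, the geodesic from $(1,1)$ to $(aN,bN)$ realizing the value $\lv N$ should pass near $(xaN,ybN)$, so $\LPP(xaN,ybN)\approx \mv(x,y)N$ is pinned to the unique "cheapest" way to distribute the excess.

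First I would set up the variational picture. By \eqref{eq:upperld}, for any intermediate point $\mb r_N\approx (x'N,y'N)$ we have $\prob(\LPP(\mb r_N)>\lv'N)\approx e^{-N\J_{(0,0)}(\lv';x',y')}$ and, by independence of the two path-segments, $\prob(\LPP_{(0,0)}(\mb r_N)>\lv_1 N,\ \LPP_{\mb r_N}(aN,bN)>\lv_2 N)\approx e^{-N(\J_{(0,0)}(\lv_1;x',y')+\J_{\mb r_N}(\lv_2;a-x',b-y'))}$, where I write $\J_{\mb p}$ for the rate function associated to the shifted problem (obtainable from \eqref{eq:large_deviation_fn} by translation-scaling). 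The claim is that
\beq
    \J(\lv) \;=\; \inf_{\substack{\lv_1+\lv_2=\lv}} \Big[ \J_{(0,0)}(\lv_1;x,y)\ \text{restricted to the segment}\Big],
\eeq
with the infimum attained, and that the minimizer along the whole segment from $(0,0)$ to $(a,b)$ passing through $(xa,yb)$ is precisely $\mv$. Concretely, I expect the minimizing "height profile" $s\mapsto \mv(\cdot)$ along rays to be the one for which the running value has constant "cost density," and a direct computation should show that the function $\mv(x,y)$ in \eqref{eq:mvxyshaded} is exactly the value of this optimal profile — the two linear pieces with slopes $(\lv+a-b)/2\pm\sqrt{D}/2$ reflecting whether $(xa,yb)$ lies on the $x$-side or $y$-side of the corridor, and the condition \eqref{eq:critical_slope} being exactly the requirement that $(x,y)$ lie strictly inside the corridor (equivalently, that the characteristic direction through $(aN,bN)$ on the rare event passes between the relevant cone boundaries with slopes $1/\slope$ and $\slope$).

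The upper bound in \eqref{eq:CLLN} — i.e. ruling out $\LPP(xaN,ybN) > (\mv(x,y)+\epsilon)N$ — would follow from a union bound over a net of intermediate points together with the strict convexity/uniqueness of the variational minimizer: if the value at $(xaN,ybN)$ exceeds $\mv(x,y)N$ by $\epsilon N$, then the total cost to still achieve $\lv N$ at $(aN,bN)$ strictly exceeds $\J(\lv)$ by a positive amount, so this event has conditional probability $\to 0$ after dividing by $\prob(\LPP(aN,bN)\in(\lv N-\delta,\lv N+\delta))\approx e^{-N\J(\lv)}$ (using the definition \eqref{eq:conddefLPP} and the fact, derivable from the sharp asymptotics behind \eqref{eq:upperld}, that $\frac{\partial}{\partial c}\prob(\LPP(aN,bN)\le cN)$ has the same exponential rate $e^{-N\J(cN/N)}$). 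For the lower bound — ruling out $\LPP(xaN,ybN) < (\mv(x,y)-\epsilon)N$ — I would use superadditivity: $\LPP(aN,bN)\ge \LPP_{(0,0)}(xaN,ybN) + \LPP_{(xaN,ybN)}(aN,bN)$ plus an a priori upper tail bound on $\LPP_{(xaN,ybN)}(aN,bN)$ (again from \eqref{eq:upperld}), which forces $\LPP(xaN,ybN)$ to not be too small on the conditioning event; strict convexity of $\J$ makes this quantitative. Symmetrically one controls both one-sided deviations, giving \eqref{eq:CLLN}.

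The main obstacle I anticipate is making the large-deviation heuristics rigorous at the level of \emph{densities} rather than tail probabilities: the conditioning in \eqref{eq:conddefLPP} is on $\LPP(aN,bN)$ lying in a shrinking window, so one needs that $\prob(\LPP(aN,bN)\in(\lv N - \epsilon, \lv N+\epsilon))$ and the joint event both have controlled exponential behavior, and that these match up without an unexpected polynomial or subexponential discrepancy that could survive the ratio. Relatedly, one must ensure the variational minimizer is \emph{unique and isolated} so that the strict gap $\J(\text{perturbed}) > \J(\lv)$ is uniform over the deviation event — this is where the explicit formula \eqref{eq:large_deviation_fn} and strict convexity of $\J$ do the real work, and where the corridor condition \eqref{eq:critical_slope} is essential (outside it, the minimizer degenerates and the profile $\mv$ is no longer the answer, which is why the theorem is restricted to that cone). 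Since the paper advertises that its proofs "rely on an analysis of explicit multi-point distributions," I expect the authors to bypass some of this by directly differentiating the known Fredholm-determinant / multi-point formulas for exponential LPP and extracting the exponential rate of the conditional density; I would follow that route for the delicate density estimates while using the variational/superadditivity argument above to identify the limit $\mv$ and organize the proof.
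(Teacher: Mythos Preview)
Your approach is fundamentally different from the paper's. The paper does not prove Theorem~\ref{thm:LLN} directly at all: as stated in Section~1.5, the conditional law of large numbers is obtained as an immediate corollary of the stronger fluctuation results, Theorems~\ref{thm:diagfluc} and~\ref{thm:offdiagflucsameside}, which establish convergence in distribution of $(\LPP(xaN,ybN)-\mv(x,y)N)/(\ab N^{1/2})$ to explicit Brownian-bridge functionals. Those theorems are proved by a steepest-descent analysis of the exact conditional multi-point distribution formulas derived from Liu's work (Proposition~\ref{prop:cond}). The variational/rate-function picture you describe does appear in the paper, but only in Section~\ref{sec:llnconj} as a \emph{heuristic} motivating Conjecture~\ref{conj:LLN}; the authors explicitly do not attempt to make it rigorous, and their actual proof of Theorem~\ref{thm:LLN} contains no large-deviation computation whatsoever.

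Beyond the density-versus-tail issue you already flag, your lower-bound step has a concrete gap. Superadditivity gives $\LPP(aN,bN)\ge \LPP(xaN,ybN)+\LPP_{(xaN,ybN)}(aN,bN)$, which on the conditioning event yields only the \emph{upper} bound $\LPP(xaN,ybN)\le \lv N - \LPP_{(xaN,ybN)}(aN,bN)$. An upper-tail bound on the second piece does nothing to force the first piece to be large; the inequality simply goes the wrong way. The underlying reason is that the geodesic realizing $\LPP(aN,bN)=\lv N$ need not pass through $(xaN,ybN)$, so smallness of $\LPP(xaN,ybN)$ does not conflict with the conditioning. To run a large-deviations argument for the lower bound you would instead have to control the maximum of $\LPP(\mb r)+\LPP_{\mb r}(aN,bN)$ over \emph{all} points $\mb r$ on some barrier and show, via a union bound and strict uniqueness of the variational minimizer, that the conditional geodesic localizes near $(xaN,ybN)$; this is essentially the content of the theorem and is not delivered by the sketch. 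Your upper-bound argument has a milder but related issue: bounding the numerator by the unconditional tail $\prob(\LPP(xaN,ybN)>(\mv+\epsilon)N)$ alone does not obviously produce a rate exceeding $\J(\lv)$ without invoking the joint structure with the event at $(aN,bN)$.
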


The function $h(x,y)$ is a piecewise linear function; see Figure \ref{fig:hlevel} for its level curves. 

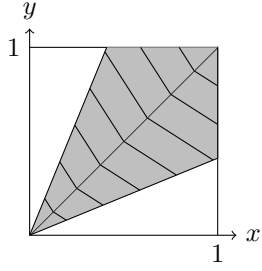
\begin{figure}[h]\centering
\begin{tikzpicture}[scale=2.5]
\draw[thin] (1,1) to (0,1) node[left] {$1$};
\draw[thin] (1,1) to (1,0) node[below] {$1$};
\fill[lightgray] (0, 0) to (1, 0.41) to (1,1) to (0,0); 
\draw[thin] (0, 0) to (1, 0.41) to (1,1) to (0,0); 
\draw[thin] (0.125, 0.125) to (0.19523, 0.41*0.19523); 
\draw[thin] (0.25, 0.25) to (0.39047, 0.41*0.39047); 
\draw[thin] (0.375, 0.375) to (0.58571, 0.41*0.58571);
\draw[thin] (0.5, 0.5) to (0.78095, 0.41*0.78095);
\draw[thin] (0.625, 0.625) to (0.97619, 0.41*0.97619);
\draw[thin] (0.75, 0.75) to (1, 0.59);
\draw[thin] (0.875, 0.875) to (1, 0.795);
\draw [->, thin][black] (0, 0) to (1.1,0) node[right] {$x$};
\draw [->, thin][black] (0,0) to (0, 1.1) node[above] {$y$};
\draw[thin][black] (0, 0) to (1,1); 
\fill[lightgray] (0, 0) to (0.41, 1) to (1,1) to (0,0); 
\draw[thin] (0, 0) to (0.41, 1); 
\draw[thin] (0.125, 0.125) to (0.41*0.19523, 0.19523); 
\draw[thin] (0.25, 0.25) to (0.41*0.39047, 0.39047); 
\draw[thin] (0.375, 0.375) to (0.41*0.58571, 0.58571);
\draw[thin] (0.5, 0.5) to (0.41*0.78095, 0.78095);
\draw[thin] (0.625, 0.625) to (0.41*0.97619, 0.97619);
\draw[thin] (0.75, 0.75) to (0.59, 1);
\draw[thin] (0.875, 0.875) to (0.795, 1);
\end{tikzpicture}
\caption{Level curves of $h(x,y)$}
\label{fig:hlevel}
\end{figure}


We propose the following conjecture for all points $(x,y)\in \R_+^2$.

\begin{conjecture}
\label{conj:LLN}
Define the regions
\beqq
    \Omega_1= \{ (x,y)\in (1, \infty)^2 : \frac{1}{\slope} < \frac{y-1}{x-1}< \slope\} 
    \quad\text{and}\quad
    \Omega_2= \{ (x,y)\in \R_+^2 : \frac{1}{\slope} < \frac{y}{x}< \slope\} .
\eeqq
Under the same assumptions of Theorem \ref{thm:LLN}, we conjecture that \eqref{eq:CLLN} holds with 
\beq \label{eq:LLNconjv}
    	\mv(x,y)= \begin{cases}
    \lv + \bar{\LPP}_{(a,b)}(xa, yb)   \quad &\text{for }(x,y)\in \Omega_1\\
    \frac12 \left[ (\lv+a-b)x +(\lv-a+b)y - |x-y|\sqrt{D} \right] \quad &\text{for } (x,y)\in \Omega_2\setminus \Omega_1\\
   \bar{\LPP}(xa, yb) \quad &\text{for } (x,y)\in \R_+^2\setminus \Omega_2.
    \end{cases}
\eeq
\end{conjecture}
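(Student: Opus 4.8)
To prove Conjecture~\ref{conj:LLN} (under the assumptions of Theorem~\ref{thm:LLN}), the plan is to reduce the conditional law of large numbers, region by region, to a deterministic optimization, with Theorem~\ref{thm:LLN} and the last passage decomposition along a separating down‑right path as the main engine. Fix $(x,y)\in\R_+^2$ with $x,y\ge1$ and let $\mathrm L_N$ be the down‑right lattice path formed by the segments $\{\lceil aN\rceil\}\times\{1,\dots,\lceil bN\rceil\}$ and $\{1,\dots,\lceil aN\rceil\}\times\{\lceil bN\rceil\}$; every up/right path from $(1,1)$ to $(\lceil xaN\rceil,\lceil ybN\rceil)$ crosses $\mathrm L_N$, so
\beq\label{eq:decompconj}
  \LPP(xaN,ybN)\;=\;\max_{\mb r\in\mathrm L_N}\Bigl[\LPP(\mb r)+\LPP_{\mb r}(xaN,ybN)-\om_{\mb r}\Bigr].
\eeq
After translating $\mathrm L_N$ one step to the north‑east (which perturbs all last passage values by $O(1)$) the increments $\{\LPP_{\mb r}(xaN,ybN):\mb r\in\mathrm L_N\}$ become measurable with respect to weights outside $[1,\lceil aN\rceil]\times[1,\lceil bN\rceil]$, hence independent of the conditioning event $\{\LPP(aN,bN)=\lv N\}$, and by \eqref{eq:LLN} they satisfy the ordinary law of large numbers $N^{-1}\LPP_{(saN,tbN)}(xaN,ybN)\to\bar\LPP_{(sa,tb)}(xa,yb)=(\sqrt{(x-s)a}+\sqrt{(y-t)b})^2$, uniformly on $\mathrm L_N$ after a net argument with transversal‑fluctuation bounds. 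For $\LPP(\mb r)$, $\mb r=(saN,tbN)\in\mathrm L_N$, one inserts Theorem~\ref{thm:LLN}, extended to the arms $\{s=1\}$ and $\{t=1\}$: conditionally $N^{-1}\LPP(\mb r)\to\mv(s,t)$ when $(s,t)$ lies in the cone \eqref{eq:critical_slope}, and $N^{-1}\LPP(\mb r)\to\bar\LPP(sa,tb)$ when it does not (the ``no‑effect'' statement, established separately below). Granting the needed uniformity, \eqref{eq:decompconj} reduces the conjecture to the deterministic identity
\beq\label{eq:detopt}
  \mv(x,y)\;=\;\max_{(s,t)\in\mathrm L}\Bigl[g(s,t)+\bar\LPP_{(sa,tb)}(xa,yb)\Bigr],\qquad \mathrm L=\bigl(\{1\}\times(0,1]\bigr)\cup\bigl((0,1]\times\{1\}\bigr),
\eeq
where $g=\mv$ on the part of $\mathrm L$ inside \eqref{eq:critical_slope} and $g=\bar\LPP$ elsewhere; if $y<1\le x$ (resp.\ $x<1\le y$) one uses instead the single segment $\{1\}\times(0,y]$ (resp.\ $(0,x]\times\{1\}$) in \eqref{eq:decompconj}--\eqref{eq:detopt}, and if $x,y<1$ the conjecture is already Theorem~\ref{thm:LLN}.

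The identity \eqref{eq:detopt} is a one‑variable calculus computation driven entirely by the relation $(\lv-a-b)^2-D=4ab$ (equivalently $(\lv-a-b-\sqrt D)(\lv-a-b+\sqrt D)=4ab$), which is what collapses all the slope thresholds to $\slope$ and also forces $g$ to be continuous across the cone boundary. On the vertical arm $s=1$, the derivative of $t\mapsto g(1,t)+\bar\LPP_{(a,tb)}(xa,yb)$ at $t=1^-$ equals $\tfrac12[(\lv-a+b)+\sqrt D]-b\bigl(1+\sqrt{(x-1)a/((y-1)b)}\bigr)$, which is $\ge0$ exactly when $(y-1)/(x-1)\ge1/\slope$; combined with the symmetric computation on the horizontal arm, the maximum in \eqref{eq:decompconj} sits at the corner $\mb r=(aN,bN)$ precisely when $(x,y)\in\Omega_1$, with value $\lv N+\bar\LPP_{(a,b)}(xa,yb)N$ --- the first case of \eqref{eq:LLNconjv}. (In this case the matching lower bound is immediate and needs no uniformity, since $\LPP(xaN,ybN)\ge\LPP(aN,bN)+\LPP_{(aN,bN)}(xaN,ybN)-\om_{(aN,bN)}$ and the last two terms depend only on weights disjoint from $[1,\lceil aN\rceil]\times[1,\lceil bN\rceil]$, with ordinary‑LLN value $\bar\LPP_{(a,b)}(xa,yb)$.) When $(x,y)\in\Omega_2\setminus\Omega_1$ with $x\ge1$, the unique interior critical point on the relevant arm is $t^{\ast}=y-(x-1)/\slope$, which the cone condition $1/\slope<y/x<\slope$ places inside \eqref{eq:critical_slope}; substituting $g(1,t^{\ast})=\mv(1,t^{\ast})$ and simplifying --- again using $(\lv-a-b)^2-D=4ab$ --- returns exactly the piecewise‑linear $\mv(x,y)$ of \eqref{eq:mvxyshaded}, the second case. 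Finally, when $(x,y)\in\R_+^2\setminus\Omega_2$ that critical point leaves the cone, where $g=\bar\LPP$; since $(s,t)\mapsto\bar\LPP(sa,tb)+\bar\LPP_{(sa,tb)}(xa,yb)$ is maximized (value $\bar\LPP(xa,yb)$) at the collinear point, and since on the cone part $g+\bar\LPP_{(sa,tb)}(xa,yb)$ is then monotone and bounded at the cone boundary by $\bar\LPP(a,b/\slope)+\bar\LPP_{(a,b/\slope)}(xa,yb)\le\bar\LPP(xa,yb)$, the overall maximum is $\bar\LPP(xa,yb)$ --- the third case.

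It remains to prove the ``no‑effect'' statement used above: for $(x,y)$ outside the cone \eqref{eq:critical_slope} (in particular for $\mb r$ on the arms of $\mathrm L$ outside the cone), $N^{-1}\LPP(xaN,ybN)\to\bar\LPP(xa,yb)$ conditionally. I would establish this by large‑deviation decoupling. Using \eqref{eq:conddefLPP}, the conditional probability of an upper deviation is a ratio whose denominator is $e^{-(\J(\lv)+o(1))N}$ by \eqref{eq:upperld}, while the numerator is governed by the joint upper large deviation rate of $(\LPP(aN,bN),\LPP(xaN,ybN))$. Since $(x,y)$ outside the cone keeps $x$ and $y$ a fixed amount apart, the geodesics $(1,1)\to(aN,bN)$ and $(1,1)\to(xaN,ybN)$ --- straight segments in distinct directions meeting only at $(1,1)$ --- are macroscopically separated, so realizing both deviations costs, to leading exponential order, the sum of the two individual rates; the excess is strictly positive as soon as $\LPP(xaN,ybN)$ exceeds $\bar\LPP(xa,yb)$, and the ratio tends to $0$. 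The lower deviation is easier, since $\prob(\LPP(xaN,ybN)<(\bar\LPP(xa,yb)-\epsilon)N)$ decays at speed $N^2$ and is negligible against $e^{-\J(\lv)N}$. The cleanest rigorous route to the ``costs add'' bound is through the explicit two‑point distribution of $(\LPP(aN,bN),\LPP(xaN,ybN))$, or via the hydrodynamic large deviation picture (cf.\ \cite{QT25}).

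I expect the main obstacle to be the uniformity needed to pass from \eqref{eq:decompconj} to \eqref{eq:detopt}: Theorem~\ref{thm:LLN} must be upgraded to hold, with effective rates, uniformly for $(s,t)$ on the closed cone up to and slightly beyond the arms $\{s=1\}$ and $\{t=1\}$, and combined with transversal‑fluctuation bounds to control the $\Theta(N)$‑fold maximum. This should be accessible from the same explicit multi‑point distributions that underlie Theorem~\ref{thm:LLN}, but the relevant saddle‑point analysis degenerates along those arms and near the cone boundary, and handling those degenerations --- together with the joint two‑point large‑deviation asymptotics of the previous paragraph --- is where the real analytic work lies. By contrast, the deterministic optimizations, while they must be carried out carefully enough to reproduce \eqref{eq:LLNconjv} verbatim and to regenerate the precise cones $\Omega_1$ and $\Omega_2$, are routine consequences of the single algebraic identity $(\lv-a-b)^2-D=4ab$; and the lower bounds in all three regions are easy, following either from the single‑point form of Theorem~\ref{thm:LLN} at the optimizer or from the concatenation inequality above, so that only the upper bounds truly require the full strength of the uniform estimates.
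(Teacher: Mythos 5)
This statement is a conjecture in the paper: the authors give only the heuristic of Subsection \ref{sec:llnconj} (optimizing $t\mapsto t\lv+\bar{\LPP}_{(ta,tb)}(xa,yb)$ over two-segment paths through diagonal midpoints), plus Theorem \ref{thm:LLN}, which proves the middle case of \eqref{eq:LLNconjv} on $(0,1)^2$ only. So there is no proof to match; what you propose is a program. Its deterministic core is fine and is essentially a reparametrized version of the paper's variational heuristic: your exact decomposition along the boundary of $[1,\lceil aN\rceil]\times[1,\lceil bN\rceil]$ replaces midpoints on the diagonal by midpoints on the two arms, and your calculus (I checked the derivative at $t=1^-$, the interior critical point $t^{*}=y-(x-1)/\slope$, and a numerical instance) does reproduce \eqref{eq:LLNconjv} via the identity $(\lv-a-b)^2-D=4ab$. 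The decomposition identity itself is also an improvement over the paper's heuristic in that it is exact rather than restricted to two-segment paths.

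The genuine gaps are the probabilistic inputs, and they are exactly where the conjecture's content sits. First, your ``no-effect'' statement (conditional LLN equal to $\bar{\LPP}(xa,yb)$ outside the cone \eqref{eq:critical_slope}) is itself the third case of the conjecture restricted to the arms, so feeding it into the decomposition is circular unless it is established independently; and the decoupling argument you sketch for it is not correct as stated. ``Macroscopic separation of the two geodesics, hence costs add'' cannot be the mechanism: the same separation holds for directions inside the cone, where the conclusion fails, because the deviation-optimal way to realize $\{\LPP(aN,bN)\ge\lv N\}$ creates a boosted diagonal corridor that a path to $(xaN,ybN)$ can follow partway and then leave --- deciding whether that helps is precisely the variational problem you are trying to prove, so you genuinely need joint two-point upper-tail/large-deviation asymptotics for $(\LPP(aN,bN),\LPP(xaN,ybN))$, which neither the paper nor your sketch supplies (the paper's footnote indicates such input exists for the directed landscape via \cite{DDV24}, not for exponential LPP). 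Second, Theorem \ref{thm:LLN} is a pointwise convergence-in-probability statement on the open cone inside $(0,1)^2$; to pass from your decomposition to the deterministic optimization you need it on the closed arms $s=1$, $t=1$ and, more seriously, uniform quantitative conditional upper-tail bounds over the $\Theta(N)$ crossing points --- an LLN does not survive a maximum over $N$ points without tail control, and the required conditional tail estimates (with the saddle-point analysis degenerating on the arms and at the cone boundary) are not in the paper. You flag both issues yourself, but flagging them does not close them; until they are, the argument is a plausible roadmap rather than a proof, consistent with the statement being left as Conjecture \ref{conj:LLN}. (Minor: the north-east shift of the crossing path does not perturb the increments by a deterministic $O(1)$; this is a fixable technicality, unlike the two items above.)
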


We provide a heuristic argument for the above conjecture in Subsection \ref{sec:llnconj}. See Figure \ref{fig:Omega12} for the picture of the regions $\Omega_1$ and $\Omega_2$. 
The conjecture suggests that conditioning on $\LPP(aN, bN)=\lv N$ does not affect the hydrodynamic limit of the last passage time for points in the region $\R_+^2\setminus \Omega_2$, and only has a trivial effect on points in $\Omega_1$. 
In contrast, the hydrodynamic limit of the conditional last passage time to points in $\Omega_2\setminus \Omega_1$ is conjectured to be a piecewise linear function. Theorem \ref{thm:LLN} establishes this part of the conjecture for $(x,y)\in (0,1)^2$.

\begin{figure}[h]\centering
\begin{tikzpicture}[scale=1.5]
\fill[lightgray] (0, 0) to (2, 0.82) to (2, 1.41) to (2,2) to (1.41, 2) to (0.82, 2) to (0,0); 
\fill[gray]  (1, 1) to (2, 1.41) to (2,2) to (1.41, 2) to (1,1); 
\draw[thin] (1.41, 2)  to (1, 1) to (2, 1.41); 
\draw[thin] (0.82, 2) to (0, 0) to (2, 0.82); 
\draw [->, thin][black] (0, 0) to (2.1,0)  node[right] {$x$};
\draw [->, thin][black] (0,0) to (0, 2.1) node[left] {$y$};
\node at (1.6, 1.6)  {$\Omega_1$};
\node at (0.8, 0.6)  {$\Omega_2$}; 
    \filldraw (1,1) circle (1pt);
       \node[below] at (1,0) {$1$};    
    \node[left] at (0,1) {$1$};    
\end{tikzpicture}
\caption{The dark gray region is $\Omega_1$. The union of the light gray region and the dark gray region is $\Omega_2$}
\label{fig:Omega12}
\end{figure}
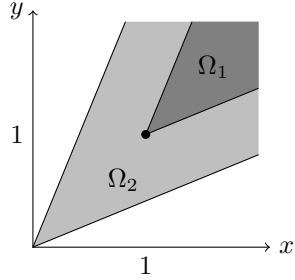

The level curves of the conjectured $h(x,y)$ are shown on the right panel of Figure \ref{fig:levelcurves}. 
The function $h(x,y)$ is continuous everywhere and is $C^1$ on $\R^2_+ \setminus \{(t,t): 0 \le t \le 1\}$. 
At the boundary of $\Omega_2$, the level curves of $h(x,y)$ are tangential to the level curves of the unconditional limit $\bar{\LPP}(xa,yb)$. 
Similarly, at the boundary of $\Omega_1$, the level curves are tangential to the curves $\lv+\bar{\LPP}_{(a,b)}(xa,yb)$. 
The left panel displays the level curves of the unconditional limit, $\bar{\LPP}(xa,yb)$.

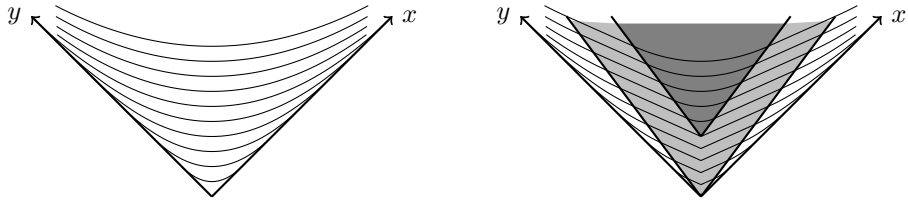
\begin{figure}[h] \centering
\begin{tikzpicture}[scale=0.8]
\draw[thick, ->] (0, 0) -- (3, 3) node[right] {$x$};
\draw[thick, ->] (0,0) -- (-3, 3) node[left] {$y$};

\foreach \c in {1, 2, 3, 4, 5, 6} {
\draw[domain=-\c/2 : \c/2, samples=100] plot (\x, {\x*\x/\c+\c/4});
} 
\foreach \c in {7, 8, 9, 10} {
\draw[domain=- 2.5-0.01*\c  : 2.5+0.01*\c, samples=100] plot (\x, {\x*\x/\c+\c/4});
} 
\end{tikzpicture}
\qquad 
\begin{tikzpicture}[scale=0.8]
\fill[gray] (0, 1) to (1.4, 1.34188034188*1.4+1) to (-1.4, 1.34188034188*1.4+1) to (0,1); 
\fill[lightgray] (0, 0) to (2.18, 1.34188034188*2.18) to (1.4, 1.34188034188*1.4+1) to (0,1) to (-1.4, 1.34188034188*1.4+1) to (-2.18, 1.34188034188*2.18) to (0,0); 

\draw[thick, ->] (0, 0) -- (3, 3) node[right] {$x$};
\draw[thick, ->] (0,0) -- (-3, 3) node[left] {$y$};

\draw[thick, domain=-2.23566878981:2.23566878981, samples=100] plot (\x, {1.34188034188*abs(\x)});  
\draw[thick, domain=-1.49044585987:1.49044585987, samples=100] plot (\x, {1.34188034188*abs(\x)+1});  


\foreach \c in {1, 2, 3, 4, 5} {
\draw[domain=-0.2235469249*\c: 0.22354692494*\c, samples=3] plot (\x, {0.4472135955*abs(\x)+\c/5}); 
} 

\foreach \c in {6, 7, 8, 9, 10} {
\draw[domain=-0.2235469249*\c: -0.22354692494*\c+1.1177346247, samples=100] plot (\x, {0.4472135955*abs(\x)+\c/5});
\draw[domain=0.22354692494*\c-1.1177346247: 0.22354692494*\c, samples=100] plot (\x, {0.4472135955*abs(\x)+\c/5});
\draw[domain=-0.22354692494*\c+1.1177346247:0.22354692494*\c-1.1177346247, samples=100] plot (\x, {\x*\x/(\c-5)+(\c-5)/4+1});
} 

\foreach \c in {1, 2, 3, 4, 5, 6} {
\draw[domain=-\c/2 : -0.22354692494*\c, samples=100] plot (\x, {\x*\x/\c+\c/4}); 
\draw[domain=0.22354692494*\c : \c/2, samples=100] plot (\x, {\x*\x/\c+\c/4});
} 
\foreach \c in {7, 8, 9, 10} {
\draw[domain=- 2.5-0.01*\c : -0.22354692494*\c, samples=100] plot (\x, {\x*\x/\c+\c/4});
\draw[domain=0.22354692494*\c : 2.5+0.01*\c, samples=100] plot (\x, {\x*\x/\c+\c/4});
} 
\end{tikzpicture}
\caption{Left: Level curves of $\bar{\LPP}(x,y)$ for the law of large numbers  \eqref{eq:LLN}, rotated by 45 degrees. Right: Level curves of the function $h(x,y)$ for the conjectured conditional law of large numbers \eqref{eq:LLNconjv}, rotated by 45 degrees, when $a=b=1$ and $\lv=5$. The gray area is $\Omega_1$; the light gray area denotes  $\Omega_2\setminus \Omega_1$.}
\label{fig:levelcurves}
\end{figure}

As mentioned before, the papers \cite{LinTsai25} and \cite{DasTsai24} considered the conditional law of large numbers for the KPZ equation and the directed landscape.
These papers state results for times before the conditioning time, which corresponds to the regime $x+y\le 2$ in the exponential LPP.\footnote{Li-Cheng Tsai informed us that the result of \cite{DasTsai24} can be extended to all times.}

\subsection{Conditional fluctuations}  \label{sec:fluctuations1}

We now consider fluctuations and present two results. The first pertains to points along the diagonal line, as shown in the left panel of Figure \ref{fig:diag}. 
For these points, convergence holds in the sense of finite-dimensional distributions. This result may be compared with \eqref{eq:fdc}.

\begin{thm}[Diagonal multi-point fluctuations] \label{thm:diagfluc} 
Fix $a,b > 0$ and $\lv> \bar{\LPP}(a,b)$. Let $D=\lv^2-2(a+b)\lv+(a-b)^2$. 
Define the positive real numbers   
\beq \label{eq:abdf}
    \ab= \frac{\sqrt{(a+b)\lv- (a-b)^2} D^{1/4}}{2\sqrt{ab}}
    \quad \text{and} \quad 
    \cd_\pm= \bigg(1 \pm \frac{(a-b)\sqrt{D}}{(a+b)\lv -(a-b)^2} \bigg)^{1/2}. 
\eeq
Then, 
\beq \begin{split} \label{eq:diagfluc}
    &\law \left( \left\{ \frac{ \LPP( t aN +  s \frac{a(\ell-a+b)\ab}{\ell\sqrt{D}}   N^{1/2}, t bN -  s \frac{b(\ell+a-b)\ab}{\ell\sqrt{D}}  N^{1/2})- t \lv N}{\ab N^{1/2}} \right\}_{(s,t)\in \R\times (0,1)}
    \, \bigg| \,  \LPP(aN, bN)= \lv N \right) \\
    &\fddto  \law \left( \{ \B_1(t) - |\B_2(t) - s| \}_{(s,t)\in \R\times (0,1)} \right)
\end{split} \eeq
as $N\to \infty$, where 
\beq \label{eq:defcd}
    \B_1(t)= \frac{\cd_+ \B_+(t)+ \cd_- \B_-(t)}{\sqrt{2}}  \quad\text{and}\quad 
    \B_2(t)= \frac{\cd_+ \B_+(t)- \cd_- \B_-(t)}{\sqrt{2}} 
\eeq
for two independent standard Brownian bridges $\B_+$ and $\B_-$. 
\end{thm}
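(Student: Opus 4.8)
\medskip
\noindent The plan is to derive the statement from an exact formula for the multi-point distribution of exponential LPP, analyzed by steepest descent in the upper large-deviation window $\lv>\bar\LPP(a,b)$. Fix test points $(s_1,t_1),\dots,(s_m,t_m)\in\R\times(0,1)$, reals $x_1,\dots,x_m$, and set $\mb q_j=\big(t_j aN+s_j\tfrac{a(\lv-a+b)\ab}{\lv\sqrt D}N^{1/2},\, t_j bN-s_j\tfrac{b(\lv+a-b)\ab}{\lv\sqrt D}N^{1/2}\big)$. By \eqref{eq:conddefLPP}, the conditional probability that $\LPP(\mb q_j)\le t_j\lv N+x_j\ab N^{1/2}$ for all $j$, given $\LPP(aN,bN)=\lv N$, is the ratio of $\partial_c\,\prob\big(\bigcap_j\{\LPP(\mb q_j)\le t_j\lv N+x_j\ab N^{1/2}\}\cap\{\LPP(aN,bN)\le c\}\big)$ to $\partial_c\,\prob(\LPP(aN,bN)\le c)$, evaluated at $c=\lv N$. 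The denominator is the density of $\LPP(aN,bN)$ at $\lv N$, whose large-deviation asymptotics are governed by $\J$ in \eqref{eq:large_deviation_fn} through Johansson's one-point formula underlying \eqref{eq:onepoint} and \eqref{eq:upperld}. For the numerator one observes that, for $N$ large, the $\mb q_j$ are totally ordered when the $t_j$ are distinct and lie on a common down-right path when the $t_j$ agree; the joint law of $\LPP$ over this nested family of down-right paths together with $\LPP(aN,bN)$ admits an explicit Fredholm-determinant representation coming from the determinantal structure of exponential LPP (the Schur process, equivalently step-initial-condition TASEP).

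The heart of the argument is the asymptotic analysis of this determinant. Because we condition at level $\lv N$ with $\lv>\bar\LPP(a,b)$, the relevant phase function must be analyzed at a \emph{real} critical point lying strictly outside the bulk location responsible for the Tracy--Widom behaviour in \eqref{eq:onepoint}; this is precisely the critical point producing $\J(\lv)$, and its location and the pertinent derivatives of the phase are controlled by $\sqrt D$ with $D=\lv^2-2(a+b)\lv+(a-b)^2$. At this point the phase has a nondegenerate quadratic --- rather than cubic, Airy-type --- expansion, and this is the source of the $N^{1/2}$ scaling in \eqref{eq:diagfluc}. After deforming the steepest-descent contours so that they pinch at this critical point, one shows that the rescaled kernel localizes and converges, after conjugation, to a Gaussian kernel. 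The transverse coordinates $s_j$ enter through a conjugation whose exponent is linear in $s_j$, and the two descent rays through the critical point contribute two competing linear-in-$s_j$ terms; this is the mechanism behind the absolute value, via the identity $\B_1(t)-|\B_2(t)-s|=\min\big(\sqrt2\,\cd_+\B_+(t)-s,\ \sqrt2\,\cd_-\B_-(t)+s\big)$. Finally, applying $\partial_c$ and evaluating at $c=\lv N$ pins the total fluctuation in the $(a,b)$-direction, which turns the two limiting Gaussian processes $\B_\pm$ into Brownian \emph{bridges} on $(0,1)$ --- the functional analogue of the elementary fact, recalled in the introduction, that a sum of i.i.d.\ exponentials conditioned on its value rescales to a Brownian bridge. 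The constants $\ab$ and $\cd_\pm$ of \eqref{eq:abdf} are then read off from the first two $\lv$-derivatives of the phase (equivalently of $\J$) together with the Jacobian of $(s,t)\mapsto\mb q(s,t)$; in particular $\cd_+=\cd_-$ when $a=b$, as forced by symmetry.

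The hard part will be the steepest-descent analysis in this large-deviation window together with the uniformity demanded by \eqref{eq:conddefLPP}. Unlike the typical $N^{1/3}$ regime, here the descent contours must be brought to pinch at the relevant critical point, and one must prove (i) that the contribution of the kernel away from a shrinking neighbourhood of that point is negligible, with estimates uniform in $c$ near $\lv N$ and in the test parameters, and (ii) that one may interchange the $N\to\infty$ limit with the $c$-derivative and with the $\epsilon\downarrow0$ limit in \eqref{eq:conddefLPP}, which requires uniform tail bounds on the Fredholm determinants and their $c$-derivatives. A secondary difficulty is handling the mixed nested/antichain structure of $\{\mb q_j\}$ cleanly and verifying that the limiting covariance is exactly that of the stated rotation of two independent Brownian bridges; once the localization is established, identifying the limit and computing $\ab$ and $\cd_\pm$ is lengthy but routine.
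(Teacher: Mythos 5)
Your overall plan --- steepest descent on the exact multi-point formula for exponential LPP at the large-deviation critical point, obtaining Gaussian (rather than Airy) kernels and reading off Brownian bridges --- is the same route the paper takes, and your observations about the quadratic phase, the $N^{1/2}$ scaling, and $\cd_+=\cd_-$ at $a=b$ are correct. But there is a genuine gap in the mechanism you propose for the $|\B_2(t)-s|$ structure. You attribute the absolute value to ``two descent rays through the critical point,'' i.e.\ a single critical point. In the paper's analysis (Lemma \ref{lem:Q_hat_111_limit}), the phase function $\GG(z)=-a\log(z+1)+b\log z+\lv z$ has \emph{two} real critical points $z^-<z^+$ in $(-1,0)$, and the exact formula \eqref{eq:Q_hat_111} integrates over two disjoint nests of contours: circles around $-1$ (the $\bsxi$ variables) and circles around $0$ (the $\bseta$ variables). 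After steepest descent the former localize at $z^-$ and the latter at $z^+$, and the resulting limiting integral \emph{factorizes} into a product $\mathsf P_1\mathsf P_2$ of two separate Gaussian contour integrals, each converted into a Brownian-bridge probability via Lemma \ref{lem:bridge}. The independence of $\B_+$ and $\B_-$ and the $\min$ in \eqref{eq:limitofca1} come precisely from this factorization across the two critical points: $\mathsf P_1\mathsf P_2=\prob(\min(\sqrt2\cd_+\B_+(t_i)-s_i,\sqrt2\cd_-\B_-(t_i)+s_i)>h_i,\ \forall i)$, and then $\min(u,v)=\tfrac{u+v}2-\tfrac{|u-v|}2$ yields the statement. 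Without the two-critical-point/two-contour picture, your sketch cannot actually produce this form, nor can it explain why the $\cd_\pm$ parameters are $(\mp\tfrac12\GG''(z^\pm))^{1/2}/(\sqrt2\,\ab)$ rather than some single curvature.

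Two smaller remarks. First, your concern about interchanging $N\to\infty$ with the $c$-derivative in \eqref{eq:conddefLPP} is moot here: Proposition \ref{prop:cond} already carries out the $\partial_{T_m}$ differentiation \emph{exactly} at the finite-$N$ level (this is what produces the $\ST$ factor in $\Pi_{\bn}$), so the asymptotics are of the derivative, not a derivative of asymptotics, and no uniformity-in-$c$ estimate is needed. Second, the paper's starting formula is a series over $\bn\in\N^m$, not a single determinant; a real part of the work (Lemma \ref{lem:diag_error}) is showing that only the $\bn=(1,\dots,1)$ term survives, via bounds of the form $\prod_i |\ff_{L,i}(z^-)/\ff_{L,i}(z^+)|^{n_i-1}=e^{-c(|\bn|-m)L}$. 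Your sketch should acknowledge this series control; it is where the separation $\GG(z^+)-\GG(z^-)=\J(\lv)>0$ is actually used.
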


We note that $\B_1$ and $\B_2$ are standard Brownian bridges with covariance 
\beq
    \Exp [ \B_1(t)\B_2(t)] = \frac{(a-b)\sqrt{D}}{(a+b)\lv -(a-b)^2} t(1-t), \qquad t\in (0,1). 
\eeq 
They are independent only when $a=b$. 

\begin{figure}[h]\centering
\begin{tikzpicture}[scale=2.5]
\draw[thin] (1, 0) to (1,1) to (0,1); 
\fill[lightgray] (0, 0) to (0.1, 0) to (1,0.9) to (1,1) to (0.9,1) to (0,0.1) to (0,0); 
\draw[thin] (0, 0) to (1, 1); 
\draw [->, thin][black] (0, 0) to (1.1,0); 
\draw [->, thin][black] (0,0) to (0, 1.1); 
\end{tikzpicture}
\qquad \qquad 
\begin{tikzpicture}[scale=2.5]
\draw[thin] (1, 0) to (1,1) to (0,1); 
\fill[lightgray] (0, 0) to (1, 0.41) to (1,1) to (0,0); 
\draw[thin] (0, 0) to (1, 0.41) to (1,1) to (0,0); 
\fill[lightgray] (0, 0) to (0.41, 1) to (1,1) to (0,0); 
\draw[thin] (0, 0) to (0.41, 1)  to (1,1) to (0,0); 
\draw [->, thin][black] (0, 0) to (1.1,0); 
\draw [->, thin][black] (0,0) to (0, 1.1);
\end{tikzpicture}
\caption{The left picture is related to Theorem \ref{thm:diagfluc}. The right picture is related to Theorem \ref{thm:offdiagflucsameside}.}
\label{fig:diag}
\end{figure}
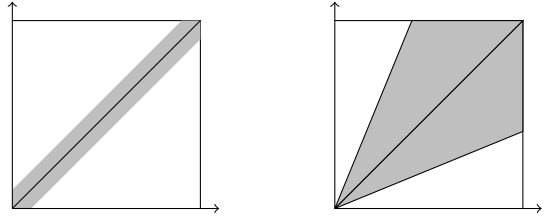

The second result concerns fluctuations at points off the diagonal line, as shown in the right panel of Figure~\ref{fig:diag}. However, for these points, we were only able to prove convergence for two-point distributions.

\begin{thm} [Off-diagonal two-point fluctuations] \label{thm:offdiagflucsameside} 
Fix $a,b > 0$ and $\lv> \bar{\LPP}(a,b)$. 
Let the function $h(x,y)$ and the positive numbers $\slope$, $\ab$, and $\cd_{\pm}$ be as defined in Theorems \ref{thm:LLN} and \ref{thm:diagfluc}. 
Then, for two distinct points $(x_1, y_1), (x_2, y_2)\in (0,1)^2$ and two real numbers $\mr r_1, \mr r_2\in \R$, 
\beqq
\begin{split}
    &\lim_{N \to \infty} \prob \left[\frac{\mc L(x_iaN, y_ibN) - \mv(x_i,y_i)N}{\sqrt{2} \ab   N^{1/2}} > \mr r_i, \ i=1, 2 \bigg| \LPP(aN, bN)= \lv N \right] \\ 
    &\qquad  \quad 
    = \begin{dcases}  \prob \left[ \cd_{+} \B\left(\frac{\slope y_i- x_i}{\slope -1} \right) > \mr r_i,  i=1,2 \right] 
    \qquad &\text{if $\frac{1}{\slope} < \frac{y_1}{x_1}, \frac{y_2}{x_2} < 1$,} \\
    \prob \left[ \cd_{-} \B\left(\frac{\slope x_i- y_i}{\slope -1} \right) > \mr r_i,  i=1,2 \right]
\qquad &\text{if $1 < \frac{y_1}{x_1}, \frac{y_2}{x_2} < \slope$,}
    \end{dcases}
\end{split} \eeqq
where $\B$ is a standard Brownian bridge, and $\slope$ is the constant defined in \eqref{eq:critical_slope}. 
\end{thm}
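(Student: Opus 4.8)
The plan is to express the conditional two-point distribution, via the definition \eqref{eq:conddefLPP} of conditioning, as a ratio of $c$-derivatives of joint distribution functions of $\mc L$ at the three points $(x_1aN,y_1bN)$, $(x_2aN,y_2bN)$, $(aN,bN)$, and then analyze each of these asymptotically. Writing $c_i=\mv(x_i,y_i)N+\sqrt 2\,\ab\,\mr r_iN^{1/2}$, the quantity in the statement equals
\[
\frac{\partial_c\,\prob\big(\mc L(x_1aN,y_1bN)>c_1,\ \mc L(x_2aN,y_2bN)>c_2,\ \mc L(aN,bN)\le c\big)}{\partial_c\,\prob\big(\mc L(aN,bN)\le c\big)}\bigg|_{c=\lv N},
\]
and by inclusion--exclusion the numerator is a signed sum of the one-, two-, and three-point joint distribution functions of exponential LPP, all with $\le$-inequalities and with the conditioning argument in the large-deviation window $c\approx\lv N>\bar{\LPP}(a,b)N$. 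For each of these we insert the explicit contour-integral (Fredholm determinant) formula for the joint law of last passage times and pass to $N\to\infty$ by steepest descent.

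The asymptotic analysis differs from the standard KPZ one in a crucial way: the conditioning at the large-deviation level forces the relevant saddle to be a simple \emph{real} critical point on the positive axis, rather than the double critical point responsible for Tracy--Widom fluctuations. The denominator is Johansson's one-point distribution, which in the window $c\approx\lv N$ concentrates at this real saddle and, after $\partial_c$, reproduces the density of $\mc L(aN,bN)$ evaluated at $\lv N$; this fixes the normalization. The distribution functions in the numerator factor, to leading exponential order, through the \emph{same} saddle --- which is precisely why the conditional probability is $O(1)$ --- and their subleading behavior is governed by a local central limit theorem: along the conditioned geodesic the relevant weights behave like tilted exponential random variables, so that $\mc L(x_iaN,y_ibN)$, recentred at $\mv(x_i,y_i)N$, fluctuates on scale $\sqrt 2\,\ab\,N^{1/2}$, and $(x_i,y_i)$ enters only through the single characteristic coordinate $t_i=\frac{\slope y_i-x_i}{\slope-1}$ below the diagonal (respectively $\frac{\slope x_i-y_i}{\slope-1}$ above it) --- the analytic counterpart of the linearity of $\mv$ along rays from the origin established in Theorem \ref{thm:LLN}. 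The constants $\ab$ and $\cd_{\pm}$ emerge by identifying the variance in this local CLT with the Hessian of the phase function at the saddle; matching them to \eqref{eq:abdf} is a lengthy but mechanical computation.

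Dividing by the denominator and taking $\partial_c$ has the effect of pinning the limiting Gaussian field at $t=1$ (the point $(aN,bN)$), while the vanishing of both $\mv(x,y)$ and the fluctuation scale as $(x,y)\to(0,0)$ pins it at $t=0$; hence the limit is a \emph{bridge}. Computing the limiting covariance of the two recentred last passage times for $(x_1,y_1)$ and $(x_2,y_2)$ lying on (possibly distinct) rays yields the Brownian-bridge covariance $\min(t_1,t_2)-t_1t_2$ together with the scalar prefactor $\cd_+$ (respectively $\cd_-$), which produces $\cd_{\pm}\,\B(t_i)$ as claimed; sending $y_i\to x_i$ recovers consistency with Theorem \ref{thm:diagfluc}. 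A heuristic version of this picture --- the conditioned geodesic, the tilted weights, and the resulting pinned Gaussian field --- is the one described in Subsection \ref{sec:llnconj}.

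The main obstacle is the steepest-descent step itself: one must extract a finite $N^{1/2}$-scale limit from a ratio of contour integrals governed by a simple real saddle while deforming and keeping track of the integration contours jointly in the presence of the two insertions at $(x_1,y_1)$ and $(x_2,y_2)$, which is essentially a multi-time asymptotic problem. It is this joint-contour bookkeeping that restricts the off-diagonal result to two points; on the diagonal the points automatically lie on a single ray, the last passage times are to leading order partial sums along one conditioned geodesic with asymptotically independent increments, and the argument therefore extends to full finite-dimensional distributions, as in Theorem \ref{thm:diagfluc}.
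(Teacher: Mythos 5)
Your starting point matches the paper's: write the conditional probability as a ratio of $c$-derivatives of joint distribution functions, insert the explicit contour-integral formulas (Proposition~\ref{prop:cond} / \cite{Liu22a}), and locate the $N^{1/2}$ fluctuation scale via a simple real saddle created by the upper-tail conditioning. You also correctly identify that the central obstacle is ``keeping track of the integration contours jointly'' when there are two insertion points. But the proposal stops precisely where the proof begins: it does not actually resolve that obstacle. The paper's resolution requires (i) a classification of the pair $(x_1,y_1),(x_2,y_2)$ into seven sub-regions $\rgo,\dots,\rgse$ according to where the critical points of the six phase functions $\mcG_1,\dots,\mcG_{123}$ fall relative to one another (Lemma~\ref{cor:criticalptofGlo}); (ii) a systematic residue bookkeeping framework (the lists $\bsigma,\btau$, types $\type(\cdot)$, and Lemmas~\ref{lem:integralordering}--\ref{lem:integralordering2}) to deform the nested contours past one another; and (iii) the nontrivial discovery that in regions $\rgs$ and $\rgse$ the series term $\QQ^{(1,2,1)}_L$ contributes to the limit at the same order as $\QQ^{(1,1,1)}_L$, via the integral $\ing^{(123)2}_{(23)(12)}$ (Corollary~\ref{cor:Q1ll}). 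None of these appear in your sketch, and in particular point (iii) is exactly the kind of surprise that a ``mechanical'' steepest-descent computation could silently miss.

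Two further points. First, your ``inclusion--exclusion'' decomposition of the numerator into one-, two-, and three-point distribution functions is not what the paper does (it uses Liu's single formula with mixed inequality signs); it could in principle be made to work, but you would then face delicate cancellations between terms that individually grow like $e^{O(N)}$ relative to the conditioned event, and you give no mechanism for controlling them. Second, the ``tilted exponential weights / local CLT along the conditioned geodesic'' picture you invoke to read off the bridge and the constants $\ab,\cd_\pm$ is the heuristic of Subsection~\ref{sec:llnconj}--\ref{sec:fluctuationsconjecture}, not a proof step; in the actual argument the bridge and the covariance emerge from the explicit Gaussian integral identity of Lemma~\ref{lem:bridge} applied after the saddle-point reduction, and the Hessians are computed exactly (e.g.\ \eqref{eq:GG123pp}, \eqref{eq:Aratio}), not identified with a CLT variance. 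As written, the proposal is a correct high-level map of the strategy but contains a genuine gap: the residue and region analysis that constitutes the body of Section~\ref{sec:proofoffdiagflucsameside} is absent, and without it the limit cannot be established.
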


We expect that the results will hold for multi-point distributions as well. However, since the analysis becomes quite involved, in this paper we focus only on two-point distribution results, leaving the general case for future work.

\begin{figure}[h]\centering
\begin{tikzpicture}[scale=2.5]
\draw[thin] (1,1) to (0,1) node[left] {$1$};
\draw[thin] (1,1) to (1,0) node[below] {$1$};
\fill[lightgray] (0, 0) to (1, 0.41) to (1,1) to (0,0); 
\draw[thin] (0, 0) to (1, 0.41) to (1,1) to (0,0); 
\draw[thin] (0.17, 0.17) to (1, 0.51); 
\draw[thin] (0.338, 0.338) to (1, 0.61); 
\draw[thin] (0.508, 0.508) to (1, 0.71); 
\draw[thin] (0.678, 0.678) to (1, 0.81); 
\draw[thin] (0.847, 0.847) to (1, 0.91); 
\draw [->, thin][black] (0, 0) to (1.1,0) node[right] {$x$};
\draw [->, thin][black] (0,0) to (0, 1.1) node[above] {$y$};
\draw[thin][black] (0, 0) to (1,1);  
\end{tikzpicture}
\qquad\qquad\begin{tikzpicture}[scale=2.5]
\draw[thin] (1,1) to (0,1) node[left] {$1$};
\draw[thin] (1,1) to (1,0) node[below] {$1$}; 
\draw [->, thin][black] (0, 0) to (1.1,0) node[right] {$x$};
\draw [->, thin][black] (0,0) to (0, 1.1) node[above] {$y$};
\draw[thin][black] (0, 0) to (1,1); 
\fill[lightgray] (0, 0) to (0.41, 1) to (1,1) to (0,0); 
\draw[thin] (0, 0) to (0.41, 1); 
\draw[thin] (0.17, 0.17) to (0.510, 1); 
\draw[thin] (0.338, 0.338) to (0.609, 1); 
\draw[thin] (0.508, 0.508) to (0.71, 1); 
\draw[thin] (0.678, 0.678) to (0.81, 1); 
\draw[thin] (0.847, 0.847) to (0.91, 1); 
\end{tikzpicture}
\caption{Left: Level curves of $(x,y)\mapsto \slope y- x$ for $y<x$. Right: Level curves of $(x,y)\mapsto \slope x- y$ for $y>x$}
\label{fig:mlevel}
\end{figure}
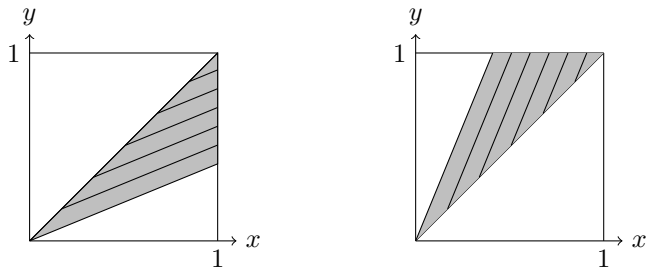

The above result shows that if two points in the gray region in the right panel of Figure \ref{fig:diag} lie on a level curve of the mapping $(x,y) \mapsto \slope y - x$ and both are below the diagonal line, then the corresponding limiting two-point distributions are identical. A similar statement holds for the mapping $(x,y) \mapsto \slope x - y$ for points above the diagonal line; see Figure \ref{fig:mlevel}.
We further conjecture that the distributions for points below the diagonal line and those for points above the diagonal line become independent. In other words, we conjecture that the Brownian bridges governing the fluctuations in the below-diagonal and above-diagonal regimes are independent; see \eqref{eq:flconb} below.

\medskip
For general points, we propose the following conjecture. A heuristic argument supporting this conjecture is provided in Subsection \ref{sec:fluctuationsconjecture}. 
Although items (a) and (c) below are stated only for one-point distributions, the extension of the conjecture to convergence to the KPZ fixed point—analogous to \eqref{eq:fdc}—is straightforward, and thus we omit it here.

\begin{conjecture} 
\label{conj:offdiagfl}
Let $\Omega_1$ and $\Omega_2$ be the regions defined in Conjecture \ref{conj:LLN}, and let $\mv(x,y)$ be the function defined in \eqref{eq:LLNconjv}. 
Under the same assumptions and notation as in Theorem \ref{thm:diagfluc} and \ref{thm:offdiagflucsameside}, and conditional on the event $\LPP(aN,bN)=\lv N$, we conjecture that the following results hold. 
\begin{enumerate}[(a)]
\item For each $(x,y)\in \Omega_1$, 
\beq \label{eq:flcona}
    \lim_{N\to \infty} \frac{\LPP(xaN, ybN)- h(x,y) N}{ (ab(x-1)(y-1))^{-1/6}(\sqrt{a(x-1)}+\sqrt{b(y-1)})^{4/3}  N^{1/3}} \eqind \TW_2.
\eeq
\item We expect that 
\beq \label{eq:flconb}
    \frac{\LPP(xaN, ybN)-  h(x,y) N}{ \sqrt{2} \ab   N^{1/2}} \fddto 
    \begin{dcases} \cd_{+} \B_{+} \left( \frac{\slope y- x}{\slope -1} \right) \quad &\text{for $(x,y)\in \Omega_2\setminus \overline{\Omega}_1$ satisfying $y<x$,} \\
    \cd_{-} \B_{-} \left( \frac{\slope x- y}{\slope -1} \right) \quad &\text{for $(x,y)\in \Omega_2\setminus \overline{\Omega}_1$ satisfying  $y>x$}
    \end{dcases}
\eeq
as $N\to \infty$, where $\B_{+}$ and $\B_-$ are independent standard Brownian bridges, and furthermore, they are the same ones appearing in \eqref{eq:defcd}. 

\item For each $(x,y)\in \R_+^2\setminus \overline{\Omega}_2$, 
\beq \label{eq:flconc}
    \lim_{N\to \infty} \frac{\LPP(xaN, ybN)- h(x,y) N}{ (abxy)^{-1/6}(\sqrt{ax}+\sqrt{by})^{4/3}  N^{1/3}} \eqind \TW_2. 
\eeq
\end{enumerate}
\end{conjecture}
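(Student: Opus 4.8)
The plan is to derive all three parts from the same engine behind Theorems~\ref{thm:LLN}--\ref{thm:offdiagflucsameside}: an exact formula for the joint distribution of $\LPP$ at the conditioned site $(aN,bN)$ and at the other sites (from the RSK/Schur-measure description of exponential LPP, equivalently step-initial-condition TASEP, giving a Fredholm determinant or, for finitely many points, a finite-dimensional contour integral), inserted into the ratio of $c$-derivatives \eqref{eq:conddefLPP} and analyzed by steepest descent in $N$. The structural mechanism is that the $(aN,bN)$-variable is frozen at the large-deviation value $\lv N$: its contour must pass through a non-standard critical point, and this produces an overall factor $e^{-N\J(\lv)}$ that cancels against the denominator $\tfrac{\partial}{\partial c}\prob(\LPP(aN,bN)\le c)\big|_{c=\lv N}$, while the remaining variables are treated around their own critical points. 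Running this on the diagonal and for two points is what gave Theorems~\ref{thm:diagfluc} and~\ref{thm:offdiagflucsameside}; the conjecture predicts the outcome when the extra points are moved off the near-diagonal cone, and the work is to identify the surviving critical points in each of the three geometric regimes and to show the crossover between them is consistent with the $C^1$ matching of level curves noted after Figure~\ref{fig:levelcurves}.

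For part~(a) I would combine a soft lower bound with a variational upper bound. The lower bound is path concatenation through $(aN,bN)$: the best up/right path from $(1,1)$ to $(xaN,ybN)$ constrained through $(aN,bN)$ has weight $\LPP(aN,bN)+\LPP_{(aN,bN)}(xaN,ybN)-\om_{(aN,bN)}$, and the increment $\LPP_{(aN,bN)}(xaN,ybN)-\om_{(aN,bN)}$ uses only weights strictly above and to the right of $(aN,bN)$, hence is independent of the conditioning; by Johansson's one-point theorem \eqref{eq:onepoint} applied from $(a,b)$ to $(xa,yb)$ it equals $\bar{\LPP}_{(a,b)}(xa,yb)N+(ab(x-1)(y-1))^{-1/6}(\sqrt{a(x-1)}+\sqrt{b(y-1)})^{4/3}N^{1/3}\TW_2+o(N^{1/3})$, which, since $h(x,y)=\lv+\bar{\LPP}_{(a,b)}(xa,yb)$ on $\Omega_1$, is exactly the conjectured lower bound with the conjectured fluctuation. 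The matching upper bound is the subtle part: decompose $\LPP(xaN,ybN)=\max_{\mb p}\big(\LPP(\mb p)+\LPP_{\mb p}(xaN,ybN)-\om_{\mb p}\big)$ over $\mb p$ on the anti-diagonal $\{\xi+\eta=(a+b)N\}$ that the geodesic must cross, control the first summand by the conditional law of large numbers (Theorem~\ref{thm:LLN}, which one must first extend to the points of this anti-diagonal lying in $\Omega_2$) and the second by the unconditional law \eqref{eq:LLN}, and then show the resulting finite-dimensional variational problem in the transversal coordinate of $\mb p$ is solved at $\mb p=(aN,bN)$ — a genuine computation, since the first leg alone need not be maximized there when $a\ne b$ (indeed near the threshold $\lv\downarrow\bar{\LPP}(a,b)$ it is not) — upgrading the expectation-level statement to high probability by $N^{1/3}$-scale concentration, slow decorrelation, and a union bound over $\mb p$. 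Part~(c) runs on the same logic but with a decoupling conclusion: for $(x,y)\notin\overline{\Omega}_2$ the cone of influence $\Omega_2$ (a cone through the origin, containing $\Omega_1$) is not met by the straight segment to $(xa,yb)$, so the geodesic stays away from the large-deviation corridor; one sandwiches $\LPP(xaN,ybN)$ between the unconditional field and a field built from weights disjoint from a neighborhood of the $(1,1)$--$(aN,bN)$ corridor, and must rule out that any detour into the inflated corridor helps — precisely the content of the variational analysis that identifies $\Omega_2$ as the exact influence region — leaving $\LPP(xaN,ybN)$ asymptotically independent of $\LPP(aN,bN)$ and hence with the Tracy--Widom limit of \eqref{eq:onepoint}; at the level of the exact formula this is the vanishing of the cross-block of the correlation kernel after the contour deformation.

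For part~(b) I would push the computations behind Theorems~\ref{thm:diagfluc} and~\ref{thm:offdiagflucsameside} to all $(x,y)\in\Omega_2\setminus\overline{\Omega}_1$ and to arbitrary finite collections of points. The $N^{1/2}$ scaling and Brownian-bridge limit reflect that, under the conditioning, the relevant path is pinned to the large-deviation corridor so that its transversal displacement is a diffusively scaled bridge; the time-change $t\mapsto(\slope y-x)/(\slope-1)$ for $y<x$ (respectively $(\slope x-y)/(\slope-1)$ for $y>x$) records that the entire level set of that affine map reaches the corridor at the same random time, which is exactly the location of the relevant critical point in the Fredholm-determinant equations, and the asserted independence of $\B_+$ and $\B_-$ is the decoupling of the two blocks of critical points, one for $y<x$ and one for $y>x$. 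The main obstacle throughout is the steepest-descent analysis of the multi-point distribution with a pinned large-deviation coordinate: one must control the contour deformations so that the large-deviation factor cleanly factors out and cancels, so that the surviving critical points are precisely those dictated by the conjectured geometry and cross over correctly between the Tracy--Widom regimes of $\Omega_1$ and $\R_+^2\setminus\overline{\Omega}_2$ and the Gaussian regime of $\Omega_2\setminus\overline{\Omega}_1$, with matching behavior at the shared boundaries, and so that the off-diagonal two-point (and higher) kernel contributions are estimated uniformly — the authors already flag the multi-point case as involved, and the genuinely new difficulty is the oscillatory structure of the two-point kernel for points that are neither on the diagonal nor on an anti-diagonal through $(aN,bN)$.
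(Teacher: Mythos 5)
You should first be clear about what you are up against: the statement is Conjecture \ref{conj:offdiagfl}, which the paper does not prove; it supports it only by the heuristic of Subsection \ref{sec:fluctuationsconjecture} (concatenation through mesoscopic mid-way points $\mb p^s_{t_c}$, Theorem \ref{thm:diagfluc} for the first leg, the unconditional asymptotics \eqref{eq:onepoint} for the second leg, and the maximization of $r(s)=\B_1(t_c)-|\B_2(t_c)-s|+R(t_c)s$). Your parts (a) and (c) are essentially that heuristic recast as a concatenation lower bound plus a variational upper bound, and your part (b) is the proposal to ``push'' the contour analysis behind Theorems \ref{thm:diagfluc} and \ref{thm:offdiagflucsameside}. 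So the plan is aligned with the authors' own reasoning, but as written it is a program, not a proof: every step you defer is exactly the step that is open.

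Concretely. (i) In (a) the lower bound is sound (the increment $\LPP_{(aN,bN)}(xaN,ybN)-\om_{(aN,bN)}$ is indeed independent of the conditioning), but the upper bound is not yet an argument. For a generic crossing point $\mb p$ on your anti-diagonal the two legs are \emph{not} conditionally independent, since the weights used by the second leg overlap the rectangle $[1,aN]\times[1,bN]$ that the conditioning tilts; Theorem \ref{thm:LLN} controls only points of $(0,1)^2$ inside the cone and only at LLN precision; and the concentration and slow-decorrelation inputs you invoke are statements about the unconditional field, whose transfer under a conditioning of probability $e^{-N\J(\lv)(1+o(1))}$ is precisely the missing estimate. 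To conclude \eqref{eq:flcona} you need the maximizer localized to within $o(N^{1/3})$ in value of the concatenation through $(aN,bN)$, i.e.\ fluctuation-scale rather than LLN-scale control along the crossing set, and nothing in the proposal produces that. The same issue recurs in (c): the sandwich requires an upper bound on conditional passage times through the inflated corridor and a proof that corridor-avoiding LPP retains the TW limit with the same constant; neither is supplied, and at the level of the exact formula the asserted ``vanishing of the cross-block'' is the conclusion one wants, not a step toward it. (ii) In (b), ``extend the computations to all of $\Omega_2\setminus\overline{\Omega}_1$ and to multi-point'' is exactly what the authors could not do: the obstruction is the nested-contour and pole structure of $\Pi_{\bn}$, which already forces seven regimes and heavy residue bookkeeping for two points in $(0,1)^2$; moreover the conjectured region contains points with $x>1$ or $y>1$, where the ordering of the critical points relative to the conditioning point changes and Lemma \ref{cor:criticalptofGlo} no longer applies as stated, so even the two-point analysis would have to be redone there. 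None of this makes your roadmap wrong---it mirrors the heuristics of Subsections \ref{sec:llnconj} and \ref{sec:fluctuationsconjecture}---but each of the three parts still rests on an unproved core, so it should be read as a plan of attack rather than a proof.
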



\subsection{Comparison with the conditional KPZ fixed point} \label{sec:compareKPZ}

The KPZ fixed point under a one-point upper large deviation event was recently studied in \cite{LW24, NZ24, LiuZhang25}. 
Let $\mathsf{H}_{\tn{step}}(s,t)$ for $(s,t)\in \R\times \R_+$, denote the KPZ fixed point with the narrow wedge initial condition. 
In \cite[Remark 1.5]{LW24}, Liu and Wang proved that\footnote{We have rewritten the result of \cite{LW24} using the identity $\min\{u,v\}= \frac{u+v-|u-v|}{2}$, and adjusted the parameters to eliminate the factor  $\sqrt{2}$.} for every $(X,T)\in \R\times \R_+$, 
\beq \label{eq:KPZfpfl} \begin{split}
	&\law \left( \left\{ \frac{ \mathsf{H}_{\tn{step}}( tX+ s\frac{T^{3/4}}{2L^{1/4}}, tT) - tL }{T^{1/4} L^{1/4} } \right\}_{(s,t)\in \R\times (0,1)}
    \, \bigg| \,  \mathsf{H}_{\tn{step}}(X, T )= L \right) \\
 	&\qquad 
	\fddto  \law \left( \{ \B_1(t) - |\B_2(t) - s| \}_{(s,t)\in \R\times (0,1)} \right)
\end{split} \eeq
as $L\to \infty$, where the Brownian bridges $\B_1$ and $\B_2$ are independent. 
Theorem~\ref{thm:diagfluc} is similar to this result, but the Brownian bridges that appear in that theorem are generally not independent. It is intriguing why this dependence arises in the exponential LPP. While this dependence follows from explicit computation, we do not have a simple conceptual explanation for this phenomenon. 
A fluctuation result for points away from the straight line from $(0,0)$ to $(X,T)$, analogous to Theorem \ref{thm:offdiagflucsameside} of this paper, has not yet been obtained for the KPZ fixed point.

The fluctuations of the conditional KPZ fixed point was also studied near the conditioning time in \cite{LiuZhang25}, and after the conditioning time in \cite{NZ24}. 
In our upcoming paper, we study the fluctuations both near and beyond the conditioning time in Exponential LPP, and obtain, for example, a local limiting field around the conditioning point $(aN,bN)$.

The fluctuation scaling $L^{1/4}$ in \eqref{eq:KPZfpfl} is consistent with a formal KPZ limit of the result \eqref{eq:diagfluc}, as we now show. 
For simplicity, consider the case $a=b=1$. The typical KPZ behavior is that 
\beq
    \LPP \ (tN + 2^{2/3}sN^{2/3}, {t} y N - 2^{2/3}s   N^{2/3} ) \approx 4tN + 2^{4/3} N^{1/3} \mathsf{H}_{\tn{step}}(s,t ) . 
\eeq 
If we consider the case when $\ell= 4 + 2^{4/3} LN^{-2/3}$ with $L\to \infty$ and $L=o(N^{2/3})$, and formally apply the above approximation, then the conditioning event $\LPP(N,N)=\ell N$ translates to $\mathsf{H}_{\tn{step}}(0,1 )\approx L$. 
On the other hand, a formal application of the above approximation to the quantity in \eqref{eq:diagfluc} becomes 
\beq \begin{split} 
    &\frac{ \LPP( t N +  s \frac{\sqrt{\ell}}{\sqrt{2}D^{1/4}}   N^{1/2}, t N -  s \frac{\sqrt{\ell}}{\sqrt{2}D^{1/4}}   N^{1/2})- t \lv N}{2^{-1/2}\sqrt{\ell}D^{1/4} N^{1/2}} 
    \approx \frac{ \mathsf{H}_{\tn{step}}( s \frac{\sqrt{\ell}}{2^{7/6}D^{1/4} N^{1/6}} , t) - \frac{t(\ell-4) N^{2/3}}{2^{4/3}} }{ 2^{-11/6} \sqrt{\ell}D^{1/4} N^{1/6} }, 
\end{split} \eeq
which is $\frac{ \mathsf{H}_{\tn{step}}( \frac{s}{2L^{1/4}}, t) - tL }{ L^{1/4} }$, 
since $D = \ell^2-4\ell \approx 2^{10/3}LN^{-2/3}$. This is exactly the term in \eqref{eq:KPZfpfl} when $(X,T)=(0,1)$. 

A version of Theorem \ref{thm:diagfluc} was also established for the periodic KPZ fixed point in \cite{BL24}; in that context, the limiting distribution involves a Brownian bridge and a Brownian bridge on a circle, which are again independent.

\subsection{Method of proof and outline of the paper}

Theorem \ref{thm:LLN} follows from Theorems \ref{thm:diagfluc} and \ref{thm:offdiagflucsameside}; thus, we prove only these latter two theorems.
Our approach is based on the analysis of explicit multi-point distribution formulas for the exponential LPP. The multi-point distributions in so-called space-like directions were computed in the mid-2000s in \cite{Joh03, BFS08}. The distributions for general points, including those in time-like directions, were obtained more recently by Liu \cite{Liu22a}.

The proof of Theorem \ref{thm:diagfluc} is similar to that of \cite{LW24} for the KPZ fixed point, and we have adapted it for the exponential LPP.
However, the proof of Theorem \ref{thm:offdiagflucsameside} requires substantially more effort and constitutes the most technical part of this paper.

The explicit multi-point distribution formula from \cite{Liu22a} involves an integral of a Fredholm determinant. In random matrix theory and KPZ models, upper large deviation and upper tail limits are often readily obtained from Fredholm determinants, as the operator becomes small and the determinant can be approximated by its trace using the method of steepest descent. In our case, however, the operator acts on nested contours. 
For Theorem \ref{thm:offdiagflucsameside}, the critical points relevant to the steepest descent method are ordered such that the contours cannot be deformed appropriately without crossing the poles of the kernel. As a consequence, we must keep track of all residue contributions, which quickly becomes challenging.

Due to these complexities, we restrict our analysis to two-point distribution results and leave multi-point distribution considerations for future work. Even for two-point distributions, the locations of the critical points depend on the relative positions of the points, requiring the consideration of seven distinct regimes. In contrast, the proof of Theorem \ref{thm:diagfluc} is simpler, since the critical points are fixed and the poles of the kernel do not need to be considered.

Although we do not use the Fredholm determinant formula directly in our analysis, instead relying on its series expansion, we still encounter the same underlying challenges.

\medskip

This paper is organized as follows.
In Section \ref{sec:conjectures}, we provide heuristic reasoning behind Conjectures \ref{conj:LLN} and \ref{conj:offdiagfl}, and state an additional conjecture regarding conditional geodesics.
In Section \ref{sec:finitemulti}, we present explicit formulas for the multi-time conditional distributions.
Section \ref{sec:threelemmas} contains two miscellaneous lemmas used throughout the paper.
In Section \ref{sec:asfunction}, we examine functions that play a central role in our analysis and derive their limits and bounds.
The proof of Theorem \ref{thm:diagfluc} is given in Section \ref{sec:proofofdiag}.
Finally, Theorem \ref{thm:offdiagflucsameside}, which constitutes the most technical part of the paper, is proved in Section \ref{sec:proofoffdiagflucsameside}. 


\subsection*{Acknowledgments}

The work of Baik was supported in part by NSF grant DMS-2246790 and by the Simons Fellows in Mathematics program. 
We would like to thank Milind Hegde, Zhipeng Liu and Bálint Virág for useful discussions, and the anonymous referees for several helpful suggestions.

\section{Heuristics} \label{sec:conjectures}

We give a heuristic argument for Conjectures \ref{conj:LLN} and \ref{conj:offdiagfl}. 
We also discuss a conjecture on the conditional geodesics. 

\subsection{Heuristic argument for Conjecture \ref{conj:LLN}} \label{sec:llnconj}

Using the dynamic programming recursion, or equivalently the Bellman equation, we have
\begin{equation} \label{eq:recursion}
\LPP(xaN,ybN)=\max_{0 \leq t \leq \min\{x,y,1\}}\left\{\LPP(taN,tbN)+\LPP_{taN,tbN}(xaN,ybN)\right\}.
\end{equation}
The condition $t \leq \min\{x,y,1\}$ ensures that the point $(xaN,ybN)$ lies in the up-right direction from the point $(taN,tbN)$.

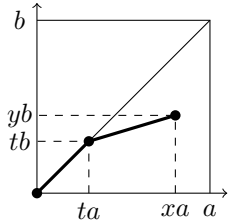
\begin{figure}[h] \centering
\begin{tikzpicture}[scale=2.3]
\draw[thin] (1, 0) to (1,1) to (0,1); 
\draw[thin] (0, 0) to (1,1); 
\draw[very thick] (0, 0) to (0.3,0.3) to (0.8,0.45); 
\fill (0,0) circle(0.03);
\fill (0.3,0.3) circle(0.03);
\fill (0.8,0.45) circle(0.03);
\draw[dashed, thin] (0.3, 0) to (0.3, 0.3) to (0,0.3); 
\draw[dashed, thin] (0.8, 0) to (0.8, 0.45) to (0,0.45); 
\node at (0.3, -0.1) {$ta$};
\node at (0.8, -0.1) {$xa$};
\node at (1, -0.1) {$a$};
\node at (-0.1, 0.3) {$tb$};
\node at (-0.1, 0.45) {$yb$};
\node at (-0.1, 1) {$b$};
\draw [->, thin][black] (0, 0) to (1.1,0);
\draw [->, thin][black] (0,0) to (0, 1.1);
\end{tikzpicture}
\caption{Conjectural maximizing path}
\label{fig:pathmax}
\end{figure}

Now suppose that $\LPP(aN,bN)=\ell N$ for some $\ell>\bar{\LPP}(a,b)$. 
For $\LPP(aN,bN)$ to be large, it suffices that there exists a single path $\pi$ ending at $(aN,bN)$ with a large value of $E(\pi)=\sum_{\bv\in \pi}\omega_{\bv}$.
It is reasonable to expect that such a path remains close to the straight line segment from $(0,0)$ to $(aN,bN)$. 
Moreover, if we assume that the weights $\omega_{\bv}$ along this path are all of roughly the same order (i.e., the large value of $E(\pi)$ is not due to a small number of exceptional sites $\bv$), then we may expect that
\begin{equation}\label{eq:high-spine}
\LPP(taN,tbN) \approx t\ell N
\end{equation}
for every $t\in [0,1]$.

For general points $(x,y)\in \R_+^2$, we conjecture that the geodesic to $(xaN, ybN)$ follows the high-time spine from $(1,1)$ to $(taN, tbN)$ for some $t\in [0,1]$, and then grow typically from $(taN, tbN)$ to $(xaN, ybN)$ (see Figure \ref{fig:pathmax}). Using \eqref{eq:recursion} and \eqref{eq:high-spine}, we therefore conjecture that, conditional on the event $\LPP(aN,bN)=\lv N$, 
\beq \label{eq:conj1}
    \lim_{N \to \infty} \frac{\LPP(xaN,ybN)}{N} 
    \eqinp \max \left\{ H(t)  : 0 \leq t \leq \min\{x,y,1\}\right\}
    \quad \text{where} \quad H(t) =t \ell + \bar{\LPP}_{ta, tb} (xa, yb). 
\eeq
We find that the maximizer is 
\beq \label{eq:tcc}
    t_c= \begin{cases} 1 \qquad & \text{for $(x,y)\in \Omega_1$,} \\
	\frac{\slope y-x}{\slope-1} \qquad & \text{for $(x,y)\in \Omega_2\setminus \Omega_1$ satisfying $y<x$,} \\
	\frac{\slope x-y}{\slope-1} \qquad & \text{for $(x,y)\in \Omega_2\setminus \Omega_1$ satisfying $y>x$,} \\
	0 \qquad & \text{for $(x,y)\in \R_+^2\setminus \Omega_2$,} \\
	\end{cases} 
\eeq
and the maximum value is 
\beq \label{eq:Htctemp}
	H(t_c)= t_c \lv+  \bar{\LPP}_{t_ca, t_cb} (xa, yb) = h(x,y)
\eeq
as in \eqref{eq:LLNconjv}.

\begin{remark}
Recall the function $h(x,y)$ in \eqref{eq:LLNconjv}. In physical coordinates $X=ax$ and $Y=by$, the function $U(X,Y) := h(X/a,Y/b)$ satisfies
\begin{equation}
    (U_X-1)(U_Y-1)-1 = 0, \qquad (X,Y) \in \R^2_+ \setminus \{(at,bt): 0 \le t \le 1\},
\end{equation}
which is the Hamilton-Jacobi equation for exponential LPP.
The characteristic curves for this Hamilton--Jacobi equation, expressed in normalized coordinates $(x,y)$, satisfy $\frac{\d y}{\d x} = \frac{h_x-a}{h_y-b}$.
On a smooth linear piece of $\mv$ below the diagonal, the characteristic curves are given by the level curves of $(x,y) \mapsto \slope y - x$, while above the diagonal they are given by the level curves of $(x,y) \mapsto \slope x - y$.
These are precisely the lines along which the optimizer $t_c$ in \eqref{eq:tcc} remains unchanged. The fluctuations also remain the same along these characteristic lines; see Theorem~\ref{thm:offdiagflucsameside} and Section~\ref{sec:fluctuationsconjecture} below.
\end{remark}

\medskip
For the directed landscape, the results of \cite{DDV24}, especially Proposition 2.1, show that the heuristic argument above essentially holds in that model.\footnote{Private communication with Sayan Das.}

\subsection{Heuristic argument for Conjecture \ref{conj:offdiagfl}}  \label{sec:fluctuationsconjecture}

\begin{figure}[h]\centering
\begin{tikzpicture}[scale=2.2]
\draw[thin] (0.7, 0) to (0.7,0.7) to (0,0.7); 
\draw[thin] (0, 0) to (0.7, 0.7); 
\draw[very thick] (0, 0) to (0.37,0.3) to (0.8,0.45); 
\fill (0,0) circle(0.03);
\fill (0.37,0.3) circle(0.03);
\fill (0.8,0.45) circle(0.03);
\draw[dashed, thin] (0.8, 0) to (0.8, 0.45) to (0,0.45); 
\node at (0.9, -0.1) {$x$};
\node at (0.7, -0.1) {$1$};
\node at (-0.1, 0.45) {$y$};
\node at (-0.1, 0.7) {$1$};
\draw [->, thin][black] (0, 0) to (1.1,0);
\draw [->, thin][black] (0,0) to (0, 1.1);
\end{tikzpicture}
\qquad\quad 
\begin{tikzpicture}[scale=2.2]
\draw[thin] (0.7, 0) to (0.7,0.7) to (0,0.7); 
\draw[thin] (0, 0) to (0.7, 0.7); 
\draw[very thick] (0, 0) to (0.8,0.3); 
\fill (0,0) circle(0.03);
\fill (0.8,0.3) circle(0.03);
\draw[dashed, thin] (0.8, 0) to (0.8, 0.3) to (0,0.3); 
\node at (0.9, -0.1) {$x$};
\node at (0.7, -0.1) {$1$};
\node at (-0.1, 0.3) {$y$};
\node at (-0.1, 0.7) {$1$};
\draw [->, thin][black] (0, 0) to (1.1,0);
\draw [->, thin][black] (0,0) to (0, 1.1);
\end{tikzpicture}
\qquad\quad 
\begin{tikzpicture}[scale=2.2]
\draw[thin] (0.7, 0) to (0.7,0.7) to (0,0.7); 
\draw[thin] (0, 0) to (0.7, 0.7); 
\draw[very thick] (0, 0) to (0.7,0.7) to (0.8,0.9); 
\fill (0,0) circle(0.03);
\fill (0.7,0.7) circle(0.03);
\fill (0.8,0.9) circle(0.03);
\draw[dashed, thin] (0.8, 0) to (0.8, 0.9) to (0,0.9); 
\node at (0.9, -0.1) {$x$};
\node at (0.7, -0.1) {$1$};
\node at (-0.1, 0.9) {$y$};
\node at (-0.1, 0.7) {$1$};
\draw [->, thin][black] (0, 0) to (1.1,0);
\draw [->, thin][black] (0,0) to (0, 1.1);
\end{tikzpicture}
\caption{Conjectural maximizing path for the fluctuations}
\label{fig:flucconj}
\end{figure}
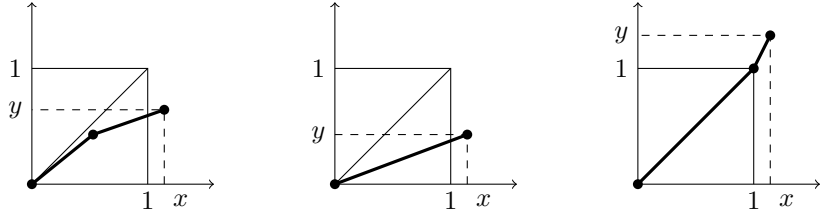

We assume that Theorem \ref{thm:diagfluc} has already been established, and now argue for the conjecture.
Suppose that $\LPP(aN, bN)=\lv N$ for some $\lv> \bar{\LPP}(a,b)$, and let $(x,y)\in \R_+^2\setminus \{(t,t): 0<t\le 1\}$. 
We follow the same reasoning as in Conjecture \ref{conj:LLN}, but now include the next-order asymptotic terms. 
Additionally, we consider more general ``mid-way" points. 
In the previous subsection, the mid-way points were $t(aN, bN)$, $t\in (0,1)$; see Figure \ref{fig:pathmax}. 
This time, we consider the points $\mb p^s_t N\in \R_+^2$, where 
\beqq
	\mb p^s_t= t(a, b) +  s(c_1 N^{-1/2}, -c_2 N^{-1/2})  
	\quad \text{with $c_1=  \frac{a(\ell-a+b)\ab}{\ell\sqrt{D}}$ and $c_2= \frac{b(\ell+a-b)\ab}{\ell\sqrt{D}}$}
\eeqq
for some $(t,s)\in (0,1)\times \R$. See the leftmost panel in Figure \ref{fig:flucconj}. 
As in the last subsection, we again expect that $t=t_c$: 
\beqq
	\LPP(xaN, ybN) \approx  \max\{ \LPP(\mb p^s_{t_c} N)+ \LPP_{\mb p^s_{t_c} N}(xaN, ybN) : s\in \R, \,  \mb p^s_{t_c}\in \R_+^2 \} .
\eeqq
From Theorem \ref{thm:diagfluc}, 
\beqq
    \LPP(\mb p^s_{t_c} N) \approx t_c \lv N +  \ab (\B_1(t_c) - |\B_2(t_c) - s| ) N^{1/2} .
\eeqq
On the other hand, using the unconditional fluctuation result \eqref{eq:onepoint}, we expect that
\beqq
	\LPP_{\mb p^s_{t_c} N}(xaN, ybN) \approx \bar{\LPP}_{\mb p^s_{t_c}}(xa, yb) N +  \frac{(\sqrt{a(x-t_c)}+\sqrt{b(y-t_c)})^{4/3}}{ (a(x-t_c)b(y-t_c))^{1/6}}   \TW_2 N^{1/3}.
\eeqq
Using Taylor's theorem, we see that 
\beqq
	\bar{\LPP}_{\mb p^s_{t_c}}(xa, yb) 
	= \bar{\LPP}_{(t_c a, t_c b)}(xa, yb) + \ab s R(t_c)N^{-1/2} + O(N^{-1})
\eeqq
where
\beq \label{eq:QinRQ}
    R(t)= Q \left(\frac{y-t}{x-t} \right), 
    \qquad 
    Q(u)= \frac{\sqrt{ab}}{\ell\sqrt{D}} \left[ \frac{\lv+a-b}{\sqrt{u}} - (\lv-a+b)\sqrt{u} - \frac{(a-b)(\lv-a-b)}{\sqrt{ab}} \right]. 
\eeq
Since $t_c \lv + \bar{\LPP}_{(t_c a, t_c b)}(xa, yb)=h(x,y)$ from \eqref{eq:Htctemp}, we are thus led to conjecture that 
\beqq
	\LPP(xaN, ybN) \approx h(x,y) N+ \ab Z N^{1/2} +  \frac{(\sqrt{a(x-t_c)}+\sqrt{b(y-t_c)})^{4/3}}{ (ab(x-t_c)(y-t_c))^{1/6}}   \TW_2 N^{1/3}
\eeqq
where
\beqq
	Z= \max\{ r(s) : s\in \R, \,  \mb p^s_{t_c}\in \R_+^2 \}, \qquad
	 r(s):= \B_1(t_c) - |\B_2(t_c) - s|   + R(t_c) s. 
\eeqq

We now evaluate $Z$. 
Observe that $Q(u)$ in \eqref{eq:QinRQ} is a monotonically decreasing function of $u>0$. Also, note from the formula $\slope=\frac{\lv-a-b+\sqrt{D}}{\lv-a-b-\sqrt{D}}$ that $Q(\frac1{\slope})=1$ and $Q(\slope)=-1$. 

\begin{itemize}
\item Suppose $(x,y)\in \Omega_1$. In this case, we have $t_c=1$ from \eqref{eq:tcc} and thus $r(s)= -|s|+R(1)s$. See the rightmost panel in Figure \ref{fig:flucconj}. Since the condition $(x,y)\in \Omega_1$ implies that $\frac1{\slope}<\frac{y-1}{x-1}<\slope$, we find that
$R(1)= Q(\frac{y-1}{x-1})\in [Q(\slope), Q(\frac1{\slope})]=[-1,1]$ by the monotonicity of the function $Q$. 
Hence, $|R(1)|\le 1$, and thus, the maximum of $r(s)= -|s|+R(1)s$ is $r(0)=0$. Therefore, $Z=0$ and 
$\LPP(xaN, ybN) \approx h(x,y) N +  \frac{(\sqrt{a(x-1)}+\sqrt{b(y-1)})^{4/3}}{ (ab(x-1)(y-1))^{1/6}}   \TW_2 N^{1/3}$. This is \eqref{eq:flcona}. 

\item Suppose $(x,y)\in \Omega_2\setminus\overline{\Omega}_1$. 
In this case, $t_c= \frac{\slope y-x}{\slope-1}$ if $y<x$, and $t_c= \frac{\slope x-y}{\slope-1}$ if $y>x$.  
See the leftmost panel in Figure \ref{fig:flucconj}.
Thus, $R(t_{c})=Q(\frac1{\slope})=1$ if $y< x$, and $R(t_{c})= Q(\slope)=-1$ if $y> x$. 
Hence, $r(s)=  \B_1(t_c) - |\B_2(t_c) - s|  \pm s$, with the sign $+$ for $y<x$ and $-$ for $y>x$. 
The maximum occurs at $s=\B_2(t_c)$, yielding 
$Z= \B_1(t_{c}) +\B_2(t_{c})$ if $y< x$, and $Z= \B_1(t_{c}) -\B_2(t_{c})$ if $y> x$. 
Thus, using \eqref{eq:defcd}, we find that $\LPP(xaN, ybN) \approx h(x,y) N+ \ab Z N^{1/2}$ with $Z= \sqrt{2} \cd_{\pm} \B_{\pm}(t_c)$ as in \eqref{eq:flconb}.

\item Suppose $(x,y)\in \R^2_+\setminus \overline{\Omega}_2$. In this case, we have $t_c=0$. See the middle panel in Figure \ref{fig:flucconj}.
Since $\mb p^s_0=s(c_1, -c_2)N^{-1/2}$ lies in $\R_+^2$ only for $s=0$, we find that $Z=0$. Therefore, 
$\LPP(xaN, ybN) \approx h(x,y) N+  \frac{(\sqrt{ax}+\sqrt{by})^{4/3}}{ (abxy)^{1/6}}   \TW_2 N^{1/3}$. 
This corresponds to \eqref{eq:flconc}. 
\end{itemize}
This completes our heuristic argument for Conjecture \ref{conj:offdiagfl}.

\subsection{Conjecture on conditional geodesics} 

The diagonal fluctuation result, Theorem \ref{thm:diagfluc}, suggests a conjecture regarding the geodesic. 
The following consideration is analogous to that in \cite[Conjecture 1.11]{LW24} for the conditional KPZ fixed point. 

Let $\pi_*$ be the geodesic from $(1,1)$ to the site $(aN, bN)$, i.e., 
\begin{equation*}
    \LPP(aN, bN) = \max_{\pi\in (1,1)\to (aN, bN)} E(\pi) = E(\pi_*).
\end{equation*}
Since $\omega_\bv$ are continuous random variables, the geodesic is unique almost surely. The path $\pi_*$ is a sequence of points in $\N^2$. We linearly interpolate so that it becomes a collection of a line segments. Using the basis vectors 
\beqq
	\vecv_1 =(a,b), \qquad  \vecv_2= \left(\frac{a(\ell-a+b)\ab}{\ell\sqrt{D}}, -  \frac{b(\ell+a-b)\ab}{\ell\sqrt{D}} \right)
\eeqq
we may write 
\beqq
	\pi_*=  \{ \tau \mathbf{v}_1 +  \pi^*(\tau) \mathbf{v}_2 \}_{\tau\in [0, N]} 
\eeqq
for a function $\pi^*(\tau)$, $\tau\in [0,N]$, satisfying $\pi^*(0)=\pi^*(N)=0$. 
By the geometry of the geodesic, this function is well-defined. 

Now, assume $\LPP(aN, bN)=\lv N$ and consider the geodesic to $(aN, bN)$. 
From the limit in Theorem \ref{thm:diagfluc}, 
we observe that the function $x\mapsto \B_1(t) - |\B_2(t) - x|$ achieves its maximum at $x= \B_2(t)$ with maximum value $\B_1(t)$. 
This observation leads us to the following conjecture.

\begin{conjecture}
Using the same notation as in Theorem \ref{thm:diagfluc}, we conjecture that 
\beqq \begin{split} 
    &\law \left( \left(  \frac{\pi^*(t N)}{N^{1/2}}  , \frac{\LPP \left( tN \mathbf{v}_1+ \pi^*(t N) \mathbf{v}_2 \right)- t\lv N}{\ab N^{1/2}}  \right)_{t\in (0,1)}
    \, \bigg| \,  \LPP(aN, bN)= \lv N \right) \fddto \law \left( (\B_2(t), \B_1(t) )_{t\in (0,1)} \right)
\end{split} \eeqq
where $\B_1$ and $\B_2$ are correlated Brownian bridges given by \eqref{eq:defcd}. 
\end{conjecture}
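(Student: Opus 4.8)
Since the final statement is a conjecture, what follows is a proof strategy rather than a complete argument; it reduces the conjecture to a two‑sided strengthening of Theorem~\ref{thm:diagfluc} together with a convergence‑of‑argmax argument.

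\textbf{Step 1: geometric reduction.} Fix $t\in(0,1)$ and parametrize the transversal segment at level $\tau=tN$ by $N\mathbf{p}^s_t = tN\mathbf{v}_1 + sN^{1/2}\mathbf{v}_2$, $s\in\R$, as in Subsection~\ref{sec:fluctuationsconjecture}; with this parametrization the geodesic $\pi_*$ from $(1,1)$ to $(aN,bN)$ crosses the segment at $\hat s_N(t):=\pi^*(tN)/N^{1/2}$. Write $F_N(s,t):=\LPP(N\mathbf{p}^s_t)-t\lv N$ for the forward increment and $B_N(s,t):=\LPP_{N\mathbf{p}^s_t}(aN,bN)-(1-t)\lv N$ for the backward increment, and set $G_N(s,t):=F_N(s,t)+B_N(s,t)$. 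Because subpaths of geodesics are geodesics, $G_N(\hat s_N(t),t)=\LPP(aN,bN)-\lv N+\om_{N\mathbf{p}^{\hat s_N(t)}_t}$ and $\max_s G_N(s,t)\le \LPP(aN,bN)-\lv N+\max_s\om_{N\mathbf{p}^s_t}$; hence, on the conditioning event, $0\le \max_s G_N(\cdot,t)=O(\log N)$ and $\hat s_N(t)$ is a near‑maximizer, $G_N(\hat s_N(t),t)\ge \max_s G_N(s,t)-O(\log N)=o(N^{1/2})$. Theorem~\ref{thm:diagfluc} asserts precisely that, conditionally on $\LPP(aN,bN)=\lv N$, the rescaled forward field $F_N(\cdot,\cdot)/(\ab N^{1/2})$ converges in f.d.d.\ to $\B_1(t)-|\B_2(t)-s|$.

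\textbf{Step 2: the backward field and coincidence of the tents.} By the reflection symmetry of exponential LPP about the center of $[1,aN]\times[1,bN]$, the backward field $B_N$ is a forward field of the same type, so the multi‑point formulas of~\cite{Liu22a} apply to the pair $(F_N,B_N)$ simultaneously. Running the analysis behind Theorem~\ref{thm:diagfluc} on their joint law (still conditioned on the fixed endpoint value) should give joint f.d.d.\ convergence of $(F_N(s,t),B_N(s,t))/(\ab N^{1/2})$ to $(\B_1(t)-|\B_2(t)-s|,\ \wt{\B}_1(t)-|\wt{\B}_2(t)-s|)$ for some random $\wt{\B}_1(t),\wt{\B}_2(t)$. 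Passing $\sup_s G_N(\cdot,t)=o(N^{1/2})$ to the limit (for which one needs the uniform control discussed in Step~4) gives $\sup_s\bigl[(\B_1(t)+\wt{\B}_1(t))-|\B_2(t)-s|-|\wt{\B}_2(t)-s|\bigr]=0$ a.s.; since a sum of two tent functions with distinct peaks is maximized on the whole interval between those peaks while $\B_1(t),\wt{\B}_1(t)$ are genuinely random, this forces $\wt{\B}_2(t)=\B_2(t)$ and $\wt{\B}_1(t)=-\B_1(t)$. Therefore $G_N(\cdot,t)/(\ab N^{1/2})$ converges locally uniformly to $-2|\B_2(t)-s|$, whose unique maximizer is $s=\B_2(t)$.

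\textbf{Step 3: argmax convergence and conclusion.} Since $\hat s_N(t)$ is a near‑maximizer of $G_N(\cdot,t)$ whose rescaled limit has a well‑separated unique maximum, continuity of the argmax functional gives $\hat s_N(t)\to\B_2(t)$, i.e.\ $\pi^*(tN)/N^{1/2}\to\B_2(t)$. Evaluating the forward field along the geodesic and using joint convergence of $F_N(\cdot,t)$ with $\hat s_N(t)$ together with continuity of the limiting tent,
\[
    \frac{\LPP(tN\mathbf{v}_1+\pi^*(tN)\mathbf{v}_2)-t\lv N}{\ab N^{1/2}}=\frac{F_N(\hat s_N(t),t)}{\ab N^{1/2}}\longrightarrow \B_1(t)-|\B_2(t)-\B_2(t)|=\B_1(t).
\]
Running this jointly over finitely many levels $0<t_1<\cdots<t_k<1$---legitimate because the limiting forward field and the bridges $\B_1,\B_2$ of~\eqref{eq:defcd} are already defined jointly in $t$---yields the asserted f.d.d.\ convergence of $\bigl(\pi^*(tN)/N^{1/2},\,[\LPP(tN\mathbf{v}_1+\pi^*(tN)\mathbf{v}_2)-t\lv N]/(\ab N^{1/2})\bigr)$ to $(\B_2(t),\B_1(t))$.

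\textbf{Step 4: the main obstacle.} The crux is to show $\hat s_N(t)$ cannot escape to infinity and that $\sup_s G_N(\cdot,t)$ is controlled uniformly in $N$: one needs a conditional transversal‑fluctuation estimate keeping the geodesic within $O(N^{1/2})$ of the straight segment, e.g.\ a moderate‑deviation bound $\prob\bigl(F_N(s,t)>-c\,|s|\,\ab N^{1/2}\,\big|\,\LPP(aN,bN)=\lv N\bigr)\le e^{-c's^2}$ for large $|s|$, uniformly in $N$, together with a tightness (modulus‑of‑continuity) bound on $F_N(\cdot,t)$ as a process in $s$. Neither follows from Theorem~\ref{thm:diagfluc} itself; both must be extracted from the finite‑$N$ determinantal formulas, and this is exactly the type of analytic bookkeeping (tracking the poles of the kernel against the steepest‑descent critical points) that already forces the two‑point restriction in Theorem~\ref{thm:offdiagflucsameside}. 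A secondary, routine point is to carry the $\epsilon\downarrow0$ definition of the conditioning in~\eqref{eq:conddefLPP} uniformly through all the limits above.
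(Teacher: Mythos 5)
This statement is a conjecture, so there is no proof in the paper to compare against. The authors offer only the one-line heuristic that the tent $s\mapsto\B_1(t)-|\B_2(t)-s|$ from Theorem~\ref{thm:diagfluc} is maximized at $s=\B_2(t)$ with value $\B_1(t)$, and point to the analogous conjecture in \cite{LW24} and the directed-landscape result of \cite{GHZ23}. Your Steps~1 and~3 develop that heuristic faithfully into an argmax-convergence scheme, and Step~4 correctly flags the principal obstacle: conditional tightness of $F_N(\cdot,t)$ as a process in $s$ and a transversal-fluctuation estimate keeping $\hat s_N(t)$ stochastically bounded. Neither is supplied by Theorem~\ref{thm:diagfluc}, whose conclusion is only finite-dimensional.

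One further gap you do not flag: the backward field $B_N(s,t)=\LPP_{N\mb p^s_t}(aN,bN)-(1-t)\lv N$ is not accessible from the multi-point formulas of \cite{Liu22a} that the paper relies on, since those give joint laws of $\LPP(M_1,N_1),\dots,\LPP(M_m,N_m)$ with all last passage times started from $(1,1)$. Reflection symmetry identifies the \emph{marginal} of $B_N$ with a forward field, but the joint law of $(F_N,B_N)$ under the conditioning — which Step~2 needs in order to pass $\sup_s G_N=o(N^{1/2})$ to the limit and force $\wt{\B}_2=\B_2$, $\wt{\B}_1=-\B_1$ — requires an ingredient beyond the paper's determinantal toolkit. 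Even granting joint tent-shaped limits, your deduction is compressed: $\sup_s\bigl[(\B_1+\wt{\B}_1)-|\B_2-s|-|\wt{\B}_2-s|\bigr]=0$ yields only the single a.s.\ identity $\B_1+\wt{\B}_1=|\B_2-\wt{\B}_2|\ge 0$, and one must then invoke joint Gaussianity of $(\B_1,\wt{\B}_1)$ (a centered Gaussian that is a.s.\ nonnegative is a.s.\ zero) to split this into $\B_1+\wt{\B}_1=0$ and $\B_2=\wt{\B}_2$. These are the right ingredients for an eventual proof, but, as you acknowledge, the supporting estimates would require fresh analysis of the explicit formulas — precisely where the paper stops and records a conjecture instead.
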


The transversal fluctuation exponent under the conditioning event is shown to be $1/2$ in \cite{BG23}. 
For the directed landscape, the convergence of a quantity similar to $\frac{\pi^*(t N)}{N^{1/2}}$ to a Brownian bridge was proved in  \cite{GHZ23}.

\section{Conditional multi-point distributions} \label{sec:finitemulti}

As mentioned in the Introduction, we prove Theorems \ref{thm:diagfluc} and \ref{thm:offdiagflucsameside} by computing the limits of an explicit formula for the conditional multi-point distributions.
The exponential LPP is equivalent to the continuous-time totally asymmetric simple exclusion process (TASEP) with step initial condition: for $(M,N) \in \N^2$ and $T \geq 0$,
\begin{equation} \label{eq:equiDLPPTASEP}
    \prob(\mc L(M,N) > T) = \prob ( \mathsf{x}_N(T) < M-N)
\end{equation}
where $\mathsf{x}_k(T)$ denotes the position of the $k^{\text{th}}$ particle in the TASEP at time $T$. 
In \cite{Liu22a}, Liu obtained an explicit formula for multi-time distributions for the TASEP. 
Using the relation \eqref{eq:equiDLPPTASEP}, the case $I = \{1, \cdots, m - 1\}$ in Proposition 2.3 of \cite{Liu22a}, specialized to the step initial condition, gives a formula for the probabilities 
\beqq
    \PP( \mc L(M_1,N_1) > T_1, \cdots , \mc L(M_{m-1},N_{m-1}) > T_{m-1},\mc L(M_m,N_m) \leq T_m) 
\eeqq
of the exponential LPP. 
We can thus find a formula for the multi-point conditional distributions by computing 
\begin{equation} \label{eq:condpasder}
\begin{split}
        &\PP( \mc L(M_1,N_1) > T_1, \cdots, \mc L(M_{m-1},N_{m-1}) > T_{m-1}|\mc L(M_m,N_m) = T_m) \\
        &= 
        \frac{ \frac{\partial}{\partial T_m}\PP( \mc L(M_1,N_1) > T_1, \cdots,  \mc L(M_{m-1},N_{m-1}) > T_{m-1}, \mc L(M_m,N_m) \leq T_m) }{ \frac{\partial}{\partial T_m} \PP(\mc L(M_m,N_m) \leq  T_m) }
\end{split}
\end{equation}

In this section, we state explicit formulas for multi-point conditional distributions.
We begin by introducing several notations in Subsection \ref{sec:conddef}. The main formula is presented in Proposition \ref{prop:cond} in Subsection \ref{sec:condmpd}.
A few special cases of the formula are discussed in Subsection \ref{sec:condspecial}.
Throughout this section, we fix a positive integer $m$.

\subsection{Definitions} \label{sec:conddef}

Let
\beq \label{eq:Cauchydd}
    \K_n(\bfr | \bfs) = \det\left[ \frac1{r_i-s_j}\right]_{i,j=1}^n = \frac{\prod_{1\le i<j\le n}(r_i-r_j)(s_j-s_i)}{\prod_{i,j=1}^n (r_i-s_j)}
\eeq
be the Cauchy determinant for the vectors $\bfr= (r_1,\cdots,r_n)$ and $\bfs= (s_1,\cdots,s_n)$ in $\C^n$. 
Define 
\beq \label{eq:STdf}
    \ST_n(\bfr | \bfs)=  \sum_{i=1}^{n} (r_i-s_i). 
\eeq
We often suppress the subscript $n$ if the sizes of the vectors are clear from context, and simply write $\K(\bfr | \bfs)$ and $\ST(\bfr | \bfs)$ instead. 
For $\bn = (n_1, \cdots, n_m)\in \N^m$, define the rational function 
\beq \label{eq:Pi_n} \begin{split}
        \Pi_{\vecn}(\bs \xi, \bs \eta) 
        =& \K_{n_1}(\bseta^{1}|\bsxi^{1}) \left[ \prod_{i=1}^{m-1} \K_{n_i+n_{i+1}}(\bsxi^{i}, \bseta^{i+1}|\bseta^{i}, \bsxi^{i+1}) \right] \K_{n_m}(\bsxi^{m}|\bseta^{m}) \ST_{n_m}(\bsxi^{m}|\bseta^{m})
\end{split} \eeq
where $\bs \xi= (\bs \xi^{1}, \cdots, \bs \xi^{m})$ and $\bs \eta= (\bs \eta^{1}, \cdots, \bs \eta^{m})$ 
with $\bsxi^i, \bseta^i\in \C^{n_i}$.  
When $m=1$, the above formula becomes, for $n\in \N$, 
\beqq
     \Pi_{n}(\bs \xi, \bs \eta) := \K_n(\bseta|\bsxi) \K_n(\bsxi|\bseta) \ST_n(\bsxi|\bseta)
     \qquad \text{for $\bs \xi, \bs \eta \in \C^n$.}
\eeqq

For $M,N\in \N$ and $T\in \R_+$, define the function\footnote{Throughout the paper $\log$ denotes the branch of the logarithm function that is analytic in $\C\setminus \ii \overline{ \R_-}$ and satisfies $\log 1=0$.} 
\beq \label{eq:f_expr}
    f_{M,N,T}(z) = \frac{z^N e^{Tz}}{(z+1)^M} = e^{ N\log z - M\log(z+1) + Tz} .
\eeq
For $\bM = (M_1,\cdots, M_m)\in \N^m$, $\bN = (N_1,\cdots, N_m)\in \N^m$, $\bT = (T_1,\cdots, T_m)\in \R_+^m$, and $\bn=(n_1, \cdots, n_m)\in \N^m$, define the function 
\beq \label{eq:defFF}
    \FF^{(\bn)}_{\bM, \bN, \bT} (\bsxi, \bseta) 
    =  \prod_{i=1}^m \prod_{k_i=1}^{n_i} \frac{\ff_i(\xi_{k_i}^{i})}{\ff_i(\eta_{k_i}^{i})}, 
    \qquad \ff_i(z)= \frac{f_{M_i, N_i,T_i}(z)}{f_{M_{i-1},N_{i-1},T_{i-1}}(z)}
\eeq
where $\bs \xi= (\bs \xi^{1}, \cdots, \bs \xi^{m})$ and $\bs \eta= (\bs \eta^{1}, \cdots, \bs \eta^{m})$ as before, with $\bsxi^i=(\xi^i_1, \cdots, \xi^i_{n_i})$ and $\bseta^i = (\eta^i_1, \cdots, \eta^i_{n_i})$ in $\C^{n_i}$.  
In the above formula, we set $M_0=N_0=T_0=0$. 

Let  
\beqq           C^{\text{in}}_{m,\text{left}},\cdots,C^{\text{in}}_{2,\text{left}}, C_{1,\text{left}}, C^{\text{out}}_{2,\text{left}}, \cdots, C^{\text{out}}_{m,\text{left}}
\eeqq 
be $2m-1$ small circles, nested from inside to outside, that enclose the point $-1$.  Similarly, let 
\beqq   
    C^{\text{in}}_{m,\text{right}},\cdots, C^{\text{in}}_{2,\text{right}}, C_{1,\text{right}}, C^{\text{out}}_{2,\text{right}}, \cdots, C^{\text{out}}_{m,\text{right}}
\eeqq
be $2m-1$ small circles, also nested from inside to outside, that enclose the point $0$ and are disjoint from the previous circles.  
See Figure \ref{fig:expLPP} for the case when $m=2$.
The circles are oriented counter-clockwise.\footnote{All closed contours in this paper are oriented counterclockwise, unless otherwise specified. The orientations of infinite contours will be stated explicitly.}  

\begin{figure}[h]
\begin{center}
\begin{tikzpicture}[scale=1.3]
    \draw[->] (-3.5,0) -- (1.5,0); 
    \draw[->] (0,-1.25) -- (0,1.25); 
    \node[below] at (-2,0) {$-1$};  
    \filldraw[black] (-2,0) circle (1pt);
    \draw[thick] (-2,0) circle [radius=0.45];
    \draw[thick] (-2,0) circle [radius=0.65];
    \draw[thick] (-2,0) circle [radius=0.85];
    \node[below] at (0,0) {$0$};    
    \filldraw (0,0) circle (1pt);
    \draw[thick] (0,0) circle [radius=0.45];
    \draw[thick] (0,0) circle [radius=0.65];
    \draw[thick] (0,0) circle [radius=0.85];
\end{tikzpicture}
\end{center}
\caption{Contours for $m = 2$: The three circles on the left are $C^{\text{in}}_{2,\text{left}}$, $C_{1,\text{left}}$, $C^{\text{out}}_{2,\text{left}}$ listed from inside to outside. The three circles on the right are $C^{\text{in}}_{2,\text{right}}$, $C_{1,\text{right}}$, $C^{\text{out}}_{2,\text{right}}$, also listed from inside to outside.}
\label{fig:expLPP}
\end{figure}
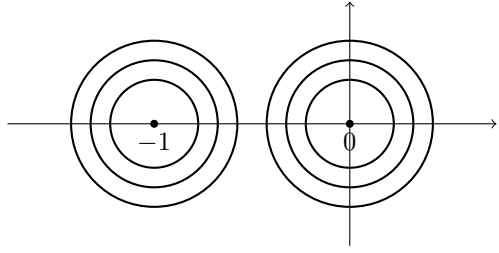

For $\vecn\in \N^m$, $\bM\in \N^m$, $\bN\in \N^m$, and $\bT\in \R_+^m$, 
define the polynomial $\DD^{(\bn)}_{\bM, \bN, \bT}(\bz)$ in $\bz=(z_1, \cdots, z_{m-1})$ of degree $2|\bn|-2n_1$ by    
\beq \label{eq:D_hat_n} \begin{split}
    \DD^{(\bn)}_{\bM, \bN, \bT}(\bz)= & \frac{1 }{(2\pi \ii)^{2|\bn|}}  
    \prod_{i=2}^m  \prod_{k_i = 1}^{n_i} 
    \left[ \int_{C_{i,\tn{left}}^{\tn{in}}} \dd\xi_{k_i}^{i}  +  z_{i-1} \int_{C_{i,\tn{left}}^{\tn{out}}} \dd\xi_{k_i}^{i} \right] 
    \left[ \int_{C_{i,\tn{right}}^{\tn{in}}} \dd\eta_{k_i}^{i} + z_{i-1} \int_{C_{i,\tn{right}}^{\tn{out}}} \dd\eta_{k_i}^{i} \right]  	\\
    &\times  \prod_{k_1 =1}^{n_1}  \left[  \int_{C_{1,\tn{left}}} \dd\xi_{k_1}^{1}  \right] \left[  \int_{C_{1,\tn{right}}} \dd\eta_{k_1}^{1}  \right] 
    \, \Pi_{\bn}(\bs \xi, \bs \eta) 
    \FF^{(\bn)}_{\bM, \bN, \bT}(\bsxi, \bseta) 
\end{split} \eeq 
where 
\beqq   
    |\bn|= n_1+\cdots + n_m \qquad \text{for $\bn = (n_1, \cdots, n_m)\in \N^m$.} 
\eeqq
The coefficients of this polynomial are linear combinations of $2|\bn|$-fold contour integrals. 
Note that when $m=1$, $\DD^{(n)}_{M, N, T}$  is a constant. 

\subsection{Formula for conditional multi-point distributions}\label{sec:condmpd}

We now state an explicit formula for the  conditional multi-point distributions. 
The formula is similar to that for the KPZ fixed point studied in \cite[Lemma 2.2 and Lemma 3.1]{LW24}, and the proof is also nearly identical, since the multi-point distribution formulas share similar structures. 
 
\begin{prop} \label{prop:cond}
Consider the exponential LPP in Definition \ref{def:explpp}. Fix an integer $m\ge 2$. 
Let $\bM = (M_1,\cdots, M_m)\in \N^m$, $\bN = (N_1,\cdots, N_m)\in \N^m$, and $\bT = (T_1,\cdots, T_m)\in \R_+^m$. 
Assume that $0< T_1 \leq \cdots \leq T_m$  and $(N_1,T_1), \cdots, (N_m,T_m)$ are all distinct.  
Then, 
\beq \label{eq:exp_cond}
   	\PP( \LPP(M_1,N_1) > T_1,\cdots , \LPP(M_{m-1},N_{m-1}) > T_{m-1}| \LPP(M_m,N_m) = T_m) 
    	= \frac{ \QQ_m(\bM, \bN, \bT)}{\QQ_1(M_m, N_m, T_m)}
\eeq
where 
\beq \label{eq:Q_hat}
    	\QQ_m(\bM, \bN, \bT) = \sum_{\bn\in\N^m}\frac{1}{(\bn!)^2} \QQ^{(\bn)}_m(\bM, \bN,\bT)
\eeq
with
\beq\label{eq:Q_hat_n} \begin{split} 
    	\QQ^{(\bn)}_m(\bM, \bN,\bT) = \frac{(-1)^{|\bn|+m-1}}{(2\pi \ii)^{m-1}} \oint_{>1}\cdots \oint_{>1} 
   	\DD^{(\bn)}_{\bM, \bN, \bT}(\bz)  \prod_{i=1}^{m-1} \frac{(z_i+1)^{n_i-n_{i+1}-1}}{z_i^{n_{i+1}+1}} \dd z_i . 
\end{split} \eeq
The function $\DD^{(\bn)}_{\bM, \bN, \bT}(\bz)$ is defined in \eqref{eq:D_hat_n}, and the contours are circles centered at the origin with  radii greater than $1$. 
\end{prop}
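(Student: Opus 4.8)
The plan is to follow the strategy of \cite[Lemmas~2.2 and~3.1]{LW24}, transported to the exponential LPP through the TASEP identity \eqref{eq:equiDLPPTASEP}. Via that identity, the case $I=\{1,\dots,m-1\}$ of \cite[Proposition~2.3]{Liu22a}, specialized to the step initial condition (and valid under the hypotheses assumed here), expresses the mixed probability
\[
  G(\bM,\bN,\bT):=\PP\big(\LPP(M_1,N_1)>T_1,\dots,\LPP(M_{m-1},N_{m-1})>T_{m-1},\ \LPP(M_m,N_m)\le T_m\big)
\]
as an absolutely convergent series $\sum_{\bn}\frac{1}{(\bn!)^2}\,G^{(\bn)}(\bM,\bN,\bT)$ whose summands $G^{(\bn)}$ are finite linear combinations of $2|\bn|$-fold contour integrals of $\Pih_{\bn}(\bsxi,\bseta)\,\FF^{(\bn)}_{\bM,\bN,\bT}(\bsxi,\bseta)$ over the nested circles enclosing $-1$ and $0$; here $\Pih_{\bn}$ denotes the rational function obtained from $\Pi_{\bn}$ in \eqref{eq:Pi_n} by deleting the final factor $\ST_{n_m}(\bsxi^m|\bseta^m)$, and $\FF^{(\bn)}_{\bM,\bN,\bT}$ is as in \eqref{eq:defFF}. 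Rewriting this series so that each $G^{(\bn)}$ takes the form of the right-hand side of \eqref{eq:Q_hat_n} with $\Pih_{\bn}$ in place of $\Pi_{\bn}$ is the first, and I expect the most delicate, step: one must reorganize Liu's pairing of inner and outer contours by introducing the auxiliary variables $z_i$, encode the inner/outer choices through the combinations $\int_{C^{\mathrm{in}}}+z_{i-1}\int_{C^{\mathrm{out}}}$ as in \eqref{eq:D_hat_n}, and resum the residual combinatorial sums into the $z_i$-contour integrals carrying the rational weights $\frac{(z_i+1)^{n_i-n_{i+1}-1}}{z_i^{n_{i+1}+1}}$. Pinning down the signs, the $(\bn!)^2$ denominators, and the radii of the $z_i$-contours is exactly the bookkeeping carried out in \cite{LW24}; it transfers here because the two multi-point distribution formulas share the same algebraic skeleton.

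With $G$ in this form, the next step is to differentiate in $T_m$. Inside $\FF^{(\bn)}_{\bM,\bN,\bT}$ the variable $T_m$ enters only through $\ff_m$, hence only through $f_{M_m,N_m,T_m}(z)=z^{N_m}e^{T_m z}/(z+1)^{M_m}$ of \eqref{eq:f_expr}, and $\partial_{T_m}\log f_{M_m,N_m,T_m}(z)=z$. Consequently
\[
  \partial_{T_m}\FF^{(\bn)}_{\bM,\bN,\bT}(\bsxi,\bseta)
  =\Big(\sum_{k=1}^{n_m}\xi^m_{k}-\sum_{k=1}^{n_m}\eta^m_{k}\Big)\FF^{(\bn)}_{\bM,\bN,\bT}(\bsxi,\bseta)
  =\ST_{n_m}(\bsxi^m|\bseta^m)\,\FF^{(\bn)}_{\bM,\bN,\bT}(\bsxi,\bseta),
\]
so differentiation inserts precisely the missing factor and turns $\Pih_{\bn}$ back into $\Pi_{\bn}$; hence $\partial_{T_m}G^{(\bn)}=\QQ^{(\bn)}_m$ term by term and $\partial_{T_m}G=\QQ_m(\bM,\bN,\bT)$. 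Applying the same differentiation to the classical one-point formula for $\PP(\LPP(M_m,N_m)\le T_m)$ gives $\partial_{T_m}\PP(\LPP(M_m,N_m)\le T_m)=\QQ_1(M_m,N_m,T_m)$. Dividing and invoking the definition \eqref{eq:condpasder} of the conditional probability then yields \eqref{eq:exp_cond}.

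The remaining points are routine. To differentiate the series term by term, I would note that the contours are fixed compact sets bounded away from $-1$ and $0$, so on them $\Pih_{\bn}$, $\Pi_{\bn}$, $\FF^{(\bn)}_{\bM,\bN,\bT}$, and $\partial_{T_m}\FF^{(\bn)}_{\bM,\bN,\bT}$ are all bounded by $C^{|\bn|}$ with $C$ uniform for $T_m$ in compact subsets of $(0,\infty)$; combined with the $\frac{1}{(\bn!)^2}$ weights and the elementary estimate for Cauchy determinants, this gives absolute and locally uniform convergence of $\sum_{\bn}\frac{1}{(\bn!)^2}G^{(\bn)}$ and of its formal $T_m$-derivative, and the same estimate controls the series defining $\QQ_m$ and $\QQ_1$. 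Finally, $\partial_{T_m}\PP(\LPP(M_m,N_m)\le T_m)$ is the density of $\LPP(M_m,N_m)$, which is strictly positive on $(0,\infty)$, so the quotient in \eqref{eq:condpasder} makes sense. Apart from the contour reorganization of the first step, every part of the argument is bookkeeping of the kind carried out in \cite{LW24}.
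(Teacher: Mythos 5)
Your overall strategy matches the paper's: import the case $I=\{1,\dots,m-1\}$ of Liu's multi-time TASEP formula via \eqref{eq:equiDLPPTASEP}, observe that differentiating in $T_m$ inserts precisely the factor $\ST_{n_m}(\bsxi^m|\bseta^m)$ that turns $\widetilde\Pi_{\bn}$ into $\Pi_{\bn}$, and form the ratio \eqref{eq:condpasder}. Two points, one a misdescription and one a genuine gap.

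The misdescription: you characterize the first step as needing to ``reorganize Liu's pairing of inner and outer contours by introducing the auxiliary variables $z_i$'' and resumming residual combinatorics into the weights $\frac{(z_i+1)^{n_i-n_{i+1}-1}}{z_i^{n_{i+1}+1}}$. In fact Liu's Proposition 2.3 already gives the mixed probability in the form \eqref{eq:multExp}, with the $z_i$-contours, the combinations $\int_{C^{\mathrm{in}}}+z_{i-1}\int_{C^{\mathrm{out}}}$, and those rational weights all in place; what remains is only the sign change $z_i\mapsto -z_i$ and the specialization to the step initial condition, not a derivation of the $z_i$-structure from scratch.

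The gap: you never address the range of summation. Liu's formula produces a sum over $\bn\in\N_0^m$, while \eqref{eq:Q_hat} sums over $\bn\in\N^m$, and after differentiating in $T_m$ one must justify discarding the terms with some $n_i=0$. For $n_m=0$ this is immediate because $\ST_{n_m}$ makes $\Pi_{\bn}$ vanish, but for $\bn$ with $n_m\ge 1$ and some $n_i=0$, $i<m$, an argument is needed: the polynomial $\DD^{(\bn)}_{\bM,\bN,\bT}(\bz)$ has degree $2n_{i+1}$ in $z_i$ while the weight is $O(z_i^{-2n_{i+1}-2})$ at infinity, so the $z_i$-integral vanishes by pushing the contour to infinity (this is Lemma~\ref{lem:trivial}). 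Without this degree-counting step, the equality \eqref{eq:Q_hat} with the sum over $\N^m$ is not established. Once this is added, the rest of your argument — differentiating under the integral and under the series, justified by boundedness of the integrands on the fixed compact contours together with the $(\bn!)^{-2}$ weights — is sound and is indeed how the paper proceeds.
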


\begin{proof}
Recall the notation $\N_0:=\{0\}\cup \N$. 
Using the relation \eqref{eq:equiDLPPTASEP}, the formula for multi-point distributions for TASEP in \cite[Proposition 2.3]{Liu22a}, specialized to the case $I = \{1, \cdots, m - 1\}$, implies that\footnote{We have replaced $z_i$ with $-z_i$ in the formula (3) of \cite{Liu22a}.}  
\beq\label{eq:multExp} \begin{split}
    &\PP( \mc L(M_1,N_1) > T_1,..., \mc L(M_{m-1},N_{m-1}) > T_{m-1},\mc L(M_m,N_m) \leq T_m) 
    \\ 
    &= \frac{1}{(2\pi \ii)^{m-1}} 
    \sum_{\bn\in\N_0^m}\frac{(-1)^{|\bn|+m-1}}{(\bn!)^2} 
    \oint_{>1}\cdots \oint_{>1} \DDz^{(\bn)}_{\bM, \bN, \bT}(\bz) 
    \prod_{i=1}^{m-1} \frac{(z_i+1)^{n_i-n_{i+1}-1}}{z_i^{n_{i+1}+1}} \dd z_i
\end{split} \eeq
where $\DDz^{(\bn)}_{\bM, \bN, \bT}(\bz)$ is\footnote{We use different conventions from \cite{Liu22a}. By carefully accounting for the measures in Proposition 2.10 of \cite{Liu22a} and using the Cauchy determinant formula \eqref{eq:Cauchydd}, we find that $\mathsf D^{(\bn)}_{\bM, \bN, \bT}(z_1,\cdots,z_{m-1})$ is equal to $(-1)^{|\bn|}  \mathcal{D}_{\boldsymbol{n},{Y_{\mathrm{step}}}}(-z_1,\cdots,-z_{m-1}) \prod_{j=1}^{m-1}  \frac{z_j^{n_{j+1}}}{(1+z_j)^{n_j-n_{j+1}}}$ in terms of the notation in Section 2.1.3.2 of \cite{Liu22a}.} 
the same as $\DD^{(\bn)}_{\bM, \bN, \bT}(\bz)$, except that the term $\Pi_{\bn}(\bs \xi, \bs \eta)$ is replaced by  $\widetilde{\Pi}_{\bn}(\bsxi, \bseta)$, which is given by \eqref{eq:Pi_n} without the factor $\ST(\bsxi^{m}|\bseta^{m})$. 
The assumptions that $0< T_1 \leq \cdots \leq T_m$ and that the pairs $(N_1,T_1), \cdots, (N_m,T_m)$ are all distinct are necessary since \cite[Proposition 2.3]{Liu22a} requires similar conditions.

We insert the above formula into equation \eqref{eq:condpasder}. 
Noting that $T_m$ appears only in the function $f_{M_m,N_m, T_m}$, we have 
\beqq
    \frac{\partial}{\partial T_m} \widetilde{\Pi}_{\bn}(\bs \xi, \bs \eta)  \FF^{(\bn)}_{\bM, \bN, \bT}(\bsxi, \bseta) 
    = \Pi_{\bn}(\bs \xi, \bs \eta)  \FF^{(\bn)}_{\bM, \bN, \bT}(\bsxi, \bseta) . 
\eeqq
Thus, we arrive at the formula \eqref{eq:exp_cond}, but with the series in both the numerator and denominator taken over $\bn\in \N_0^m$.  

Now Lemma \ref{lem:trivial} below implies that $\QQ_m^{(\bn)}(\bM, \bN, \bT)=0$ if $\bn \in (\N_0^{m-1}\setminus \N^{m-1}) \times \N$. 
Furthermore, since $\Pi_{\bn}(\bs \xi, \bs \eta) =0$ for $\bn\in \N_0^{m-1}\times \{0\}$ by formula \eqref{eq:Pi_n}, 
we find that $\QQ^{(\bn)}(\bM, \bN, \bT)=0$ for $\bn\in \N_0^m\setminus \N^m$. 
Therefore, the series over $\bn \in\N_0^m$ reduces to a series over $\bn\in \N^m$. 
The series for the denominator is similar. 
\end{proof}


The following lemma is used in the proof of the above proposition. 
\begin{lem}\label{lem:trivial}
Let $H_{\bn}(\bs \xi, \bs \eta)$ be an integrable function that does not depend on $\bz$. Then, the function 
\beqq \begin{split}
    G^{(\bn)}(\bz) = & \prod_{i=2}^m \prod_{k_i = 1}^{n_i} 
    \left[ \int_{C_{i,\tn{left}}^{\tn{in}}} \dd\xi_{k_i}^{i}  +  z_{i-1} \int_{C_{i,\tn{left}}^{\tn{out}}} \dd\xi_{k_i}^{i} \right] 
    \left[ \int_{C_{i,\tn{right}}^{\tn{in}}} \dd\eta_{k_i}^{i} + z_{i-1} \int_{C_{i,\tn{right}}^{\tn{out}}} \dd\eta_{k_i}^{i} \right] 	
    H_{\bn}(\bs \xi, \bs \eta)
\end{split} \eeqq
satisfies 
\beqq
    \oint_{>1}\cdots \oint_{>1} G^{(\bn)}(\bz)  \prod_{i=1}^{m-1} \frac{(z_i+1)^{n_i-n_{i+1}-1}}{z_i^{n_{i+1}+1}} \dd z_i 
    =0  
    \qquad \text{for $\bn \in (\N_0^{m-1}\setminus \N^{m-1}) \times \N$.}
\eeqq 
\end{lem}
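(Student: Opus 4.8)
The plan is to show that for $\bn \in (\N_0^{m-1}\setminus \N^{m-1}) \times \N$, at least one of the early indices $n_j$ (with $1 \le j \le m-1$) vanishes, and then to exploit the structure of the $z$-integral at that index to conclude the whole integral is zero. Concretely, let $j$ be the smallest index in $\{1,\dots,m-1\}$ with $n_j = 0$. I would treat the cases $j = 1$ and $j \ge 2$ slightly differently, though the mechanism is the same: when $n_j = 0$ the inner $\xi$- and $\eta$-integrals carrying the label $i = j$ (if $j \ge 2$) are empty products, so $G^{(\bn)}(\bz)$ does not depend on $z_{j-1}$ in the way needed to produce a pole, and moreover the only remaining $z_j$-dependence (and, when $j\ge 2$, $z_{j-1}$-dependence) sits in the explicit rational prefactor $\prod_{i=1}^{m-1}\frac{(z_i+1)^{n_i-n_{i+1}-1}}{z_i^{n_{i+1}+1}}$.

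The key step is to isolate the $z_j$-contour integral. With $n_j = 0$, the factor for $i = j$ in the prefactor is $\frac{(z_j+1)^{-n_{j+1}-1}}{z_j^{n_{j+1}+1}}$, while the factor for $i = j-1$ (when $j \ge 2$) is $\frac{(z_{j-1}+1)^{n_{j-1}-1}}{z_{j-1}^{1}}$. The crucial observation is that $G^{(\bn)}(\bz)$ is a \emph{polynomial} in each $z_{i}$ — indeed, in $z_{j}$ it is constant because $n_{j+1}$-many integrals over $C_{j+1,\mathrm{left/right}}^{\mathrm{in/out}}$ each contribute a linear-in-$z_{j}$ term but there is no $z_j$ coming from level $i=j$ (which is empty) — wait, more carefully: $z_j$ appears in $G^{(\bn)}$ only through the level $i = j+1$ integrals, contributing a polynomial in $z_j$ of degree at most $2n_{j+1}$. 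So the $z_j$-integral is
\[
\oint_{>1} \bigl(\text{polynomial in } z_j \text{ of degree} \le 2n_{j+1}\bigr)\,\frac{(z_j+1)^{-n_{j+1}-1}}{z_j^{n_{j+1}+1}}\,\dd z_j .
\]
Because the contour $|z_j| > 1$ encloses both $z_j = 0$ and $z_j = -1$, and the integrand decays like $z_j^{-2}$ at infinity (the polynomial has degree $\le 2n_{j+1}$ while the denominator contributes $z_j^{2n_{j+1}+2}$), the integral can be evaluated by pushing the contour to infinity, giving zero. Hence $\QQ^{(\bn)}_m$ — equivalently the stated integral — vanishes. The case $j=1$ is the cleanest instance of this: then there is no $i=j-1=0$ factor to worry about, and $G^{(\bn)}(\bz)$ is literally independent of $z_1$ except through level $i=2$.

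I expect the main obstacle to be a clean bookkeeping argument that $G^{(\bn)}(\bz)$, as a function of $z_j$, is a polynomial of degree at most $2n_{j+1}$ — one must check that the $z_j$-dependence enters only through the $2n_{j+1}$ integrals at level $i = j+1$ (each producing a factor $\int_{\mathrm{in}} + z_j \int_{\mathrm{out}}$, linear in $z_j$), and that the integrand $\Pi_{\bn}$ (or $\widetilde\Pi_{\bn}$) together with $\FF^{(\bn)}$ contributes no further $z_j$-dependence, since $\bz$ appears in $\DD^{(\bn)}$ only through these contour-selection operators. Once that degree count is in hand, the contour-at-infinity argument is routine. An alternative, perhaps slicker, route avoiding the degree bound: observe that after performing all $\xi$- and $\eta$-integrals, $G^{(\bn)}(\bz)$ is a sum of terms each of which is a monomial in the $z_i$'s with degree in $z_j$ between $0$ and $2n_{j+1}$; multiplying by $(z_j+1)^{-n_{j+1}-1} z_j^{-n_{j+1}-1}$ and integrating over $|z_j|>1$, each such monomial $z_j^{k}$ with $0 \le k \le 2n_{j+1}$ yields a function that is $O(z_j^{k - 2n_{j+1}-2}) = O(z_j^{-2})$ at infinity with no poles outside $|z_j| \le 1$, so the residue at infinity — hence the integral — is zero. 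I would present this version.
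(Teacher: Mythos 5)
Your proposal is correct and takes essentially the same approach as the paper: identify an index $j$ with $n_j = 0$, observe that $G^{(\bn)}(\bz)$ is a polynomial in $z_j$ of degree at most $2n_{j+1}$ while the rational prefactor is $O(z_j^{-2n_{j+1}-2})$, and conclude by Cauchy's theorem (residue at infinity) that the $z_j$-integral vanishes. The only cosmetic difference is that you pick the smallest index $j$ with $n_j = 0$ while the paper picks one satisfying $n_j = 0$ and $n_{j+1} \ge 1$; this does not affect the argument since the degree count works either way.
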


\begin{proof}
Note that $G^{(\bn)}(\bz)$ is a polynomial of degree $2n_{i+1}$ in each variable $z_i$ for $i=1, \cdots, m-1$. 
If $\bn \in (\N_0^{m-1}\setminus \N^{m-1}) \times \N$, then there exists $i\in \{1, \cdots, m-1\}$ such that $n_{i}=0$ and $n_{i+1} \ge 1$. In this case, $G^{(\bn)}(\bz) = O(z_{i}^{2n_{i+1}})$ and $\frac{(z_{i}+1)^{n_{i}-n_{i+1}-1}}{z_{i}^{n_{i+1}+1}}= O(z_{i}^{-2n_{i+1}-2})$ as $z_{i}\to \infty$. 
Therefore, the integrand decays sufficiently fast at infinity, and the result follows from Cauchy's theorem. 
\end{proof}

\subsection{Formulas for two special cases} \label{sec:condspecial}

The case where $\bn= (1, \cdots, 1)=:\bone$ will play a special role. The formula \eqref{eq:Q_hat_n} simplifies in this case. 

\begin{lem} \label{lem:Q_hat_111}  
We have  
\beq \label{eq:Q_hat_111}
    \QQ^{(\bone)}_m(\bM,\bN,\bT)  
    = -\frac{1}{(2\pi \ii)^{2m}} \int_{\vec{\gamma}} \dd \bsxi \int_{\vec{\Gamma}}\dd \bseta
    \,\, \Pi_{\bone}(\bs\xi, \bseta)  \FF^{(\bone)}_{\bM,\bN,\bT}(\bs\xi, \bseta) 
\eeq
where $\bsxi= (\xi^1, \cdots, \xi^m) \in \C^m$ and $\bseta= (\eta^1, \cdots, \eta^m) \in \C^m$. The contours are 
\beq \label{eq:Cveccont}
    \vec{\gamma} = \gamma_1\times \cdots \times \gamma_m, 
    \qquad
    \vec{\Gamma} = \Gamma_1\times \cdots\times \Gamma_m, 
\eeq
where $\gamma_1, \cdots, \gamma_m$ are small circles around the point $z=-1$, nested from inside to outside, and $\Gamma_1, \cdots, \Gamma_m$ are small circles around the point $z=0$, also nested from inside to outside, such that all circles are mutually disjoint. 
\end{lem}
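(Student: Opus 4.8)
The plan is to specialize the general formula \eqref{eq:Q_hat_n} to $\bn=\bone$ and carry out the $\bz$-integration explicitly. Since $|\bone|=m$, the sign prefactor collapses to $(-1)^{|\bone|+m-1}=(-1)^{2m-1}=-1$, and each factor $\frac{(z_i+1)^{n_i-n_{i+1}-1}}{z_i^{n_{i+1}+1}}$ becomes $\frac{1}{(z_i+1)z_i^2}$, so that
\beqq
    \QQ^{(\bone)}_m(\bM,\bN,\bT) = -\frac{1}{(2\pi\ii)^{m-1}} \oint_{>1}\cdots\oint_{>1} \DD^{(\bone)}_{\bM,\bN,\bT}(\bz) \prod_{i=1}^{m-1} \frac{\dd z_i}{(z_i+1)z_i^2}.
\eeqq
The crucial point is that the integrand $\Pi_{\bone}(\bsxi,\bseta)\FF^{(\bone)}_{\bM,\bN,\bT}(\bsxi,\bseta)$ in \eqref{eq:D_hat_n} does not depend on $\bz$ (and is integrable, being a rational function whose poles lie at $\xi=\eta$-type coincidences that the disjoint nested contours avoid), so the entire $\bz$-dependence of $\DD^{(\bone)}_{\bM,\bN,\bT}(\bz)$ sits in the bracketed sums of contours. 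Expanding the product $\bigl[\int_{C^{\tn{in}}_{i,\tn{left}}}\dd\xi^i + z_{i-1}\int_{C^{\tn{out}}_{i,\tn{left}}}\dd\xi^i\bigr]\bigl[\int_{C^{\tn{in}}_{i,\tn{right}}}\dd\eta^i + z_{i-1}\int_{C^{\tn{out}}_{i,\tn{right}}}\dd\eta^i\bigr]$ appearing for each $i\in\{2,\dots,m\}$ exhibits $\DD^{(\bone)}_{\bM,\bN,\bT}(\bz)=\sum_{a_1,\dots,a_{m-1}\in\{0,1,2\}} c_{a_1,\dots,a_{m-1}}\,z_1^{a_1}\cdots z_{m-1}^{a_{m-1}}$ as a polynomial of degree $2$ in each $z_i$, whose top coefficient $c_{2,\dots,2}$ is obtained by selecting the outermost contour $C^{\tn{out}}_{i,\cdot}$ for every $\xi^i,\eta^i$ with $i\ge2$ (while $\xi^1,\eta^1$ always run over $C_{1,\tn{left}},C_{1,\tn{right}}$).

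The heart of the matter is then the elementary residue evaluation
\beqq
    \frac{1}{2\pi\ii}\oint_{>1}\frac{z^a\,\dd z}{(z+1)z^2}=\begin{cases}1,&a=2,\\0,&a\in\{0,1\},\end{cases}
\eeqq
where for $a\le1$ one pushes the contour to infinity (the integrand is $O(z^{a-3})$, and no poles lie beyond radius $1$) and for $a=2$ the integrand equals $\frac{1}{z+1}$ and one reads off the residue at $z=-1$; the value is independent of the radius since any circle of radius larger than $1$ encloses both $z=0$ and $z=-1$. Because $\DD^{(\bone)}_{\bM,\bN,\bT}$ is genuinely a polynomial in $\bz$, the iterated $z$-integral is a finite sum, and applying the identity above in each variable annihilates every monomial except $z_1^2\cdots z_{m-1}^2$. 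This yields $\QQ^{(\bone)}_m(\bM,\bN,\bT)=-c_{2,\dots,2}$.

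Finally I would identify $c_{2,\dots,2}$ with the right-hand side of \eqref{eq:Q_hat_111}. Writing $\gamma_1:=C_{1,\tn{left}}$ and $\gamma_i:=C^{\tn{out}}_{i,\tn{left}}$ for $2\le i\le m$, and $\Gamma_1:=C_{1,\tn{right}}$ and $\Gamma_i:=C^{\tn{out}}_{i,\tn{right}}$ for $2\le i\le m$, the nesting conventions imposed just before \eqref{eq:D_hat_n} say exactly that $\gamma_1,\dots,\gamma_m$ are mutually disjoint circles around $-1$ nested from inside to outside, that $\Gamma_1,\dots,\Gamma_m$ are the analogous circles around $0$, and that the two families are disjoint — precisely the contours $\vec{\gamma},\vec{\Gamma}$ of \eqref{eq:Cveccont}. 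Hence $c_{2,\dots,2}=\frac{1}{(2\pi\ii)^{2m}}\int_{\vec{\gamma}}\dd\bsxi\int_{\vec{\Gamma}}\dd\bseta\,\Pi_{\bone}(\bsxi,\bseta)\FF^{(\bone)}_{\bM,\bN,\bT}(\bsxi,\bseta)$, and \eqref{eq:Q_hat_111} follows. I do not anticipate a genuine obstacle here; the only points that need attention are the sign bookkeeping ($(-1)^{2m-1}=-1$) and correctly matching the ``all outermost'' contour selection to the nested families $\vec{\gamma},\vec{\Gamma}$ in the statement.
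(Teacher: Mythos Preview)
Your proposal is correct and follows essentially the same approach as the paper: specialize \eqref{eq:Q_hat_n} to $\bn=\bone$, observe that $\DD^{(\bone)}_{\bM,\bN,\bT}(\bz)$ is a polynomial of degree $2$ in each $z_i$, evaluate the $z_i$-integrals to extract the top coefficient (keeping only the outermost contours), and relabel. The paper's proof is terser, simply asserting that ``the $z_i$-integrals retain only the leading coefficients of the polynomial,'' whereas you spell out the residue computation $\frac{1}{2\pi\ii}\oint_{>1}\frac{z^a}{(z+1)z^2}\,\dd z=\delta_{a,2}$ explicitly.
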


\begin{proof}
When $\bn= \bone$, the formula \eqref{eq:Q_hat_n} becomes
\beqq \begin{split} 
    \QQ^{(\bone)}_m(\bM, \bN,\bT) = - \frac{1}{(2\pi \ii)^{m-1}} \oint_{>1}\cdots \oint_{>1} 
    \DD^{(\bone)}_{\bM, \bN, \bT}(\bz)  \prod_{i=1}^{m-1} \frac{1}{z_i^{2} (z_i+1)} \dd z_i . 
\end{split} \eeqq
The function $\DD^{(\bone)}_{\bM, \bN, \bT}(\bz)$ is a polynomial of degree $2$ in each $z_i$ for $i=1, \cdots, m-1$. 
Thus, the $z_i$-integrals retain only the leading coefficients of the polynomial, which effectively removes all $C_{i,\tn{left}}^{\tn{in}}$- and $C_{i,\tn{right}}^{\tn{in}}$-integrals. 
We then relabel the contours as follows:  $C_{1,\tn{left}}= \gamma_1$, $C_{1,\tn{right}}=\Gamma_1$, and 
$C_{i,\tn{left}}^{\tn{out}}=\gamma_i$ and $C_{i,\tn{right}}^{\tn{in}}=\Gamma_i$ for $i=2, \cdots, m$. 
The same calculation was also carried out for the KPZ fixed point in \cite[Lemma 3.5]{LW24}. 
\end{proof}

For later use, we note that
\beq \label{eq:Pi_111}
    \Pi_{\bone}(\bs{ \xi}, \bs{\eta})
    = \frac{(-1)^{m}}{\xi^{m}-\eta^{m}} \prod_{i=1}^{m-1} \frac{(\xi^{i}-\eta^{i+1})(\eta^{i}-\xi^{i+1})}{(\xi^{i}-\xi^{i+1})(\eta^{i}-\eta^{i+1})(\xi^{i}-\eta^{i})^2}.
\eeq

\medskip

For the proof of Theorem \ref{thm:offdiagflucsameside}, we also need to evaluate the limit of 
$\QQ^{(\bn)}_m(\bM,\bN,\bT) $ when $m=3$ and $\bn=(1,2,1)$. 
This term has the following explicit formula. 

\begin{lem} \label{lem:Q_hat_121}
Let 
$\vec{\gamma}=\gamma_1\times \gamma_2\times \gamma_3\times \gamma_4$ and 
$\vec{\Gamma}=\Gamma_1\times \Gamma_2\times \Gamma_3\times \Gamma_4$, 
where $\gamma_1, \cdots, \gamma_4$ are small circles around $-1$, nested from inside to outside, $\Gamma_1, \cdots, \Gamma_4$ are small circles around $0$, also nested from inside to outside, with all circles mutually disjoint.
We have
\beq \label{eq:Q123df} \begin{split}
    \QQ^{(1,2,1)}_3(\bM,\bN,\bT)  
    =& \frac{1}{(2\pi \ii)^8} \int_{\vec{\gamma}} \dd \bsxi^{3122} \int_{\vec{\Gamma}} \dd \bseta^{1223} \, 
    \Pi_{(1,2,1)}(\bsxi, \bseta)  \FF^{(1,2,1)}_{\bM,\bN,\bT}(\bsxi, \bseta) \\
    &+ \frac{1}{(2\pi \ii)^8}  \int_{\vec{\gamma}} \dd \bsxi^{1223} \int_{\vec{\Gamma}} \dd \bseta^{3122} \, 
    \Pi_{(1,2,1)}(\bsxi, \bseta) \FF^{(1,2,1)}_{\bM,\bN,\bT}(\bsxi, \bseta)   
\end{split} \eeq
where $\bsxi=(\xi^{1}, \xi^{2}_1, \xi^{2}_2, \xi^{3})$,  $\bsxi^{3122}= (\xi^{3}, \xi^{1}, \xi^{2}_1, \xi^{2}_2)$,  $\bsxi^{1223}= (\xi^{1}, \xi^{2}_1, \xi^{2}_2, \xi^{3})$, and similarly for $\bseta, \bseta^{3122}, \bseta^{1223}$. 
\end{lem}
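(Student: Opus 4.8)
The plan is to specialize the general formula \eqref{eq:Q_hat_n} to $m=3$, $\bn=(1,2,1)$, and carry out the two contour integrals in $z_1,z_2$ explicitly, exactly as in the proof of Lemma \ref{lem:Q_hat_111}, but now tracking the fact that $\DD^{(1,2,1)}_{\bM,\bN,\bT}(z_1,z_2)$ is a polynomial of degree $2n_2=4$ in $z_1$ and of degree $2n_3=2$ in $z_2$. First I would write out \eqref{eq:Q_hat_n} in this case: the prefactor $(-1)^{|\bn|+m-1}(2\pi\ii)^{-(m-1)}$ with $|\bn|=4$, $m=3$ becomes $(2\pi\ii)^{-2}$, and the weight factor $\prod_{i=1}^{2}\frac{(z_i+1)^{n_i-n_{i+1}-1}}{z_i^{n_{i+1}+1}}$ becomes $\frac{(z_1+1)^{1-2-1}}{z_1^{3}}\cdot\frac{(z_2+1)^{2-1-1}}{z_2^{2}}=\frac{1}{z_1^{3}(z_1+1)^{2}}\cdot\frac{1}{z_2^{2}}$. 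Then I would expand $\DD^{(1,2,1)}$ using \eqref{eq:D_hat_n}: the two $k_1$-variables $\xi^1_1,\eta^1_1$ are integrated on the fixed contours $C_{1,\tn{left}},C_{1,\tn{right}}$ with no $z$-weight; each of the two $k_2$-variables $\xi^2_1,\xi^2_2$ (and likewise $\eta^2_1,\eta^2_2$) contributes a factor $\int_{C^{\tn{in}}_{2}}+z_1\int_{C^{\tn{out}}_{2}}$, so the $z_1$-dependence of $\DD^{(1,2,1)}$ is a polynomial of degree $4$; and each of $\xi^3,\eta^3$ contributes $\int_{C^{\tn{in}}_{3}}+z_2\int_{C^{\tn{out}}_{3}}$, giving degree $2$ in $z_2$.

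Next, evaluate the $z_2$-integral: $\frac{1}{2\pi\ii}\oint_{>1}\frac{P(z_2)}{z_2^{2}}\,\dd z_2$ for a degree-$2$ polynomial $P(z_2)=P_0+P_1 z_2+P_2 z_2^2$ picks out the residue at $z_2=0$, i.e.\ the coefficient $P_1$ of $z_2^1$. From the structure above, $P_1$ is the sum of two terms: one in which $\xi^3$ carries the $z_2$ (so $\xi^3$ is on $C^{\tn{out}}_{3}$ and $\eta^3$ on $C^{\tn{in}}_{3}$), and one in which $\eta^3$ carries it (so $\eta^3$ on $C^{\tn{out}}_{3}$, $\xi^3$ on $C^{\tn{in}}_{3}$). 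This already produces the two summands in \eqref{eq:Q123df}; I would relabel the surviving $\xi^3$-contour as $\gamma_4$ (resp.\ $\Gamma_4$) depending on which term. Then evaluate the $z_1$-integral: $\frac{1}{2\pi\ii}\oint_{>1}\frac{R(z_1)}{z_1^{3}(z_1+1)^{2}}\,\dd z_1$ for a degree-$4$ polynomial $R$; since the radius is $>1$ both poles $z_1=0$ (order $3$) and $z_1=-1$ (order $2$) are enclosed, so by deforming to infinity the integral equals the coefficient of $z_1^{4}$ in $R(z_1)$ (the unique surviving residue from $\frac{R(z_1)}{z_1^{3}(z_1+1)^{2}}\sim R_4 z_1^{4}\cdot z_1^{-5}=R_4 z_1^{-1}$ at infinity). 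Extracting the top coefficient in $z_1$ forces all four variables $\xi^2_1,\xi^2_2,\eta^2_1,\eta^2_2$ onto their \emph{outer} contours $C^{\tn{out}}_{2,\tn{left}}$, $C^{\tn{out}}_{2,\tn{right}}$, which effectively removes the inner contours for the $k_2$-variables, just as in Lemma \ref{lem:Q_hat_111}.

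After these two residue evaluations, what remains is an $8$-fold contour integral of $\Pi_{(1,2,1)}\,\FF^{(1,2,1)}_{\bM,\bN,\bT}$ over fixed mutually-disjoint circles: $\xi^1$ on $C_{1,\tn{left}}$, $\xi^2_1,\xi^2_2$ on $C^{\tn{out}}_{2,\tn{left}}$, $\xi^3$ on either $C^{\tn{out}}_{3,\tn{left}}$ or $C^{\tn{in}}_{3,\tn{left}}$ (two terms), and similarly for the $\eta$'s on the right. I would then rename these circles in the two cases as $\gamma_1,\dots,\gamma_4$ and $\Gamma_1,\dots,\Gamma_4$, checking in each term that the nesting (inside-to-outside) and mutual disjointness claimed in the lemma is consistent with the original nesting of $C^{\tn{in}}_{3}\subset C_{1}\subset C^{\tn{out}}_{3}$ etc.; this is a bookkeeping check and the ordering of $(\xi^3,\xi^1,\xi^2_1,\xi^2_2)$ vs.\ $(\xi^1,\xi^2_1,\xi^2_2,\xi^3)$ in the two integrals records precisely which of $\xi^3$ lies innermost or outermost. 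The constant $(2\pi\ii)^{-2}$ from the $z$-weights combines with the $(2\pi\ii)^{-2|\bn|}=(2\pi\ii)^{-8}$ already present in \eqref{eq:D_hat_n}; one must confirm the total power and overall sign work out to the $\frac{1}{(2\pi\ii)^8}$ with a $+$ sign stated in \eqref{eq:Q123df} (the $(-1)^{|\bn|+m-1}=(-1)^{6}=+1$ helps here, and $\Pi_{\bone}$-type sign factors are absorbed into $\Pi_{(1,2,1)}$). The main obstacle I anticipate is purely combinatorial: correctly matching, in each of the two $z_2$-residue branches and after the $z_1$-top-coefficient extraction, the surviving contours to the nested family $\gamma_1,\dots,\gamma_4$ (and $\Gamma_1,\dots,\Gamma_4$) so that the variable orderings $\bsxi^{3122}$ and $\bsxi^{1223}$ come out exactly as written — and making sure no spurious residue is dropped when the contour radius $>1$ encloses both $0$ and $-1$ in the $z_1$-integral. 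Everything else reduces to the same polynomial-degree and residue-at-infinity argument already used for Lemma \ref{lem:Q_hat_111}.
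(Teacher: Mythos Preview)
Your approach is correct and is essentially the same as the paper's: the paper writes out the integrand with the weight $\frac{1}{(z_1+1)^2 z_1^3 z_2^2}$ and simply says ``evaluating the $z_1$ and $z_2$-integrals, we obtain the result,'' which is exactly the residue-at-infinity extraction of the $z_1^4$-coefficient and the $z_2^1$-coefficient that you describe. Your bookkeeping of the two $z_2$-branches (which variable among $\xi^3,\eta^3$ lands on the outer versus inner contour) correctly produces the two orderings $\bsxi^{3122},\bseta^{1223}$ and $\bsxi^{1223},\bseta^{3122}$; the only small point you should make explicit is that $\xi^2_1,\xi^2_2$ (and $\eta^2_1,\eta^2_2$) both land on the same contour $C^{\tn{out}}_{2}$, and can be separated onto distinct nested circles $\gamma_3,\gamma_4$ since $\Pi_{(1,2,1)}$ has no pole at $\xi^2_1=\xi^2_2$.
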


\begin{proof}
When $m=3$ and $\bn=(1,2,1)$, we need to compute 
\beqq \begin{split}
    \oint_{>1} \oint_{>1} 
    &\left[ \int_{C_{3,\tn{left}}^{\tn{in}}} \dd\xi^{3}  +  z_{2} \int_{C_{3,\tn{left}}^{\tn{out}}} \dd\xi^{3} \right] 
    \left[ \int_{C_{3,\tn{right}}^{\tn{in}}} \dd \eta^{3}  +  z_{2} \int_{C_{3,\tn{right}}^{\tn{out}}} \dd \eta^{3} \right] \\
    &	\prod_{i=1}^2 \left[ \int_{C_{2,\tn{left}}^{\tn{in}}} \dd\xi_{i}^{2}  +  z_{1} \int_{C_{2,\tn{left}}^{\tn{out}}} \dd\xi_{i}^{2} \right] 
    \left[ \int_{C_{2,\tn{right}}^{\tn{in}}} \dd \eta_{i}^{2}  +  z_{1} \int_{C_{2,\tn{right}}^{\tn{out}}} \dd \eta_{i}^{2} \right] 
    \frac{\dd z_1 \dd z_2}{(z_1+1)^2 z_1^3 z_2^2} . 
\end{split} \eeqq
Evaluating the $z_1$ and $z_2$-integrals, we obtain the result. 
\end{proof}

\section{Miscellaneous lemmas} \label{sec:threelemmas}

We record the following two lemmas, which will be used in several places throughout this paper.

\begin{lem}[\cite{LW24}] \label{lem:bridge}
Let $m\ge 2$. 
Let $\Gamma_1, \cdots, \Gamma_m$ be disjoint contours, listed from left to right, each parallel to the $y$-axis with upwards orientation. 
Let $\vec\Gamma \equiv \Gamma_1\times\cdots\times\Gamma_m$. 
For every $0=a_0<a_1<\cdots<a_m = A$ and $b_1,\cdots,b_{m-1}\in\R$ with $b_0=b_m=0$,  
\beqq
    \frac{\sqrt{2\pi A}}{(2\pi\ii)^m} \int_{\vec\Gamma} 
    \frac{\prod_{i=1}^m e^{\frac{1}{2}(a_i-a_{i-1})u_i^2+(b_{i}-b_{i-1})u_i} }{\prod_{i=1}^{m-1}(u_{i+1}-u_i)}  \dd \bu    
    = \prob\left({\sqrt{A} \, \B\left(\frac{{a_{i}}}{A}\right)>b_i, \,\,  i=1,\dots,m-1}\right)
\eeqq
where $\dd\bu= \dd u_1\cdots\dd u_m$  with each $u_i\in\Gamma_i$, and $\B$ is a standard Brownian bridge. 
\end{lem}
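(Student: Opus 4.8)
The plan is to evaluate the contour integral by residues, reducing it to a Gaussian integral that one recognizes as a convolution of transition densities conditioned to return to the origin. First I would observe that the only singularities of the integrand, viewed as a function of $u_1$ (with the other variables fixed), are the simple poles at $u_1 = u_2$; since $\Gamma_1$ lies strictly to the left of $\Gamma_2$ and each $e^{\frac12 a_1 u_1^2 + b_1 u_1}$ has no pole, I can push $\Gamma_1$ to the right (or work directly from the Gaussian nature of the factor). The cleaner route is to recall the standard Gaussian-convolution identity: for $c>0$,
\beqq
    \frac{1}{2\pi\ii}\int_{\Gamma} \frac{e^{\frac12 c\, u^2 + \beta u}}{w - u}\,\dd u
\eeqq
picks out, after suitable contour placement, the ``heat semigroup'' acting on the remaining variables. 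Concretely, I would use the representation $e^{\frac12 c u^2} = \sqrt{\tfrac{1}{2\pi c}}\int_{\R} e^{-\frac{t^2}{2c}} e^{\ii t u}\,\dd t$ (or its real-variable analogue after deforming each $\Gamma_j$ to the imaginary axis shifted appropriately), turning the whole $\vec\Gamma$-integral into a finite-dimensional Gaussian integral over auxiliary variables $t_1, \dots, t_m$.

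The key steps, in order: (1) deform each $\Gamma_j$ to a vertical line and parametrize $u_j = \ii v_j + \mu_j$ with $\mu_1 < \cdots < \mu_m$ real, so that the factor $1/(u_{i+1}-u_i)$ is integrable and equals $\int_0^\infty e^{-(u_{i+1}-u_i)\sigma_i}\,\dd\sigma_i$ after arranging signs; (2) perform the resulting Gaussian integrals in $v_1,\dots,v_m$ in sequence, which is a routine completion of squares and produces exactly the product of Brownian transition densities $p_{a_i - a_{i-1}}(x_i - x_{i-1})$ together with the $\sqrt{2\pi A}$ normalization that converts the free kernel into the bridge kernel; (3) recognize that the remaining integral over the ``spatial'' variables $x_1, \dots, x_{m-1}$ against the product of transition densities, with the constraints $x_i > b_i$ coming from the $\int_0^\infty \dd\sigma_i$ representation of $1/(u_{i+1}-u_i)$, is precisely $\prob(\sqrt{A}\,\B(a_i/A) > b_i,\ i=1,\dots,m-1)$ by the Markov property of the Brownian bridge and the Chapman--Kolmogorov identity $\int p_{a_i-a_{i-1}}(x_i-x_{i-1})\,\cdots\,\dd x = p_A(0) \cdot(\text{bridge density})$. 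Alternatively — and this is likely the shortest write-up — I would simply cite that this identity is established in \cite{LW24} and verify that the normalization and orientation conventions here match theirs, since the lemma is attributed to that paper.

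The main obstacle I anticipate is bookkeeping rather than conceptual: getting the contour placements, orientations, and the sign in the geometric-series / Laplace representation $1/(u_{i+1}-u_i) = \pm\int_0^\infty e^{\mp(u_{i+1}-u_i)\sigma}\dd\sigma$ all mutually consistent, so that the constraints emerge as $x_i > b_i$ (with the correct $b_0 = b_m = 0$ boundary values) and not as some reflected or shifted region. One must check that $\mathrm{Re}(u_{i+1} - u_i) = \mu_{i+1} - \mu_i > 0$ on the chosen contours for the Laplace representation to converge, and that the quadratic forms $\frac12(a_i - a_{i-1})u_i^2$ have the right sign of real part after the deformation $u_j \mapsto \ii v_j + \mu_j$ for the Gaussian integrals to converge — this forces a careful, but standard, choice of the $\mu_j$ depending on the $a_i$. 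Once these are pinned down, the computation is a direct Gaussian evaluation, and the identification with the Brownian bridge probability is immediate from the explicit bridge density $\frac{1}{\sqrt{A}}\,p_{a/A}(\cdot)$ scaling.
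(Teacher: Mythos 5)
Your primary approach is correct and does go through: on the nested vertical contours $u_j = \mu_j + \ii v_j$ with $\mu_1 < \cdots < \mu_m$ one has $\re(u_{i+1}-u_i) > 0$, so $\frac{1}{u_{i+1}-u_i} = \int_0^\infty e^{-(u_{i+1}-u_i)\sigma_i}\,\dd\sigma_i$ converges, and the quadratic factors $e^{\frac12(a_i-a_{i-1})u_i^2}$ decay along each vertical line because $a_i > a_{i-1}$. Collecting the linear coefficient of $u_j$ from the exponent and the two adjacent Laplace factors gives $(b_j + \sigma_j)-(b_{j-1}+\sigma_{j-1})$; after the change of variable $y_j = b_j + \sigma_j$ (with $y_0 = y_m = 0$ automatic from $b_0 = b_m = \sigma_0 = \sigma_m = 0$) the $u$-integrals evaluate to $\prod_i \frac{1}{\sqrt{2\pi(a_i - a_{i-1})}} e^{-(y_i-y_{i-1})^2/(2(a_i-a_{i-1}))}$, and the $\sqrt{2\pi A}$ prefactor multiplies this heat-kernel product into the bridge joint density. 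The constraint $\sigma_i > 0$ becomes $y_i > b_i$, which gives exactly $\prob(\sqrt{A}\,\B(a_i/A) > b_i)$. Your caution about the sign conventions is well-placed: the sign of $\sigma_i$ in the exponent is precisely what produces $>$ rather than $<$, and it is the one place the bookkeeping matters.

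The paper's own proof is dual to yours rather than identical. Instead of introducing the Laplace integral, it differentiates both sides with respect to $b_1, \ldots, b_{m-1}$: on the left each $\partial/\partial b_i$ brings down $(u_i - u_{i+1})$, which cancels the $i$-th Cauchy factor, leaving the same product of one-dimensional Gaussian integrals you compute; on the right the $(m-1)$-fold derivative of the probability gives a sign times the joint density of the bridge at times $a_i/A$. Matching the two densities and integrating back (using the boundary behavior as $b_i \to +\infty$) recovers the lemma. The two routes perform literally the same Gaussian calculation — your Laplace representation is the antiderivative that the paper reconstructs by inspection — but yours is a touch more self-contained because it outputs the probability directly rather than a density that must be reintegrated with boundary conditions checked. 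Your suggested alternative of simply citing \cite{LW24} is in fact what the paper does, deferring the calculation to Lemma 3.4 there. One small slip in your sketch: the Fourier-type representation you wrote for $e^{\frac12 c u^2}$ has the wrong sign in the exponent on one side, but this plays no role since your main route (parametrize the contours and complete the square directly) never uses it.
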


\begin{proof}
The equality can be verified by expressing the right-hand side in terms of the usual density function for a Brownian bridge, and then taking derivatives with respect to $b_1, \cdots, b_{m-1}$. The details can be found in Lemma 3.4 of \cite{LW24}. 
\end{proof} 

When proving the main theorems, we first establish them for parameters lying outside certain hypersurfaces.
We then extend the results to the full set of parameters using the next lemma.
The proof essentially follows that of Lemma 3.6 in \cite{LW24}, although we present the result here in a slightly different form.

\begin{lem}\label{lem:bootstrap} 
Let $I$ be an open interval in $\R$ and let $y_0\in I$. 
For each $n\in \N$, let $A_n$ be an event, and let $\{Y_n(y)\}_{y\in I}$ be a stochastic process. 
Let $r\in\R$. 
Suppose that the following two conditions hold:
\begin{enumerate}[(a)]
\item There is a continuous function $f$ on $I$ such that
\beq \label{eq:convergenced2}
    \lim_{n\to\infty} \prob(\{Y_n(y)\le r \} \cap A_n) =f(y)
    \qquad \text{for every} \quad y \in I\setminus \{y_0\}.
\eeq
\item There is a continuous function $g$ on $I\times I$ satisfying $g(y,y)=0$ for $y\in I$,  such that
\beqq
    \lim_{n\to\infty} \prob(Y_n(y)\le r, \, Y_n(y')>r  ) =g(y,y') 
    \qquad \text{for every} \quad  y, y'\in I \quad \text{with} \quad y\neq y'. 
\eeqq
\end{enumerate}
Then, \eqref{eq:convergenced2} also holds for $y=y_0$. 
\end{lem}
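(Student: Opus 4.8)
The plan is to show that the limit $\lim_{n\to\infty}\prob(\{Y_n(y_0)\le r\}\cap A_n)$ exists and equals $f(y_0)$, where $f$ is the continuous function provided by hypothesis (a). The natural strategy is a sandwiching argument: for $y$ near $y_0$ we compare $\prob(\{Y_n(y_0)\le r\}\cap A_n)$ with $\prob(\{Y_n(y)\le r\}\cap A_n)$, controlling the discrepancy using the ``swap'' probabilities in hypothesis (b). Concretely, for any $y\in I\setminus\{y_0\}$ we can write
\beqq
    \prob(\{Y_n(y_0)\le r\}\cap A_n) \le \prob(\{Y_n(y)\le r\}\cap A_n) + \prob(Y_n(y)> r,\ Y_n(y_0)\le r),
\eeqq
and symmetrically
\beqq
    \prob(\{Y_n(y)\le r\}\cap A_n) \le \prob(\{Y_n(y_0)\le r\}\cap A_n) + \prob(Y_n(y_0)> r,\ Y_n(y)\le r).
\eeqq
Both correction terms are dominated by the quantities appearing in (b) (dropping the event $A_n$ only increases the probability), so taking $\limsup$ and $\liminf$ as $n\to\infty$ gives
\beqq
    |{\textstyle\limsup_n}\,\prob(\{Y_n(y_0)\le r\}\cap A_n) - f(y)| \le g(y_0,y)+g(y,y_0),
\eeqq
and the same bound for the $\liminf$.

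The final step is to let $y\to y_0$. By the continuity of $f$ we have $f(y)\to f(y_0)$, and by the continuity of $g$ together with the normalization $g(y,y)=0$ we have $g(y_0,y)+g(y,y_0)\to 0$. Hence both $\limsup_n$ and $\liminf_n$ of $\prob(\{Y_n(y_0)\le r\}\cap A_n)$ are pinned to $f(y_0)$, which is exactly the assertion \eqref{eq:convergenced2} at $y=y_0$. One should be slightly careful about the direction of the inequalities when $A_n$ is present: the inclusion $\{Y_n(y_0)\le r\}\cap A_n \subseteq (\{Y_n(y)\le r\}\cap A_n) \cup \{Y_n(y)>r,\ Y_n(y_0)\le r\}$ is a genuine set inclusion, so the first displayed inequality is valid; for the reverse direction one uses $\{Y_n(y)\le r\}\cap A_n \subseteq (\{Y_n(y_0)\le r\}\cap A_n)\cup \{Y_n(y_0)>r,\ Y_n(y)\le r\}$, and in the correction term one simply forgets $A_n$, which is harmless since we only need an upper bound there.

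I do not anticipate a serious obstacle here: the argument is a soft topological sandwiching and requires no probabilistic input beyond monotonicity of measure and the two hypotheses. The only point demanding mild care is bookkeeping with $\limsup$ versus $\liminf$ — since we are not told a priori that $\lim_n \prob(\{Y_n(y_0)\le r\}\cap A_n)$ exists, we must run the estimate for both and conclude existence of the limit at the end. It is also worth noting that hypothesis (b) is stated with both events referring to plain probabilities (no $A_n$), which is exactly what makes the correction terms easy to bound after dropping $A_n$; this is the reason the lemma is phrased the way it is.
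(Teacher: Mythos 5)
Your proof is correct and follows essentially the same sandwiching argument as the paper: bound $\prob(\{Y_n(y_0)\le r\}\cap A_n)$ above and below by $\prob(\{Y_n(y)\le r\}\cap A_n)$ up to the swap probabilities from hypothesis (b), take $\limsup$ and $\liminf$, and then send $y\to y_0$ using continuity of $f$ and $g$ and $g(y_0,y_0)=0$.
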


\begin{proof}
Let $y \in I\setminus \{y_0\}$.  
Noting that  
\beqq \begin{split}
    \prob(\{Y_n(y_0)\le r \} \cap A_n) 
    \le \prob(\{Y_n(y)\le r \} \cap A_n) +  \prob(Y_n(y_0)\le r, Y_n(y)> r ), 
\end{split} \eeqq
we find that
\beqq \begin{split}
    \limsup_{n\to \infty} \prob(\{Y_n(y_0)\le r \} \cap A_n) \le f(y)+ g(y_0, y) .
\end{split} \eeqq
Similarly, since 
\beqq \begin{split}
    \prob(\{Y_n(y_0)\le r \} \cap A_n) 
    \ge \prob(\{Y_n(y)\le r  \} \cap A_n) -  \prob(Y_n(y_0)> r, Y_n(y)\le  r ), 
\end{split} \eeqq
we find that  
\beqq \begin{split}
    \liminf_{n\to \infty} \prob(\{Y_n(y_0)\le r \} \cap A_n) \ge f(y)- g(y, y_0). 
\end{split} \eeqq
Taking the limit as $y\to y_0$ and using the continuity of $f$ and $g$ and the fact that $g(y_0, y_0)=0$, we conclude that $\prob(\{Y_n(y_0)\le r \} \cap A_n)$ converges to $f(y_0)$ as $n\to \infty$. 
\end{proof}

\section{Asymptotic analysis of a function} \label{sec:asfunction}

When we evaluate the limits of the formulas in Proposition \ref{prop:cond}, we require the asymptotic properties of the functions $f_{M,N,T}(z) = e^{ N\log z - M\log(z+1) + Tz}$, defined in \eqref{eq:f_expr}, as the parameters $M, N, T$ tend to infinity. 
In this section, we summarize the relevant asymptotic results for this function.

\subsection{Asymptotic properties}

Let $\alpha_1, \alpha_2, \alpha_3 \in \R \setminus \{0\}$ and $\beta_1, \beta_2, \beta_3\in \R$. 
For every $L>0$, let $\delta^1_L$ and $\delta^2_L$ be real numbers such that 
\beqq
    \alpha_1 L + \beta_1 L^{1/2} + \delta^1_L \in \N, \qquad 
    \alpha_2 L + \beta_2 L^{1/2} + \delta^2_L \in \N, 
\eeqq
and assume that $\delta^1_L, \delta^2_L$ are uniformly bounded for all $L> 0$. 
Define the functions 
\beq \label{eq:Gaph} \begin{split}
    &\GG(z)=   - \alpha_1 \log (z+1) + \alpha_2\log z + \alpha_3 z, \\
    &\HH(z)= -\beta_1 \log(z+1)+ \beta_2 \log z  + \beta_3 z, \\
    &\EE_L(z)= -\delta^1_L \log (z+1) + \delta^2_{L}\log z, 
\end{split} \eeq
and 
\begin{equation} \label{eq:fdef22}
    \ff_{L}(z) := e^{L \GG(z) + L^{1/2} \HH(z)+ \EE_L(z)} . 
\end{equation}

\begin{lem} \label{lem:asymptotics_f1}
Let $\cp$ be a critical point of $\GG(z)$. 
Then, for every $\epsilon \in (0, 1/2)$, there exists a constant $L_0>0$ such that for all $L>L_0$ and $|w|\leq L^{\epsilon/3}$, 
\begin{equation} \label{eq:z_nbd_asym}
    \ff_{L}( \cp + w L^{-1/2}) 
    = \ff_{L}( \cp) e^{\frac{1}2 \GG''(\cp)  w^2 + \HH'(\cp) w}
    \big(1 + O(L^{-1/2+\epsilon}) \big).
\end{equation}
\end{lem}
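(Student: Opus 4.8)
The plan is to perform a routine Taylor expansion of each of the three exponents $L\GG$, $L^{1/2}\HH$, $\EE_L$ around $\cp$, after substituting $z = \cp + wL^{-1/2}$, and to control the error terms uniformly over the disk $|w|\le L^{\epsilon/3}$. Write $\ff_L(z) = e^{L\GG(z) + L^{1/2}\HH(z) + \EE_L(z)}$ and expand the argument of the exponential. The first observation is that $\cp$ is a critical point, so $\GG'(\cp)=0$; hence, by Taylor's theorem with remainder,
\beq
    L\,\GG(\cp + wL^{-1/2}) = L\,\GG(\cp) + \tfrac12 \GG''(\cp) w^2 + L^{-1/2} \cdot \tfrac16 \GG'''(\zeta_1)\, w^3
\eeq
for some $\zeta_1$ on the segment between $\cp$ and $\cp + wL^{-1/2}$, where the cubic (and higher) terms collect into an $O(L^{-1/2} |w|^3)$ correction. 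Similarly, $L^{1/2}\HH(\cp + wL^{-1/2}) = L^{1/2}\HH(\cp) + \HH'(\cp) w + O(L^{-1/2} w^2)$, and $\EE_L(\cp + wL^{-1/2}) = \EE_L(\cp) + O(L^{-1/2}|w|)$, the last because $\EE_L$ has coefficients $\delta^1_L,\delta^2_L$ that are uniformly bounded in $L$, so $\EE_L'$ is bounded near $\cp$ uniformly in $L$. Collecting all error terms, the exponent equals $L\GG(\cp) + L^{1/2}\HH(\cp) + \EE_L(\cp) + \tfrac12\GG''(\cp)w^2 + \HH'(\cp)w + \mathcal{E}_L(w)$ with $\mathcal{E}_L(w) = O(L^{-1/2}|w|^3 + L^{-1/2}w^2 + L^{-1/2}|w|)$.

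The second step is to verify that this aggregated error $\mathcal{E}_L(w)$ is $O(L^{-1/2+\epsilon})$ uniformly for $|w|\le L^{\epsilon/3}$. Indeed, the dominant contribution is the cubic term $L^{-1/2}|w|^3 \le L^{-1/2} (L^{\epsilon/3})^3 = L^{-1/2+\epsilon}$; the quadratic and linear contributions are $O(L^{-1/2+2\epsilon/3})$ and $O(L^{-1/2+\epsilon/3})$ respectively, both smaller. For this step one needs $\cp + wL^{-1/2}$ to stay in a fixed compact neighborhood of $\cp$ where $\GG, \HH, \EE_L$ are all analytic and their third derivatives (resp. second, first) are bounded; this holds once $L$ is large enough that $L^{\epsilon/3}L^{-1/2} = L^{\epsilon/3-1/2}$ is smaller than the distance from $\cp$ to the singularities $0$ and $-1$ of the logarithms, which determines the threshold $L_0$. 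Note one must also ensure $\cp \ne 0, -1$, which holds because $\cp$ is a finite critical point of $\GG$ and $\GG'$ blows up at $0$ and $-1$.

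The third and final step is to exponentiate: writing $e^{\text{exponent}} = \ff_L(\cp)\, e^{\frac12\GG''(\cp)w^2 + \HH'(\cp)w}\, e^{\mathcal{E}_L(w)}$ and using $e^{\mathcal{E}_L(w)} = 1 + O(\mathcal{E}_L(w)) = 1 + O(L^{-1/2+\epsilon})$, which is valid since $\mathcal{E}_L(w) \to 0$ uniformly, gives exactly \eqref{eq:z_nbd_asym}. I do not anticipate a genuine obstacle here — this is a standard Laplace/steepest-descent-type local expansion. The only point requiring a little care is bookkeeping: making sure the implied constants in the $O(\cdot)$ bounds on $\GG''', \HH'', \EE_L'$ are genuinely uniform in $L$ (which they are, because the $\delta^i_L$ are uniformly bounded and $\GG, \HH$ do not depend on $L$ at all), and that the disk $|w|\le L^{\epsilon/3}$ maps into the common domain of analyticity for all $L > L_0$. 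So the "hard part" is really just organizing the three separate Taylor remainders and confirming the exponent $\epsilon/3$ in the radius is exactly what makes the cubic error land at $L^{-1/2+\epsilon}$.
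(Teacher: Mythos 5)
Your proof is correct and follows essentially the same route as the paper: Taylor-expand each of $L\GG$, $L^{1/2}\HH$, and $\EE_L$ at $z=\cp$, use $\GG'(\cp)=0$, and check that the aggregated remainder is $O(L^{-1/2+\epsilon})$ on the disk $|w|\le L^{\epsilon/3}$. The paper states this in a single compressed display, while you spell out the bookkeeping of the three remainder terms, but the argument is identical.
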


\begin{proof}
This follows from Taylor's theorem expanded at $z=\cp$: 
\beqq
    \ff_{L}(z) = f_{L}(\cp) 
    e^{L \left[ \frac12 \GG''(\cp)  (z-\cp)^2+ O( | z-\cp|^3) \right] + L^{1/2} \left[ \HH'(z_c) (z- \cp)  + O(|z-\cp|^2) \right]
    + O(|z-\cp|)}. 
\eeqq
\end{proof}

\begin{lem} \label{lem:basic}
The critical points of $\GG(z)$ are
\beq \label{eq: z_pm}
    \cp^{\pm}= \frac{-\alpha_3+\alpha_1-\alpha_2 \pm \sqrt{\Qt}}{2\alpha_3} 
    \qquad \text{where} \quad 
    \Qt= \alpha_3^2 -2(\alpha_1 + \alpha_2)\alpha_3 + (\alpha_1-\alpha_2)^2, 
\eeq
with the following cases: 
\begin{enumerate}[(a)]
\item If $\alpha_1,\alpha_2 > 0$ and $\alpha_3>(\sqrt{\alpha_1}+\sqrt{\alpha_2})^2$, then  $\Qt>0$,  
$\GG''(\cp^{+}) <0$, $\GG''(\cp^{-}) >0$, and $-1< \cp^-< \cp^+<0$.  

\item If $\alpha_1 < 0$ and $\alpha_2,\alpha_3 > 0$, then $\Qt > 0$, $\GG''(\cp^{+}) <0$, $\GG''(\cp^{-}) <0$, and $\cp^- < -1 < \cp^+ < 0$.  

\item If $\alpha_2 < 0$ and $\alpha_1,\alpha_3 > 0$ then $\Qt > 0$, $\GG''(\cp^{+}) >0$, $\GG''(\cp^{-}) >0$, and $-1 < \cp^- < 0 < \cp^+$.  
\end{enumerate}
Furthermore,  
\beq \label{eq:Atpm}
    \GG''(\cp^{\pm})= \mp \frac{\sqrt{\Qt}}{2\alpha_1\alpha_2} \left[ (\alpha_1+\alpha_2)\alpha_3-  (\alpha_1-\alpha_2)^2 \pm  (\alpha_1-\alpha_2) \sqrt{\Qt} \right]. 
\eeq
\end{lem}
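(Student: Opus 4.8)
The plan is to reduce the entire statement to the analysis of a single quadratic polynomial, namely the one obtained by clearing denominators in $\GG'(z)=0$. Since $\GG'(z)=-\frac{\alpha_1}{z+1}+\frac{\alpha_2}{z}+\alpha_3$, the critical points of $\GG$ are exactly the zeros of
\[
    q(z):=z(z+1)\,\GG'(z)=\alpha_3 z^2+(\alpha_3+\alpha_2-\alpha_1)z+\alpha_2 ,
\]
which is a genuine quadratic since $\alpha_3\neq0$ in all three cases. The quadratic formula then yields the expression \eqref{eq: z_pm} for $\cp^{\pm}$ once one checks by a one-line expansion that the discriminant $(\alpha_3+\alpha_2-\alpha_1)^2-4\alpha_2\alpha_3$ equals $\Qt$. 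Throughout I would keep at hand the evaluations $q(-1)=\alpha_1$, $q(0)=\alpha_2$, and the Vieta relations $\cp^+\cp^-=\alpha_2/\alpha_3$, $\cp^++\cp^-=(\alpha_1-\alpha_2-\alpha_3)/\alpha_3$.

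For the sign of $\Qt$ I would view $\Qt$ as a quadratic in $\alpha_3$; its discriminant in $\alpha_3$ is $16\alpha_1\alpha_2$. In cases (b) and (c) this is negative, so $\Qt>0$ for every value of $\alpha_3$; in case (a) the two roots in $\alpha_3$ are $(\sqrt{\alpha_1}\pm\sqrt{\alpha_2})^2$, and the hypothesis $\alpha_3>(\sqrt{\alpha_1}+\sqrt{\alpha_2})^2$ puts $\alpha_3$ to the right of the larger root, again giving $\Qt>0$. For the locations of the critical points I would use that $\alpha_3>0$ makes $q$ an upward parabola and read off the positions of its roots relative to $-1$ and $0$ from the signs of $q(-1)=\alpha_1$ and $q(0)=\alpha_2$ together with Vieta: in case (a), $q(-1),q(0)>0$, $\cp^+\cp^->0$, and $-2<\cp^++\cp^-<0$ (the last from $\alpha_3>\max\{\alpha_1,\alpha_2\}$, a consequence of the hypothesis) force $-1<\cp^-<\cp^+<0$; in case (b), $q(-1)=\alpha_1<0<\alpha_2=q(0)$ forces $\cp^-<-1<\cp^+<0$; in case (c), $q(-1)=\alpha_1>0>\alpha_2=q(0)$ forces $-1<\cp^-<0<\cp^+$. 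Strictness at the endpoints $-1,0$ is because $q(-1),q(0)\neq0$.

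For the second derivative, differentiating $q(z)=z(z+1)\GG'(z)$ and evaluating at a critical point, where $\GG'$ vanishes, gives $q'(\cp^{\pm})=\cp^{\pm}(\cp^{\pm}+1)\,\GG''(\cp^{\pm})$. Since also $q'(z)=\alpha_3\big(2z-\cp^+-\cp^-\big)$, one gets $q'(\cp^{\pm})=\pm\alpha_3(\cp^+-\cp^-)=\pm\sqrt{\Qt}$, hence
\[
    \GG''(\cp^{\pm})=\frac{\pm\sqrt{\Qt}}{\cp^{\pm}(\cp^{\pm}+1)} .
\]
Combined with the locations of $\cp^{\pm}$ found above, the sign of $\cp^{\pm}(\cp^{\pm}+1)$ is pinned down in each case, and the claimed signs of $\GG''(\cp^{\pm})$ follow immediately.

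Finally, for the closed form \eqref{eq:Atpm}, I would substitute $\cp^{\pm}=\frac{(\alpha_1-\alpha_2-\alpha_3)\pm\sqrt{\Qt}}{2\alpha_3}$ and $\cp^{\pm}+1=\frac{(\alpha_1-\alpha_2+\alpha_3)\pm\sqrt{\Qt}}{2\alpha_3}$ into the last display, giving
\[
    \cp^{\pm}(\cp^{\pm}+1)=\frac{(\alpha_1-\alpha_2)^2-(\alpha_1+\alpha_2)\alpha_3\pm(\alpha_1-\alpha_2)\sqrt{\Qt}}{2\alpha_3^2} ,
\]
and then rationalize the resulting fraction for $\GG''(\cp^{\pm})$ by multiplying numerator and denominator by the conjugate, using the polynomial identity $\big[(\alpha_1+\alpha_2)\alpha_3-(\alpha_1-\alpha_2)^2\big]^2-(\alpha_1-\alpha_2)^2\,\Qt=4\alpha_1\alpha_2\alpha_3^2$ (immediate from $\Qt=\alpha_3^2-2(\alpha_1+\alpha_2)\alpha_3+(\alpha_1-\alpha_2)^2$) to remove the surd from the denominator. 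This last step is the only delicate point: the $\pm$ signs must be tracked carefully through the conjugation, and the displayed identity must be verified; everything else is routine manipulation of the single quadratic $q$. As a consistency check one can confirm that the signs of $\GG''(\cp^{\pm})$ read off from \eqref{eq:Atpm} agree with those obtained in the previous step.
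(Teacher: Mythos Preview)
Your proof is correct and, while it shares the paper's starting point of reducing to the quadratic $q(z)=\alpha_3 z^2+(\alpha_3+\alpha_2-\alpha_1)z+\alpha_2$, the execution is organized differently. The paper first asserts the closed form \eqref{eq:Atpm} as a direct computation, then sets $\At_\pm=\mp\GG''(\cp^\pm)$ and uses the product $\At_+\At_-=\alpha_3^2\Qt/(\alpha_1\alpha_2)$ together with the sign of $\At_++\At_-$ (or the ordering $\At_+>\At_-$) to deduce the signs of $\GG''(\cp^\pm)$ case by case; the locations of $\cp^\pm$ relative to $-1$ and $0$ are obtained by squaring inequalities of the form $\sqrt{\Qt}\lessgtr\alpha_3\pm(\alpha_1-\alpha_2)$. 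You instead read off the locations immediately from the signs of $q(-1)=\alpha_1$ and $q(0)=\alpha_2$ for the upward parabola $q$, and obtain the compact identity $\GG''(\cp^\pm)=\pm\sqrt{\Qt}\big/[\cp^\pm(\cp^\pm+1)]$ by differentiating $q=z(z+1)\GG'$ at a critical point, so the signs of $\GG''$ follow at once from the locations without ever touching \eqref{eq:Atpm}. Your route is a bit more structural and avoids the product/sum trick; the paper's route has the advantage that \eqref{eq:Atpm} is available from the start rather than derived last via rationalization. Either way the algebra is elementary, and your consistency check at the end (matching the signs from \eqref{eq:Atpm} with those from the intermediate formula) is a good safeguard.
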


\begin{proof}
Since 
\beqq
    \GG'(z)= - \frac{\alpha_1}{z+1} + \frac{\alpha_2}{z} + \alpha_3 = \frac{\alpha_3 z^2 + (\alpha_3-\alpha_1+\alpha_2)z + \alpha_2 }{(z+1)z}, 
\eeqq
we obtain \eqref{eq: z_pm}. 
It is also direct to verify \eqref{eq:Atpm}. 
Set $\At_{\pm}= \mp \GG''(\cp^{\pm})$. 
Note that $\At_+\At_- =  \frac{\alpha_3^2\Qt}{\alpha_1\alpha_2}$. 

\begin{enumerate}[(a)]
    \item Suppose $\alpha_1,\alpha_2 > 0$ and $\alpha_3 > (\sqrt{\alpha_1}+\sqrt{\alpha_2})^2$. 
Since $\Qt= (\alpha_3- (\sqrt{\alpha_1}+\sqrt{\alpha_2})^2)(\alpha_3- (\sqrt{\alpha_1}-\sqrt{\alpha_2})^2)$, 
the condition $\alpha_3> (\sqrt{\alpha_1}+\sqrt{\alpha_2})^2$ implies  $\Qt>0$. 
The same condition also implies that 
\beqq 
    \frac{\alpha_1\alpha_2}{\sqrt{\Qt}}(\At_++\At_-) 
    = (\alpha_1+\alpha_2)\alpha_3-  (\alpha_1-\alpha_2)^2 > 2\sqrt{\alpha_1\alpha_2}(\sqrt{\alpha_1}+\sqrt{\alpha_2})^2 > 0. 
\eeqq 
Since $\At_+\At_- =  \frac{\alpha_3^2\Qt}{\alpha_1\alpha_2}>0$, we find that $\At_{\pm}>0$. 
The inequalities $-1< \cp^-< \cp^+<0$ follow from the inequalities $\sqrt{\Qt} < \alpha_3\pm (\alpha_1-\alpha_2)$, which can be checked by squaring both sides. 

\item Suppose $\alpha_1 < 0$ and $\alpha_2,\alpha_3 > 0$. 
Since $\Qt = (\alpha_3 +\alpha_1 - \alpha_2)^2 - 4\alpha_1\alpha_3$, we find that $\Qt > 0$. 
In this case, $\At_+\At_- =  \frac{\alpha_3^2\Qt}{\alpha_1\alpha_2}<0$ and $\At_+>\At_-$. Thus, $A_+ > 0$ and $A_- < 0$. 
The property $\cp^- < -1 < \cp^+ < 0$ follows from the inequalities $|\alpha_3 +\alpha_1-\alpha_2|< \sqrt{\Qt} < \alpha_3-\alpha_1+\alpha_2$.

\item 
Suppose $\alpha_2 < 0$ and $\alpha_1,\alpha_3 > 0$. 
From $\Qt = (\alpha_3 -\alpha_1 + \alpha_2)^2 - 4\alpha_2\alpha_3$, we see that $\Qt > 0$. 
Since $\At_+\At_- =\frac{\alpha_3^2\Qt}{\alpha_1\alpha_2}<0$ and $\At_+<\At_-$, it follows that $\At_+ < 0$ and $\At_- > 0$. 
The property $-1 < \cp^- < 0 < \cp^+$ follows from noting that $ |\alpha_3 - \alpha_1+\alpha_2|< \sqrt{\Qt} < \alpha_3 + \alpha_1 - \alpha_2$. 
\end{enumerate}
\end{proof}

\begin{lem} \label{lem:asymptotics_f2} 
Let $\cp^\pm$ be the critical points of $\GG(z)$ as given in \eqref{eq: z_pm}. 
Let $b\in \R$, and for each $L>0$, define the circles 
\beq
    \con^{b, L}_{\lt}= \{ z\in \C \, : \, |z+1|= | \cp^- +1|+b L^{-\frac12} \}, 
    \qquad
    \con^{b, L}_{\rt}= \{ z\in \C \, : \, |z|= |\cp^+| +b L^{-\frac12} \}. 
\eeq
\begin{itemize}
\item If $\alpha_1,\alpha_2 > 0$ and $\alpha_3 > (\sqrt{\alpha_1}+\sqrt{\alpha_2})^2$, then both  statements \ref{eq:propertya}  and \ref{eq:propertyb}  below hold.  
\item If $\alpha_1 < 0$ and $\alpha_2, \alpha_3 > 0$, then statement \ref{eq:propertyb}  holds. 
\item If $\alpha_2 < 0$ and $\alpha_1, \alpha_3>0$, then statement \ref{eq:propertya}  holds.  
\end{itemize}

\begin{enumerate}[(a)] 
\item\label{eq:propertya} 
There exists a constant $c>0$ such that for every $0 < \epsilon < 1/2$, 
\beq \label{eq:f21}
    \left| \frac{\ff_{L}(z)}{\ff_{L}(\cp^-)} \right| =   O(e^{-cL^{2\epsilon/3}})  
    \qquad \text{for $z \in \Sigma^{b, L}_{\lt}\cap \{ z\in \C : |z-\cp^-| \ge L^{-\frac12 + \frac{\epsilon}{3}}\}$}
\eeq
as $L \to \infty$. Furthermore, there exist  $L_0>0$ and $C>0$ such that for all $L>L_0$,
\beq \label{eq:f2aub_xi}
    \left| \frac{\ff_{L}(z)}{\ff_{L}(\cp^-)} \right| \le C \quad \text{for $z \in \con^{b, L}_{\lt}$}. 
\eeq

\item \label{eq:propertyb} 
There exists a constant $c>0$ such that for every $0 < \epsilon < 1/2$,
\beq \label{eq:f22}
    \left| \frac{\ff_{L}(\cp^+)}{\ff_{L}(z)} \right| =   O(e^{-cL^{2\epsilon/3}})  
    \qquad \text{for $z \in \Sigma^{b, L}_{\rt}\cap \{ z\in \C : |z- \cp^+| \ge L^{-\frac12 + \frac{\epsilon}{3}}\}$}
\eeq
as $L\to \infty$. 
Furthermore, there exist $L_0>0$ and $C>0$ such that for all $L>L_0$,
\beq \label{eq:f2aub_eta}
    \left| \frac{\ff_{L}(\cp^+)}{\ff_{L}(z)} \right| \le C \quad \text{for  $z \in \con^{b, L}_{\rt}$}.
\eeq
\end{enumerate}   
\end{lem}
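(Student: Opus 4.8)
The plan is to carry out a Laplace/steepest–descent estimate resting on the claim that $\cp^-$ is a strict, non-degenerate maximum of $\re\GG$ on the circle $\con^{0,L}_{-}$ and, dually, that $\cp^+$ is a strict, non-degenerate minimum of $\re\GG$ on $\con^{0,L}_{+}$. I will describe part (a); part (b) is identical after replacing $-1$ by $0$, $\cp^-$ by $\cp^+$, and ``maximum'' by ``minimum''. Writing $|\ff_L(z)| = e^{L\re\GG(z) + L^{1/2}\re\HH(z) + \re\EE_L(z)}$, observe that on $\con^{b,L}_{-}$ the quantity $|z+1|$ is constant, and since $|\cp^-+1| \in (0,1)$ in the cases where part (a) is asserted, for $L$ large the quantities $\log|z|$, $\log|z+1|$, $\re z$ stay bounded and bounded away from the singular value $0$; hence $\re\HH$ and $\re\EE_L$ are $O(1)$ on the circle. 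Moreover $\GG'(\cp^-)=0$, so by Taylor's theorem shifting the radius from $|\cp^-+1|$ to $|\cp^-+1| + bL^{-1/2}$ moves the maximum of $\re\GG$ over the circle by only $O(L^{-1})$. Thus the whole lemma reduces to controlling $\re\GG(z) - \re\GG(\cp^-)$ along the unperturbed circle.

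The key computation is that, parametrizing $\con^{0,L}_{-}$ by $z = -1 + \rho e^{\ii\theta}$ with $\rho = |\cp^-+1|$, one has $\frac{d}{d\theta}\re\GG(z) = \rho\sin\theta\bigl(\frac{\alpha_2}{|z|^2} - \alpha_3\bigr)$, where $|z|^2 = 1 - 2\rho\cos\theta + \rho^2$ attains its minimum over $\theta$ precisely at $z = \cp^-$. So it suffices to show the bracketed factor is $<0$ for every $\theta$. When $\alpha_2 < 0$ this is immediate; when $\alpha_2 > 0$ the factor is decreasing in $|z|^2 \ge |\cp^-|^2$, so it is at most its value at $\cp^-$, and a short manipulation using $\GG'(\cp^-)=0$ gives the identity $\frac{\alpha_2}{|\cp^-|^2} - \alpha_3 = -|\cp^-+1|\,\GG''(\cp^-)$, which is negative since $\GG''(\cp^-)>0$ in those cases by Lemma \ref{lem:basic}. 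Hence $\frac{d}{d\theta}\re\GG(z)$ has the strict sign of $-\sin\theta$, so $\re\GG$ is strictly increasing on $(-\pi,0)$ and strictly decreasing on $(0,\pi)$, making $\cp^-$ the unique maximum; it is non-degenerate because $\GG''(\cp^-)\neq 0$. The analogous identity behind part (b) is $\frac{\alpha_1}{|\cp^++1|^2} - \alpha_3 = |\cp^+|\,\GG''(\cp^+)$, negative in the relevant cases ($\GG''(\cp^+)<0$ there, or $\alpha_1<0$ outright), which makes $\cp^+$ the unique, non-degenerate minimum on $\con^{0,L}_{+}$.

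It remains to convert unimodality into the quantitative bounds by the usual two-regime split. For $z$ on the circle with $|z-\cp^-|\ge\delta_0$ (a fixed small constant), compactness and strict unimodality give a uniform gap $\re\GG(\cp^-)-\re\GG(z)\ge c(\delta_0)>0$, which survives the $O(L^{-1/2})$ transfer to $\con^{b,L}_{-}$ and dominates the $O(L^{1/2})$ error coming from $\HH$, yielding a bound of order $e^{-cL}$. For $L^{-1/2+\epsilon/3}\le|z-\cp^-|\le\delta_0$, Taylor expansion of $\GG$ at $\cp^-$ along the shifted circle gives $\re\GG(\cp^-)-\re\GG(z) = \frac12\rho^2\GG''(\cp^-)\,\theta^2\,(1+o(1)) + O(L^{-1})$ with $|\theta|$ comparable to $|z-\cp^-|$; the quadratic term beats both the $O(L^{-1})$ radius correction and the $L^{1/2}\cdot O(\theta^2)$ contribution of $\HH$, producing the bound $O(e^{-cL^{2\epsilon/3}})$ of \eqref{eq:f21} (and \eqref{eq:f22} for part (b)). Evaluating the same estimate at $z=\cp^-$ itself, or at any point of $\con^{b,L}_{-}$ within $\delta_0$ of it, gives the uniform $O(1)$ bound \eqref{eq:f2aub_xi} (resp. \eqref{eq:f2aub_eta}) on the whole circle. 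I expect the only genuinely delicate step to be the sign verification of the bracketed factor via the identities relating it to $\GG''(\cp^\pm)$ and Lemma \ref{lem:basic}; the remainder is bookkeeping of the three scales $L$, $L^{1/2}$, and $1$.
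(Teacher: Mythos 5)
Your proposal is correct and has the same overall architecture as the paper's proof (Taylor expansion and a wedge estimate near $\cp^\pm$, strict unimodality of $\re\GG$ on the circle away from $\cp^\pm$, bookkeeping of the three scales). The one genuine point of departure is how the unimodality is verified: the paper's Lemma~\ref{lemma:G1decay} establishes the sign of $\frac{\alpha_2}{1+s^2-2s\cos\theta}-\alpha_3$ by showing $|\cp^-+1| < 1-\sqrt{\alpha_2/\alpha_3}$ directly from the explicit formula \eqref{eq: z_pm} for $\cp^-$ (and similarly for $\cp^+$), whereas you observe the clean second-derivative identity $\frac{\alpha_2}{|\cp^-|^2}-\alpha_3 = -|\cp^-+1|\GG''(\cp^-)$ (and the mirror identity for $\cp^+$), so that the needed negativity follows from the sign of $\GG''(\cp^\mp)$ already recorded in Lemma~\ref{lem:basic}, with the case $\alpha_2<0$ (resp.\ $\alpha_1<0$) handled trivially. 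This is a nice variant: it recognizes the paper's algebraic inequality \eqref{eq:derGspm} as being nothing but the second-derivative test at $\cp^\mp$ in disguise, and it avoids redoing explicit manipulations with $\Qt$. Two small inaccuracies in your write-up, neither affecting the conclusion: the $\HH$-contribution in the Taylor zone is $L^{1/2}\cdot O(|\theta|)$, not $L^{1/2}\cdot O(\theta^2)$; and deducing \eqref{eq:f2aub_xi}, \eqref{eq:f2aub_eta} from the Taylor estimate is not quite ``evaluating the same estimate''---one needs the elementary observation (as in the paper, just after \eqref{eq:Fes1}) that $-\tfrac{A}{8}u^2L + CuL^{1/2}$ is uniformly bounded above in $u\ge 0$, e.g.\ by completing the square.
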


\subsection{Proof of Lemma \ref{lem:asymptotics_f2}} \label{sec:pf_asymptotics_f}

We use the following result. 

\begin{lem} \label{lemma:G1decay}
\begin{enumerate}[(a)]
\item If $\alpha_1,\alpha_2 > 0$ and $\alpha_3 > (\sqrt{\alpha_1}+\sqrt{\alpha_2})^2$, then 
\beq \label{eq:derGspm}
    |\cp^-+1| < s_-:=1-\sqrt{\alpha_2/\alpha_3}, \qquad |\cp^+|< s_+:=1- \sqrt{\alpha_1/\alpha_3},
\eeq
and, for every $s \in (0, \mr s_-)$,  
\beq\label{eq:derG1the1} \begin{split}
    &\frac{\partial }{\partial \theta}  \re \GG(-1+se^{\tn{\i} \theta })  < 0 \quad \text{for $\theta\in (0,\pi)$;} \qquad 
    \frac{\partial }{\partial \theta}  \re \GG(-1+se^{\tn{\i} \theta })  > 0 \quad \text{for $\theta\in (-\pi, 0)$,}
\end{split} \eeq
and, for every $s \in (0, \mr s_+)$,  
\beq \label{eq:derG1the2} \begin{split}
            &\frac{\partial }{\partial \theta}  \re \GG(se^{\tn{\i} \theta })  < 0 \quad \text{for $\theta\in (0,\pi)$;} \qquad 
            \frac{\partial }{\partial \theta}  \re \GG(se^{\tn{\i} \theta })  > 0 \quad \text{for $\theta\in (-\pi, 0)$.}
\end{split} \eeq
\item If $\alpha_1 < 0$ and $\alpha_2,\alpha_3 > 0$, then \eqref{eq:derG1the2} holds with $\mr s_+ = 1$.  
\item If $\alpha_2 < 0$ and $\alpha_1,\alpha_3 > 0$, then \eqref{eq:derG1the1} holds with $\mr s_- = 1$. 
\end{enumerate}
\end{lem}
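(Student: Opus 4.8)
The plan is to reduce the whole statement to one–variable calculus by using that $\re\log w=\log|w|$: on a circle centered at $-1$ the term $-\alpha_1\log(z+1)$ contributes a $\theta$–independent real part, and on a circle centered at $0$ the term $\alpha_2\log z$ contributes a $\theta$–independent real part. Since $s<1$ in every case below, $-1+se^{\ii\theta}$ never equals $0$ and $se^{\ii\theta}$ never equals $-1$, so only $\log|\cdot|$ enters and no branch choice is needed. With $z=-1+se^{\ii\theta}$ one has $\re\GG(-1+se^{\ii\theta})= -\alpha_1\log s+\tfrac{\alpha_2}{2}\log(1-2s\cos\theta+s^2)+\alpha_3(s\cos\theta-1)$, hence
\[
\frac{\partial}{\partial\theta}\,\re\GG(-1+se^{\ii\theta})= -s\sin\theta\,\Big(\alpha_3-\frac{\alpha_2}{1-2s\cos\theta+s^2}\Big),
\]
and, with $z=se^{\ii\theta}$ (the $\alpha_2\log z$ term drops out of the $\theta$–derivative on $\{|z|=s\}$),
\[
\frac{\partial}{\partial\theta}\,\re\Go(se^{\ii\theta})= -s\sin\theta\,\Big(\alpha_3-\frac{\alpha_1}{1+2s\cos\theta+s^2}\Big).
\]

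Next I would control the signs. For all real $\theta$, $1-2s\cos\theta+s^2\ge(1-s)^2$ and $1+2s\cos\theta+s^2\ge(1-s)^2$. So if $s<s_-=1-\sqrt{\alpha_2/\alpha_3}$, i.e. $(1-s)^2>\alpha_2/\alpha_3$, then $\alpha_3-\alpha_2/(1-2s\cos\theta+s^2)\ge\alpha_3-\alpha_2/(1-s)^2>0$ for every $\theta$, so the first derivative above has the sign of $-\sin\theta$: negative on $(0,\pi)$, positive on $(-\pi,0)$. This is \eqref{eq:derG1the1}, and \eqref{eq:derG1the2} follows the same way from $s<s_+=1-\sqrt{\alpha_1/\alpha_3}$. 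In case (b) ($\alpha_1<0$) I would instead note that $\alpha_3-\alpha_1/(1+2s\cos\theta+s^2)>\alpha_3>0$ automatically for all $s\in(0,1)$, so $\mr s_+=1$ is admissible; symmetrically, in case (c) ($\alpha_2<0$) one gets $\mr s_-=1$. In both of these the relevant point stays off the singularity of the logarithm because $s<1$.

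It then remains to prove the location bounds \eqref{eq:derGspm} in case (a). By Lemma \ref{lem:basic}(a), $\cp^-,\cp^+$ are real with $-1<\cp^-<\cp^+<0$, so $|\cp^-+1|=\cp^-+1$ and $|\cp^+|=-\cp^+$, and it suffices to show $\cp^-<-\sqrt{\alpha_2/\alpha_3}$ and $\cp^+>\sqrt{\alpha_1/\alpha_3}-1$. Since $\cp^\pm$ are the roots of the upward-opening parabola $p(x)=\alpha_3x^2+(\alpha_3-\alpha_1+\alpha_2)x+\alpha_2$ (the numerator of $\GG'$), each inequality follows once we check $p<0$ at the corresponding test point, so that the test point lies strictly between the roots. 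A direct substitution, simplified using $\alpha_3(1-\sqrt{\alpha_2/\alpha_3})^2=(\sqrt{\alpha_3}-\sqrt{\alpha_2})^2$ and the analogous identity, gives
\[
p\big({-}\sqrt{\alpha_2/\alpha_3}\big)=\sqrt{\alpha_2/\alpha_3}\,\big(\alpha_1-(\sqrt{\alpha_3}-\sqrt{\alpha_2})^2\big),\qquad p\big(\sqrt{\alpha_1/\alpha_3}-1\big)=\sqrt{\alpha_1/\alpha_3}\,\big(\alpha_2-(\sqrt{\alpha_3}-\sqrt{\alpha_1})^2\big),
\]
and both are negative precisely because $\alpha_3>(\sqrt{\alpha_1}+\sqrt{\alpha_2})^2$ is the statement $\sqrt{\alpha_3}>\sqrt{\alpha_1}+\sqrt{\alpha_2}$, which (together with the immediate $\sqrt{\alpha_3}>\sqrt{\alpha_1}$ and $\sqrt{\alpha_3}>\sqrt{\alpha_2}$) yields $\sqrt{\alpha_1}<\sqrt{\alpha_3}-\sqrt{\alpha_2}$ and $\sqrt{\alpha_2}<\sqrt{\alpha_3}-\sqrt{\alpha_1}$. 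The one step that needs genuine care is this one: the test points must be chosen so that the sign of $p$ there collapses to exactly the standing hypothesis (equivalently, one uses the factorization $\Qt=(\alpha_3-(\sqrt{\alpha_1}+\sqrt{\alpha_2})^2)(\alpha_3-(\sqrt{\alpha_1}-\sqrt{\alpha_2})^2)$ from the proof of Lemma \ref{lem:basic}(a) and tracks which explicit root is $\cp^-$ and which is $\cp^+$); everything else is elementary.
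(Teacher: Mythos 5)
Your proof is correct and follows essentially the same route as the paper: you compute the $\theta$-derivative of $\re\GG$ on the two circles, use the elementary lower bound $1\pm 2s\cos\theta+s^2\ge(1-s)^2$ to force the factor in parentheses to have a definite sign, and observe that in cases (b) and (c) the sign is automatic because the relevant $\alpha_i$ is negative. The only place you deviate from the paper is the location bounds \eqref{eq:derGspm}: the paper manipulates the explicit formulas for $\cp^\pm$ from \eqref{eq: z_pm} and reduces the desired inequality to positivity of $2\Qt+2(\alpha_3\mp(\alpha_1-\alpha_2))\sqrt{\Qt}$, while you instead evaluate the numerator quadratic $p(x)=\alpha_3 x^2+(\alpha_3-\alpha_1+\alpha_2)x+\alpha_2$ of $\GG'$ at the two test points $-\sqrt{\alpha_2/\alpha_3}$ and $\sqrt{\alpha_1/\alpha_3}-1$ and show $p<0$ there, which sidesteps $\sqrt{\Qt}$ entirely and lands directly on the hypothesis $\sqrt{\alpha_3}>\sqrt{\alpha_1}+\sqrt{\alpha_2}$. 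Both verifications are correct and equally elementary; yours is arguably a cleaner piece of algebra for that step.
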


\begin{proof}
(a) Suppose $\alpha_1,\alpha_2 > 0$ and $\alpha_3 > (\sqrt{\alpha_1}+\sqrt{\alpha_2})^2$. 
From the formula \eqref{eq: z_pm} for $\cp^\pm$, 
the properties $\mr s_-> |\cp^-+1|=\cp^-+1$ and $\mr s_+> |\cp^+|=-\cp^+$ hold since 
\beqq \begin{split}
    (\alpha_3 - \alpha_1 + \alpha_2+\sqrt{\Qt})^2 -  4\alpha_2\alpha_3
    &= 2 \Qt + 2(\alpha_3 - \alpha_1+\alpha_2) \sqrt{ \Qt} >0,  \\
    (\alpha_3 + \alpha_1- \alpha_2+\sqrt{\Qt})^2 -  4\alpha_1\alpha_3
    &= 2 \Qt + 2(\alpha_3 + \alpha_1-\alpha_2) \sqrt{ \Qt} >0.
\end{split} \eeqq
From the formula of $\GG$, we have 
\beqq \begin{split}
    \frac{\partial }{\partial \theta} \re\GG( -1+se^{\tn{\i} \theta } ) 
    = s \sin \theta \left(  \frac{\alpha_2}{1+s^2 - 2s\cos \theta} - \alpha_3 \right), 
    \quad 
    \frac{\partial }{\partial \theta} \re\GG(se^{\tn{\i} \theta } ) 
    = s \sin \theta \left( \frac{\alpha_1}{1+s^2 + 2s\cos \theta} - \alpha_3\right). 
\end{split}  \eeqq
Note that $0<s_\pm<1$. If $s\in (0, \mr s_-)$, then 
\beqq
    \frac{\alpha_2}{1+s^2 - 2s\cos \theta} - \alpha_3 \le \frac{\alpha_2 }{(1-s)^2}-  \alpha_3 <0
\eeqq
for every $\theta$. Similarly, if $s\in (0, \mr s_+)$, then 
\beqq
    \frac{\alpha_1}{1+s^2 +2s\cos \theta} - \alpha_3 \le \frac{\alpha_1 }{(1-s)^2}-  \alpha_3  <0 
\eeqq
for every $\theta$. Hence, \eqref{eq:derG1the1} and  \eqref{eq:derG1the2} hold. 

(b) If $\alpha_1 < 0$ and $\alpha_2,\alpha_3 > 0$, then 
$ \frac{\alpha_1}{1+s^2 + 2s\cos \theta} - \alpha_3 \leq \frac{\alpha_1}{(1+s)^2} - \alpha_3
< 0$
for every $s \in (0,1)$ and all $\theta$. Thus, \eqref{eq:derG1the2} holds with $s_+=1$. 

(c) If $\alpha_2 < 0$ and $\alpha_1,\alpha_3 > 0$, then 
$ \frac{\alpha_2}{1+s^2 - 2s\cos \theta} - \alpha_3 \leq \frac{\alpha_2}{(1+s)^2} - \alpha_3
< 0$
for every $s \in (0,1)$ and all $\theta$. Thus, \eqref{eq:derG1the1} holds with $s_-=1$. 
\end{proof}

\medskip

\begin{proof}[Proof of Lemma \ref{lem:asymptotics_f2}] 
$\bullet$ Suppose $\alpha_1,\alpha_2 > 0$ and $\alpha_3 > (\sqrt{\alpha_1}+\sqrt{\alpha_2})^2$. 
Let $\At_\pm = \mp \GG''(\cp^\pm)$. 
By Lemma \ref{lem:basic} (a), we have $A_\pm > 0$. 
By Taylor's theorem at $z=\cp^{\pm}$, there exists $\delta>0$ such that 
\beqq
    \GG(z) - \GG(\cp^{\pm})  =  \mp \frac{\At_\pm}{2}  (z-\cp^{\pm})^2 +  \mc E_{1,\pm} (z) 
\eeqq
where the function $\mc E_{1,\pm} (z)$ satisfies  
$|\mc E_{1,\pm} (z) | \le \frac{\At_\pm}{8} |z -  z^{\pm}_c|^2$ for $|z-\cp^\pm|\le \delta$. 
Note that $\re (w^2)  \le - \frac12 |w|^2$ if $\arg w\in [\frac{\pi}3, \frac{2\pi}{3}] \cup [\frac{4\pi}{3}, \frac{5\pi}3]$, since $\cos (2\theta)\le \cos (\frac{2\pi}{3})\le -\frac12$ for such $\arg w=\theta$.  
Thus, 
\beq \label{eq:Goa} \begin{split}
    &\mp \re (\GG(z) - \GG(\cp^{\pm}))  = \frac{\At_\pm}{2} \re [(z-\cp^{\pm})^2] +   \re (\mp \mc E_{1,\pm}(z) )
    \le - \frac{\At_\pm}{8} |z-\cp^{\pm}|^2 \quad \text{for $|z-\cp^\pm|\le \delta$}
\end{split}
\eeq
whenever  
\beqq
    \arg(z - \cp^\pm)\in \left[\pi/3, 2\pi/3\right] \cup \left[4\pi/3, 5\pi/3\right] .
\eeqq
Moreover, possibly after shrinking $\delta>0$, there exists $C>0$  
such that 
\beq \label{eq:HHHH}
        |\HH(z) - \HH(z^{\pm}_c) |\le C |z- \cp^{\pm}| \quad \text{for $|z- \cp^\pm|\le \delta$,}
\eeq
and 
\beq \label{eq:EEEE}
        |\EE_L(z) - \EE_L(\cp^{\pm}) |\le C \quad \text{for $|z-\cp^\pm|\le \delta$ and for every $L>0$.} 
\eeq

Fix $\epsilon\in (0, 1/2)$, and divide the circle $\con^{b, L}_{\lrt}$ into two parts:  
\beq \begin{split}
    &\con^{b, L, 1}_{\lrt}:= \con^{b, L}_{\lrt} \cap \{z\in \C : 0 \le |z-z_c^\pm| \le \delta \},\qquad 
    \con^{b, L, 2}_{\lrt}:= \con^{b, L}_{\lrt} \cap \{z\in \C :  |z-z_c^\pm| \ge \delta \} .
\end{split} \eeq
Since the circles $\con^{b, L}_{\lrt}$ are close to vertical lines near the points $z_c^\pm$, 
after adjusting $\delta>0$ if necessary, 
we have $\arg(z - z^\pm_c)\in [\frac{\pi}3, \frac{2\pi}{3}] \cup [\frac{4\pi}{3}, \frac{5\pi}3]$ for $z\in \con^{b, L, 1}_{\pm}$ and for all sufficiently large $L>0$. 
Therefore, from \eqref{eq:Goa}, \eqref{eq:HHHH},  and \eqref{eq:EEEE}, there exist $L_0>0$ such that
\beq \label{eq:Fes1} 
    \mp \log \left| \frac{\ff_{L}(z)}{\ff_{L}(z^\pm_c)} \right|
    \le  - \frac{\At_\pm}{8} | z- z^\pm_c|^2 L +  C |z- z^\pm_c| L^{1/2}  +C 	
    \qquad \text{for $z\in \con^{b, L, 1}_{\lrt}$}
\eeq 
and for every $L \ge L_0$. 
Thus, $\mp \log \left| \frac{\ff_{L}(z)}{\ff_{L}(z^\pm_c)} \right|$ is uniformly bounded from the above on $ \con^{b, L, 1}_{\lrt}$ for all $L\ge L_0$. We also note that there exists $L_1>0$ such that 
\beqq
     - \frac{\At_\pm}{8} | z- z^\pm_c|^2 L +  C |z- z^\pm_c| L^{1/2}
     \le - \frac{\At_\pm}{16} | z- z^\pm_c|^2 L \le - \frac{\At_\pm}{16} L^{\frac{2\epsilon}{3}}
\eeqq
if $|z-z^\pm_c|\ge L^{-\frac12+ \frac{\epsilon}{3}}$, for all $L\ge L_1$. 

Now we consider the part $\con^{b, L,2}_{\lrt} $. 
Let $z_1^\pm$ denote the endpoint of the arc $\con^{b, L, 1}_{\lrt}$ in the upper half-plane. 
Because $\alpha_3 > (\sqrt{\alpha_1}+\sqrt{\alpha_2})^2$, 
the inequalities \eqref{eq:derGspm} holds. 
Thus, setting $s= |1+\cp^-|+ bL^{-1/2} $ or $s=|\cp^+|+ bL^{-1/2}$, 
the properties \eqref{eq:derG1the1} and \eqref{eq:derG1the2} hold for all $z \in \Gamma_{\pm}^{b,L,2}$ by Lemma \ref{lemma:G1decay}, for all large enough $L$. 
Hence, noting that $\re \GG(z)= \re \GG(\bar{z})$, we find that there exists $L_2 > 0$ such that
\beq \label{eq:Fesintermediate}
    \mp \re \left( \GG(z) - \GG(z_c^\pm)\right) \le \mp \re \left( \GG(z_1^\pm) - \GG(z_c^\pm)\right) \text{ for every } z\in \con^{b, L, 2}_{\lrt}
\eeq
for all $L \geq L_2$.
From \eqref{eq:Goa}, we see that 
$\mp \re \left( \GG(z^\pm_1) - \GG(z_c^\pm)\right) \le - \frac{\At_\pm}{8} |z^\pm_1-z^\pm_c|^2 = - \frac{\At_\pm}{8} \delta^2$. 
Since $\con^{b, L,2}_{\lrt}$ lies in a compact subset of $\C\setminus \{-1, 0\}$ for all sufficiently large $L$, we find that  
there exist $L_3>0$ and $K>0$ such that 
\beq \label{eq:Fes2} 
    \mp \log \left| \frac{\ff_{L}(z)}{\ff_{L}(\mr z^\pm_c)} \right| \le - \frac{\At_\pm}{8} \delta^2  L + K L^{1/2}
\eeq
for every $z$ in the arc $\con^{b, L,2}_{\lrt} $, whenever $L\ge L_3$. 

The estimates \eqref{eq:f21}, \eqref{eq:f2aub_xi}, \eqref{eq:f22}, and \eqref{eq:f2aub_eta} follow from the above computations.

\medskip 
$\bullet$ Suppose $\alpha_1 < 0$ and $\alpha_2,\alpha_3 > 0$. By Lemma \ref{lem:basic} (b), we have $A_+ > 0$. 
The proof of (b) for the $(+)$-case applies here as well, and thus the result follows.

\medskip
$\bullet$ Suppose $\alpha_2 < 0$ and $\alpha_1,\alpha_3 > 0$. 
by Lemma \ref{lem:basic} (c), we have $A_- > 0$. 
The proof of (a) for the $(-)$-case applies here as well, and thus the result follows.
\end{proof}

\section{Proof of Theorem \ref{thm:diagfluc}} \label{sec:proofofdiag}

To prove the theorem, we show that for every $m\ge 2$,
\beq \label{eq:diagpr}
    \prob \left( \bigcap_{i=1}^{m-1} \left\{  \frac{ \LPP( \td_i a \LL +  \xd_i\frac{a(\ell-a+b)}{\ell\sqrt{D}} \ab  \LL^{1/2}, \td_i b\LL -  \xd_i\frac{b(\ell+a-b)}{\ell\sqrt{D}} \ab  \LL^{1/2})- \td_i \lv \LL}{\ab \LL^{1/2}}  > \hd_i \right\}\, \bigg| \, \LPP(a\LL, b\LL)= \ell \LL \right)  
\eeq
converges, as $\LL\to \infty$, to 
\beqq
	\Ql:= \prob\left( \bigcap_{i=1}^{m-1} \left\{ \B_1(\td_i) - \left| \B_2(\td_i)-\xd_i \right|  \} > \hd_i\right\} \right)
\eeqq
for every 
\beq \label{eq:txhvecd}
    \mb\td=(\td_1, \cdots,\td_{m-1})\in (0,1)^{m-1}, 
    \quad \mb \xd=(\xd_1, \cdots,\xd_{m-1})\in \R^{m-1}, 
    \quad \mb \hd=(\hd_1, \cdots,\hd_{m-1})\in \R^{m-1}. 
\eeq
Here we use $\LL$ as the large parameter, whereas in the theorem we used $N$. 
Using the identity $\min(a,b)= \frac{a+b}{2}- \frac{|a-b|}{2}$, 
\beq \label{eq:limitofca1}
    \Ql= \prob \left( \bigcap_{i=1}^{m-1} \left\{  \min \{\sqrt{2} \cd_+ \B_+ (\td_i) - \xd_i, \sqrt{2} \cd_-\B_-(\td_i) +\xd_i\} > \hd_i\right\} \right) 
\eeq
where $\cd_\pm$ are defined in \eqref{eq:abdf} and $\B_{\pm}$ are independent Brownian bridges. 

Since the limit \eqref{eq:limitofca1} is a continuous function of $t_1, \cdots, t_{m-1}$, successive applications of Lemma \ref{lem:bootstrap} imply that, if the result holds for the case when $\td_i\neq \td_j$ for every $i\neq j$, then it also holds for all $\td_1, \cdots, \td_{m-1}\in (0,1)$.  
Thus, it suffices to assume that all $\td_i$ are distinct. By re-labelling the indices if necessary, we may further assume that $\td_1< \cdots< \td_{m-1}$. 
We now prove that \eqref{eq:diagpr} converges to \eqref{eq:limitofca1} under this assumption. 

\bigskip

Fix an integer $m\geq 2$ and fix the numbers \eqref{eq:txhvecd}, assuming now that 
\beqq
    0  < \td_1 <  \cdots < \td_{m-1} < 1. 
\eeqq  
We use $L$ as the large parameter instead of $N$. For real numbers $\LL>0$, define  
\beqq \begin{split}
    &\bM_L = (M_{L,1}, \cdots, M_{L,{m}})\in \N^m, \qquad \bN_L = (N_{L,1}, \cdots, N_{L,{m}})\in \N^m, \qquad \bT_L = (T_{L,1}, \cdots, T_{L,{m}}) \in \R_+^m
\end{split} \eeqq
where\footnote{Recall that $\lceil s \rceil$ denotes the least integer greater than or equal to $s$.} for $i=1,  \cdots, m-1$, 
\beq\label{eq:j_scaling}
    M_{L,i} = \big\lceil \td_i  a \LL + \xd_i \frac{a( \lv -  a +  b)\ab}{ \lv \sqrt{D}}  \LL^{1/2}  \rceil, 
    \quad  N_{\LL, i} =  \big\lceil  \td_i b L - \xd_i \frac{ b( \lv +  a -  b)\ab}{ \lv \sqrt{D}} \LL^{1/2}  \big\rceil , 
    \quad T_{\LL, i} = \td_i \lv L + \hd_i \ab  \LL^{1/2}, 
\eeq
and 
\beqq
    M_{L,m}= \lceil aL \rceil, \qquad N_{L,m}=\lceil bL \rceil, \qquad T_{L,m}= \ell L. 
\eeqq
We also set $M_{L,0}=N_{L,0}=T_{L,0}=0$.

Recalling $\eqref{eq:LPPforreal}$, Proposition $\ref{prop:cond}$ implies that Theorem $\ref{thm:diagfluc}$ is proved if we show that 
\beq \label{eq:Qhatlimit}
    \lim_{L\to \infty} \frac{\QQ_m(\bM_L, \bN_L, \bT_L)}{\QQ_1(M_{L,m}, N_{L,m}, T_{L,m})} = \Ql.  
\eeq
Recall that 
\beq \label{eq:QQtempa} 
    \QQ_m(\bM_L, \bN_L, \bT_L) =  \sum_{\bn\in\N^m}\frac{1}{(\bn!)^2}  \QQ_m^{(\bn)}(\bM_L, \bN_L, \bT_L)
\eeq
where $\QQ_m^{(\bn)}$ is given by the formula \eqref{eq:Q_hat_n}. 
The following lemma shows that the term with $\bn = (1,\cdots,1)$ is responsible for the limit.  
For $L>0$, define the constant
\beq \label{eq:cKdef}
    \cK_L:= \left(\frac{\lv+ a- b+\sqrt{D}} {\lv+ a- b -\sqrt{D}}  \right)^{\lceil aL \rceil} 
    \left(\frac{\lv -a +b+\sqrt{D}} {\lv - a + b -\sqrt{D}}  \right)^{\lceil bL \rceil} 
    e^{- \sqrt{D} L}, 
\eeq
where $D$ is defined in \eqref{eq:Ddefn}.

\begin{lem}\label{lem:Q_hat_111_limit}
Set $\bone = (1,\cdots,1)$. We have 
\beqq
    \lim_{L \to \infty} \frac{2\pi  LD} {\sqrt{ab} \cK_L} \QQ_m^{(\bone)}(\bM_L, \bN_L, \bT_L) 
    = \Ql .
\eeqq
\end{lem}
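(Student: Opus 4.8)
The plan is to start from the closed form for $\QQ^{(\bone)}_m$ in Lemma~\ref{lem:Q_hat_111} and run a steepest-descent analysis, following the template of \cite{LW24}. Write $\GG(z)=-a\log(z+1)+b\log z+\lv z$, with critical points $\cp^{\pm}=\frac{-\lv+a-b\pm\sqrt D}{2\lv}$ (the roots of $\lv z^2+(\lv-a+b)z+b=0$), which by Lemma~\ref{lem:basic}(a) satisfy $-1<\cp^-<\cp^+<0$, $\GG''(\cp^-)>0$, $\GG''(\cp^+)<0$ since $\lv>\bar{\LPP}(a,b)=(\sqrt a+\sqrt b)^2$. Under the scaling \eqref{eq:j_scaling} each ratio $\ff_i$ in $\FF^{(\bone)}_{\bM_L,\bN_L,\bT_L}$ is an instance of the function of Section~\ref{sec:asfunction} with $\GG$ replaced by $(\td_i-\td_{i-1})\GG$ and with a subleading term $\HH_i$ fixed by the $L^{1/2}$-coefficients of $M_{L,i},N_{L,i},T_{L,i}$ (with $\td_0=0$, $\td_m=1$); case (a) of Lemmas~\ref{lem:basic} and \ref{lem:asymptotics_f2} applies to each. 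A key preliminary observation is that $\prod_{i=1}^m\ff_i(\cp^{\pm})$ telescopes to $f_{M_{L,m},N_{L,m},T_{L,m}}(\cp^{\pm})$, and substituting the explicit $\cp^{\pm}$ shows $\prod_{i=1}^m \ff_i(\cp^-)/\ff_i(\cp^+)=\cK_L$ exactly, which is the source of the normalization $\cK_L$ in the statement.

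Next I would deform each $\gamma_i$ onto the circle $\con^{\beta_i,L}_{\lt}$ and each $\Gamma_i$ onto $\con^{\beta_i,L}_{\rt}$ of Lemma~\ref{lem:asymptotics_f2}, for an increasing sequence $\beta_1<\cdots<\beta_m$ keeping all $2m$ circles nested and disjoint (possible since radii differ only at order $L^{-1/2}$). On these circles Lemma~\ref{lem:asymptotics_f2} gives that $|\ff_i(\xi^i)/\ff_i(\cp^-)|$ and $|\ff_i(\cp^+)/\ff_i(\eta^i)|$ are uniformly bounded and $O(e^{-cL^{2\epsilon/3}})$ outside the balls of radius $L^{-1/2+\epsilon/3}$ about $\cp^-,\cp^+$. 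Because the nesting forces the differences $\xi^i-\xi^{i+1}$, $\eta^i-\eta^{i+1}$ in the denominator of $\Pi_{\bone}$ (see \eqref{eq:Pi_111}) to stay of order at least $L^{-1/2}$, the integrand is only polynomially large there, so up to an error $O(e^{-cL^{2\epsilon/3}})$ the $2m$-fold integral is supported on $\prod_i\{|\xi^i-\cp^-|\le L^{-1/2+\epsilon/3}\}\times\prod_i\{|\eta^i-\cp^+|\le L^{-1/2+\epsilon/3}\}$.

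On that region I would substitute $\xi^i=\cp^-+w_iL^{-1/2}$, $\eta^i=\cp^++v_iL^{-1/2}$. By Lemma~\ref{lem:asymptotics_f1}, $\ff_i(\xi^i)/\ff_i(\cp^-)\to e^{\frac12(\td_i-\td_{i-1})\GG''(\cp^-)w_i^2+\HH_i'(\cp^-)w_i}$ and $\ff_i(\cp^+)/\ff_i(\eta^i)\to e^{-\frac12(\td_i-\td_{i-1})\GG''(\cp^+)v_i^2-\HH_i'(\cp^+)v_i}$ uniformly, while in $\Pi_{\bone}$ the ``cross'' factors $\xi^i-\eta^{i+1}$, $\eta^i-\xi^{i+1}$, $\xi^m-\eta^m$, $(\xi^i-\eta^i)^2$ converge to nonzero constants in $\cp^{\pm}$ and $\xi^i-\xi^{i+1}=(w_i-w_{i+1})L^{-1/2}$, $\eta^i-\eta^{i+1}=(v_i-v_{i+1})L^{-1/2}$; together with the Jacobian $L^{-m}$ these combine (using $\cp^--\cp^+=-\sqrt D/\lv$) into a factor $\frac{\lv}{\sqrt D}L^{-1}\prod_{i=1}^{m-1}\frac{1}{(w_i-w_{i+1})(v_i-v_{i+1})}$. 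Multiplying by the prefactor $\frac{2\pi LD}{\sqrt{ab}\cK_L}$, the $\cK_L$'s and $L$'s cancel, and dominated convergence—extending the $w_i,v_i$ to the limiting vertical lines oriented as images of the counterclockwise circles, using the Gaussian decay for the tails and $L$-uniform bounds near the poles from the nesting—gives
\beqq
\frac{2\pi LD}{\sqrt{ab}\,\cK_L}\,\QQ^{(\bone)}_m(\bM_L,\bN_L,\bT_L)\;\longrightarrow\;-\frac{2\pi\lv\sqrt D}{\sqrt{ab}\,(2\pi\ii)^{2m}}\,I_-\,I_+,
\eeqq
where $I_-,I_+$ are Gaussian contour integrals in $(w_1,\dots,w_m)$ and $(v_1,\dots,v_m)$ of exactly the type appearing in Lemma~\ref{lem:bridge}.

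Finally I would evaluate $I_{\pm}$ via Lemma~\ref{lem:bridge}. Combining \eqref{eq:Atpm} (with $\alpha_1=a,\alpha_2=b,\alpha_3=\lv$) and \eqref{eq:abdf} gives $\GG''(\cp^-)=2\ab^2\cd_-^2$ and $-\GG''(\cp^+)=2\ab^2\cd_+^2$, so the quadratic parts match Lemma~\ref{lem:bridge} with $a_i=2\ab^2\cd_{\mp}^2\,\td_i$, producing $\B(\td_i)$ with the prefactor $\sqrt2\,\ab\cd_{\mp}$; a short rationalization using the explicit $\cp^{\pm}$ shows the accumulated linear offsets equal $\ab(\hd_i-\xd_i)$ on the $\cp^-$-side and $-\ab(\hd_i+\xd_i)$ on the $\cp^+$-side. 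Carefully accounting for the counterclockwise orientations (which make the limiting $\eta$-contours downward-oriented, contributing a $(-1)^m$) and for the fact that the $\gamma_i$- and $\Gamma_i$-circles are nested in opposite order near $\cp^-$ and $\cp^+$ (absorbed using $\B(t)\eqind\B(1-t)$), Lemma~\ref{lem:bridge} yields $I_-\propto\prob\big(\bigcap_i\{\sqrt2\,\cd_-\B_-(\td_i)+\xd_i>\hd_i\}\big)$ and $I_+\propto\prob\big(\bigcap_i\{\sqrt2\,\cd_+\B_+(\td_i)-\xd_i>\hd_i\}\big)$, with all remaining constants and signs—the $\frac{2\pi\lv\sqrt D}{\sqrt{ab}}$, the powers of $2\pi\ii$, the normalizations $\sqrt{2\pi\cdot 2\ab^2\cd_{\pm}^2}$ from Lemma~\ref{lem:bridge}, and the $(-1)$'s—cancelling to $1$ via the identity $\ab^2\cd_+\cd_-=\frac{\lv\sqrt D}{2\sqrt{ab}}$. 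Since $\B_+$ and $\B_-$ are independent, the product of the two probabilities equals $\Ql$ in the form \eqref{eq:limitofca1}, which proves the lemma.

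The step I expect to be the main obstacle is the localization and interchange of limit and integral: one needs an $L$-uniform integrable majorant for the rescaled integrand that simultaneously controls the singularities of $\Pi_{\bone}$ along $w_i=w_{i+1}$, $v_i=v_{i+1}$ (handled by keeping the deformed contours nested with an $O(L^{-1/2})$ gap) and the decay in the $w_i,v_i$ tails (handled by the estimates of Lemma~\ref{lem:asymptotics_f2}). The analytic estimates mirror \cite{LW24}, but the bookkeeping of constants and orientations in the last step is model-specific, and it is precisely this computation that produces the correlation structure of the limiting bridges $\B_1,\B_2$ in Theorem~\ref{thm:diagfluc}.
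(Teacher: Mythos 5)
Your proposal is correct and follows essentially the same route as the paper's proof: the same closed-form starting point from Lemma~\ref{lem:Q_hat_111}, the same deformation onto the $L^{-1/2}$-spaced nested circles of Lemma~\ref{lem:asymptotics_f2}, localization near $\cp^\pm$ via the estimates there, local expansion via Lemma~\ref{lem:asymptotics_f1}, extension to vertical lines and dominated convergence, evaluation of the resulting Gaussian integrals by Lemma~\ref{lem:bridge}, and cancellation of constants via $\ab^2\cd_+\cd_-=\frac{\lv\sqrt D}{2\sqrt{ab}}$. The only differences are cosmetic rescalings (the paper writes $\xi^i=z^-+u_i/(\ab L^{1/2})$, $\eta^i=z^+-v_i/(\ab L^{1/2})$, absorbing the $\ab$ and the orientation flip into the substitution rather than tracking them separately as you do), and your sign bookkeeping for the linear offsets on the $\cp^+$-side is stated somewhat loosely in the intermediate step, though your final answer is correct.
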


The next result shows that the remaining terms in the sum are negligible by comparison.

\begin{lem}\label{lem:diag_error}
There exists a constant $c>0$ such that
\beqq
    \frac1{\cK_L} \sum_{\bn\in\N^m\setminus\{\bone\}}\frac1{(\bn!)^2}\abs{\QQ_m^{(\bn)}(\bM_L,\bN_L,\bT_L)} \leq e^{-c L}
\eeqq
for all sufficiently large $L>0$.
\end{lem}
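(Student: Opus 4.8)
Here is my plan for proving Lemma~\ref{lem:diag_error}.

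The strategy is to deform every contour in the formula \eqref{eq:Q_hat_n}--\eqref{eq:D_hat_n} to steepest--descent circles passing near the critical points of the phase functions, and to read off an exponentially small gain for each coordinate of $\bn$ that exceeds $1$. Set $c_1=\td_1$, $c_i=\td_i-\td_{i-1}$ for $2\le i\le m-1$, and $c_m=1-\td_{m-1}$, so that $c_i>0$ and $\sum_i c_i=1$. By \eqref{eq:f_expr}, \eqref{eq:defFF} and the scaling \eqref{eq:j_scaling}, $\ff_i(z)=\exp\big(\LL\,\GG_i(z)+\LL^{1/2}\HH_i(z)+O(1)\big)$, where $\GG_i=c_i\GG$ with $\GG(z)=-a\log(z+1)+b\log z+\lv z$ and $\HH_i$ is a fixed linear combination of $\log(z+1),\log z,z$. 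Since $\lv>\bar\LPP(a,b)=(\sqrt a+\sqrt b)^2$, Lemma~\ref{lem:basic}(a) applies to each $\GG_i$ (with $(\alpha_1,\alpha_2,\alpha_3)=(c_ia,c_ib,c_i\lv)$): all the $\GG_i$ share the same pair of critical points $-1<\cp^-<\cp^+<0$ — those of $\GG$ — with $\GG''(\cp^-)>0>\GG''(\cp^+)$, and Lemma~\ref{lem:asymptotics_f2} applies to each $\ff_i$. Using the elementary identities $\cp^-\cp^+=b/\lv$, $(\cp^-+1)(\cp^++1)=a/\lv$, $\cp^--\cp^+=-\sqrt D/\lv$, one gets the telescoping identity $\prod_{i=1}^m\ff_i(\cp^-)/\ff_i(\cp^+)=f_{M_{L,m},N_{L,m},T_{L,m}}(\cp^-)/f_{M_{L,m},N_{L,m},T_{L,m}}(\cp^+)=\cK_L$, while each individual ratio satisfies $|\ff_i(\cp^-)/\ff_i(\cp^+)|\le \cK_L^{c_{\min}}\,e^{C\LL^{1/2}}$, where $c_{\min}=\min_i c_i>0$ and $\cK_L\in(0,1)$ for large $\LL$ since $\J(\lv)>0$ for $\lv>\bar\LPP(a,b)$.

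Next I would deform contours. In \eqref{eq:D_hat_n} I move the $2m-1$ left circles to circles $\con_{-}^{b_j,\LL}=\{|z+1|=|\cp^-+1|+b_j\LL^{-1/2}\}$ and the $2m-1$ right circles to circles $\con_{+}^{b_j,\LL}=\{|z|=|\cp^+|+b_j\LL^{-1/2}\}$, with fixed $b_1<\cdots<b_{2m-1}$, keeping the $\bz$--contours at a fixed radius $R>1$. Exactly as in the proof of Lemma~\ref{lem:Q_hat_111_limit}, this deformation crosses no singularities of $\Pi_{\bn}\FF^{(\bn)}$: in each $\xi$--variable the only pole is at $-1$ (enclosed throughout the homotopy), coincidences among $\xi$--variables are removable zeros of the Cauchy determinants in \eqref{eq:Pi_n}, and the factors $1/(\xi^i_k-\eta^j_l)$ never vanish because the left and right circle families stay disjoint for large $\LL$. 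On the deformed contours, \eqref{eq:f2aub_xi} and \eqref{eq:f2aub_eta} give $|\ff_i(\xi)/\ff_i(\cp^-)|\le C$ on each $\con_{-}^{b_j,\LL}$ and $|\ff_i(\cp^+)/\ff_i(\eta)|\le C$ on each $\con_{+}^{b_j,\LL}$. Factoring $\FF^{(\bn)}=\prod_i(\ff_i(\cp^-)/\ff_i(\cp^+))^{n_i}\cdot\prod_{i,k_i}\frac{\ff_i(\xi^i_{k_i})/\ff_i(\cp^-)}{\ff_i(\eta^i_{k_i})/\ff_i(\cp^+)}$ and using $\sum_i(n_i-1)=|\bn|-m\ge1$ for $\bn\neq\bone$, one obtains, on the deformed contours and for all large $\LL$,
\[ \big|\FF^{(\bn)}(\bsxi,\bseta)\big|\le \cK_L\,C^{|\bn|}\,e^{-\frac12 c_{\min}\J(\lv)\LL(|\bn|-m)}. \]

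It then remains to bound the rest of the integrand uniformly in $\bn$. The contour lengths, the factors $|z_i+1|^{n_i-n_{i+1}-1}|z_i|^{-n_{i+1}-1}$ and $|z_{i-1}|$ in \eqref{eq:D_hat_n}, and $|\ST_{n_m}(\bsxi^m|\bseta^m)|$ contribute at most $C^{|\bn|}|\bn|$. For $|\Pi_{\bn}|$, note that on the deformed contours every denominator factor $r_a-s_b$ of the Cauchy determinants in \eqref{eq:Pi_n} is bounded below, being either of order $1$ (a point near $\cp^-$ minus a point near $\cp^+$) or of order $\LL^{-1/2}$ (two points on distinct concentric left circles, or on distinct right circles). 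Expanding each Cauchy determinant as a sum over permutations, in any single term of $\K_{n_i+n_{i+1}}(\bsxi^i,\bseta^{i+1}|\bseta^i,\bsxi^{i+1})$ at most $2\min(n_i,n_{i+1})$ factors are of the $\LL^{-1/2}$--type, since a permutation can pair at most $\min(n_i,n_{i+1})$ of the $n_i$ rows indexed by $\bsxi^i$ with the $n_{i+1}$ columns indexed by $\bsxi^{i+1}$ (and likewise in the $\eta$--block); summing over the at most $\prod_j N_j!\le|\bn|^{2|\bn|}$ permutations and over $i$ gives $|\Pi_{\bn}|\le |\bn|^{2|\bn|}C^{|\bn|}\LL^{|\bn|}$, with the power of $\LL$ \emph{linear} in $|\bn|$. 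Combining these bounds, using $\#\{\bn\in\N^m:|\bn|=k\}\le k^{m-1}$ and $|\bn|^{2|\bn|}/(\bn!)^2\le C^{|\bn|}|\bn|^{C}$ (Stirling), one arrives at
\[ \frac1{\cK_L}\sum_{\bn\in\N^m\setminus\{\bone\}}\frac1{(\bn!)^2}\big|\QQ_m^{(\bn)}(\bM_L,\bN_L,\bT_L)\big|\le e^{\frac12 c_{\min}\J(\lv)\LL m}\sum_{k\ge m+1} k^{C}\big(C\LL\,e^{-\frac12 c_{\min}\J(\lv)\LL}\big)^k, \]
and for $\LL$ large enough that $C\LL\,e^{-\frac12 c_{\min}\J(\lv)\LL}<\tfrac12$ the series is dominated by its $k=m+1$ term, so the whole expression is at most $C(C\LL)^{m+1}e^{-\frac12 c_{\min}\J(\lv)\LL}\le e^{-c\LL}$ for any $c<\tfrac12 c_{\min}\J(\lv)$ and all sufficiently large $\LL$. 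The main obstacle is precisely this uniform-in-$\bn$ estimate for $|\Pi_{\bn}|$: a naive bound on the Cauchy determinants retaining only the denominators would produce a factor $\LL^{O(|\bn|^2)}$ and destroy the summability over $\bn$, so one genuinely needs the permutation-counting step that caps the number of $O(\LL^{-1/2})$-small factors per term at $O(|\bn|)$; once that is in hand, the exponential gain from Lemma~\ref{lem:asymptotics_f2} together with the telescoping identity $\prod_i\ff_i(\cp^-)/\ff_i(\cp^+)=\cK_L$ overwhelms all polynomial-in-$\LL$ and combinatorial-in-$\bn$ losses.
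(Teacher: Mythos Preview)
Your proposal is correct and follows essentially the same steepest-descent strategy as the paper: the same choice of deformed contours $\con_\pm^{b_j,L}$, the same telescoping identity $\prod_i \ff_i(\cp^-)/\ff_i(\cp^+)=\cK_L$ together with the per-factor bound from \eqref{eq:frati} to get $|\FF^{(\bn)}|\le \cK_L\,C^{|\bn|}e^{-\frac12 c_{\min}\J(\lv)L(|\bn|-m)}$, and the same final summation over $\bn\neq\bone$.

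The only genuine difference is in how you bound $|\Pi_{\bn}|$. The paper applies Hadamard's inequality directly to each Cauchy determinant with the crude minimal distance $d=L^{-1/2}$, yielding $|\K_N|\le N^{N/2}L^{N/2}$, and then uses $N^N\le e^N N!$ and $(a+b)!\le 2^{a+b}a!b!$ to get $|\Pi_{\bn}|\le C^{|\bn|}\,\bn!\,L^{|\bn|}$; after dividing by $(\bn!)^2$ this leaves a clean $1/\bn!$ for the summation. Your permutation-expansion argument reaches the same $L^{|\bn|}$ power (your observation that each permutation term has at most $2\min(n_i,n_{i+1})$ small factors is correct, and in fact slightly sharper than what Hadamard gives), but at the price of carrying the cruder prefactor $|\bn|^{2|\bn|}$, which you then have to cancel against $(\bn!)^2$ via Stirling. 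Both routes work; Hadamard is the cleaner bookkeeping since it produces the $\bn!$ directly and avoids the detour through $|\bn|^{2|\bn|}/(\bn!)^2\le C^{|\bn|}$.
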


The same analysis applies to the case when $m=1$. Note that in this case, $\Ql=1$. 

\begin{lem} \label{cor:Qhat1scalar}
We have
\beqq
    \lim_{L \to \infty} \frac{2\pi  LD} {\sqrt{ab} \cK_L}  \QQ_1(aL, bL, \lv L) = 1. 
\eeqq
\end{lem}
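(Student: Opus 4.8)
The plan is to run the proofs of Lemmas~\ref{lem:Q_hat_111_limit} and~\ref{lem:diag_error} in the degenerate case $m=1$, where the limiting probability is that of an intersection over the empty index set, so $\Ql=1$. Specializing \eqref{eq:Q_hat}--\eqref{eq:Q_hat_n} to $m=1$ (no surviving $z$-integrals) gives $\QQ_1(\lceil aL\rceil,\lceil bL\rceil,\lv L)=\QQ_1^{(\bone)}+\sum_{n\ge2}\tfrac{1}{(n!)^2}\QQ_1^{(n)}$ with $\bone=(1)$ and $\QQ_1^{(n)}=(-1)^n\DD^{(n)}_{\lceil aL\rceil,\lceil bL\rceil,\lv L}$. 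The function $f:=f_{\lceil aL\rceil,\lceil bL\rceil,\lv L}$ is precisely the function $\ff_L$ of Section~\ref{sec:asfunction} with $(\alpha_1,\alpha_2,\alpha_3)=(a,b,\lv)$, $\beta_1=\beta_2=\beta_3=0$, and bounded $\delta^1_L,\delta^2_L$; and $\lv>\bar{\LPP}(a,b)=(\sqrt a+\sqrt b)^2$ puts us in case~(a) of Lemmas~\ref{lem:basic} and~\ref{lem:asymptotics_f2}, so $\GG(z)=-a\log(z+1)+b\log z+\lv z$ has critical points $\cp^-<\cp^+$ in $(-1,0)$ with $\GG''(\cp^-)>0>\GG''(\cp^+)$, $\Qt=D$, and $\cp^--\cp^+=-\sqrt D/\lv$.

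For the $\bone$-term, Lemma~\ref{lem:Q_hat_111} at $m=1$, together with $\Pi_{\bone}(\xi,\eta)=-(\xi-\eta)^{-1}$, gives
\beqq
    \QQ_1^{(\bone)} = \frac{1}{(2\pi\ii)^2}\int_{\gamma}\dd\xi\int_{\Gamma}\dd\eta\;\frac{1}{\xi-\eta}\,\frac{f(\xi)}{f(\eta)} .
\eeqq
I would deform $\gamma$ to the circle $\{|z+1|=|\cp^-+1|+b_0L^{-1/2}\}$ and $\Gamma$ to $\{|z|=|\cp^+|+b_0L^{-1/2}\}$ of Lemma~\ref{lem:asymptotics_f2} (with $b_0\in\R$ fixed); this is valid since $|\cp^-+1|<1$ and $|\cp^+|<1$, so the circles stay disjoint loops around $-1$ and $0$ and the poles of the integrand (at $\xi=-1$, $\eta=0$, $\xi=\eta$) are not crossed. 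By Lemma~\ref{lem:asymptotics_f2}(a),(b) the integrand is exponentially small away from $O(L^{-1/2+\epsilon/3})$-neighborhoods of $\cp^\pm$ while the Cauchy factor stays bounded; on those neighborhoods I would set $\xi=\cp^-+wL^{-1/2}$, $\eta=\cp^++w'L^{-1/2}$ and apply Lemma~\ref{lem:asymptotics_f1} with $\HH\equiv0$. The rescaled contours limit to the imaginary axis, oriented upward in $w$ and downward in $w'$, and evaluating the two Gaussian integrals yields
\beqq
    \QQ_1^{(\bone)} = \frac{f(\cp^-)}{f(\cp^+)}\cdot\frac{\lv}{2\pi L\sqrt D\,\sqrt{-\GG''(\cp^-)\GG''(\cp^+)}}\,\big(1+o(1)\big).
\eeqq
Two explicit computations then close this step: evaluating $f(z)=z^Ne^{Tz}(z+1)^{-M}$ at $\cp^\pm$ with $M=\lceil aL\rceil$, $N=\lceil bL\rceil$, $T=\lv L$ and comparing with \eqref{eq:cKdef} shows $f(\cp^-)/f(\cp^+)=\cK_L$ \emph{exactly}; and by \eqref{eq:Atpm} (equivalently, the relation $\GG''(\cp^+)\GG''(\cp^-)=-\alpha_3^2\Qt/(\alpha_1\alpha_2)$ recorded in the proof of Lemma~\ref{lem:basic}) one has $-\GG''(\cp^-)\GG''(\cp^+)=D\lv^2/(ab)$, so $\sqrt{-\GG''(\cp^-)\GG''(\cp^+)}=\lv\sqrt D/\sqrt{ab}$. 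Substituting gives $\QQ_1^{(\bone)}=\tfrac{\sqrt{ab}\,\cK_L}{2\pi LD}(1+o(1))$, hence $\tfrac{2\pi LD}{\sqrt{ab}\,\cK_L}\QQ_1^{(\bone)}\to1$.

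It remains to bound the tail, which is the $m=1$ instance of Lemma~\ref{lem:diag_error}. On the deformed contours, \eqref{eq:f2aub_xi} and \eqref{eq:f2aub_eta} give $|f(\xi)/f(\eta)|\le C^2\cK_L$, so $|\FF^{(n)}_{\lceil aL\rceil,\lceil bL\rceil,\lv L}|\le(C^2\cK_L)^n$; the Cauchy-determinant factor $\Pi_n$, whose only possible singularities (a $\xi$ meeting an $\eta$) do not occur, is bounded on these fixed disjoint contours by a crude estimate $|\Pi_n|\le(n!)^2K_0^n$. Integrating over contours of bounded length gives $\tfrac{1}{(n!)^2}|\QQ_1^{(n)}|=\tfrac{1}{(n!)^2}|\DD^{(n)}|\le(K_1\cK_L)^n$ for some constant $K_1$. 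Since $\cK_L\to0$ exponentially --- indeed $\GG(\cp^-)<\GG(\cp^+)$, equivalently $\cK_L=e^{-L\J(\lv)+O(1)}$ with $\J$ the rate function of \eqref{eq:large_deviation_fn} --- for $L$ large we get $\tfrac{1}{\cK_L}\sum_{n\ge2}\tfrac{1}{(n!)^2}|\QQ_1^{(n)}|\le\tfrac{1}{\cK_L}\cdot\tfrac{(K_1\cK_L)^2}{1-K_1\cK_L}=O(\cK_L)$, so $\tfrac{2\pi LD}{\sqrt{ab}\,\cK_L}\sum_{n\ge2}\tfrac{1}{(n!)^2}|\QQ_1^{(n)}|=O(L\cK_L)\to0$; together with the previous paragraph this proves the lemma. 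Because $m=1$ there are no nested contours, hence none of the residue bookkeeping that makes Theorem~\ref{thm:offdiagflucsameside} hard; the only parts requiring attention are the (routine) steepest-descent localization and the two exact identifications $f(\cp^-)/f(\cp^+)=\cK_L$ and $\sqrt{-\GG''(\cp^-)\GG''(\cp^+)}=\lv\sqrt D/\sqrt{ab}$, both immediate from \eqref{eq:cKdef} and \eqref{eq:Atpm} --- so this is the simplest of the asymptotic computations in the paper.
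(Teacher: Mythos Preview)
The proposal is correct and follows essentially the same approach as the paper, which explicitly says the lemma is the special case $m=1$ of Lemmas~\ref{lem:Q_hat_111_limit} and~\ref{lem:diag_error}. Your tail bound is in fact slightly cleaner than a direct specialization of Section~\ref{sec:diag_error}: because $m=1$ has no nested contours, you can keep the $\xi$- and $\eta$-circles at fixed positive distance, so the Hadamard bound on $\Pi_n$ produces only a constant per variable (rather than the $L^{|\bn|}$ factor the paper needs for $m\ge2$), and the tail is killed purely by the geometric factor $(\cK_L)^{n-1}$.
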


The above three lemmas complete the proof of Theorem \ref{thm:diagfluc}. 
We prove Lemmas \ref{lem:Q_hat_111_limit} and \ref{lem:diag_error} in Subsections \ref{sec:Q_hat_111_limit} and \ref{sec:diag_error}, respectively, following a preliminary discussion of some functions in Subsection \ref{sec:diag_basic}. 
Lemma \ref{cor:Qhat1scalar} is the special case  $m=1$ of these two lemmas, and we omit its proof. 

\subsection{Formula for $\ff_{L,i}$} \label{sec:diag_basic}

The quantity $\QQ_m^{(\bn)}$ in \eqref{eq:QQtempa} is expressed in terms of $\DD^{(\bn)}_{\bM_L, \bN_L, \bT_L}(\bz)$ from  \eqref{eq:D_hat_n}, which involves the functions 
\beqq
    \ff_{L,i}(z):= \frac{f_{M_{L,i}, N_{L,i},T_{L,i}}(z)}{f_{M_{L,i-1},N_{L,i-1},T_{L,i-1}}(z)}, 
    \qquad
    f_{M,N,T}(z)=  \frac{z^Ne^{Tz}}{(z+1)^M} = e^{- M\log(z+1) +N\log z+ Tz}. 
\eeqq
From \eqref{eq:j_scaling}, we have 
\beq\label{eq:j_scaling2}
    M_{L,i} =  \td_i  a \LL + \xd_i \frac{a( \lv -  a +  b)\ab}{ \lv \sqrt{D}}  \LL^{1/2}  + \epsilon_{L, i}
    \qquad  
    N_{\LL, i} =  \td_i b L - \xd_i \frac{ b( \lv +  a -  b)\ab}{ \lv \sqrt{D}} \LL^{1/2}  + \epsilon'_{L, i}
\eeq
for real numbers $ \epsilon_{L, i},  \epsilon_{L, i}'\in [0,1)$. 
Thus, 
\beq \label{eq:fLide}
    \ff_{L,i}(z) = e^{(t_i-t_{i-1}) \GG(z) L + \HH_i(z) L^{1/2} + \EE_{L, i}(z) } 
\eeq
where
\beq \label{eq:GGHHE} \begin{split}
    &\GG(z)= -a\log(z+1)+b\log z +\ell z, \\
    &\HH_i(z)= - (\xd_i-\xd_{i-1}) \frac{a( \lv -  a +  b)\ab}{ \lv \sqrt{D}} \log(z+1) - (\xd_i-\xd_{i-1}) \frac{ b( \lv +  a -  b)\ab}{ \lv \sqrt{D}} \log z + (\hd_i-\hd_{i-1}) \ab z, \\
    &\EE_{L, i}(z)= - \delta^1_{L,i}\log (z+1)+\delta^2_{L,i} \log z, 
\end{split} \eeq
with real numbers 
\beq
    \delta^1_{L,i}, \, \delta^2_{L,i} \in (-1, 1). 
\eeq
Here, we set $x_0=t_0=h_0=x_m=h_m=0$ and $t_m=1$. 
Note that $\fLi(z)$ are analytic except possibly  at $z=0$ and $z= -1$. 

We list a few properties:
\begin{itemize}
\item 
From Lemma \ref{lem:basic}, the critical points of $\GG$ are
\beq \label{eq:c3+-}
    z^{\pm} = \frac{- \lv+ a- b  \pm \sqrt{D}}{2\lv}, \qquad D=\lv^2-2(a+b)\lv + (a-b)^2, 
\eeq
and they satisfy the inequalities $-1<z^-<z^+<0$. 
\item 
It is straightforward to check  (see \eqref{eq:Atpm}) that 
\beq \label{eq:GGpaHp}
    \GG''(z^\pm)= \mp\frac{\sqrt{D}}{2ab}\left[(a+b)\lv- (a-b)^2 \pm (a-b) \sqrt{D}\right]  =  \mp 2\cd_\pm^2 \ab^2, 
\eeq
and that 
\beq \label{eq:GGpaHp2}
    \HH_i'(z^\pm)=  \ab \left( \pm (\xd_i-\xd_{i-1}) + \hd_i-\hd_{i-1} \right). 
\eeq
\item 
Since $\prod_{i=1}^m \ff_{L,i}(z)= f_{M_{L,m}, N_{L,m},T_{L,m}}(z)$, we see that 
\beq \label{eq:finzpm}
    \prod_{i=1}^m \frac{\fLi(z^-)}{ \fLi(z^+)}
    = \frac{f_{M_{L,m}, N_{L,m},T_{L,m}}(z^-)}{f_{M_{L,m}, N_{L,m},T_{L,m}}(z^+)} 
    = \cK_L, 
\eeq
where $\cK_L$ is defined in \eqref{eq:cKdef}. 
\item 
It is direct to see that 
\beq 
    \GG(z^+)-\GG(z^-)= \sqrt{D} + a \log \bigg( \frac{\lv+a-b-\sqrt{D}}{\lv+a-b+\sqrt{D}}\bigg) + b \log \bigg( \frac{\lv-a+b-\sqrt{D}}{\lv-a+b+\sqrt{D}}\bigg)  =\J(\lv), 
\eeq
using the notation from \eqref{eq:large_deviation_fn}. In particular,  $\J(\lv)>0$. 
Since $t_i-t_{i-1}>0$ for every $i$, there exists $L_0>0$ such that 
\beq \label{eq:frati}
    \frac{\ff_{L, i}(z^-)}{\ff_{L, i}(z^+)} \le e^{-\frac{1}{2} (t_i-t_{i-1}) (\GG(z^+)- \GG(z^-)) L}  \le  e^{- \frac{1}{2} \tau \J(\lv) L} , \qquad 
    \tau := \min_{1 \le i \le m} (t_i - t_{i-1}) > 0, 
\eeq
for every $L>L_0$ and $i=1, \cdots , m$. 
\end{itemize}

\subsection{Proof of Lemma \ref{lem:Q_hat_111_limit}} \label{sec:Q_hat_111_limit}

By Lemma \ref{lem:Q_hat_111}, 
\begin{equation} \label{eq:Qinpr41}
    \QQ_m^{(\bone)}(\bM_L,\bN_L, \bT_L) 
    = -\frac{1}{(2\pi \ii)^{2m}} \int_{\vec{\gamma}} \dd \bsxi \int_{\vec{\Gamma}}\dd \bseta \,\, 
    \Pi_{\bone}(\bs{\xi}, \bs{\eta}) \FF^{(\bone)}_{\bM_L,\bN_L,\bT_L}(\bs{\xi}, \bs{\eta})  .
\end{equation}
By Cauchy's theorem, we can deform the contours, without changing the value of the integral, to 
\beqq
    \vec{\gamma}= \con_{\lt}^{1,L}\times \cdots \times \con_{\lt}^{m,L}, 
    \qquad 
    \vec{\Gamma}=  \con_{\rt}^{1,L} \times \cdots \times \con_{\rt}^{m, L}. 
\eeqq
where $\con^{i, L}_{\lrt}$ are  the circles  in Lemma \ref{lem:asymptotics_f2} with $z^{\pm}_c = z^{\pm}$ and $b=i$. 
Note that all circles  $\con_{\lrt}^{i,L}$ are contained in the disk $\{z\in \C : |z|\le 2\}$ for all sufficiently large $L>0$. 
Fix $\epsilon\in (0,1/2)$ and define  $D_{L,-}^{\epsilon}=\{z\in \C : |z-z^-|\le L^{-\frac12+\frac{\epsilon}{3} }\}$ and $D_{L,+}^{\epsilon}=\{z\in \C : |z-z^+|\le L^{-\frac12+\frac{\epsilon}{3} }\}$. Set 
\beqq
    \vec{\gamma}^\epsilon= (\con_{\lt}^{1,L}\cap D_{L,-}^\epsilon) \times \cdots \times (\con_{\lt}^{m,L}\cap D_{L,-}^\epsilon), 
    \qquad
    \vec{\Gamma}^\epsilon =  (\con_{\rt}^{1,L}\cap D_{L,+}^\epsilon) \times \cdots \times (\con_{\rt}^{m, L}\cap D_{L,+}^\epsilon). 
\eeqq

Since $a,b>0$ and $\ell> (\sqrt{a}+\sqrt{b})^2$, Lemma \ref{lem:asymptotics_f2} (a) and (b) apply to $\ff_{L,i}(z)$. 
Thus, using \eqref{eq:finzpm}, 
\beqq 
    \frac{ |\FF^{(\bone)}_{\bM_L,\bN_L,\bT_L}(\bs{\xi}, \bs{\eta}) | }{\cK_L} 
    = \prod_{i=1}^m \left| \frac{\ff_{L,i}(\xi^{i})\ff_{L,i}(z^+)}{\ff_{L,i}(z^-)\ff_{L,i}(\eta^{i})} \right| 
    = O(e^{-cL^{2\epsilon/3}})
    \quad \text{uniformly for $(\bs{\xi}, \bs{\eta})\in (\vec{\gamma}\times \vec{\Gamma}) \setminus 
	(\vec{\gamma}^\epsilon \times \vec{\Gamma}^\epsilon)$.}
\eeqq
On the other hand, note that 
\beqq
    \dist(\con^{\lt}_{L,i}, \con^{\lt}_{L,j} )\ge L^{-\frac12}, \qquad
    \dist( \con^{\rt}_{L,i}, \con^{\rt}_{L,j})\ge L^{-\frac12}
\eeqq
for every $i\neq j$, and $\dist( \con^{\lt}_{L,i}, \con^{\rt}_{L,j})$ is bounded below by a constant for all $i,j$ and sufficiently large $L$. 
Since all circles are contained in the disk $\{z\in \C : |z|\le 2\}$ when $L$ is large enough, 
we find from \eqref{eq:Pi_111} that
\beqq
    \Pi_{\bone}(\bs{\xi}, \bs{\eta}) = O(L^{m-1})         
    \qquad  \text{uniformly for $(\bs{\xi}, \bs{\eta})\in \vec{\gamma}\times \vec{\Gamma}$}
\eeqq
as $L\to \infty$. 
Thus, 
\beq \label{eq:Pi1e}
    \frac{1}{\cK_L L^{m-1} }  
    \int_{ (\vec{\gamma}\times \vec{\Gamma}) \setminus (\vec{\gamma}^\epsilon \times \vec{\Gamma}^\epsilon) } \dd \bsxi  \, \dd \bseta \, 
    \Pi_{\bone}(\bs{\xi}, \bs{\eta})  \FF^{(\bone)}_{\bM_L,\bN_L,\bT_L}(\bs{\xi}, \bs{\eta})
    = O(e^{-cL^{2\epsilon/3}}).  
\eeq
 
We now evaluate the integral over $\vec{\gamma}^\epsilon \times \vec{\Gamma}^\epsilon$. Changing the variables as $\xi^{i}\mapsto u_i$ and $\eta^{i}\mapsto v_i$ given by
\beq \label{eq:xietacofL}
    \xi^{i} = z^- + \frac{u_i}{\ab L^{1/2}}, \qquad
    \eta^{i} = z^+ - \frac{v_i}{\ab L^{1/2}},
\eeq
we have
\beq \label{eq:nohattohat}
    \int_{ \vec{\gamma}^\epsilon \times \vec{\Gamma}^\epsilon} \dd \bsxi  \, \dd \bseta \, 
    \Pi_{\bone}(\bs{\xi}, \bs{\eta})  \FF^{(\bone)}_{\bM_L,\bN_L,\bT_L}(\bs{\xi}, \bs{\eta})
    = 
    \frac{(-1)^m}{(\ab^{2}L)^{m}}  \int_{\vec{\Sigma}^-_L\times \vec{\Sigma}^+_L}  \hat\Pi_{\bone}(\mathbf{u}, \mathbf{v})  \hat \FF_{\LL}(\mathbf{u}, \mathbf{v})
    \, \, \dd \mathbf{u} \, \dd\mathbf{v} 
\eeq
where $\hat\Pi_{\bone}(\mathbf{u}, \mathbf{v}) = \Pi_{\bone}(\bs{\xi}(\mb u), \bs{\eta}(\mb v))$ 
and $\hat \FF_{\LL}(\mathbf{u}, \mathbf{v})= \FF^{(\bone)}_{\bM_L,\bN_L,\bT_L}(\bs{\xi}(\mb u), \bs{\eta}(\mb v))$, 
and the contours $\vec{\Sigma}^-_L$ and $\vec{\Sigma}^+_L$ are the images of the contours $\vec{\gamma}^\epsilon$ and $\vec{\Gamma}^\epsilon$ under the change of variables. 
Noting that $z^+-z^-= \frac{\sqrt{D}}{\lv}$, we find from \eqref{eq:Pi_111} that 
\beqq  
    \hat \Pi_{\bone}(\mathbf{u}, \mathbf{v}) 
    =  \frac{\ell (\ab^2L)^{m-1}}{\sqrt{D}} \left[ \prod_{i=1}^{m-1} \frac{1}{(u_i-u_{i+1})(v_{i+1}-v_{i})} \right] \left( 1+  O(L^{-\frac12+\frac{\epsilon}{3}})  \right) 
    \qquad  \text{for $(\mathbf{u}, \mathbf{v})\in \vec{\Sigma}^-_L \times \vec{\Sigma}^+_L$.}
\eeqq
On the other hand, using  \eqref{eq:GGpaHp} and  \eqref{eq:GGpaHp2}, Lemma \ref{lem:asymptotics_f1} gives 
\beqq 
    \frac{\hat \FF_L(\mathbf{u}, \mathbf{v})}{\cK_L}
    = \FF(\mathbf{u}, \mathbf{v}) \left(1 + O(L^{-1/2+\epsilon}) \right)
    \qquad \text{for $(\mathbf{u}, \mathbf{v})\in \vec{\Sigma}^-_L\times \vec{\Sigma}^+_L$,}
\eeqq
where 
\beq \label{eq:mrF1}
    \FF(\mathbf{u}, \mathbf{v})
    = \prod_{i=1}^m \frac{e^{ \cd_-^2 (t_i-t_{i-1}) u_i^2 + [ - (\xd_i-\xd_{i-1}) + (\hd_i-\hd_{i-1})] u_i } }{e^{ - \cd_+^2 (t_i-t_{i-1}) v_i^2 -  [(\xd_i-\xd_{i-1}) + (\hd_i-\hd_{i-1})] v_i}}. 
\eeq
Because the function $\FF(\mathbf{u}, \mathbf{v})$ decays super-exponentially fast in each variable as it tends to infinity in any closed sector strictly contained in $\{z\in \C: \arg (z)\in (\frac{\pi}{4}, \frac{3\pi}{4})\cup (-\frac{3\pi}{4}, -\frac{\pi}4)\}$, extending the contours $\vec{\Sigma}^-_L$ and $\vec{\Sigma}^+_L$, and applying the dominated convergence theorem, we find that 
\beqq 
    \lim_{L\to\infty} 
    \frac{\sqrt{D}}{\ell (\ab^2L)^{m-1} \cK_L}  \int_{\vec{\Sigma}^-_L} \dd \mathbf{u}  
    \int_{\vec{\Sigma}^+_L}  \dd\mathbf{v}  \, \hat\Pi_{\bone}(\mathbf{u}, \mathbf{v})  \hat \FF_{\LL}(\mathbf{u}, \mathbf{v})
\eeqq
converges, as $L\to \infty$, to 
\beq \label{eq:Pi1c}
    (-1)^{m-1} \int_{\vec{\Sigma}^-} \dd \mathbf{u} 
    \int_{\vec{\Sigma}^+}  \dd \mathbf{v}  \, \left[ \prod_{i=1}^{m-1} \frac{1}{(u_{i+1}-u_{i})(v_{i+1}-v_{i})} \right]  \FF(\mathbf{u}, \mathbf{v})
\eeq
where $\vec{\Sigma}^-= \Sigma_1^{\lt}\times \cdots \times \Sigma_m^{\lt}$ and
$\vec{\Sigma}^+=\Sigma_1^{\rt}\times \cdots \times \Sigma_m^{\rt}$, 
with $\Sigma_i^{\lrt}= i+\ii \R$ for $1 \le i \le m$. 
All contours $\Sigma_i^{\pm}$ are oriented upwards.

From \eqref{eq:Qinpr41}, \eqref{eq:nohattohat}, and \eqref{eq:Pi1c}, we conclude that 
\beqq
    \lim_{L \to \infty} \frac{4\pi \cd_+\cd_- \ab^2 L \sqrt{D}}{\lv   \cK_L}   \QQ^{(\bone)}_m(\bM_L, \bN_L, \bT_L) 
    = \mathsf P_1 \mathsf P_2
\eeqq
where
\beqq
    \mathsf P_1:= \frac{\sqrt{4\pi} \cd_-}{(2\pi \ii)^m} \int_{\vec{\Sigma}^{\lt}} 
    \frac{\prod_{i=1}^m e^{ \cd_-^2 (t_i-t_{i-1}) u_i^2 + [ - (\xd_i-\xd_{i-1}) + (\hd_i-\hd_{i-1})]  u_i }}{\prod_{i=1}^{m-1}(u_{i+1} - u_{i})} \d \mathbf{u}
\eeqq
and
\beqq
    \mathsf P_2:= \frac{\sqrt{4\pi} \cd_+}{(2\pi \ii)^m}  \int_{\vec{\Sigma}^{\rt}}  
    \frac{\prod_{i=1}^m e^{\cd_+^2 (t_i-t_{i-1})  v_i^2 +  [(\xd_i-\xd_{i-1}) + (\hd_i-\hd_{i-1})] v_i}  }{\prod_{i=1}^{m-1}(v_{i+1} - v_{i})}\d \mathbf{v}. 
\eeqq
By Lemma \ref{lem:bridge}, 
\beqq
    \mathsf P_1= \prob\left( \bigcap_{i=1}^{m-1}\left\{ \sqrt{2}\cd_-\B_-(t_i ) > -\xd_i+\hd_i \right\} \right), 
    \qquad 
    \mathsf P_2= \prob\left( \bigcap_{i=1}^{m-1}\left\{ \sqrt{2}\cd_+\B_+(t_i ) > \xd_i+\hd_i \right\} \right)
\eeqq
for independent Brownian bridges $\B_+$ and $\B_-$. 
Noting $\frac{4\pi \cd_+\cd_- \ab^2 L\sqrt{D}}{\lv \cK_L} = \frac{2\pi  LD} {\sqrt{ab} \cK_L}$, we obtain Lemma \ref{lem:Q_hat_111_limit}. 

\subsection{Proof of Lemma \ref{lem:diag_error}} \label{sec:diag_error}

We take the $z_i$-contours in the formula \eqref{eq:Q_hat_n} of $\QQ^{(\bn)}_m(\bM_L, \bN_L, \bT_L)$ to be circles of fixed radii greater than $1$. For concreteness, we set them to be the circles of radii $2$ centered at the origin. Then, 
\beq \label{eq:QbdD}
    \left| \QQ^{(\bn)}_m(\bM_L, \bN_L, \bT_L) \right| 
    \le 3^{|\bn|} \max_{|z_i|=2, \, i=1, \cdots, m-1}
   \left| \DD^{(\bn)}_{\bM_L, \bN_L, \bT_L}(\bz)  \right|. 
\eeq
Consider now the formula \eqref{eq:D_hat_n} for $\DD^{(\bn)}_{\bM, \bN, \bT}(\bz)$. 
Recall the circles $\con^{b, L}_{\lrt}$ in Lemma \ref{lem:asymptotics_f2}. 
We take the contours as
$C_{1,\tn{left}} = \con^{0, L}_{\lt}$, $C_{1,\tn{right}} = \con^{0, L}_{\rt}$, and, 
for $i=2, \cdots, m$, 
\beqq
    C_{i,\tn{left}}^{\tn{in}} = \con^{-(i-1), L}_{\lt}, \quad 	C_{i,\tn{left}}^{\tn{out}} = \con^{i-1, L}_{\lt}, \quad 
    C_{i,\tn{right}}^{\tn{in}} = \con^{-(i-1), L}_{\rt}, \quad 	C_{i,\tn{right}}^{\tn{out}} = \con^{i-1, L}_{\rt} .
\eeqq
Since the lengths of all contours are at most $2\pi$ and $|z_i|=2$, we find that 
\beq \label{eq:DDpf}
    \left| \DD^{(\bn)}_{\bM_L, \bN_L, \bT_L}(\bz)  \right|
    \le 3^{2|\bn|} \max_{(\bs \xi, \bs \eta)\in \vec{C}_{\tn{left}} \times \vec{C}_{\tn{right}}} 
    | \Pi_{\bn}(\bs \xi, \bs \eta)|  | \FF^{(\bn)}_{\bM_L, \bN_L, \bT_L}(\bs \xi, \bs \eta) |, 
\eeq
where we set 
\beqq
    \vec{C}_{\tn{left}} = \left(C_{1,\tn{left}}\right)^{n_1} \times \left( C^{\tn{in}}_{2,\tn{left}} \cup C^{\tn{out}}_{2,\tn{left}}\right)^{n_2} \times \cdots \times \left( C^{\tn{in}}_{m,\tn{left}} \cup C^{\tn{out}}_{m,\tn{left}} \right)^{n_m}, 
\eeqq
and
\beqq
    \vec{C}_{\tn{right}} = \left(C_{1,\tn{right}}\right)^{n_1} \times \left( C^{\tn{in}}_{2,\tn{right}} \cup C^{\tn{out}}_{2,\tn{right}} \right)^{n_2} \times \cdots \times \left( C^{\tn{in}}_{m,\tn{right}} \cup C^{\tn{out}}_{m,\tn{right}} \right)^{n_m} .
\eeqq

Consider the term $\left| \Pi_{\bn}(\bs \xi, \bs \eta) \right|$ given in \eqref{eq:Pi_n}. 
By Hadamard's inequality, 
\beq \label{eq:Cayd1}
    |\K(\mb w | \mb w')| = \left| \det \left( \frac1{w_i-w_j'} \right) \right| 
    \le \prod_{i=1}^n \left( \sum_{j=1}^n \frac1{|w_i-w_j'|^2} \right)^{1/2}
    \le \frac{n^{n/2}}{d^n} 
\eeq
for every $\mb w=(w_1, \cdots, w_n)\in \C^n$ and $\mb w'=(w_1', \cdots, w_n')\in \C^n$, provided that 
$\min_{i,j\in \{1, \cdots, n\} } |w_i-w_j'|\ge d >0$. 
Thus, for every $(\bsxi, \bseta)\in \vec{C}_{\tn{left}} \times \vec{C}_{\tn{right}}$, using $d=L^{-1/2}$ in \eqref{eq:Cayd1}, 
\beqq \begin{split}
    \left| \K(\bseta^{1}|\bsxi^{1})
    \left[ \prod_{i=1}^{m-1} \K(\bsxi^{i}, \bseta^{i+1}|\bseta^{i}, \bsxi^{i+1}) \right]
    \K(\bsxi^{m}|\bseta^{m}) \right| 
    \le n_1^{\frac{n_1}{2}}\left[\prod_{i=1}^{m-1}(n_i+n_{i+1})^{\frac{n_i+n_{i+1}}{2}}\right]n_m^{\frac{n_m}{2}} L^{|\bn|}. 
\end{split} \eeqq 
Recall the basic bound of factorials: $n!\ge n^n e^{-n}$ for $n\in \N$. Thus, $n^n\le e^n n! \le 4^n n!$, and hence, $(a+b)^{a+b} \le 4^{a+b} (a+b)!\le 8^{a+b}a!b!$ for every $a,b\in \N$. Therefore, 
\beqq
    n_1^{\frac{n_1}{2}}\left[\prod_{i=1}^{m-1}(n_i+n_{i+1})^{\frac{n_i+n_{i+1}}{2}}\right]n_m^{\frac{n_m}{2}} 
    \le \frac{8^{|\bn|}}{2^{(n_1+n_m)/2}} \prod_{i=1}^m n_i !
    = \frac{8^{|\bn|}\bn !}{2^{(n_1+n_m)/2}}  .
\eeqq
Now, for all large enough $L$, the contours $C^{\text{in/out}}_{\text{left/right}}$ are contained a disk of radius $2$. Hence, 
\beqq
    |\ST(\bsxi^m|\bseta^m)|= \left| \sum_{k_m = 1}^{n_m}(\xi^{m}_{L,k_m} - \eta^{m}_{L,k_m}) \right| \leq 4 n_m . 
\eeqq
Thus, 
\beq \label{eq:Piest12}
    \max_{(\bs \xi, \bs \eta)\in \vec{C}_{\tn{left}} \times \vec{C}_{\tn{right}}} \left| \Pi_{\bn}(\bs \xi, \bs \eta) \right| 
    \leq 4 n_m \frac{8^{|\bn|}\bn!}{2^{(n_1+n_m)/2}}  
    \le 8^{|\bn|+1} \bn! 
\eeq
for all sufficiently large $L$.  

By Lemma \ref{lem:asymptotics_f2} and using \eqref{eq:finzpm}, there exist constants $C>0$ and $L_0>0$ such that 
\beqq
    \left| \FF^{(\bn)}_{\bM_L, \bN_L, \bT_L}(\bs \xi, \bs \eta) \right|
    \le C^{2|\bn|} \prod_{i=1}^m  \left| \frac{\ff_{L, i}(z^-)}{ \ff_{L, i}(z^+)} \right|^{n_i}
    = C^{2|\bn|} \cK_L\prod_{i=1}^m  \left| \frac{\ff_{L, i}(z^-)}{ \ff_{L, i}(z^+)} \right|^{n_i-1}
\eeqq
for every $L\ge L_0$ and $(\bsxi, \bseta)$ on the contours. Thus, by \eqref{eq:frati}, 
\beq \label{eq:fpCj}
    \frac1{\cK_L} \left| \FF^{(\bn)}_{\bM_L, \bN_L, \bT_L}(\bsxi, \bseta) \right|
    \le C^{2|\bn|} e^{ -  \frac{ \tau}{2} (|\bn|-m)  \J(\lv) L}. 
\eeq

From \eqref{eq:QbdD}, \eqref{eq:DDpf}, \eqref{eq:Piest12}, and \eqref{eq:fpCj}, we find that there exist constants $C>0$ and $L_1>0$ such that 
\beqq
    \frac1{\cK_L} \left| \QQ^{(\bn)}_m(\bM_L, \bN_L, \bT_L) \right| 
    \le C^{|\bn|} e^{-  \frac{ \tau}{2} (|\bn|-m)  \J(\lv) L} \bn!  
\eeqq
for all $L\ge L_1$ and $\bn\in \N^m$. 
Now, if $\bn \neq \bone$, then $|\bn|\ge m+1$ and thus $|\bn|- m \ge \frac1{m+1} |\bn|$. 	
Hence, 
\beq 
    \frac1{\cK_L} \left| \QQ^{(\bn)}_m(\bM_L, \bN_L, \bT_L) \right| 
    \le C^{|\bn|}  e^{- \frac{\tau \J(\lv) }{2(m+1)} |\bn| L} \bn! 
    \qquad \text{for $\bn \in \N^m\setminus \{\bone\}$.} 
\eeq
Therefore, there exist constants $c>0$ and $L_2>0$ such that 
\beq \label{eq:aassbba}
    \frac1{\cK_L} \sum_{\bn\in\N^m\setminus\{\bone\}}\frac1{(\bn!)^2}\abs{\QQ_m^{(\bn)}(\bM_L,\bN_L, \bT_L)}  \leq e^{ -cL}
\eeq
for all $L\ge L_2$. This proves Lemma \ref{lem:diag_error}.


\section{Proof of Theorem \ref{thm:offdiagflucsameside}}\label{sec:proofoffdiagflucsameside}

In the proof of Theorem \ref{thm:diagfluc}, the leading terms in the exponent of the functions $\ff_{L,i}(z)$ in \eqref{eq:fLide} are given by the same function $\GG(z)$ from \eqref{eq:GGHHE} for every $i = 1, \cdots, m$. However, for Theorem \ref{thm:offdiagflucsameside}, the leading functions depend on $i$. This implies that each function has different critical points and thus requires different contours. 
Because of the nesting structure of the original contours, 
which may not be in a suitable order, and the form of the rational function $\Pi_{\bn}(\bsxi, \bseta)$, it becomes necessary to account for the poles. Keeping track of the residues coming from these poles introduces technical difficulties in proving Theorem \ref{thm:offdiagflucsameside}. 
For these reasons, we prove Theorem \ref{thm:offdiagflucsameside} only for two-point distributions, leaving the problem of multi-point distribution convergence to future work. 

Fix $a,b > 0$ and $\lv>  \bar{\LPP}(a,b)$. 
Let $(x_1, y_1)$ and $(x_2, y_2)$ be distinct points in the square $(0,1)^2$ satisfying $\frac1{\slope} < \frac{y_1}{x_1}, \frac{y_2}{x_2} < 1$ or $1 < \frac{y_1}{x_1}, \frac{y_2}{x_2} < \slope$, where, recalling from \eqref{eq:critical_slope},  
\beqq
    \slope = \frac{\lv - a -b +\sqrt{D}}{\lv - a-b-\sqrt{D}} , \qquad D= \lv^2-2(a+b)\lv + (a-b)^2.
\eeqq
Since $\LPP(m, n) \eqind \LPP(n,m)$, it suffices to consider one of these cases. Without loss of generality, we assume  
\beq \label{eq:x1x2y1y2_sameside}
    \frac{1}{\slope} < \frac{y_1}{x_1}, \frac{y_2}{x_2} < 1.  
\eeq

Recall the function $h(x,y)$ from  \eqref{eq:mvxyshaded}. The points $(x_1, y_1)$ and $(x_2, y_2)$ satisfy one of the following three possibilities: $h(x_1,y_1) < h(x_2,y_2)$, $h(x_1,y_1) > h(x_2,y_2)$, or $h(x_1,y_1) = h(x_2,y_2)$. 
The case $h(x_1,y_1) = h(x_2,y_2)$ follows from the results of the other two cases and Lemma \ref{lem:bootstrap};  see Subsection \ref{sec:equalt}. 
The second case,  $h(x_1,y_1) > h(x_2,y_2)$, can be reduced to the first by relabeling the points.
Thus, we focus on the first case.

\subsection{Setup} 

The assumption 
\begin{equation} \label{eq:x2y2region00}
    h(x_1,y_1) < h(x_2,y_2) 
\end{equation}
is equivalent to
\begin{equation} \label{eq:x2y2region}
    (x_2-x_1)\hide{(\lv +a-b-\sqrt{D})} + \mu (y_2-y_1)\hide{(\lv -a + b +\sqrt{D})} > 0
    \quad \text{where $\mu := \frac{\lv - a+b+\sqrt{D}}{\lv+a-b-\sqrt{D}}$.}
\end{equation}
We use the notation
\beq \label{eq:hi12}
    h_i := h(x_i, y_i)=\frac12 \left[ x_i (\lv+a-b-\sqrt{D})  + y_i (\lv-a+b+\sqrt{D}) \right] 
\eeq
for $i=1,2$. Let $\mr r_1, \mr r_2 \in \R$ be fixed numbers as in Theorem \ref{thm:offdiagflucsameside}. 

We again use $L$ as the large parameter instead of $N$. 
For every $L>0$, set 
\beq \label{eq:MNTLiorj} \begin{split}
    M_{L,i} = \lceil x_i aL \rceil, \qquad N_{L,i} =\lceil y_i bL \rceil, \qquad T_{L,i} = h_iL + \sqrt{2} \ab \mr r_i L^{1/2} \qquad  \text{for $i=1,2$,}
\end{split} \eeq
and $M_{L,3} = \lceil a  L \rceil$, $N_{L,3} = \lceil b L\rceil$, $T_{L,3} = \lv L$, with  $\ab$ in \eqref{eq:abdf}. 
We also set $M_{L,0}=N_{L,0}=T_{L,0}=0$.
Note that $0<T_{L,1}<T_{L,2}<T_{L,3}$ for all large enough $L$, and we always assume that $L$ is large enough so that these inequalities hold. 
Thus, Proposition \ref{prop:cond} implies that 
\beqq
    \prob \left(\mc L(M_{L,i}, N_{L,i}) > T_{L,i}, \, i=1,2 \, \big| \, \mc L(aL, bL) = \lv L\right) = \frac{\QQ_3(\bM_L, \bN_L, \bT_L)}{\QQ_1(\lceil aL \rceil , \lceil bL \rceil,\lv L)}
\eeqq 
where $\bM_L = (M_{L,1}, M_{L,2}, M_{L,3})\in \N^3$, $\bN_L = (N_{L,1}, N_{L,2}, N_{L,3})\in \N^3$, and $\bT_L = (T_{L,1}, T_{L,2}, T_{L,3})\in \R_+^3$.
The goal is to prove that, with  $\cd_{+}$ as in \eqref{eq:abdf}, 
\beq \label{eq:Qlimitoffdiagsameside}
    \lim_{L\to \infty} 
    \frac{\QQ_3(\bM_L, \bN_L, \bT_L)}{\QQ_1(aL,bL,\lv L)} 
    =\prob \left[ \cd_{+} \B\left(\frac{\slope y_i- x_i}{\slope -1} \right) > \mr r_i,  i=1,2 \right],
\eeq
where $\B$ is a standard Brownian bridge.

From Proposition \ref{prop:cond}, 
\beq \label{eq:Qld}
    \QQ_3(\bM_L, \bN_L, \bT_L)
    = \sum_{\vecn\in \N^3} \frac{1}{(\vecn!)^2} \QQ_L^{(\vecn)}
    \qquad \QQ_L^{(\vecn)}:= \QQ^{(\vecn)}_3(\bM_L, \bN_L, \bT_L). 
\eeq
Noting that $m=3$,  we have 
\beq \label{eq:Qnold}
    \QQ^{(\bn)}_{L} 
    = \frac{(-1)^{|\bn|}}{(2\pi \ii)^2} \oint_{>1} \oint_{>1}
    \DD^{(\bn)}_{L}(z_1,z_2)  \prod_{i=1}^{2} \frac{(z_i+1)^{n_i-n_{i+1}-1}}{z_i^{n_{i+1}+1}} \dd z_i , 
\eeq
where $\DD^{(\vecn)}_L(z_1, z_2) := \DD^{(\vecn)}_{\bM_L, \bN_L, \bT_L}(\bz)$ is given by 
\beq \label{eq:Dvnz2} \begin{split}
    \DD^{(\vecn)}_L(z_1, z_2) = 
    &\frac{1 }{(2\pi \ii)^{2|\bn|}}  \prod_{i=2}^3 \prod_{k_i = 1}^{n_i}  \left[ \int_{C_{i, \tn{left}}^{\tn{in}}} \dd\xi_{k_i}^{i}  +  z_{i-1} \int_{C_{i,\tn{left}}^{\tn{out}}} \dd\xi_{k_i}^{i} \right] 
    \left[ \int_{C_{i,\tn{right}}^{\tn{in}}} \dd\eta_{k_i}^{i} + z_{i-1} \int_{C_{i,\tn{right}}^{\tn{out}}} \dd\eta_{k_i}^{i} \right] 	\\
    &\qquad\qquad \prod_{k_1 =1}^{n_1}  \left[ \int_{C_{1,\tn{left}}} \dd\xi_{k_1}^{1}  \right] \left[  \int_{C_{1,\tn{right}}} \dd\eta_{k_1}^{1}  \right]  
    \, \Pi_{\bn}(\bsxi, \bseta)  \FF_L^{(\bn)}(\bsxi, \bseta).
\end{split} \eeq
Here, recalling \eqref{eq:Pi_n} and \eqref{eq:defFF}, 
\beq \label{eq:Pi_n_m3} \begin{split}
    \Pi_{\vecn}(\bsxi, \bseta)  = \K(\bseta^{1}|\bsxi^{1})
    \left[ \prod_{i=1}^{2} \K(\bsxi^{i}, \bseta^{i+1}|\bseta^{i}, \bsxi^{i+1}) \right]
    \K(\bsxi^{3}|\bseta^{3}) \ST(\bsxi^{3}|\bseta^{3})
\end{split} \eeq
and, with the functions $f_{M,N,T}(z)=e^{ N\log z - M\log(z+1) + Tz} $ from \eqref{eq:f_expr}, 
\beq \label{eq:FLfL}
    \FF_L^{(\bn)}(\bsxi, \bseta)  := \FF^{(\bn)}_{\bM_L,\bN_L,\bT_L}(\bsxi, \bseta)
    =\prod_{i=1}^3 \prod_{k_i=1}^{n_i}\frac{\ff_{L,i}(\xi_{k_i}^{i})}{\ff_{L,i}(\eta_{k_i}^{i})}, 
    \qquad \ff_{L,i}(z):= \frac{f_{M_{L,i},N_{L,i},T_{L,i}}(z)}{f_{M_{L,i-1},N_{L,i-1},T_{L,i-1}}(z)} 
\eeq
with $\bsxi= (\bs \xi^{1}, \bsxi^{2}, \bs \xi^{3})$ and $\bseta= (\bs \eta^{1}, \bseta^{2}, \bs \eta^{3})$, where
$\bsxi^{i}= (\xi^{i}_1, \cdots, \xi^{i}_{n_i})$ and $\bseta^{i}=(\eta^{i}_1, \cdots, \eta^{i}_{n_i})$ for $i=1,2,3$. 
Note that for each $i=1, 2,3$, the functions $\FF_L^{(\bn)}(\bsxi, \bseta)$ and $ \Pi_{\vecn}(\bsxi, \bseta)$ are symmetric in the variables $\xi^{i}_1, \cdots, \xi^{i}_{n_i}$ and also symmetric in $\eta^{i}_1, \cdots, \eta^{i}_{n_i}$.

\medskip

The rational function $\Pi_{\bn}(\bsxi, \bseta)$ has simple poles at $\xi^{i}_j = \xi^{i+1}_k$ and $\eta^{i}_j = \eta^{i+1}_k$ for every $i,j,k$. 
We will need to consider the residues at these various poles. The resulting expressions involve new functions
\beq \label{eq:ftwoth}
    \ff_{L, 12}(z) := \ff_{L, 1}(z) \ff_{L,2}(z), \quad 
    \ff_{L, 23}(z) := \ff_{L, 2}(z) \ff_{L,3}(z), \quad 
    \ff_{L, 123}(z) := \ff_{L, 1}(z) \ff_{L,2}(z) \ff_{L,3}(z). 
\eeq
We observe that (cf. Subsection \ref{sec:diag_basic}) 
\beq \label{eq:fLstard}
    \ff_{L,*}(z) = e^{\mcG_{*}(z)L + \mcH_{*}(z)L^{1/2}+\EE_{L,*}(z)}, \qquad *\in \{1,2,3,12,23,123\}, 
\eeq 
where
\beq \label{eq:mcGdf} \begin{split}
    &\mcG_{1}(z) = -a x_1\log(z+1) + by_1\log z + h_1z, \\
    &\mcG_{2}(z) = -a (x_2-x_1)\log(z+1) + b (y_2-y_1) \log z+ (h_2-h_1)z, \\
    &\mcG_{3}(z) = -a (1-x_2)\log(z+1) + b (1-y_2) \log z + (\lv - h_2) z 
\end{split} \eeq
and
\beq \label{eq:Gsubide} \begin{split}
    &\mcG_{12}(z) := \mcG_{1}(z) + \mcG_{2}(z) =  -ax_2\log(z+1) + by_2\log z+ h_2z, \\
    &\mcG_{23}(z) := \mcG_{2}(z) + \mcG_{3} (z)=  -a(1-x_1)\log(z+1) + b(1-y_1)\log z + (\lv -h_1)z, \\
    &\mcG_{123} (z):= \mcG_{1}(z)+ \mcG_{2}(z) +\mcG_{3} (z)= -a\log(z+1) + b\log z +\lv z. 
\end{split} \eeq
The functions $\mcH_*$ are given by 
\beq \label{eq:mcHf} \begin{split}
    &\mcH_1(z) = \sqrt{2} \ab \mr r_1 z, \qquad
    \mcH_2(z) = \sqrt{2} \ab (\mr r_2 - \mr r_{1}) z, \qquad
    \mcH_3(z) = - \sqrt{2} \ab \mr r_{2} z,\\
    & \mcH_{12}(z) :=\mcH_{1}(z) +\mcH_{2}(z) = \sqrt{2} \ab \mr r_2 z, \qquad
    \mcH_{23}(z) :=\mcH_{2}(z) +\mcH_{3}(z)=  - \sqrt{2} \ab \mr r_{1} z, 
\end{split} \eeq
and $\mcH_{123}(z) :=\mcH_{1}(z) +\mcH_{2}(z) +\mcH_3(z)= 0$.  
Finally, the functions $\EE_{L,*}$ are 
\beq \begin{split}
    \EE_{L,*}(z) =  - \delta^1_{L,*}\log (z+1)+\delta^2_{L,*} \log z
\end{split} \eeq
with real numbers satisfying 
\beq
    \delta^1_{L,*}, \delta^2_{L,*} \in [-3,3]
\eeq
so that $\ff_{L,*}(z)$ are meromorphic with  possible poles only at $z=-1$ and $z=0$. 
All six functions $f_{L, *}$ in \eqref{eq:fLstard} are of the form \eqref{eq:fdef22}. In Lemma \ref{lem:fl_asym_sameside} in Subsection \ref{sec:cpa}, we check the applicability of Lemma \ref{lem:asymptotics_f2} to these functions. 

\subsection{Integrals} \label{sec:intetrals}

We will express the integrals appearing in $ \eqref{eq:Dvnz2} $ as sums of contributions from various residues. 
To this end, we introduce notation for the types of integrals that will appear in these expressions.

\begin{definition} \label{def:Bnset}
Define the set  
\beqq 
    \mc A_3 = \{ 1,2,3,12,23,123 \}.
\eeqq 
For $\vecn=(n_1, n_2, n_3)\in \N^3$, define $\listn$ to be the set of lists $\bsigma=\sigma_1\sigma_2\cdots \sigma_k$ of elements $\sigma_j\in \mc A_3$ such that, for each $i=1,2,3$, the total
number of times $i$ appears in any of $\sigma_1, \sigma_2, \cdots,  \sigma_k$ is equal to $n_i$. We denote
\beqq
    |\bsigma|=k \qquad \text{if $\bsigma= \sigma_1\sigma_2\cdots \sigma_k$.}
\eeqq
Let $\lista=\cup_{\vecn\in \N^3} \listn$. 
The type of a list $\bsigma\in \lista$ is the vector 
\beq \label{eq:psigma}
    \type(\bsigma) = (a_{123}, a_{12}, a_{23}, a_1, a_2, a_3) \in \N_0^6
\eeq
where $a_*$ is the number of $\sigma_i$ in $\bs \sigma = \sigma_1 \cdots \sigma_k$ such that $\sigma_i = *$ for each $* \in \mc A_3$.  
\end{definition}    

Typically, we write a list $\bsigma$ as 
\beq \label{eq:bsex}
    \bsigma = \alpha_1^{m_1} \alpha_2^{m_2}\alpha_3^{m_3} \cdots
\eeq 
where for each $i$, $\alpha_i$ and $\alpha_{i+1}$ are distinct elements of $\mc A_3$, and $\alpha^m$ denotes the list consisting of $m$ 
consecutive copies of  $\alpha$. 
If there is a possibility of confusion, we use parentheses for the numbers $12$, $23$, and $123$, writing them as $(12)$, $(23)$, or $(123)$, respectively. We also omit the superscript $1$ when $m_i=1$. 
For example, $3^2(23)^11^22^1=3^2(23) 1^22= 33(23)112$ is an element of $\listset_{(2,2,3)}$ of type $(0,0,1,2,1,2)$. 
Similarly, $3^2(12)^23=33(12)(12)3$ is also an element of $\listset_{(2,2,3)}$ but of type $(0,2,0,0,0,3)$. 
We have 
\beqq
    \listset_{(1,1,1)}=  \{ 123, 132, 213, 231, 312, 321, 1(23), (23)1, (12)3, 3(12), (123) \}. 
\eeqq

Note that if $\bsigma\in \listn$ has $\type(\bsigma)= \veca=(a_{123}, a_{12}, a_{23}, a_1, a_2, a_3)$, then 
\beq \label{eq:setBdf}
    a_1 + a_{12} + a_{123} = n_1, \quad  
    a_2 + a_{12}+a_{23}+a_{123} = n_2, \quad 
    a_3 + a_{23}+a_{123} = n_3. 
\eeq	


\begin{definition} \label{def:Pisigmaxi}
Let $\vecn\in \N^3$. For $\bsigma, \btau\in \listn$, define the functions 
\beq \label{eq:Pisigmaxi} \begin{split}
    \Pi^{\bsigma}_{\btau}(\bsxi, \bseta) 
    = &\K(\bea, \beot, \beo  | \bxa, \bxot, \bxo) \, 
    \K(\bxo, \betr, \bet | \beo,  \bxtr, \bxt) \, \\
    &\times \K( \bxot, \bxt, \ber | \beot, \bet, \bxr) \,
    \K( \bxa, \bxtr, \bxr | \bea, \betr,  \ber) 
    \ST( \bxa, \bxtr, \bxr | \bea, \betr,  \ber), 
\end{split} \eeq
and
\beq
    \FF_{L}^{\bsigma|\btau} (\bsxi, \bseta) 
    = \frac{\prod_{*\in \mc A_3} \prod_{i=1}^{a_*} \ff_{L, *} (\xi_i^*)}{\prod_{*\in \mc A_3} \prod_{i=1}^{b_*} \ff_{L, *} (\eta_i^*)}
\eeq
where 
\beqq   
    (a_{123}, a_{12}, a_{23}, a_1,a_2,a_3)=\type(\bsigma), \qquad 
    (b_{123}, b_{12}, b_{23}, b_1,b_2,b_3)=\type(\btau). 
\eeqq
Here,  $\bsxi = ( \bsxi^{123}, \bsxi^{12}, \bsxi^{23}, \bsxi^1, \bsxi^2, \bsxi^3)$ 
and $\bseta = ( \bseta^{123}, \bseta^{12}, \bseta^{23}, \bseta^1, \bseta^2, \bseta^3)$, 
with $\bsxi^{*} = (\xi^*_1,\cdots,\xi^*_{a_*}) \in \C^{a_*}$ and $\bseta^{*} = (\eta^*_1,\cdots,\eta^*_{b_*}) \in \C^{b_*}$
for each $* \in \mc A_3$. 
\end{definition}   

We note that $\Pi^{\bsigma}_{\btau}$ and $\FF_{L}^{\bsigma|\btau}$ depend only on $\type(\bsigma)$ and $\type(\btau)$, and not on the exact form of $\bsigma$ and $\btau$. 

The first $\K$ and the last $\K$ in \eqref{eq:Pisigmaxi} are  determinants of Cauchy matrices of sizes $n_1$ and $n_3$, respectively. 
The second $\K$ is the  determinant of a Cauchy matrix of size $n_2-n_1+a_1+b_1$, which is equal to $a_{1}+b_{23}+b_2$ and also to $b_1+a_{23}+a_2$ since $a_1-a_2-a_{23}=n_1-n_2=b_1-b_2-b_{23}$. 
Similarly, the third $\K$ is the  determinant of a Cauchy matrix of size $n_2-n_3+a_3+b_3$ which is equal to $a_{12}+a_{2}+b_{3}$ and also to $b_{12}+b_{2}+a_3$ since
$a_3-a_2-a_{12}=n_3-n_2=b_3-b_2-b_{12}$.  

We note that for each $*\in \mc A_3$, the functions $\Pi^{\bsigma}_{\btau}(\bsxi, \bseta)$ and $\FF_{L}^{\bsigma|\btau} (\bsxi, \bseta)$ are symmetric functions in the variables $\xi^*_1, \cdots, \xi^*_{a_*}$ and also symmetric in  $\eta^*_1,\cdots,\eta^*_{b_*}$. 

\medskip

Let $\bsigma\in \listn$ with $\type(\bsigma)=\veca$. 
For $\bsxi\in \C^{|\veca|}$, 
we define $\bsxi^{\bsigma} \in \C^{|\veca|}$ 
as follows. 
We can always write $\bsigma= \bsigma_1 \cdots \bsigma_r$, where each sub-list 
$\bsigma_i= 123^{s^{123}_i}12^{s^{12}_i}23^{s^{23}_i}1^{s^1_i}2^{s^2_i}3^{s^3_i}$ with $s_i^*\ge 0$ for every $i$ and for each superscript $*$. 
Define
\beqq
    \bsxi^{\bsigma} = (\bsxi_1, \cdots, \bsxi_r)
\eeqq
where, setting $k^{*}_i= s^*_1+\cdots +s^*_{i-1}$ with $k^*_{1}=0$, 
\beqq
    \bsxi_i= ( \underbrace{\xi_{k^{123}_i+1}^{123}, \cdots, \xi_{k^{123}_{i}+ s^{123}_i}^{123}}_{s^{123}_i}, 
    \underbrace{\xi_{k^{12}_i+1}^{12}, \cdots, \xi_{k^{12}_{i}+ s^{12}_i}^{12}}_{s^{12}_i}, \cdots, \underbrace{\xi_{k^{3}_i+1}^{3}, \cdots, \xi_{k^{3}_{i}+ s^{3}_i}^{3}}_{s^{3}_i}) 
\eeqq
for each $i$. 
For example, 
$\bsxi^{2(23)1}= (\xi^2_1,  \xi^{23}_1, \xi^1_1)$ and 
$\bsxi^{2(12)22}= (\xi^2_1 , \xi^{12}_1 , \xi^2_2, \xi^2_3)$. 

\begin{definition}
For $\bsigma, \btau\in \listn$ and $L>0$, define the integral 
\beq \label{eq:ingdef}
    \ing^{\bsigma}_{\btau}
    =  \frac{1}{(2\pi \ii)^{|\bsigma|+|\btau|}} \int \dd \bsxi^{\bsigma} \int \dd \bseta^{\btau} \, 
    \Pi^{\bsigma}_{\btau}(\bsxi, \bseta) \FF_{L}^{\bsigma|\btau} (\bsxi, \bseta) 
\eeq
where the contour for $\bsxi^{\bsigma}$ is a product of $|\bsigma|$ small circles centered at $-1$, nested from inside to outside, 
and the contour for $\bseta^{\btau}$ is a product of $|\btau|$ small circles centered at $0$, also nested from inside to outside. All circles are mutually disjoint. 
\end{definition}   

For example, both 
$\bsigma=22(123)$ and $\btau=2(12)32$ are elements of  $\listset_{(1,3,1)}$, with $\type(\bsigma)=(1,0,0,0,2,0)$ and $\type(\btau)=(0,1,0,0,2,1)$. We have 
\beqq
    \ing^{\bsigma}_{\btau}
    =  \frac{1}{(2\pi \ii)^{7}}  \int_{\gamma_1} \dd \xi^2_1  \int_{\gamma_2} \dd \xi^{2}_2 \int_{\gamma_3} \dd \xi^{123}_1 
    \int_{\Gamma_1} \dd \eta^2_1 \int_{\Gamma_2} \dd \eta^{12}_1 \int_{\Gamma_3} \dd \eta^3_1 \int_{\Gamma_4} \dd \eta^2_2
    \, \, 
    \Pi^{\bsigma}_{\btau}(\bsxi, \bseta) \FF_{L}^{\bsigma|\btau} (\bsxi, \bseta) 
\eeqq
where $\gamma_{1}, \gamma_{2},  \gamma_3$ are nested circles centered at $-1$ of radii $0<r_1<r_2<r_3<1/2$, 
$\Gamma_{1}, \Gamma_2, \Gamma_3, \Gamma_4$ are nested circles centered at $0$ of radii $0<R_1<R_2<R_3<R_4<1/2$; all circles are mutually disjoint. 
Note that in this example, we can take  $\gamma_1=\gamma_2$ without changing the integral since the integrand is analytic at $\xi_1^2=\xi_2^2$. However, we cannot take $\Gamma_3$ and $\Gamma_4$ to be the same since $\eta^3_1=\eta^2_2$ is a pole of $\Pi^{\bsigma}_{\btau}(\bsxi, \bseta)$, arising from the third $\K$ term in \eqref{eq:Pisigmaxi}. 

\medskip

In terms of the notations introduced above, \eqref{eq:Qnold} can be written as follows. 

\begin{lem} \label{lem:Dbaseint2}
For $\vecn=(n_1, n_2, n_3)\in \N^3$ and $L>0$, 
\beqq
    \QQ^{(\bn)}_{L} 
    = (-1)^{|\bn|}  \sum_{i=0 \vee (2n_2-n_1+1)}^{2n_2} \sum_{j=0 \vee (2n_3-n_2+1)}^{2n_3}  
    \left[ \oint_{>1}  \frac{(z_1+1)^{n_1-n_{2}-1}}{z_1^{n_{2}-i+1}} \frac{\dd z_1}{2\pi \ii} \right]  
    \left[ \oint_{>1} \frac{(z_2+1)^{n_2-n_{3}-1}}{z_2^{n_{3}-j+1}} \frac{\dd z_2}{2\pi \ii} \right]
    \alpha_{ij} 
\eeqq
where $\alpha_{ij}$ is a sum of $\binom{2n_2}{i}\binom{2n_3}{j}$ terms,  each of the form $\ing^{\bsigma}_{\btau}$ with $\bsigma, \btau\in \listn$ specified by
\beq \label{eq:baseint2}
    \bsigma= 3^{n_{31}} 2^{n_{21}} 1^{n_1} 2^{n_{22}} 3^{n_{32}}, 
    \qquad     \btau = 3^{n_{31}'} 2^{n'_{21}} 1^{n_1} 2^{n_{22}'} 3^{n_{32}'} 
\eeq
for $n_{21}, n_{22}, n_{31}, n_{32},n_{21}', n_{22}', n_{31}', n_{32}'\in \N_0$ subject to the constraints  
\beqq \begin{split}
	n_{21}+n_{22}=n_{21}'+n_{22}'=n_2, \quad 
	n_{31}+n_{32}= n_{31}'+n_{32}'= n_3, \quad
	n_{22} + n'_{22} = i, \quad
	n_{32} + n_{32}' = j.
\end{split} \eeqq
\end{lem}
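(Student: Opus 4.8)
The plan is to prove the identity by a direct computation: expand the polynomial $\DD^{(\vecn)}_L(z_1,z_2)$ in powers of $z_1,z_2$, recognize each coefficient as a sum of the integrals $\ing^{\bsigma}_{\btau}$, and then carry out the $z_1$- and $z_2$-integrals in \eqref{eq:Qnold}.

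First I would expand the binomial-type measures in \eqref{eq:Dvnz2}. For each $i\in\{2,3\}$ and each index $k_i$, the bracket $\int_{C^{\tn{in}}_{i,\tn{left}}}+z_{i-1}\int_{C^{\tn{out}}_{i,\tn{left}}}$ contributes either the inner contour (weight $1$) or the outer contour (weight $z_{i-1}$), and likewise for the $\eta^i_{k_i}$-bracket. Grouping the $2^{2(n_2+n_3)}$ resulting terms by the total number $i$ of the $2n_2$ level-$2$ variables sent to an outer contour and the total number $j$ of the $2n_3$ level-$3$ variables sent to an outer contour gives $\DD^{(\vecn)}_L(z_1,z_2)=\sum_{i=0}^{2n_2}\sum_{j=0}^{2n_3}z_1^iz_2^j\,\alpha_{ij}$, where $\alpha_{ij}$ is the sum, over the $\binom{2n_2}{i}\binom{2n_3}{j}$ choices of which variables move outward, of the corresponding $2|\vecn|$-fold contour integral of $\Pi_{\vecn}\FF_L^{(\vecn)}$.

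Next I would identify each such integral with an $\ing^{\bsigma}_{\btau}$. Suppose $n_{22}$ of the $\xi^2$'s and $n_{22}'$ of the $\eta^2$'s move outward (so $n_{22}+n_{22}'=i$, and set $n_{21}=n_2-n_{22}$, $n_{21}'=n_2-n_{22}'$), and similarly $n_{32}+n_{32}'=j$ at level $3$. Because $\Pi_{\vecn}$ and $\FF^{(\vecn)}_L$ have no singularities when two variables of the same level collide — their only poles being at $\xi^i_j=\xi^{i+1}_k$ and $\eta^i_j=\eta^{i+1}_k$ — each family of variables sharing a common circle can be pushed onto distinct nested circles without changing the integral; the nesting $C_{3,\tn{left}}^{\tn{in}}\subset C_{2,\tn{left}}^{\tn{in}}\subset C_{1,\tn{left}}\subset C_{2,\tn{left}}^{\tn{out}}\subset C_{3,\tn{left}}^{\tn{out}}$ (and its right-hand analogue) keeps every adjacent-level pair on distinct circles, so no pole is crossed. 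Listing the level labels of the $\xi$-circles from inside to outside then yields the list $\bsigma=3^{n_{31}}2^{n_{21}}1^{n_1}2^{n_{22}}3^{n_{32}}$ of \eqref{eq:baseint2}, and similarly $\btau$ from the $\eta$-circles; the eight exponents satisfy exactly the constraints stated in the lemma. Since these lists contain only the labels $1,2,3$, a comparison of \eqref{eq:Pisigmaxi} with \eqref{eq:Pi_n_m3} and of the definition of $\FF_L^{\bsigma|\btau}$ with \eqref{eq:FLfL} shows that $\Pi^{\bsigma}_{\btau}$ and $\FF_L^{\bsigma|\btau}$ collapse to $\Pi_{\vecn}$ and $\FF_L^{(\vecn)}$, so the integral is precisely $\ing^{\bsigma}_{\btau}$.

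Finally I would substitute $\DD^{(\vecn)}_L(z_1,z_2)=\sum_{i,j}z_1^iz_2^j\alpha_{ij}$ into \eqref{eq:Qnold}, at which point the $z_1$- and $z_2$-integrals separate and
\beqq
\QQ^{(\bn)}_L=(-1)^{|\bn|}\sum_{i=0}^{2n_2}\sum_{j=0}^{2n_3}\left[\oint_{>1}\frac{(z_1+1)^{n_1-n_2-1}}{z_1^{n_2-i+1}}\frac{\dd z_1}{2\pi\ii}\right]\left[\oint_{>1}\frac{(z_2+1)^{n_2-n_3-1}}{z_2^{n_3-j+1}}\frac{\dd z_2}{2\pi\ii}\right]\alpha_{ij}.
\eeqq
It remains to observe that the $z_1$-integral vanishes unless $i\ge 2n_2-n_1+1$: on $|z_1|=R>1$ the integrand has its only finite singularities at $0$ and $-1$, and the expansion $(z_1+1)^{n_1-n_2-1}=z_1^{n_1-n_2-1}\sum_{k\ge0}\binom{n_1-n_2-1}{k}z_1^{-k}$ shows that its coefficient of $z_1^{-1}$ is $0$ whenever $i<2n_2-n_1+1$. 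Hence the $i$-sum may be restricted to $0\vee(2n_2-n_1+1)\le i\le 2n_2$ and, symmetrically, the $j$-sum to $0\vee(2n_3-n_2+1)\le j\le 2n_3$, which is the claimed formula. The entire argument is bookkeeping — the only thing requiring care is tracking which variable sits on which nested circle and checking that the combinatorics of the $\binom{2n_2}{i}\binom{2n_3}{j}$ choices matches the listed constraints on $(n_{21},n_{22},n_{31},n_{32},n_{21}',n_{22}',n_{31}',n_{32}')$ — so the main obstacle here is purely notational rather than conceptual.
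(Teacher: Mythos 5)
Your proof is correct and follows the same route as the paper's: expand the bracketed measures in the definition of $\DD^{(\vecn)}_L$, collect by powers of $z_1,z_2$, recognize each coefficient as a sum over $\binom{2n_2}{i}\binom{2n_3}{j}$ choices (using the symmetry of $\Pi_{\vecn}$ and $\FF^{(\vecn)}_L$ and the fact that same-level collisions are not poles to push same-circle variables onto nested circles, producing $\ing^{\bsigma}_{\btau}$), and then restrict the $i,j$ ranges by the residue-at-infinity vanishing of the $z_1,z_2$-integrals. You spell out the contour-separation step more explicitly than the paper, which condenses it into the phrase about invariance under permutations, but the underlying argument is the same.
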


\begin{proof}
Multiplying out the formula \eqref{eq:Dvnz2}, and using the invariance of $\Pi_{\bn}(\bsxi, \bseta)$ and $\FF_L^{(\bn)}(\bsxi, \bseta)$ under suitable permutations of the variables,
we find that
\beq
    \DD^{(\vecn)}_L(z_1, z_2)
    = \sum_{i=0}^{2n_2} \sum_{j=0}^{2n_3} \alpha_{ij} z_1^i z_2^j 
\eeq
where $\alpha_{ij}$ is a sum of $\binom{2n_2}{i}\binom{2n_3}{j}$ terms of the form $\ing^{\bsigma}_{\btau}$ with $\bsigma, \btau\in \listn$ as in \eqref{eq:baseint2}. 
Inserting this formula into \eqref{eq:Qnold}, we obtain the result since 
$\oint \frac{(z+1)^{n-n'-1}}{z^{n'-i+1}} \dd z =0$ whenever $i\le 2n'-n$.
\end{proof}

In Subsection \ref{sec:deform}, the integrals $\ing^{\bsigma}_{\btau}$ with $\bsigma, \btau$ as in \eqref{eq:baseint2} will be further rewritten in terms of $\ing^{\bsigma}_{\btau}$ with other choices of $\bsigma, \btau$, which are more amenable to the application of the method of steepest descent. 

\subsection{Critical point analysis} \label{sec:cpa}

We now consider the critical points of the functions $\GG_{L,*}$ in \eqref{eq:mcGdf} and  \eqref{eq:Gsubide}, and these will be used  in the asymptotic evaluation of the integrals $\ing^{\bsigma}_{\btau}$. 
All these functions are special cases of the function given in \eqref{eq:Gqmcd} below. 

As before, let $a, b > 0$ and $\lv  > \bar{\LPP}(a,b)$ be fixed. We set $D =\lv^2-2(a+b)\lv +(a-b)^2$, $\slope= \frac{\lv-a-b+\sqrt{D}}{\lv-a-b-\sqrt{D}}$, and $ \mu = \frac{\lv-a+b+\sqrt{D}}{\lv +a-b - \sqrt{D}}$.

\begin{lem} \label{lem:general_crit_behavior} 
For every $X, Y \in \R \setminus \{0\}$, the critical points of the function
\beq \label{eq:Gqmcd}
    \GG_{X,Y}(z) = -aX \log(1+z) + bY\log z + \frac12 \left[ X(\lv+a-b-\sqrt{D}) + Y(\lv-a+b+\sqrt{D}) \right] z
\eeq
are
\beq \label{eq:mean_levelcurve_slope} 
    \pq = -\frac{\frac{Y}{X}}{\frac1{\mu}+ \frac{Y}{X}} \quad \text{and} \quad   
    \pp = - \frac{\lv-a+b-\sqrt{D}}{2\lv}. 
\eeq
Furthermore, they satisfy the following properties: 
\begin{enumerate}[(a)]
    \item If $ \frac{Y}{X}<- \frac1{\mu}$, then $\pq < -1 < \pp < 0$. 
    \item If $-\frac{1}{\mu}<\frac{Y}{X}<0$, then  $-1< \pp < 0 < \pq$.
    \item If $0< \frac{Y}{X}<\frac{1}{\slope}$, then $ -1 < \pp < \pq < 0$. 
    \item If $\frac{Y}{X}> \frac{1}{\slope}$, then $ -1 < \pq < \pp < 0$. 
\end{enumerate}
Furthermore, 
\beq \label{eq:Aratio}
    \frac{ \GG_{X,Y}''(\pp)}{ \GG_{1,1}''(\pp)} 
    = \frac{\slope Y- X}{\slope-1}. 
\eeq
\end{lem}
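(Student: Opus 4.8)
The plan is to reduce the whole lemma to a single factorization of $\GG_{X,Y}'$. Write $c=\tfrac12\big[X(\lv+a-b-\sqrt D)+Y(\lv-a+b+\sqrt D)\big]$, so that
\[
\GG_{X,Y}'(z)= X\left(-\frac{a}{1+z}+\frac{\lv+a-b-\sqrt D}{2}\right)+Y\left(\frac{b}{z}+\frac{\lv-a+b+\sqrt D}{2}\right).
\]
Each parenthesized expression is $\frac{(\text{const})(z-\zeta)}{(\text{linear in }z)}$, and the key computational input — the two identities $(\lv+a-b)^2-D=4a\lv$ and $(\lv-a+b)^2-D=4b\lv$, both immediate from $D=\lv^2-2(a+b)\lv+(a-b)^2$ — shows that the numerator root $\zeta$ is in both cases $\pp=\frac{-\lv+a-b+\sqrt D}{2\lv}$. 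Hence
\[
\GG_{X,Y}'(z)=\frac{z-\pp}{2}\left[\frac{X(\lv+a-b-\sqrt D)}{1+z}+\frac{Y(\lv-a+b+\sqrt D)}{z}\right].
\]
From this I read off the two critical points at once: $z=\pp$, and the unique zero of the bracket, which after clearing $z(1+z)$ is $z=\frac{-Y(\lv-a+b+\sqrt D)}{X(\lv+a-b-\sqrt D)+Y(\lv-a+b+\sqrt D)}$; dividing numerator and denominator by $X(\lv+a-b-\sqrt D)$ and recalling $\mu=\frac{\lv-a+b+\sqrt D}{\lv+a-b-\sqrt D}$ gives $\pq=-\frac{Y/X}{1/\mu+Y/X}$.

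For the ordering statements (a)--(d), I first note that $\pp$ is exactly the critical point $z_c^+$ of $\GG_{1,1}(z)=-a\log(1+z)+b\log z+\lv z$, which is the function of Lemma~\ref{lem:basic} with $(\alpha_1,\alpha_2,\alpha_3)=(a,b,\lv)$ and $\lv>(\sqrt a+\sqrt b)^2=\bar\LPP(a,b)$; thus Lemma~\ref{lem:basic}(a) gives $-1<z_c^-<\pp<0$ for all $X,Y$. Next, set $t=Y/X$ and $u=t\mu$; since $\lv>a+b$ forces $\lv+a-b>\sqrt D$ and $\lv-a+b+\sqrt D>0$, we have $\mu>0$, and $\pq=-\frac{u}{1+u}$ is a strictly decreasing function of $u$ on each of $(-\infty,-1)$ and $(-1,\infty)$, with $\pq<-1\iff u<-1$ and $\pq>0\iff -1<u<0$. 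Combined with $\pp\in(-1,0)$ this yields (a) ($t<-1/\mu$) and (b) ($-1/\mu<t<0$). For $t>0$ we have $u>0$, so $\pq\in(-1,0)$; comparing with $\pp$ gives $\pq=\pp\iff u=u_*:=\frac{-\pp}{1+\pp}=\frac{\lv-a+b-\sqrt D}{\lv+a-b+\sqrt D}$, and a short simplification (using $D-(a-b)^2=\lv^2-2(a+b)\lv$) shows $u_*/\mu=\frac{(\lv-\sqrt D)^2-(a-b)^2}{(\lv+\sqrt D)^2-(a-b)^2}=\frac{\lv-a-b-\sqrt D}{\lv-a-b+\sqrt D}=1/\slope$. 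Since $u_*>0$, monotonicity then gives $0<t<1/\slope\iff\pq>\pp$ (case (c)) and $t>1/\slope\iff\pq<\pp$ (case (d)), each together with $-1<\pp<0$.

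For the second-derivative ratio, differentiating the factored form gives $\GG_{X,Y}''(z)=\tfrac12 B_{X,Y}(z)+\tfrac{z-\pp}{2}B_{X,Y}'(z)$, where $B_{X,Y}$ is the bracket above, so $\GG_{X,Y}''(\pp)=\tfrac12 B_{X,Y}(\pp)$. Using $\tfrac1{1+\pp}=\tfrac{\lv+a-b-\sqrt D}{2a}$ and $\tfrac1{\pp}=-\tfrac{\lv-a+b+\sqrt D}{2b}$ (again the two identities) gives $\GG_{X,Y}''(\pp)=XP-YQ$ with $P:=\tfrac{(\lv+a-b-\sqrt D)^2}{4a}$ and $Q:=\tfrac{(\lv-a+b+\sqrt D)^2}{4b}$; the same algebra shows $Q/P=\frac{(\lv+\sqrt D)^2-(a-b)^2}{(\lv-\sqrt D)^2-(a-b)^2}=\slope$, i.e.\ $Q=\slope P$. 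Therefore
\[
\frac{\GG_{X,Y}''(\pp)}{\GG_{1,1}''(\pp)}=\frac{XP-YQ}{P-Q}=\frac{X-\slope Y}{1-\slope}=\frac{\slope Y-X}{\slope-1}.
\]

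I do not expect a genuine obstacle here; the content is elementary algebra. The only points needing care are the repeated bookkeeping of the pair of identities $(\lv\pm(a-b))^2-D=4\lv\cdot(a\text{ or }b)$ (equivalently, the statement that $\pp$ is a common zero of the two pieces of $\GG_{X,Y}'$), the sign checks $\mu>0$, $u_*>0$, $\slope>1$, and $\pp\in(-1,0)$, and the implicit exclusion of the degenerate case $X(\lv+a-b-\sqrt D)+Y(\lv-a+b+\sqrt D)=0$, which is precisely $Y/X=-1/\mu$ and is ruled out in each of (a)--(d).
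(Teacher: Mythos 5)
Your proof is correct, and the core idea — verify that $\pp$ is a critical point, then find the other root and work out the orderings — is the same as the paper's. The main tactical difference is that you factor the derivative as $\GG_{X,Y}'(z)=\tfrac{z-\pp}{2}\,B_{X,Y}(z)$ and read both $\pq$ and $\GG_{X,Y}''(\pp)=\tfrac12 B_{X,Y}(\pp)$ off that one factorization, whereas the paper finds $\pq$ from Vieta's formula for the numerator quadratic $hz^2+\cdots+bY$ (writing $\pq=bY/(h\pp)$) and then asserts \eqref{eq:Aratio} as a ``direct computation'' without supplying the intermediate steps. Your route has the small advantage that the second-derivative ratio drops out for free, via $\GG_{X,Y}''(\pp)=XP-YQ$ and the single identity $Q=\slope P$, so \eqref{eq:Aratio} becomes one line rather than an opaque computation. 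You also derive $-1<\pp<0$ by identifying $\pp=z_c^+$ and invoking Lemma~\ref{lem:basic}(a), while the paper gets it directly from the four sign inequalities $\lv\pm(a-b)\pm\sqrt D>0$; both are fine. Your treatment of the orderings via the substitution $u=\mu\, Y/X$ and the monotonicity of $u\mapsto -u/(1+u)$ is more explicit than the paper's terse ``follow directly from the formula,'' and your remark that the degenerate case $Y/X=-1/\mu$ (where $\pq$ is undefined) lies outside each of (a)--(d) is a useful clarification that the paper leaves implicit.
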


\begin{proof}
This lemma in principle follows from Lemma \ref{lem:basic}. However, the explicit computation of the critical points can be tedious using the formula \eqref{eq: z_pm}. Instead we proceed as follows. We have  
\beqq
    \GG_{X,Y}'(z) 
    = X \left[-\frac{a}{1+z} + \frac{\lv+a-b-\sqrt{D}}{2} \right] + Y \left[ \frac{b}{z} + \frac{\lv-a+b+\sqrt{D}}{2} \right]. 
\eeqq
Noting that $\frac{1}{1+\pp} = \frac{\lv+a-b-\sqrt{D}}{2a}$ and $\frac{1}{\pp} = -\frac{\lv-a+b+\sqrt{D}}{2b}$, we see  that $ \GG_{X,Y}'(\pp) = 0$. 
Also, since
\beqq
    \GG_{X,Y}'(z) 
    = \frac{hz^2 + (bY-aX+h)z + bY}{z(1+z)}
    \qquad 
    \text{where $h:=\frac{1}{2} [X(\lv+a-b-\sqrt{D}) + Y(\lv-a+b+\sqrt{D})]$,}
\eeqq
we find that $\GG$ has another critical point given by  
\beqq
    \pq = \frac{bY}{h \pp}= -\frac{\frac{Y}{X}}{\frac1{\mu}+ \frac{Y}{X}}. 
\eeqq

The conditions $\lv > \bar{\LPP}(a,b) =  (\sqrt{a} + \sqrt{b})^2$ and $a,b>0$ imply that $\lv \pm (a-b) \pm \sqrt{D}>0$ for all four choices of signs.
These inequalities show that $-1<\pp<0$. 
Properties (a) and (b) follow directly from the formula of $\pq$. 
Properties (c) and (d) are obtained by noting that $\pq=\pp$ if and only if $\frac{Y}{X}=\frac1{\slope}$. 
Finally, \eqref{eq:Aratio} can be derived by direct computation.  
\end{proof}

Recalling \eqref{eq:hi12}, the functions $\mcG_{1}, \mcG_{2}, \mcG_{3}, \mcG_{12}, \mcG_{23}$, and $\mcG_{123}$ are all equal to the function $\GG_{X,Y}(z)$ in \eqref{eq:Gqmcd} with the parameters
\beq \label{eq:XYpa}
    (X,Y)= (x_1,y_1), \quad (x_2-x_1, y_2-y_1), \quad (1-x_2,1-y_2), \quad (x_2, y_2), \quad (1-x_1, 1-y_1), \quad (1,1), 
\eeq
respectively. 
The ordering of their critical points depends on the relative positions of $(x_1, y_1)$ and $(x_2, y_2)$. 
Set
\beq\label{def:wandRregion}
    \w= (x_1, y_1, x_2, y_2).  
\eeq
From the assumptions \eqref{eq:x1x2y1y2_sameside} and \eqref{eq:x2y2region}, $\w$ belongs to the region 
\beq \label{eq:rg}
    \rg = \left\{ (x_1,y_1, x_2,y_2) \in (0,1)^4: \frac{1}{\slope} < \frac{y_1}{x_1}, \frac{y_2}{x_2}  < 1 \text{ and } (x_2-x_1) +\mu (y_2-y_1) > 0 \right\}. 
\eeq
Since $(x_1, y_1)\in (0,1)^2$ satisfies $\frac1{\slope} < \frac{y_1}{x_1} < 1$, we find that  
\beq \label{eq:xoyospl}
    -\frac1{\mu}< 0<\frac1{\slope}<\frac{y_1}{x_1} <1<  \frac{1-y_1}{1-x_1} . 
\eeq	
The six numbers above divide the real line into seven intervals. 
We define the following seven disjoint sub-regions of $\rg$ according to which interval the value $\frac{y_2-y_1}{x_2-x_1}$ belongs to: 
\beq \label{eq:all_regions} \begin{split}
    &\rgo := \left\{ \w \in \rg:  \frac{y_2-y_1}{x_2-x_1}< - \frac1{\mu} \right\}, 
    \quad\quad\quad\quad \rgt := \left\{ \w \in \rg :  \frac{y_2-y_1}{x_2-x_1} >\frac{1-y_1}{1-x_1} \right\}, \\
    &\rgth := \left\{ \w \in \rg :1< \frac{y_2-y_1}{x_2-x_1}< \frac{1-y_1} {1-x_1} \right\}, 
    \quad \rgf := \left\{ \w \in \rg :  \frac{y_1}{x_1}< \frac{y_2-y_1}{x_2-x_1}<1  \right\}, \\
    &\rgfi := \left\{ \w \in \rg :\frac1{\slope}<  \frac{y_2-y_1}{x_2-x_1}< \frac{y_1} {x_1}  \right\}, 
    \quad\quad \rgs := \left\{ \w \in \rg : 0< \frac{y_2-y_1}{x_2-x_1}< \frac1{\slope}  \right\}, \\
    &\rgse :=  \left\{ \w \in \rg: -\frac1{\mu}< \frac{y_2-y_1}{x_2-x_1} < 0 \right\}.
\end{split}  \eeq
The region $\rg$ is the union of these seven regions and a finite collection of hypersurfaces. 
Since $(x_2-x_1) +\mu (y_2-y_1) > 0 $ for $\w \in \rg$, the regions $\rgo$ and $\rgse$ simplify to  
\beq \label{eq:signofxy}
    \rgo = \left\{ \w \in \rg:  x_2<x_1\, \, y_2>y_1 \right\}
    \qquad \text{and} \qquad \rgse =  \left\{ \w \in \rg: x_2> x_1\, \, y_2<y_1 \right\}.
\eeq
For $\w \in \rgt \cup \cdots \cup \rgs$, we have $x_2 - x_1>0$ and $y_2 -y_1>0$. 

We now state the following result regarding the orderings of the critical points.

\begin{lem} \label{cor:criticalptofGlo} 
Let $\pp=- \frac{\lv-a+b-\sqrt{D}}{2\lv}$, as in Lemma \ref{lem:general_crit_behavior}. 
Let  $z_{*}^{-} \le z_{*}^{+}$ denote the critical points of $\mcG_*$  for each $* \in \mc A_3$. Then, for every $\w\in \rgo\cup\cdots \cup \rgse$, 
\beq \label{eq:cpotha}
    z^+_{1} = z^+_3 = z^+_{12}  = z^+_{23}  = z_{123}^+ = \pp. 
\eeq
Furthermore, the following results hold: 
\begin{enumerate}[(a)]
    \item If $\w  \in \rgo$, then $ z^-_2<-1 < z^-_{23} < z^-_{3} < z_{123}^- < z^-_{12} < z^-_1 < \pp = z^+_2 < 0$.
    \item If $\w  \in \rgt$, then $-1 < z^-_2 < z^-_{23} < z^-_{3} < z_{123}^- < z^-_{12} < z^-_1 < \pp =z^+_2 < 0$.
    \item If $\w  \in \rgth$, then $-1 < z^-_3 < z^-_{23} < z^-_{2} < z_{123}^- < z^-_{12} < z^-_1 < \pp = z^+_2 < 0$.
    \item If $\w  \in \rgf$, then $-1 < z^-_3 < z^-_{23} < z_{123}^- < z^-_{2} < z^-_{12} < z^-_1 < \pp = z^+_2 < 0$.
    \item If $\w  \in \rgfi$, then $-1 < z^-_3 < z^-_{23} < z_{123}^- < z^-_{1} < z^-_{12} < z^-_2 <  \pp = z^+_2 < 0$.
    \item If $\w  \in \rgs$, then $-1 < z^-_3 < z^-_{23} < z_{123}^- < z^-_{1} < z^-_{12} < z^-_2 = \pp  < z^+_2 < 0$.
    \item If $\w \in \rgse$, then $-1 < z^-_3 < z^-_{23} < z_{123}^- < z^-_{1} < z^-_{12} < z^-_2 = \pp < 0 < z^+_2$.
\end{enumerate}
\end{lem}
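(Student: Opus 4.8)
The plan is to reduce everything to Lemma~\ref{lem:general_crit_behavior}. By \eqref{eq:mcGdf}, \eqref{eq:Gsubide}, and \eqref{eq:XYpa}, each $\mcG_*$ equals the function $\GG_{X,Y}$ of \eqref{eq:Gqmcd} with $(X,Y)=(X_*,Y_*)$ from \eqref{eq:XYpa}; writing $\rho_*:=Y_*/X_*$ for its slope, we have $\rho_1=y_1/x_1$, $\rho_2=(y_2-y_1)/(x_2-x_1)$, $\rho_3=(1-y_2)/(1-x_2)$, $\rho_{12}=y_2/x_2$, $\rho_{23}=(1-y_1)/(1-x_1)$, and $\rho_{123}=1$. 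By Lemma~\ref{lem:general_crit_behavior} the critical points of $\mcG_*$ are $\pp$ (the same for all $*$) and $w(\rho_*)$, where $w(\rho):=-\rho/(1/\mu+\rho)=-1+(1/\mu)/(1/\mu+\rho)$. I would first record the elementary facts used throughout: $\rho\mapsto w(\rho)$ is strictly decreasing on each of $(-\infty,-1/\mu)$ and $(-1/\mu,\infty)$; $w(\rho)\in(-1,0)$ for $\rho>0$, $w(\rho)>0$ for $-1/\mu<\rho<0$, $w(\rho)<-1$ for $\rho<-1/\mu$; and $w(1/\slope)=\pp$ (this last being the identity noted in the proof of Lemma~\ref{lem:general_crit_behavior}).

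Next I would establish \eqref{eq:cpotha} and locate $z_2^{\pm}$. Since $\w\in\rg$, relation \eqref{eq:xoyospl} and the constraint $1/\slope<y_2/x_2<1$ give $1/\slope<\rho_1,\rho_{12}<1$, while $y_1<x_1$ and $y_2<x_2$ give $\rho_{23},\rho_3>1$, and $\rho_{123}=1$; hence $\rho_*>1/\slope>0$ for every $*\in\{1,3,12,23,123\}$, so Lemma~\ref{lem:general_crit_behavior}(d) applies and gives $z_*^+=\pp$ and $z_*^-=w(\rho_*)\in(-1,\pp)$, which is \eqref{eq:cpotha}. For $\mcG_2$, the seven regions are distinguished precisely by which of the seven intervals cut out in \eqref{eq:xoyospl} contains $\rho_2$, as prescribed by \eqref{eq:all_regions}: in $\rgo$ ($\rho_2<-1/\mu$) case~(a) gives $z_2^-=w(\rho_2)<-1$ and $z_2^+=\pp$; in $\rgt,\rgth,\rgf,\rgfi$ ($\rho_2>1/\slope$) case~(d) gives $z_2^-=w(\rho_2)\in(-1,\pp)$ and $z_2^+=\pp$; in $\rgs$ ($0<\rho_2<1/\slope$) case~(c) gives $z_2^-=\pp$ and $z_2^+=w(\rho_2)\in(\pp,0)$; and in $\rgse$ ($-1/\mu<\rho_2<0$) case~(b) gives $z_2^-=\pp$ and $z_2^+=w(\rho_2)>0$. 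This fixes $z_2^{\pm}$ relative to $-1$, $\pp$, and $0$ in every region.

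What remains is the ordering of the $w(\rho_*)$ among themselves. Apart from $\rho_2$ in $\rgo$ and $\rgse$ --- which sits on the other branch of $w$ and whose placement is already settled --- all slopes in play in a given region are positive, so by monotonicity of $w$ the ordering of the $w(\rho_*)$ is the reverse of the ordering of the $\rho_*$. I would obtain the slope orderings from the region bounds \eqref{eq:all_regions} together with the mediant inequality: for $p_1/q_1$ and $p_2/q_2$ with $q_1,q_2>0$, the fraction $(p_1+p_2)/(q_1+q_2)$ lies strictly between them. Picking the decomposition according to the signs of $x_2-x_1$ and $y_2-y_1$ recorded in \eqref{eq:signofxy} and the text after it --- e.g.\ $(x_2,y_2)=(x_1,y_1)+(x_2-x_1,y_2-y_1)$ when $x_2>x_1$, the reversed decomposition when $x_2<x_1$, and likewise for the triples $(1-x_i,1-y_i)$ --- the mediant inequality shows that $\rho_{12}$ lies strictly between $\rho_1$ and $\rho_2$, that $\rho_{23}$ lies strictly between $\rho_2$ and $\rho_3$, and that $\rho_{123}=1$ lies strictly between the least and the greatest of $\rho_1,\rho_2,\rho_3$. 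Feeding these, the region bounds, and $\rho_1,\rho_{12}\in(1/\slope,1)$, $\rho_3,\rho_{23}>1$ into the strictly decreasing map $w$ yields, region by region, the full chains (a)--(g).

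I expect the main obstacle to be organizational rather than conceptual: there are seven regions, and in each one must select the correct mediant decomposition, invoke the correct case of Lemma~\ref{lem:general_crit_behavior} for $\mcG_2$, and verify the cross-branch comparisons against $-1$ and $0$ that arise when $\rho_2<0$ or $\rho_2<1/\slope$. Every individual inequality is trivial, but the case analysis is long, so in the write-up I would carry out one representative region in full detail (say $\rgf$, where $\rho_2$ lands strictly between $\rho_{123}$ and $\rho_{12}$) and note that the remaining six are handled identically.
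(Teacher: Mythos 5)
Your reduction to Lemma~\ref{lem:general_crit_behavior} and the use of the monotonicity of $\rho\mapsto w(\rho)=-\rho/(1/\mu+\rho)$ is exactly the paper's argument; the paper states the monotonicity in the form ``the function $r\mapsto-\frac{r}{1/\mu_c+r}$ is decreasing,'' and it obtains the slope comparisons such as $\rho_{12}$ vs.\ $\rho_1$ by clearing denominators (checking cross-products like $(y_2-y_1)x_1$ vs.\ $(x_2-x_1)y_1$) rather than by invoking the mediant inequality by name, but these are the same computation. Your handling of $z_2^\pm$ via cases (a)--(d) of Lemma~\ref{lem:general_crit_behavior} matches the paper as well, and you correctly flag the cross-branch comparisons against $-1$, $0$, and $\pp$ as the delicate points.

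One small correction if you write this out: your bullet-point conclusions of the mediant inequality are not uniformly valid across all seven regions. In $\rgo$ one has $x_2-x_1<0$, so the decomposition $(x_2,y_2)=(x_1,y_1)+(x_2-x_1,y_2-y_1)$ has a negative denominator and the mediant inequality does not apply directly; using the reversed decomposition $(x_1,y_1)=(x_2,y_2)+(x_1-x_2,y_1-y_2)$ (both denominators positive) shows it is $\rho_1$ that lies strictly between $\rho_{12}$ and $\rho_2$, not $\rho_{12}$ between $\rho_1$ and $\rho_2$. Likewise $\rho_3$ lies between $\rho_2$ and $\rho_{23}$ in $\rgo$, not $\rho_{23}$ between $\rho_2$ and $\rho_3$. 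The resulting orderings ($\rho_2<\rho_1<\rho_{12}$ and $\rho_2<\rho_3<\rho_{23}$) are still what the lemma requires, since $\rho_2<-1/\mu<0$ while the other slopes are positive, so $z_2^-<-1$ is already decoupled; but the conclusions as you stated them would need rewording in $\rgo$. In particular ``the remaining six are handled identically'' is slightly misleading: $\rgo$ (where $x_2<x_1$) and, for the triples $(1-x_i,1-y_i)$, also $\rgt$ (where $x_2$ may exceed $1$? no, $x_2<1$, so only $\rgo$ is the exceptional case) require the reversed decomposition. Everything else in the proposal is sound.
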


\begin{proof} 
From Lemma \ref{lem:general_crit_behavior}, one of the critical points is $\pp$ for every $\GG_*$; this critical point does not depend on $\w$. 
Since $\w\in \rg$, we have 
\beq \label{eq:xi1i1}
    \frac1{\slope}< \frac{y_i}{x_i}< 1< \frac{1-y_i}{1-x_i}
\eeq
for both $i=1,2$. 
Thus, the parameters $(X,Y)$ in \eqref{eq:XYpa} satisfy $Y, X>0$ and $\frac{Y}{X}>\frac1{\slope}$ for every $*\neq 2$. 
Hence, Lemma \ref{lem:general_crit_behavior} (d) implies that $\pp$ is the larger critical point of $\mcG_*$ for $*\neq 2$, implying \eqref{eq:cpotha}, and the smaller critical points are: 
\beqq \begin{split}
    z^-_1= -\frac{\frac{y_1}{x_1}}{\frac{1}{\mu_c} + \frac{y_1}{x_1}}, \quad
    z^-_3= -\frac{\frac{1-y_2}{1-x_2}}{\frac{1}{\mu_c}+ \frac{1-y_2}{1-x_2}}, \quad 
    z^-_{12}= -\frac{\frac{y_2}{x_2}}{\frac{1}{\mu_c} + \frac{y_2}{x_2}}, \quad 
    z^-_{23}= -\frac{\frac{1-y_1}{1-x_1}}{\frac{1}{\mu_c} + \frac{1-y_1}{1-x_1}}, \quad
    z^-_{123}= -\frac{1}{\frac{1}{\mu_c} + 1} . 
\end{split} \eeqq

Note that the function $r\mapsto -\frac{r}{\frac{1}{\mu_c} + r}$ is decreasing in $r$. 
Hence, using \eqref{eq:xi1i1} we find that for every $\w\in \rgo\cup\cdots \cup \rgse$,  
\beqq
    \max\{ z_{23}^-, z_3^-\} < z^-_{123} < \min \{ z_1^-, z_{12}^-\} .
\eeqq
From \eqref{eq:xoyospl} and the definitions of the sub-regions, we see that
\beqq
    \text{$\frac{y_2-y_1}{x_2-x_1}< \frac{y_1}{x_1}$ for $\w \in \rgo \cup \rgfi\cup\rgs\cup \rgse$} \quad\text{and}\quad \text{$\frac{y_2-y_1}{x_2-x_1}> \frac{y_1}{x_1}$ for $\w \in \rgt \cup \rgth \cup \rgf$.}
\eeqq 
Using \eqref{eq:signofxy}, these inequalities imply that $(y_2-y_1)x_1>(x_2-x_1)y_1$ if $\w \in \rgo\cup \cdots \cup \rgf$,  
and $(y_2-y_1)x_1<(x_2-x_1)y_1$ if $\w \in \rgfi\cup\rgs\cup \rgse$. Hence, $\frac{y_2}{x_2}>\frac{y_1}{x_1}$ in the former case,  and $\frac{y_2}{x_2}<\frac{y_1}{x_1}$ in the latter case, implying that $z_{12}^-<z_1^-$ in the former case and $z_{12}^->z_1^-$ in the latter case. 
Similarly, also from the definitions of the sub-regions, we see that
\beqq
    \text{$\frac{y_2-y_1}{x_2-x_1}< \frac{1-y_1}{1-x_1}$ for $\w \in \rgo \cup (\rgth\cup\cdots\cup \rgse)$,} \quad\text{and}\quad \text{$\frac{y_2-y_1}{x_2-x_1}> \frac{1-y_1}{1-x_1}$ for $\w \in \rgt$.}
\eeqq   
These inequalities imply that $\frac{1-y_2}{1-x_2}<\frac{1-y_1}{1-x_1}$ for $\w\in \rgo\cup \rgt$, and $\frac{1-y_2}{1-x_2}>\frac{1-y_1}{1-x_1}$ for $\w\in \rgth\cup\cdots\cup \rgse$. Thus, $z_{3}^->z_{23}^-$ in the former case and $z_{3}^-<z_{23}^-$ in the latter case. 
Thus, we have obtained all inequalities for the critical points that do not involve $z_2^{\pm}$. 

We now consider $z_2^{\pm}$. 
Let $w_c=-\frac{\frac{y_2}{x_2}}{\frac{1}{\mu_c} + \frac{y_2}{x_2}}$.
From Lemma \ref{lem:general_crit_behavior}, we have 
\beqq \begin{split}
    &z_2^-=w_c, \quad z_2^+= \pp \qquad \text{for $\w\in \rgo\cup\cdots\cup \rgfi$;} \qquad
    z_2^-=\pp, \quad z_2^+=w_c \qquad \text{for $\w\in \rgs\cup \rgse$.}\\
\end{split} \eeqq
Furthermore, $w_c<-1$ for $\w\in \rgo$, $w_c>0$ for $\w\in \rgse$,  $w_c\in (\pp, 0)$ for $\w\in \rgs$, and $w_c\in (-1, \pp)$ for $\w\in \rgt\cup \cdots\cup \rgfi$. 
Since $\frac{y_2-y_1}{x_2-x_1}>\frac{1-y_1}{1-x_1}$ for $\w\in \rgt$ and $1<\frac{y_2-y_1}{x_2-x_1}<\frac{1-y_1}{1-x_1}$ for $\w\in \rgth$, 
we find that $w_c<z_{23}^-$ in the former case and $w_c\in (z_{23}^-, z_{123}^-)$ in the latter case. 
Finally, observe that if $x_2-x_1>0$, then
$\frac{y_1}{x_1} <\frac{y_2-y_1}{x_2-x_1}$ if and only if $y_1x_2< y_2x_1$, which is equivalent to $\frac{y_2}{x_2} < \frac{y_2-y_1}{x_2-x_1}$. 
Thus,  $\frac{y_2}{x_2} <\frac{y_2-y_1}{x_2-x_1}<1$ for $\w\in \rgf$, and  $\frac{y_2-y_1}{x_2-x_1} < \frac{y_2}{x_2}$ for $\w\in \rgfi$, so that $w_c\in (z_{123}^-, z_{12}^-)$ in the former case and $w_c>z_{12}^-$ in the latter. 
This completes the proof. 
\end{proof}

The functions $\ff_{L,*}$ are of the form \eqref{eq:fdef22}. 
We conclude this subsection by verifying that Lemma \ref{lem:asymptotics_f2} is applicable to $\ff_{L,*}$. 

\begin{lem} \label{lem:fl_asym_sameside}
If $\w \in \rgo \cup \cdots \cup \rgse$, then Lemma \ref{lem:asymptotics_f2} \ref{eq:propertya}  and \ref{eq:propertyb} apply to $\ff_{L,*}(z)$ for every $* \in \{1, 3, 12, 23, 123\}$.
Moreover, if $\w \in \rgo \cup \cdots \cup \rgs$, then Lemma~\ref{lem:asymptotics_f2} \ref{eq:propertyb} applies to $\ff_{L,2}(z)$; if $\w \in \rgt \cup \cdots \cup \rgse$, then Lemma~\ref{lem:asymptotics_f2} \ref{eq:propertya}  applies to $\ff_{L,2}(z)$. 
\end{lem}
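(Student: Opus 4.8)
The plan is to determine, for each $* \in \mc A_3$, which of the three alternatives in Lemma~\ref{lem:asymptotics_f2} applies to $\ff_{L,*}$; by that lemma this reduces to reading off the signs of the coefficients of $\mcG_*$ and, in the first alternative, verifying $\alpha_3 > (\sqrt{\alpha_1}+\sqrt{\alpha_2})^2$. Each $\mcG_*$ is the function $\GG_{X,Y}$ of \eqref{eq:Gqmcd} for the pair $(X,Y)$ in \eqref{eq:XYpa}, so $\mcG_*(z) = -\alpha_1\log(z+1) + \alpha_2\log z + \alpha_3 z$ with $\alpha_1 = aX$, $\alpha_2 = bY$, $\alpha_3 = \tfrac12[X(\lv+a-b-\sqrt D) + Y(\lv-a+b+\sqrt D)]$. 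Since $\ff_{L,*}$ already has the form \eqref{eq:fdef22} with $\GG = \mcG_*$, $\HH = \mcH_*$, and $\EE_{L,*}$-coefficients in $[-3,3]$ (hence uniformly bounded), nothing beyond the coefficient sign checks is needed to invoke Lemma~\ref{lem:asymptotics_f2}.

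For $* \in \{1,3,12,23,123\}$ I would argue as follows. Here $(X,Y) \in (0,1)^2$ or $(X,Y)=(1,1)$, so $\alpha_1,\alpha_2>0$; and $\alpha_3>0$ because $\lv>(\sqrt a+\sqrt b)^2$ makes $\lv+a-b-\sqrt D>0$ and $\lv-a+b+\sqrt D>0$ (as recorded in the proof of Lemma~\ref{lem:general_crit_behavior}). By Lemma~\ref{cor:criticalptofGlo} (equation \eqref{eq:cpotha} together with the orderings in parts (a)--(g)), both critical points of $\mcG_*$ lie in $(-1,0)$ and are distinct. I would then upgrade this to the quantitative bound: the discriminant $\alpha_3^2 - 2(\alpha_1+\alpha_2)\alpha_3 + (\alpha_1-\alpha_2)^2$ of the quadratic $\alpha_3 z^2 + (\alpha_3-\alpha_1+\alpha_2)z + \alpha_2$ whose roots are those critical points is positive, while the sum of its roots equals $-\tfrac{\alpha_3-\alpha_1+\alpha_2}{\alpha_3} \in (-2,0)$, so (using $\alpha_3>0$) $\alpha_3 > |\alpha_1-\alpha_2| \ge (\sqrt{\alpha_1}-\sqrt{\alpha_2})^2$. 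Combined with the factorization $\alpha_3^2 - 2(\alpha_1+\alpha_2)\alpha_3 + (\alpha_1-\alpha_2)^2 = (\alpha_3-(\sqrt{\alpha_1}+\sqrt{\alpha_2})^2)(\alpha_3-(\sqrt{\alpha_1}-\sqrt{\alpha_2})^2)$ from the proof of Lemma~\ref{lem:basic}, positivity of the discriminant forces $\alpha_3 > (\sqrt{\alpha_1}+\sqrt{\alpha_2})^2$. Thus $\mcG_*$ falls under the first alternative of Lemma~\ref{lem:asymptotics_f2}, so both \ref{eq:propertya} and \ref{eq:propertyb} hold, for $\w$ in all of $\rgo \cup \cdots \cup \rgse$.

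For $* = 2$ the pair is $(X,Y)=(x_2-x_1,y_2-y_1)$ and $\alpha_3 = h_2-h_1 > 0$ by the standing assumption \eqref{eq:x2y2region00}. The signs of $\alpha_1,\alpha_2$ come from \eqref{eq:signofxy}: on $\rgo$ one has $x_2<x_1$, $y_2>y_1$, hence $\alpha_1<0<\alpha_2$, the second alternative of Lemma~\ref{lem:asymptotics_f2}, giving \ref{eq:propertyb}; on $\rgse$ one has $\alpha_2<0<\alpha_1$, the third alternative, giving \ref{eq:propertya}; on $\rgt\cup\cdots\cup\rgs$ both $\alpha_1,\alpha_2>0$ (by \eqref{eq:signofxy} and the line following it), and since $\tfrac{y_2-y_1}{x_2-x_1}$ is positive and $\neq\tfrac1\slope$ on these regions, Lemma~\ref{cor:criticalptofGlo}(b)--(f) again places both critical points of $\mcG_2$ in $(-1,0)$, so the argument of the previous paragraph yields the first alternative, i.e.\ both \ref{eq:propertya} and \ref{eq:propertyb}. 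Assembling: \ref{eq:propertyb} holds for $\ff_{L,2}$ on $\rgo\cup\rgt\cup\cdots\cup\rgs$ (first alternative on $\rgt,\dots,\rgs$, second on $\rgo$) and \ref{eq:propertya} holds on $\rgt\cup\cdots\cup\rgse$ (first alternative on $\rgt,\dots,\rgs$, third on $\rgse$), which is exactly the assertion.

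The only step that is not pure bookkeeping is the implication ``both critical points of $\GG_{X,Y}$ lie in $(-1,0)$'' $\Longrightarrow$ ``$\alpha_3 > (\sqrt{\alpha_1}+\sqrt{\alpha_2})^2$'', i.e.\ deducing precisely the hypothesis of the first alternative rather than merely some real interior critical configuration; I expect this to be the (only, and mild) obstacle, since it requires ruling out the competing branch $\alpha_3 < (\sqrt{\alpha_1}-\sqrt{\alpha_2})^2$ of the discriminant factorization. Everything else reduces to the seven-region sign analysis, for which the critical-point orderings in Lemma~\ref{cor:criticalptofGlo} and the sign relations \eqref{eq:signofxy} already do the work.
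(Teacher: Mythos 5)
Your proposal is correct, but it takes a genuinely different route to the key inequality $\alpha_3 > (\sqrt{\alpha_1}+\sqrt{\alpha_2})^2$ (the hypothesis of the first alternative of Lemma~\ref{lem:asymptotics_f2}). The paper's own proof establishes this inequality directly and self-containedly by an AM-GM computation: writing $\alpha_3 = h(\alpha_1/a, \alpha_2/b)$ and observing that $(\lv-a-b-\sqrt D)x + (\lv-a-b+\sqrt D)y > 4\sqrt{abxy}$ whenever $y/x \neq 1/\slope$, which gives $h(x,y) > (\sqrt{ax}+\sqrt{by})^2$; it then simply checks that $\alpha_2 a/(\alpha_1 b) \neq 1/\slope$ in every relevant case. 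You instead invoke the already-proved Lemma~\ref{cor:criticalptofGlo} to place both critical points in $(-1,0)$ and be distinct, then use the sum-of-roots bound $|\alpha_1-\alpha_2|<\alpha_3$ (equivalently $\alpha_3 > (\sqrt{\alpha_1}-\sqrt{\alpha_2})^2$) together with the factorization $\Qt = (\alpha_3-(\sqrt{\alpha_1}+\sqrt{\alpha_2})^2)(\alpha_3-(\sqrt{\alpha_1}-\sqrt{\alpha_2})^2) > 0$ to exclude the unwanted branch. Your chain $\alpha_3 > |\alpha_1-\alpha_2| \ge (\sqrt{\alpha_1}-\sqrt{\alpha_2})^2$ is correct, the strict positivity of $\Qt$ follows from the strict orderings in Lemma~\ref{cor:criticalptofGlo}, and there is no circularity since that lemma depends only on the purely algebraic Lemma~\ref{lem:general_crit_behavior}. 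The trade-off: your argument reuses an already-established result and avoids a new estimate, but is more indirect; the paper's is a short fresh computation that does not require having Lemma~\ref{cor:criticalptofGlo} in hand first. The $*=2$ bookkeeping via \eqref{eq:signofxy} is identical in both. What you flagged as the ``only, and mild'' obstacle is in fact resolved by the argument you give, so the proof is complete.
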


\begin{proof}
The parameters $S_* = (\al_1,\al_2,\al_3)$  in \eqref{eq:Gaph} for $\ff_{L,*}$ are given by 
\beqq\begin{split}
    &S_1 = (ax_1,by_1,h_1), \quad S_2 = (a(x_2-x_1),b(y_2-y_1),h_2-h_1), \quad S_3 = (a(1-x_2),b(1-y_2),\ell-h_2),\\
    &S_{12} = (ax_2,by_2,h_2), \quad S_{23} = (a(1-x_1),b(1-y_1),\ell-h_1), \quad S_{123} = (a,b,\ell), 
\end{split} \eeqq
where the $h_i$ are given in \eqref{eq:hi12}. In all cases, $\al_3 = h(\al_1/a,\al_2/b)$, where (see \eqref{eq:mvxyshaded}) 
\beqq
    \mv(x,y)=  \frac12 \left[ (\lv +a-b-\sqrt{D})x + (\lv -a+b+\sqrt{D})y \right].
\eeqq

Consider $*\neq 2$. 
Since $x_1, y_1, x_2, y_2\in (0,1)$, we have $0< h_1, h_2<\lv$, and thus $\al_1, \al_2, \al_3>0$ in all relevant cases.   
For $x, y>0$, the arithmetic-geometric mean inequality implies that 
\beqq
    \hat{h}(x,y):= (\ell-a-b-\sqrt{D})x + (\ell-a-b+\sqrt{D})y > 4\sqrt{abxy} 
    \qquad  \text{if}\quad  \frac{y}{x} \neq \frac1{\slope}. 
\eeqq 
Thus, $h(x,y)= \frac12 \hat{h}(x,y) + ax+by> (\sqrt{ax}+\sqrt{by})^2$ if $ \frac{y}{x} \neq \frac1{\slope}$. Therefore,  
\beqq
    \alpha_3  = h(\al_1/a,\al_2/b) > (\sqrt{\alpha_1} + \sqrt{\alpha_2})^2 \quad \text{if} \quad \frac{a\al_2}{b\al_1}\neq \frac1{\slope} .
\eeqq
By definition (cf. \eqref{eq:xoyospl}), $ \frac{y_1}{x_1}, \frac{y_2}{x_2}, \frac{1-y_1}{1-x_1}, \frac{1-y_2}{1-x_2} > \frac1{\slope}$, and thus $\frac{a\al_2}{b\al_1}\neq \frac1{\slope}$. 
Hence, $\alpha_3 > (\sqrt{\alpha_1} + \sqrt{\alpha_2})^2$ for all  $*\neq 2$. 
Therefore, Lemma~\ref{lem:asymptotics_f2} \ref{eq:propertya} and \ref{eq:propertyb} hold.  

Consider $*=2$. 
Then, by assumption \eqref{eq:x2y2region00}, $\alpha_3= h_2-h_1 >0$. 
From  \eqref{eq:signofxy}, $\al_1, \al_2>0$ if $\w\in \rgt \cup \cdots \cup \rgs$. 
From \eqref{eq:xoyospl} and \eqref{eq:all_regions}, we also see that $\frac{y_2-y_1}{x_2-x_1}\neq \frac1{\slope}$ for all $\w\in \rgo \cup \cdots \cup \rgse$. 
Hence, the argument of the previous paragraph applies, and  we find that Lemma~\ref{lem:asymptotics_f2} \ref{eq:propertya}  and \ref{eq:propertyb} apply to $f_{L,2}$ if $\w\in \rgt \cup \cdots \cup \rgs$. 
Additionally, from  \eqref{eq:signofxy}, we have $\al_1<0,\al_2>0$ for $\w \in \rgo$, and $\al_1>0,\al_2<0$ for $\w \in \rgse$. 
Therefore, if $\w \in \rgo$, then Lemma~\ref{lem:asymptotics_f2} \ref{eq:propertyb} holds, and if $\w \in \rgse$, then Lemma~\ref{lem:asymptotics_f2} \ref{eq:propertya} holds. 
\end{proof}

\subsection{Leading contributions} \label{sec:leading}

In this section, we evaluate several integrals, which we will later show provide the leading contributions to the limit.

Consider $\QQ_{L}^{(1,1,1)}$ in \eqref{eq:Qld}. From Lemma \ref{lem:Q_hat_111} with $m=3$, we have  
\beqq 
    \QQ^{(1,1,1)}_{L}
    = -\frac{1}{(2\pi \ii)^{6}} \int_{\gamma_1}\dd\xi^1  \int_{\gamma_2} \dd \xi^2 \int_{\gamma_3} \dd \xi^3 
    \int_{\Gamma_1} \dd \eta^1 \int_{\Gamma_2} \dd \eta^2 \int_{\Gamma_3} \dd \eta^3 
    \,\, \Pi_{(1,1,1)}(\bs\xi, \bseta)  \FF^{(1,1,1)}_{\bM_L,\bN_L,\bT_L}(\bs\xi, \bseta) 
\eeqq
where $\gamma_1$, $\gamma_2$, $\gamma_3$ are small circles around the point $z = -1$, nested from inside to outside, and $\Gamma_1$, $\Gamma_2$, $\Gamma_3$ are small circles around the point $z = 0$, also nested from inside to outside; all circles are disjoint. Using the notations of Subsection \ref{sec:intetrals}, 
\beqq \begin{split} 
    \Pi_{\bone}(\bs{ \xi}, \bs{\eta})
    &= \K_1(\eta^1  | \xi^1) \K_2(\xi^1, \eta^2 | \eta^1, \xi^2)  \K_2( \xi^2, \eta^3 | \eta^2, \xi^3 ) \K_1( \xi^3 | \eta^3) \ST_1( \xi^3 | \eta^3) 
    = \Pi^{123}_{123}(\bs{ \xi}, \bs{\eta}) 
\end{split} \eeqq
and $\FF^{(\bone)}_{\bM_L,\bN_L,\bT_L}(\bs\xi, \bseta)=\FF^{123|123}_{L}(\bs\xi, \bseta)$. Thus,  
\beqq
    \QQ^{(1,1,1)}_{L}= -\ing^{123}_{123}. 
\eeqq

Note that $\Pi^{123}_{123}(\bs{ \xi}, \bs{\eta})$ is a rational function, and $\FF^{123|123}_{L}(\bs\xi, \bseta)$ is analytic except possibly at $-1$ and $0$. 
We deform the contours and repeatedly apply Cauchy's residue theorem. For example, for fixed $\eta^1, \eta^2, \eta^3$, if we swap the contours for  $\xi^2$ and $\xi^3$, then 
due to the pole at $\xi^2=\xi^3$,
\beqq \begin{split}
    \int_{\gamma_1}\dd\xi^1  \int_{\gamma_2} \dd \xi^2 \int_{\gamma_3} \dd \xi^3 
    \,\, \Pi^{123}_{123}(\bs{ \xi}, \bs{\eta}) \FF^{123|123}_{L}(\bs\xi, \bseta) = \int_{\gamma_1}\dd\xi^1  \int_{\gamma_2} \dd \xi^3  \int_{\gamma_3} \dd \xi^2 
    \,\,  \Pi^{123}_{123}(\bs{ \xi}, \bs{\eta}) \FF^{123|123}_{L}(\bs\xi, \bseta) \\
    + 2\pi \ii \int_{\gamma_1}\dd\xi^1  \int_{\gamma_2} \dd \xi^{23} 
    \,\,  \Pi^{1(23)}_{123}(\xi^1, \xi^{23},  \bseta) \FF^{123|123}_{L}(\xi^1, \xi^{23}, \xi^{23}, \bseta), 
\end{split} \eeqq
since, recalling the Cauchy determinant formula \eqref{eq:Cauchydd}, 
\beqq \begin{split}
    &\lim_{\xi^2\to \xi^3} (\xi^2-\xi^3) \Pi^{123}_{123}(\bs{ \xi}, \bs{\eta}) \bigg|_{\xi^3=\xi^{23}} \\
    &= \K(\eta^1  | \xi^1) \K(\xi^1, \eta^2 | \eta^1, \xi^{23}) \left[ \lim_{\xi^2\to \xi^3} (\xi^2-\xi^3) \K( \xi^2, \eta^3 | \eta^2, \xi^3 ) \right]_{\xi^3=\xi^{23}} \K( \xi^{23} | \eta^3) \ST( \xi^{23} | \eta^3) \\
    &= - \K(\eta^1  | \xi^1) \K(\xi^1, \eta^2 | \eta^1, \xi^{23})  \K( \eta^3 | \eta^2) \K( \xi^{23} | \eta^3) \ST( \xi^{23} | \eta^3) 
    = - \Pi^{1(23)}_{123}(\xi^1, \xi^{23}, \bs{\eta}). 
\end{split} \eeqq
Thus, we see that $\ing^{123}_{123} = \ing^{132}_{123} + \ing^{1(23)}_{123}$. 
Similar computations yield the following basic rules (Table~\ref{table:sigma_rule} and~\ref{table:tau_rule}). 

\begin{table}[h]
\centering
\renewcommand{\arraystretch}{1.8}
\setlength{\tabcolsep}{8pt}
\begin{tabular}{|c|c|c|}
\hline
$\bsigma_\pm$ & Rule & $\bsigma$ \\
\hline
$\bsigma_+ = \cdots 12 \cdots$ or $\bsigma_- = \cdots 21 \cdots$
&
$\ing^{\bsigma_\pm}_{\btau}
= \ing^{\bsigma_\mp}_{\btau}
\mp (-1)^{b_1+a_{23}}\ing^{\bsigma}_{\btau}$
&
$\bsigma = \cdots (12) \cdots$ \\
\hline
$\bsigma_+ = \cdots 23 \cdots$ or $\bsigma_- = \cdots 32 \cdots$
&
$\ing^{\bsigma_\pm}_{\btau}
= \ing^{\bsigma_\mp}_{\btau}
\mp (-1)^{b_2+a_{12} + b_{12}}\ing^{\bsigma}_{\btau}$
&
$\bsigma = \cdots (23) \cdots$ \\
\hline
$\bsigma_+ = \cdots 1(23) \cdots$ or $\bsigma_- = \cdots (23)1 \cdots$
&
$\ing^{\bsigma_\pm}_{\btau}
= \ing^{\bsigma_\mp}_{\btau}
\mp (-1)^{b_1}\ing^{\bsigma}_{\btau}$
&
$\bsigma = \cdots (123) \cdots$ \\
\hline
$\bsigma_+ = \cdots (12)3 \cdots$ or $\bsigma_- = \cdots 3(12) \cdots$
&
$\ing^{\bsigma_\pm}_{\btau}
= \ing^{\bsigma_\mp}_{\btau}
\mp (-1)^{b_2+b_{12}}\ing^{\bsigma}_{\btau}$
&
$\bsigma = \cdots (123) \cdots$ \\
\hline
\end{tabular}
\caption{Some rules for swapping a pair of left contours. Here, $\type(\sigma_\pm)=\veca$ and $\type(\tau)=\vecb$.}
\label{table:sigma_rule}
\end{table}

\begin{table}[h] \label{table2}
\centering
\renewcommand{\arraystretch}{1.8}
\setlength{\tabcolsep}{8pt}
\begin{tabular}{|c|c|c|}
\hline
$\btau_\pm$ & Rule & $\btau$ \\
\hline
$\btau_+ = \cdots 12 \cdots$ or $\btau_- = \cdots 21 \cdots$
&
$\ing^{\sigma}_{\tau_\pm}
= \ing^{\sigma}_{\tau_\mp}
\pm (-1)^{a_1+b_{23}}\ing^{\sigma}_{\tau}$
&
$\btau = \cdots (12) \cdots$ \\
\hline
$\btau_+ = \cdots 23 \cdots$ or $\btau_- = \cdots 32 \cdots$
&
$\ing^{\sigma}_{\tau_\pm}
= \ing^{\sigma}_{\tau_\mp}
\pm (-1)^{a_2+a_{12} + b_{12}}\ing^{\sigma}_{\tau}$
&
$\btau = \cdots (23) \cdots$ \\
\hline
\end{tabular}
\caption{Some rules for swapping a pair of right contours. Here, $\type(\sigma)=\veca$ and $\type(\tau_\pm)=\vecb$.}
\label{table:tau_rule}
\end{table}

\begin{table}[h] \label{table3}
\centering
\renewcommand{\arraystretch}{1.8}
\setlength{\tabcolsep}{8pt}
\begin{tabular}{|c|c|}
\hline
$\bsigma_\pm$ & Rule \\
\hline
$\bsigma_+ = \cdots 13 \cdots$ or $\bsigma_- = \cdots 31 \cdots$
&
$\ing^{\sigma_\pm}_{\tau}
= \ing^{\sigma_\mp}_{\tau}$ \\
\hline
$\bsigma_+ = \cdots 2(23) \cdots$ or $\bsigma_- = \cdots (23)2 \cdots$
&
$\ing^{\sigma_\pm}_{\tau}
= \ing^{\sigma_\mp}_{\tau}$ \\
\hline
\end{tabular}
\caption{Additional rules for swapping a pair of left contours.}
\label{table:sigma_rule_nores}
\end{table}

Additionally, since the integrand of \eqref{eq:ingdef} is analytic at $\xi^1_i=\xi^3_j$ and $\xi^2_i=\xi^{(23)}_j$ for all $i,j$, we also find the rules listed in Table~\ref{table:sigma_rule_nores}. 

Applying these rules, we obtain the following result. 

\begin{lem} \label{lem:QQ111list}
We have $\QQ^{(1,1,1)}_{L}= -\ing^{123}_{123}$,  and the following expressions for $\ing^{123}_{123}$ hold: 
\begin{enumerate}[(a)]
    \item $\ing^{123}_{123}=\ing^{231}_{123}+\ing^{3(12)}_{123}+\ing^{(123)}_{123}$.
    \item $\ing^{123}_{123}=\ing^{321}_{123}+\ing^{(23)1}_{123}+\ing^{3(12)}_{123}+\ing^{(123)}_{123}$.
    \item $\ing^{123}_{123} =\ing^{312}_{123}+\ing^{(23)1}_{123}+\ing^{(123)}_{123}$. 
    \item $\ing^{123}_{123} = \ing^{312}_{213}+ \ing^{(23)1}_{213}+ \ing^{(123)}_{213}-  \ing^{312}_{(12)3}- \ing^{(23)1}_{(12)3}+\ing^{(123)}_{(12)3}$. 
\end{enumerate}
\end{lem}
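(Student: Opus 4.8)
The plan is to establish the identity $\QQ^{(1,1,1)}_L = -\ing^{123}_{123}$ exactly as sketched in the text (recognizing $\Pi_{\bone}$ as $\Pi^{123}_{123}$ and $\FF^{(\bone)}$ as $\FF^{123|123}_L$), and then to prove the four expansions (a)--(d) by a systematic application of the contour-swapping computation already carried out before the lemma. The key mechanism is this: whenever we exchange the nesting order of two consecutive $\xi$-contours (say the contours for $\xi^i$ and $\xi^{i+1}$), the only pole crossed is the simple pole of $\Pi^{\bsigma}_{\btau}$ at $\xi^i = \xi^{i+1}$, and the residue there produces, by the Cauchy-determinant collapse computation displayed in the excerpt, a new integral of type $\ing$ with the two letters $i, (i+1)$ merged into $(i\,i{+}1)$ and with one fewer integration variable. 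The same holds for the $\eta$-contours, where merging two letters produces the same concatenated label but (as seen in the $\det$ manipulation) may come with a sign, since the residue at $\eta^i = \eta^{i+1}$ of the relevant $\K$ factor differs by $-1$ from the $\xi$-case in the way the merged variable enters. Concretely, one checks once and for all the two ``move rules'': swapping adjacent $\xi$-contours sends $\ing^{\cdots i(i+1)\cdots}_{\btau} \mapsto \ing^{\cdots(i+1)i\cdots}_{\btau} + \ing^{\cdots(i\,i{+}1)\cdots}_{\btau}$, and swapping adjacent $\eta$-contours sends $\ing^{\bsigma}_{\cdots i(i+1)\cdots} \mapsto \ing^{\bsigma}_{\cdots(i+1)i\cdots} - \ing^{\bsigma}_{\cdots(i\,i{+}1)\cdots}$ (the sign being exactly the $-1$ appearing in the displayed residue calculation, combined with the orientation of the small circle swept across).

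With these two rules in hand, each of (a)--(d) is a short bookkeeping exercise. For (a): starting from $\ing^{123}_{123}$, move the $\xi^1$-contour outward past $\xi^2$ then past $\xi^3$ (or equivalently cycle $123 \to 231$ through two adjacent swaps), collecting at each step a residue term; the residue from merging $1$ and $2$ gives (after the second $\xi^1$ already moved) a $\ing^{3(12)}_{123}$ term, and the nested residue where all three $\xi$'s collide gives $\ing^{(123)}_{123}$, yielding $\ing^{123}_{123} = \ing^{231}_{123} + \ing^{3(12)}_{123} + \ing^{(123)}_{123}$. For (b) one continues from $231$ by one more adjacent $\xi$-swap to reach $321$, picking up an $\ing^{(23)1}_{123}$ term; for (c) one instead reorganizes $231 \to 312$, again via one swap. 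For (d), one takes the result of (c) and additionally moves an $\eta$-contour — exchanging $\eta^1$ and $\eta^2$ — which by the $\eta$-rule turns $123$ (in the lower index) into $213$ plus a $-\ing^{\bsigma}_{(12)3}$ correction applied termwise to each of $\ing^{312}_{123}, \ing^{(23)1}_{123}, \ing^{(123)}_{123}$; collecting the six resulting terms with their signs gives the stated formula. Throughout, one must verify that at each stage the contours can indeed be deformed (the new small circles remain mutually disjoint, and those around $-1$ stay disjoint from those around $0$), so that no pole other than the intended coincidence pole is crossed and no contribution from infinity appears; this is immediate because all contours are small circles and $\FF^{123|123}_L$ is analytic away from $\{-1,0\}$.

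I would organize the write-up by first stating and proving the two move rules as a sublemma (a direct computation with the Cauchy determinant formula \eqref{eq:Cauchydd}, essentially the one already displayed in the excerpt, done once in general notation), and then deriving (a)--(d) as corollaries, in each case specifying the precise sequence of adjacent transpositions of contours. The main obstacle — really the only place where care is required — is getting the signs right in the $\eta$-move rule and in the iterated residues: the residue of a Cauchy determinant at a coincidence of two of its arguments introduces a factor that depends on whether the collision is among the ``row'' variables or the ``column'' variables, and the orientation of the swept contour contributes another sign, so the bookkeeping in part (d) must be done meticulously. Everything else (the deformability of contours, the absence of boundary terms) is routine given that we work exclusively with small disjoint circles and a meromorphic integrand whose only poles are the explicit ones at $\xi^i=\xi^{i+1}$, $\eta^i=\eta^{i+1}$, $z=-1$, and $z=0$.
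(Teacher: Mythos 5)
Your overall strategy—derive each of (a)--(d) by iterating a "swap two adjacent contours and collect the residue" rule—is exactly the paper's strategy (the paper encapsulates the rule as Corollary~\ref{cor:rulea} and composes it into Lemmas~\ref{lem:integralordering}--\ref{lem:integralordering2}; Lemma~\ref{lem:QQ111list} is then read off). So the route is right. However, the explicit $\eta$-move rule you state is wrong as a universal statement, and it produces an incorrect sign in part~(d).

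You assert that swapping adjacent $\eta$-contours always gives
$\ing^{\bsigma}_{\cdots\alpha\beta\cdots}=\ing^{\bsigma}_{\cdots\beta\alpha\cdots}-\ing^{\bsigma}_{\cdots(\alpha\beta)\cdots}$.
This is not correct: the sign of the residue depends on the $\bsigma$-type, because the pole at $\eta^{\alpha}=\eta^{\beta}$ sits inside a Cauchy block of $\Pi^{\bsigma}_{\btau}$ whose size (and whose row/column positions of the two merging variables) is determined by $\type(\bsigma)$ as well as $\type(\btau)$. Concretely, for $\btau=123$ the pole $\eta^1=\eta^2$ lies in the factor $\K_{n_2-n_1+a_1+b_1}(\bsxi^1,\bseta^{23},\bseta^2\mid\bseta^1,\bsxi^{23},\bsxi^2)$ whose size is $1+a_1$ here. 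If $a_1=1$ (as for $\bsigma=312$ or $(23)1$) the block is $\K_2(\xi^1,\eta^2\mid\eta^1,\xi^2)$, $\eta^2$ in row~$2$, $\eta^1$ in column~$1$, and a direct computation gives $\operatorname*{Res}_{\eta^1=\eta^2}\K_2=+\K_1(\xi^1\mid\xi^2)$; combined with the $-2\pi\mathrm{i}$ from the contour deformation this indeed yields the minus sign you predict, $\ing^{312}_{123}=\ing^{312}_{213}-\ing^{312}_{(12)3}$. But if $a_1=0$ (as for $\bsigma=(123)$) the block degenerates to $\K_1(\eta^2\mid\eta^1)=\frac{1}{\eta^2-\eta^1}$, whose residue at $\eta^1=\eta^2$ is $-1$ rather than $+1$; the overall sign then flips and one gets $\ing^{(123)}_{123}=\ing^{(123)}_{213}+\ing^{(123)}_{(12)3}$. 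Applying your rule termwise to (c) therefore produces $-\ing^{(123)}_{(12)3}$ as the last term of (d), whereas the correct last term is $+\ing^{(123)}_{(12)3}$. This is precisely the subtlety the paper's $(-1)^{\#}$ in Corollary~\ref{cor:rulea} (and the footnote to Corollary~\ref{lem:deformbasic}, with its $\sd(\bfs_3)+\sd(\bfs_5)$ dependence) is there to absorb.

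This sign is not harmless bookkeeping: in the regime $\w\in\rgs\cup\rgse$ the term $\ing^{(123)}_{(12)3}$ is the dominant term of $\ing^{123}_{123}$, and its $+$ sign is what, together with the $\QQ^{(1,2,1)}_L$ contribution, produces the probability identity $\prob[Y>\mr r_2]-\prob[X\le\mr r_1,\,Y>\mr r_2]=\prob[X>\mr r_1,\,Y>\mr r_2]$ in Corollary~\ref{cor:Q1ll}. With your sign convention that identity would not close. You do flag that the signs in (d) "must be done meticulously," but the fix is not just being careful: the rule itself must be reformulated so that the sign depends on the positions of the merging variables within the relevant Cauchy block (equivalently, on the type data $\type(\bsigma)$ and $\type(\btau)$), not merely on whether the swap is a $\xi$-swap or an $\eta$-swap.
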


\begin{proof} 
Using the rules listed in Table \ref{table:sigma_rule}, we obtain the following identities:
\begin{equation}
\begin{split}
(i) \ &\ing^{123}_{123} = \ing^{213}_{123} + \ing^{(12)3}_{123}, \quad (ii) \ \ing^{(12)3}_{123} = \ing^{3(12)}_{123} + \ing^{(123)}_{123}, \quad (iii) \ \ing^{231}_{123} = \ing^{321}_{123} + \ing^{(23)1}_{123} \\
(iv) \ &\ing^{123}_{123} = \ing^{132}_{123} + \ing^{1(23)}_{123}, \quad (v) \ \, \ing^{1(23)}_{123} = \ing^{(23)1}_{123} + \ing^{(123)}_{123}.
\end{split} 
\end{equation}
The rules in Table \ref{table:tau_rule} imply
\begin{equation}
\begin{split}
 (vi) \, \ing^{312}_{123} = \ing^{312}_{213} - \ing^{312}_{(12)3} \qquad (vii) \ \ing^{(23)1}_{123} = \ing^{(23)1}_{213} - \ing^{(23)1}_{(12)3}, \qquad  (viii) \ \ing^{(123)}_{123} = \ing^{(123)}_{213} + \ing^{(123)}_{(12)3}.
\end{split} 
\end{equation}
The lemma is proved since (see Table \ref{table:sigma_rule_nores})
\begin{equation}
\begin{split}
    &(i) + \left[\ing^{213}_{123} = \ing^{231}_{123} \right] + (ii) \implies (a),  \\
    &(i) + \left[\ing^{213}_{123} = \ing^{231}_{123} \right] + (ii) + (iii) \implies (b),  \\
    &(iv) + \left[\ing^{132}_{123} = \ing^{312}_{123} \right] + (v)  \implies (c),  \\
    &(c) + (vi)  + (vii) + (viii)\implies (d).  \\
\end{split}
\end{equation}
\end{proof}

From Lemma \ref{cor:criticalptofGlo}, we see that the integrals on the right-hand side are suitable for applying the method of steepest descent in the following cases: (a) for $\w \in \rgo \cup \rgt$; (b) for $\w \in \rgth \cup \rgf$; (c) for $\w \in \rgfi$; and (d) for $\w \in \rgs \cup \rgse$. 

For $\w \in \rgo \cup \cdots \cup \rgfi$, we evaluate $\ing^{(123)}_{123}$  asymptotically in Proposition \ref{prop:leading} below, and show in Proposition \ref{cor:incor} that other integrals are subdominant.
Furthermore, in Corollary \ref{cor:errorsum}, we show that $\QQ^{(\bn)}_{L}$, $\bn\neq (1,1,1)$, give subdominant contributions. 
The integral $\ing^{(123)}_{123}$ is responsible for the limit in Theorem \ref{thm:offdiagflucsameside} when $\w\in \rgo\cup\cdots\cup \rgfi$. 

\medskip

For $\w \in \rgs \cup \rgse$, we will show that 
$\ing^{(123)}_{(12)3}$ is the largest among the integrals appearing in Lemma \ref{lem:QQ111list}(d). 
It turns out that another term from $\QQ^{(1,2,1)}_{L}$ also contributes to the limit in Theorem \ref{thm:offdiagflucsameside}. 
By Lemma \ref{lem:Q_hat_121}, we have 
\begin{equation} \label{eq:Qinpr411}
   \QQ^{(1,2,1)}_{L}  = \ing^{3122}_{1223} + \ing^{1223}_{3122}. 
\end{equation}
We write these as follows, similarly to the previous lemma. 

\begin{lem} \label{lem:QQ121list}
We have 
\beqq \begin{split}
    \QQ^{(1,2, 1)}_{L}
    = &2\ing^{3122}_{2231} -  2\ing^{3122}_{2(12)3} - 2\ing^{(23)12}_{2231} - 2\ing^{(123)2}_{2231} - 
    2\ing^{3122}_{23(12)} + 4\ing^{(23)12}_{23(12)} - 4\ing^{(123)2}_{23(12)} \\
    & - 2\ing^{3122}_{2(23)1} - 4\ing^{(23)12}_{2(23)1} - 4\ing^{(123)2}_{2(23)1} - 2\ing^{3122}_{(23)(12)} - 4\ing^{(23)12}_{(23)(12)} + 4\ing^{(123)2}_{(23)(12)}. 
\end{split} \eeqq 
\end{lem}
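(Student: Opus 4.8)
## Proof proposal for Lemma \ref{lem:QQ121list}

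The plan is to start from the formula \eqref{eq:Qinpr411}, namely $\QQ^{(1,2,1)}_{L} = \ing^{3122}_{1223} + \ing^{1223}_{3122}$, and to deform the nested $\xi$- and $\eta$-contours in each of the two integrals so that the exponents $\mcG_{*}$ governing the relevant functions $\ff_{L,*}$ are ordered according to their critical points, as required for the method of steepest descent in the regime $\w \in \rgs \cup \rgse$. In the integral $\ing^{3122}_{1223}$, the $\xi$-variables are grouped as $(\xi^3, \xi^1, \xi^2_1, \xi^2_2)$ on nested circles around $-1$, while the $\eta$-variables are grouped as $(\eta^1, \eta^2_1, \eta^2_2, \eta^3)$ on nested circles around $0$; for $\ing^{1223}_{3122}$ the roles are swapped. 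By the critical point ordering in Lemma \ref{cor:criticalptofGlo}(f),(g), on $\rgs\cup\rgse$ we have $-1 < z^-_3 < z^-_{23} < z^-_{123} < z^-_1 < z^-_{12} < z^-_2 = \pp$, so the productive grouping of $\xi$-contours is one in which the exponents of $\ff_{L,3}$, then $\ff_{L,(123)}$, then $\ff_{L,1}$, then $\ff_{L,(12)}$ (and the special $\ff_{L,2}$) appear from inside to outside; a dual statement holds for the $\eta$-contours.

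The key technical step is to perform the contour swaps one at a time, each time picking up a residue contribution exactly as in the computation preceding Lemma \ref{lem:QQ111list}. When we bring two $\xi$-contours (or two $\eta$-contours) carrying labels $*$ and $*'$ past each other, the pole of $\Pi$ at the coincidence point produces, by the Cauchy determinant identity \eqref{eq:Cauchydd} and the multiplicativity $\ff_{L,**'} = \ff_{L,*}\ff_{L,*'}$ recorded in \eqref{eq:ftwoth}, a lower-dimensional integral of the type $\ing^{\bsigma'}_{\btau'}$ with the two labels merged into $**'$, together with a sign. Tracking the signs carefully — each residue of $\frac{1}{\xi - \xi'}$ contributes a factor, as does each transposition of rows in the relevant Cauchy determinant — is what generates the coefficients $\pm 2$ and $\pm 4$ in the statement. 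The factor $2$ appears systematically because $\bn = (1,2,1)$ has two identical $\xi^2$-variables (and two identical $\eta^2$-variables): each residue computation that pulls one of the two $2$-labelled variables into a merged pair can be done with either copy, and the integrand's symmetry in $\xi^2_1, \xi^2_2$ (and in $\eta^2_1,\eta^2_2$) makes the two choices equal, so they combine into a factor of $2$; when both a $\xi$-side and an $\eta$-side merge involving a $2$-label occur, the factor becomes $4$. The two starting integrals $\ing^{3122}_{1223}$ and $\ing^{1223}_{3122}$ are exchanged under the symmetry $\xi \leftrightarrow \eta$ together with the reversal of list order, which explains why the final answer can be written with an overall factor $2$ multiplying a single collection of terms rather than as two separate sums.

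Concretely, I would (i) in $\ing^{3122}_{1223}$, deform the $\eta$-contours to bring them into the order $(\eta^2, \eta^2, \eta^3, \eta^1)$ needed to expose the merges $\eta^2\eta^3 \to \eta^{23}$ and $\eta^{23}\eta^1$-adjacent-to-$\eta^1$ $\to \eta^{123}$ and $\eta^2\eta^1 \to \eta^{12}$, producing the terms with $\btau \in \{2231, 2(12)3, 23(12), 2(23)1, (23)(12)\}$; (ii) do the mirror-image deformation in $\ing^{1223}_{3122}$ on the $\xi$-side; (iii) observe that after relabeling these two contributions coincide, giving the overall factor $2$; (iv) assemble the signs and multiplicities. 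Because these deformations are entirely parallel to those behind Lemma \ref{lem:QQ111list}, I would present both lemmas together through a single ``deformation bookkeeping'' argument in Subsection \ref{sec:deform}, as the excerpt already announces. The main obstacle is purely combinatorial rather than analytic: one must enumerate, without omission or double-counting, all the sequences of residue-producing swaps that lead to the list pairs appearing on the right-hand side, keeping scrupulous track of (a) the sign from each Cauchy-kernel residue, (b) the sign from each determinant row/column transposition, and (c) the multiplicity factor arising from the repeated $2$-label. Getting the $\pm$ signs and the $2$ versus $4$ coefficients exactly right — and verifying that no further terms (e.g.\ ones in which both a $\xi$-merge and an $\eta$-merge produce a $(123)$) survive or are needed — is the delicate part; everything else is a routine application of Cauchy's residue theorem together with the symmetry of $\Pi^{\bsigma}_{\btau}$ and $\FF^{\bsigma|\btau}_L$ in the $2$-labelled variables noted just after Definition \ref{def:Pisigmaxi}.
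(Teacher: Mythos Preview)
Your overall strategy---repeated contour swaps with residue bookkeeping, exactly as in Corollary~\ref{cor:rulea} and Lemmas~\ref{lem:integralordering}--\ref{lem:integralordering2}---is correct and is precisely the paper's approach. However, your concrete plan (i)--(iii) contains a genuine error.

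The claimed symmetry ``$\ing^{3122}_{1223}$ and $\ing^{1223}_{3122}$ are exchanged under $\xi\leftrightarrow\eta$ with list reversal'' does not exist: $\FF_L^{\bsigma|\btau}$ has $\ff_{L,*}(\xi)$ in the numerator and $\ff_{L,*}(\eta)$ in the denominator, so swapping $\xi\leftrightarrow\eta$ does not produce another integral of the form $\ing^{\bsigma'}_{\btau'}$. In particular, the two starting integrals do \emph{not} contribute symmetrically. Concretely, for $\ing^{3122}_{1223}$ the $\bsigma$-list $3122$ is already in the target form $3^{a_3}(23)^{a_{23}}(123)^{a_{123}}1^{a_1}(12)^{a_{12}}2^{a_2}$, so only the $\btau$-side needs deforming; since $\btau=1223$ has $n'_{31}=0$, Lemma~\ref{lem:integralordering2} yields just \emph{two} terms, with $\btau'\in\{2231,\,2(12)3\}$ (there is no $(23)$ or $(123)$ merge on this side, contrary to what you list in step (i)). By contrast, $\ing^{1223}_{3122}$ requires deforming \emph{both} sides: Lemma~\ref{lem:integralordering}(c) applied to $\bsigma=1223$ gives $\bsigma'\in\{3122,\,(23)12,\,(123)2\}$ with multiplicities $1,2,2$, and Lemma~\ref{lem:integralordering2} applied to $\btau=3122$ (which has $n'_{31}=1$) gives $\btau'\in\{2231,\,2(23)1,\,23(12),\,(23)(12)\}$ with multiplicities $1,2,2,2$, for $3\times 4=12$ terms. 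The two $\ing^{3122}_{2231}$ contributions (one from each starting integral) then combine to give the coefficient $2$ on that term, and the remaining $12+1=13$ terms match the statement with the coefficients $\pm2,\pm4$ you anticipated. So the factor $2$ in the answer is \emph{not} a global symmetry factor; it arises term-by-term from the binomial/multiplicity factors $\dc^{m,n}_i$ and from this single coincidence.
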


\begin{proof}
Each of the following identities is obtained from applying a single rule in Tables \ref{table:sigma_rule} and \ref{table:tau_rule} once or twice. 
First, $\ing^{1223}_{3122} = \ing^{1223}_{3221} - 2\ing^{1223}_{32(12)}$.
Each term can then further expressed as 
\begin{equation}
\begin{split}
    \ing^{1223}_{3221} = \ing^{1223}_{2231} - 2\ing^{1223}_{2(23)1},  \qquad \ing^{1223}_{32(12)} = \ing^{1223}_{23(12)} + \ing^{1223}_{(23)(12)} . 
\end{split}
\end{equation}
Next, 
\begin{equation}
\begin{split}
     &\ing^{1223}_{2231} = \ing^{1322}_{2231} -2 \ing^{12(23)}_{2231}, \qquad \quad \,\ing^{1223}_{2(23)1} = \ing^{1322}_{2(23)1} + 2\ing^{12(23)}_{2(23)1},\\
     &\ing^{1223}_{23(12)} = \ing^{1322}_{23(12)} -2 \ing^{12(23)}_{23(12)} , \qquad \ing^{1223}_{(23)(12)} = \ing^{1322}_{(23)(12)} + 2\ing^{12(23)}_{(23)(12)}.
\end{split}
\end{equation} 
Finally, also using Table \ref{table:sigma_rule_nores}, we find 
\begin{equation}
\begin{split}
     &\ing^{1322}_{2231} = \ing^{3122}_{2231}, \qquad \qquad \ \ing^{12(23)}_{2231} =\ing^{1(23)2}_{2231} = \ing^{(23)12}_{2231} + \ing^{(123)2}_{2231}, \\
     &\ing^{1322}_{2(23)1} = \ing^{3122}_{2(23)1}, \qquad \quad \, \ing^{12(23)}_{2(23)1} =\ing^{1(23)2}_{2(23)1} = \ing^{(23)12}_{2(23)1} + \ing^{(123)2}_{2(23)1}, \\
     &\ing^{1322}_{23(12)} = \ing^{3122}_{23(12)}, \qquad \quad \, \ing^{12(23)}_{23(12)} =\ing^{1(23)2}_{23(12)} = \ing^{(23)12}_{23(12)} - \ing^{(123)2}_{23(12)}, \\
     &\ing^{1322}_{(23)(12)} = \ing^{3122}_{(23)(12)}, \qquad \ing^{12(23)}_{(23)(12)} =\ing^{1(23)2}_{(23)(12)} = \ing^{(23)12}_{(23)(12)} - \ing^{(123)2}_{(23)(12)}. 
\end{split}
\end{equation}
Combining these identities and 
$\ing^{3122}_{1223} = \ing^{3122}_{2213} - 2\ing^{3122}_{2(12)3}$, we obtain the result. 
\end{proof}

When $\w\in \rgs \cup \rgse$, the integral $\ing^{(123)2}_{(23)(12)}$  has the same leading-order behavior as $\ing^{(123)}_{(12)3}$. 
We will see that two integrals $\ing^{(123)}_{123}$ and $\ing^{(123)2}_{(23)(12)}$ are responsible for the limit in Theorem \ref{thm:offdiagflucsameside} when $\w\in \rgs \cup \rgse$.  

Later, we need to find similar representations for $\ing^{\bsigma}_{\btau}$ when $\bsigma$ and $\btau$ have arbitrary length. In this case, repeated applications of the basic rules become extremely tedious. In Subsection \ref{sec:deform}, we present a unified treatment of the deformations of the integrals. 

\bigskip

In the remainder of this subsection, we evaluate the integral $\ing^{(123)}_{123}$ asymptotically when  $\w \in \rgo \cup \cdots \cup \rgfi$, and the integrals $\ing^{(123)}_{(12)3}$ and $\ing^{(123)2}_{(23)(12)}$ when $\w\in \rgs \cup \rgse$. 

\begin{prop} \label{prop:leading} 
For $L>0$, define the constant (which is the same as \eqref{eq:cKdef})
\beq \label{eq:cKdef2}
    \cK_L= \left(\frac{\lv+ a- b+\sqrt{D}} {\lv+ a- b -\sqrt{D}}  \right)^{\lceil aL \rceil} 
    \left(\frac{\lv -a +b+\sqrt{D}} {\lv - a + b -\sqrt{D}}  \right)^{\lceil bL \rceil} 
    e^{- \sqrt{D} L}. 
\eeq
As $L\to \infty$, the following hold:
\beqq  \begin{split}
    &\ing^{(123)}_{123} 
    = - \frac{\sqrt{ab}\cK_L}{2\pi  LD }  \prob \left[ \cd_{+} \B\left(\frac{\slope y_i- x_i}{\slope -1} \right) > \mr r_i,  i=1,2 \right] (1+o(1)) \qquad  \text{for $\w \in \rgo \cup \cdots \cup \rgfi$,}\\
    & \ing^{(123)}_{(12)3} 
    = - \frac{\sqrt{ab}\cK_L}{2\pi  LD} \prob \left[\cd_{+} \B\left(\frac{\slope y_2- x_2}{\slope -1} \right) > \mr r_2  \right] (1+o(1)) \qquad \text{for $\w \in \rgs \cup \rgse$,}\\
    &\ing^{(123)2}_{(23)(12)} 
    = - \frac{\sqrt{ab}\cK_L}{2\pi  LD}
    \prob \left[ \cd_{+} \B\left(\frac{\slope y_1- x_1}{\slope -1} \right) \le \mr r_1, \, \cd_{+} \B\left(\frac{\slope y_2- x_2}{\slope -1} \right) > \mr r_2 \right]
    (1+o(1)) \quad \text{for $\w \in \rgs \cup \rgse$.}
\end{split} \eeqq
\end{prop}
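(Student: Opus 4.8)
\textbf{Proof plan for Proposition~\ref{prop:leading}.}
The plan is to evaluate all three integrals by the steepest descent method, following the scheme of the proof of Lemma~\ref{lem:Q_hat_111_limit} in Subsection~\ref{sec:Q_hat_111_limit}; the new feature is that different integration variables now sit at different saddle points of the functions $\ff_{L,*}$, whose locations and orderings are supplied by Lemma~\ref{cor:criticalptofGlo}, and whose relevant asymptotics come from Lemma~\ref{lem:asymptotics_f2} (applicable by Lemma~\ref{lem:fl_asym_sameside}).

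For $\ing^{(123)}_{123}$ with $\w\in\rgo\cup\cdots\cup\rgfi$ I would first expand the Cauchy determinants in \eqref{eq:Pisigmaxi}: after the single $\ST$-factor cancels the pole it produces, $\Pi^{(123)}_{123}=[(\eta^1-\xi^{123})(\eta^2-\eta^1)(\eta^3-\eta^2)]^{-1}$ and $\FF_L^{(123)|123}=\ff_{L,123}(\xi^{123})/(\ff_{L,1}(\eta^1)\ff_{L,2}(\eta^2)\ff_{L,3}(\eta^3))$. By Lemma~\ref{cor:criticalptofGlo}, in these regions $z_1^+=z_2^+=z_3^+=\pp$ and $\mcG_{123}$ has its other saddle at $z^-_{123}\in(-1,\pp)$, so by Cauchy's theorem I would deform, without crossing poles, the $\xi^{123}$-circle to the circle of Lemma~\ref{lem:asymptotics_f2} through $z^-_{123}$ and the three nested $\eta$-circles to nested such circles through $\pp$, then localize to $L^{-1/2+\epsilon/3}$-neighbourhoods of the saddles. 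Rescaling $\xi^{123}=z^-_{123}+u/(\ab L^{1/2})$ and $\eta^i=\pp-v_i/(\ab L^{1/2})$, the factor $(\eta^1-\xi^{123})^{-1}\to(\pp-z^-_{123})^{-1}=\lv/\sqrt D$, so the $u$-integral decouples into a Gaussian integral governed by $\mcG_{123}''(z^-_{123})=2\cd_-^2\ab^2$ (by \eqref{eq:GGpaHp}), while the leading exponential collapses via $\ff_{L,1}\ff_{L,2}\ff_{L,3}=\ff_{L,123}$ and \eqref{eq:cKdef} to $\ff_{L,123}(z^-_{123})/\ff_{L,123}(\pp)=\cK_L$. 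For the $v$-integral I would apply Lemma~\ref{lem:asymptotics_f1} and note that the quadratic coefficient attached to $\eta^i$ is $\cd_+^2(\tau_i-\tau_{i-1})$ and the linear one $\sqrt2(\rho_i-\rho_{i-1})$, where $\tau_0=\rho_0=0$, $\tau_3=1$, $\rho_3=0$, $\tau_1=\frac{\slope y_1-x_1}{\slope-1}$, $\tau_2=\frac{\slope y_2-x_2}{\slope-1}$ (the mid-way times of Subsection~\ref{sec:llnconj}), $\rho_1=\mr r_1$, $\rho_2=\mr r_2$; these telescoping identities are exactly the content of \eqref{eq:Aratio} and \eqref{eq:mcHf}. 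Extending the $\eta$-contours to vertical lines, applying dominated convergence and Lemma~\ref{lem:bridge} with $A=2\cd_+^2$ then gives $\prob[\cd_+\B(\tau_i)>\mr r_i,\ i=1,2]$ for the $v$-integral; collecting the algebraic prefactors exactly as in Lemma~\ref{lem:Q_hat_111_limit} (using $z^+-z^-=\sqrt D/\lv$ and $4\pi\cd_+\cd_-\ab^2\sqrt D/\lv=2\pi D/\sqrt{ab}$) yields the stated asymptotics.

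The two integrals over $\rgs\cup\rgse$ are treated the same way, now using the orderings of Lemma~\ref{cor:criticalptofGlo}(f)--(g). For $\ing^{(123)}_{(12)3}$ the variables are $\xi^{123}$ (saddle $z^-_{123}$) and $\eta^{12},\eta^3$ (both at $\pp$, since $z^+_{12}=z^+_3=\pp$); the exponential scale is again $\cK_L$ because $\ff_{L,12}\ff_{L,3}=\ff_{L,123}$, and the two-variable $\eta$-integral — with increments corresponding to $(x_2,y_2)$ and $(1-x_2,1-y_2)$ and shifts $\mcH'_{12}(\pp),\mcH'_3(\pp)$ — collapses by Lemma~\ref{lem:bridge} to $\prob[\cd_+\B(\frac{\slope y_2-x_2}{\slope-1})>\mr r_2]$. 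For $\ing^{(123)2}_{(23)(12)}$ the variables are $\xi^{123}$ (at $z^-_{123}$), $\xi^2$ (at $z^-_2=\pp$), and $\eta^{12},\eta^{23}$ (at $\pp$); the $\ff_{L,2}$-factors cancel since $\ff_{L,23}\ff_{L,12}=\ff_{L,2}\ff_{L,123}$, again giving $\cK_L$, and after the $\xi^{123}$-integral decouples, the remaining three variables clustered at $\pp$ produce a bridge-type integral whose quadratic/linear data correspond (via \eqref{eq:Aratio} and \eqref{eq:mcHf}) to times $\tau_1,\tau_2,1$ and shifts $\mr r_1,\mr r_2$. The pole $\xi^2=\eta^{23}$ of $\Pi^{(123)2}_{(23)(12)}$ (from the second Cauchy determinant in \eqref{eq:Pisigmaxi}) couples $\xi^2$ and $\eta^{23}$ at the $L^{-1/2}$ scale, and the relative ordering of their saddle-scale contours, forced by the nesting in \eqref{eq:ingdef}, turns the time-$\tau_1$ marginal into the complementary event, which is how the ``$\le\mr r_1$'' in the claimed probability arises.

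I expect the main obstacle to be the contour bookkeeping in the last case: because up to three variables cluster at the single saddle $\pp$ while living on circles around the two distinct points $-1$ and $0$, I must introduce saddle-scale radii separated by $O(L^{-1/2})$ so that the contours stay nested and mutually disjoint, verify that no spurious residues are picked up while deforming to these contours for the specific lists $\bsigma,\btau$ at hand, and — most delicately — extract from the resulting contour ordering the correct ``$\le$'' versus ``$>$'' in the limiting Brownian-bridge probability. By contrast, the telescoping of the quadratic and linear exponents and the prefactor accounting are routine once \eqref{eq:Aratio}, Lemma~\ref{lem:asymptotics_f1} and Lemma~\ref{lem:bridge} are in hand, and analogous (but easier) care will suffice for the sub-dominant integrals treated later in this section.
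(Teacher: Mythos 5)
Your plan is correct and follows essentially the same route as the paper's proof: expand the kernels, use Lemma~\ref{cor:criticalptofGlo} to place $\xi^{123}$ at $z^-_{123}$ and the remaining variables at the shared saddle $\pp$, deform to the circles of Lemma~\ref{lem:asymptotics_f2} (checking applicability via Lemma~\ref{lem:fl_asym_sameside}), collapse the exponential factor to $\cK_L$ through the product identities $\ff_{L,1}\ff_{L,2}\ff_{L,3}=\ff_{L,12}\ff_{L,3}=\ff_{L,12}\ff_{L,23}/\ff_{L,2}=\ff_{L,123}$, rescale and localize, and finish with Lemma~\ref{lem:bridge} after identifying the bridge times via \eqref{eq:Aratio}. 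Your diagnosis of the third integral is also the right one — the only place where the paper does something beyond the template is exactly the step you flag: after rescaling, the vertical-line contour for $v_0$ (coming from $\xi^2$, which lives on a circle around $-1$) sits on the wrong side of those for $v_{12},v_{23}$ for a direct application of Lemma~\ref{lem:bridge}, and moving it across the $v_{23}$-line picks up a residue at $v_0=v_{23}$; the main term yields $\prob[\cd_+\B(\tau_2)>\mr r_2,\ \cd_+\B(\tau_1)>\mr r_1]$ (via $\At_{12}+\Bt_2=\At_1$, $\Bt_2+\At_{23}=\At_3$), the residue term yields $-\prob[\cd_+\B(\tau_2)>\mr r_2]$, and the identity $\prob[A,B]-\prob[B]=-\prob[A^{c},B]$ converts the pair into the stated mixed $\le/>$ probability. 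If you carry out the plan, make this residue computation explicit rather than leaving it at ``turns the time-$\tau_1$ marginal into the complementary event,'' since that is where the sign and the inequality direction are determined.
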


\begin{proof}
The analysis of the three integrals is similar. In all cases, the critical points $z_{123}^\pm$ of the function 
$\GG_{123}(z)= -a \log(z+1)+b\log z + \lv z$ play a distinguished role. We use the notations (see Lemma \ref{lem:general_crit_behavior})
\beqq
    \pp^-:=z_{123}^- = -\frac{1}{\frac1{\mu}+1}= - \frac{\lv-a+b+ \sqrt{D}}{2\lv}, \qquad
    \pp^+:= z_{123}^+ = - \frac{\lv-a+b-  \sqrt{D}}{2\lv}.
\eeqq
They satisfy $-1<\pp^-<\pp^+<0$. 
It is straightforward to check (cf. \eqref{eq:Atpm}) that 
\beq \label{eq:GG123pp}
    \mp \GG_{123}''(z_c^{\pm}) = \frac{\sqrt{D}}{2ab} \left[ (a+b)\lv-(a-b)^2 \pm  (a-b) \sqrt{D} \right] =2\ab^2\cd^2_\pm . 
\eeq

The integrals we consider are
\beqq  \begin{split}
    &\ing^{(123)}_{123} = \frac{1}{(2\pi \ii)^4} 
    \int_{\gamma_1} \d\xi^{123} \int_{\Gamma_1} \d\eta^{1} \int_{\Gamma_2} \d\eta^{2} \int_{\Gamma_3} \d\eta^{3} \, \, 
    \Pi^{(a)}(\bsxi, \bseta) \FF_{L}^{(a)} (\bsxi, \bseta), \\ 
    & \ing^{(123)}_{(12)3} = \frac{1}{(2\pi \ii)^3} 
    \int_{\gamma_1} \d\xi^{123} \int_{\Gamma_1} \d\eta^{12} \int_{\Gamma_2} \d\eta^{3} \, \, 
    \Pi^{(b)}(\bsxi, \bseta) F_{L}^{(b)} (\bsxi, \bseta), \\ 
    &\ing^{(123)2}_{(23)(12)} = \frac{1}{(2\pi \ii)^4} 
    \int_{\gamma_1}  \d\xi^{123} \int_{\gamma_2} \d \xi^2 
    \int_{\Gamma_1}  \d\eta^{23} \int_{\Gamma_2} \d\eta^{12} \, \, 
    \Pi^{(c)}(\bsxi, \bseta) F_{L}^{(c)} (\bsxi, \bseta) 
\end{split} \eeqq
where $\bsxi=\xi^{123}$ and $\bseta=(\eta^1, \eta^2, \eta^3)$ for the first integral; 
$\bsxi=\xi^{123}$ and $\bseta=(\eta^{12}, \eta^3)$ for the second integral; and 
$\bsxi = (\xi^{123},\xi^2)$ and $\bseta = (\eta^{12}, \eta^{23})$ for the third integral. 
The contours $\gamma_i$  are small circles around $-1$, and $\Gamma_i$ are small circles around $0$; all contours are chosen to be non-intersecting.  
The circle $\gamma_1$ is nested inside $\gamma_2$, and the circles $\Gamma_1, \Gamma_2, \Gamma_3$ are nested from inside to outside. 
The functions are 
\beq \label{eq:Piina}  \begin{split}
    &\Pi^{(a)}(\bsxi, \bseta):= \Pi^{(123)}_{123}(\bsxi, \bseta) = \frac{1}{(\eta^1 - \xi^{123})(\eta^2-\eta^1)(\eta^3-\eta^2)}, \\
    &\Pi^{(b)}(\bsxi, \bseta):= \Pi^{(123)}_{(12)3}(\bsxi, \bseta) = \frac{1}{(\eta^{12} - \xi^{123})(\eta^3-\eta^{12})}, \\	
    &\Pi^{(c)}(\bsxi, \bseta) := \Pi^{(123)2}_{(23)(12)}(\bsxi, \bseta) = \frac{1}{(\eta^{12} - \xi^{123})(\eta^{23}-\xi^2)(\xi^2-\eta^{12})}, 
\end{split} \eeq
and
\beqq  \begin{split}
    &\FF^{(a)}_L(\bsxi, \bseta):= \FF_{L}^{(123)|123} (\bsxi, \bseta) = \frac{\ff_{L,123}(\xi^{123})}{\ff_{L,1}(\eta^1)\ff_{L,2}(\eta^2)\ff_{L,3}(\eta^3)}, \\
    &F_{L}^{(b)} (\bsxi, \bseta):= F_{L}^{(123)|(12)3} (\bsxi, \bseta) = \frac{\ff_{L,123}(\xi^{123})}{\ff_{L,12}(\eta^{12})\ff_{L,3}(\eta^3)}, \\
    &F_{L}^{(c)} (\bsxi, \bseta) := F_{L}^{(123)2|(23)(12)} (\bsxi, \bseta) = \frac{\ff_{L,123}(\xi^{123})\ff_{L,2}(\xi^{2})}{\ff_{L,12}(\eta^{12})\ff_{L,23}(\eta^{23})}. 
\end{split} \eeqq

Lemma \ref{cor:criticalptofGlo} implies that the critical points satisfy 
\beqq  \begin{split}
    &-1<\pp^-<z_1^+=z_2^+=z_3^+=\pp^+<0 \qquad \text{for $\w \in \rgo \cup \cdots \cup \rgfi$,}\\
    &-1<\pp^-<z_2^- =z_{12}^+=z_3^+=z^+_{23}=\pp^+<0 \qquad \text{for $\w\in \rgs\cup \rgse$.}
\end{split} \eeqq
Note that, since $\ff_{L,1}(z)\ff_{L,2}(z)\ff_{L,3}(z)=\ff_{L,12}(z)\ff_{L,3}(z) = \ff_{L,12}(z) \frac{\ff_{L,23}(z)}{\ff_{L,2}(z)}= \ff_{123}(z)$, we have, in terms of \eqref{eq:cKdef2}, 
\beq\label{eq:fmanycK} \begin{split}
    \frac{\ff_{L,123}(\pp^-)}{\ff_{L,1}(\pp^+)\ff_{L,2}(\pp^+)\ff_{L,3}(\pp^+)}
    = \frac{\ff_{L,123}(\pp^-)}{\ff_{L,12}(\pp^+)\ff_{L,3}(\pp^+)}
    = \frac{\ff_{L,123}(\pp^-)\ff_{L,2}(\pp^+)}{\ff_{L,12}(\pp^+)\ff_{L,23}(\pp^+)}
    = \frac{\ff_{L,123}(\pp^-)}{\ff_{L,123}(\pp^+)} = \cK_L . 
\end{split} \eeq

We take the contours to be the circles given by 
\beqq  \begin{split}
    &\gamma_1= \{ z\in \C \, : \, |z+1|= 1 + \pp^- \},\\
    &\gamma_2= \{ z\in \C \, : \, |z+1|= 1+\pp^+ \},\\
    &\Gamma_i= \{ z\in \C \, : \, |z|= |\pp^+|- (4-i)L^{-1/2} \}, \qquad i=1,2,3.\\
\end{split} \eeqq
These contours satisfy the necessary nesting structure. 
We now evaluate the integrals. 
From the formula of the functions, 
\beqq
    \Pi^{(a)}(\bsxi, \bseta) = O(L) , \qquad
    \Pi^{(b)}(\bsxi, \bseta) = O(L^{1/2}),   \qquad 
    \Pi^{(c)}(\bsxi, \bseta) = O(L) 
\eeqq
uniformly for $(\bsxi, \bseta)$ on the contours. 
Fix $\epsilon\in (0,1/2)$ and denote the disks
\beqq
    D_-= \{z\in \C : |z-\pp^-|\le L^{-\frac12+\frac{\epsilon}{3} }\}, \qquad
    D_+= \{z\in \C : |z-\pp^+|\le L^{-\frac12+\frac{\epsilon}{3} }\}. 
\eeqq
Let $\gamma_1^\epsilon$ be the part of the circle $\gamma$ contained in the disk $D_-$. 
Let $\gamma_2^\epsilon$, $\Gamma_1^\epsilon$, $\Gamma_2^\epsilon$, $\Gamma_3^\epsilon$ denote the parts of the corresponding circles contained  in the disk $D_+$. 
Note that $z_2^-=\pp^+$, and $\gamma_2^\epsilon$ is a sub-arc of the circle $\gamma_2$ near this critical point, when $\w \in \rgs \cup \rgse$. 
Lemma \ref{lem:fl_asym_sameside} implies that Lemma \ref{lem:asymptotics_f2}  \ref{eq:propertya} applies to $f_{L, 123}$, and Lemma \ref{lem:asymptotics_f2}  \ref{eq:propertyb} applies to $\ff_{L,1}, \ff_{L, 3}, \ff_{L, 12}$, and $\ff_{L,23}$. 
Furthermore, Lemma \ref{lem:asymptotics_f2}  \ref{eq:propertyb} applies to $\ff_{L,2}$ when $\w \in \rgo \cup \cdots \cup \rgfi$, while Lemma \ref{lem:asymptotics_f2}  \ref{eq:propertya} applies to $\ff_{L,2}$ when $\w \in \rgs \cup \rgse$. 
Thus, we find that 
\beqq
    \frac{\FF_{L}^{(a)} (\bsxi, \bseta)}{\cK_L}  = O(e^{-cL^{2\epsilon/3}}), \qquad
    \frac{\FF_{L}^{(b)} (\bsxi, \bseta)}{\cK_L}  = O(e^{-cL^{2\epsilon/3}}), \qquad
    \frac{\FF_{L}^{(c)} (\bsxi, \bseta)}{\cK_L}  = O(e^{-cL^{2\epsilon/3}})
\eeqq
for $(\bsxi, \bseta)$ on the contours outside the parts $\gamma_1^\epsilon\times \Gamma_1^\epsilon\times \Gamma_2^\epsilon\times \Gamma_3^\epsilon$, or $\gamma_1^\epsilon\times \Gamma_1^\epsilon\times \Gamma_2^\epsilon$, or $\gamma_1^\epsilon\times \gamma_2^\epsilon\times \Gamma_1^\epsilon\times \Gamma_2^\epsilon$, respectively. 
On the other hand, for $(\bsxi, \bseta)$ on the parts $\gamma_1^\epsilon\times \Gamma_1^\epsilon\times \Gamma_2^\epsilon\times \Gamma_3^\epsilon$, or $\gamma_1^\epsilon\times \Gamma_1^\epsilon\times \Gamma_2^\epsilon$, or $\gamma_1^\epsilon\times \gamma_2^\epsilon\times \Gamma_1^\epsilon\times \Gamma_2^\epsilon$ (respectively), 
we change variables as follows: 
\beqq
    \eta^*= \pp^+  + \frac{v_*}{L^{1/2}} \quad \text{for $*=1,2, 3,12,23$}, 
    \qquad \xi^{123}= \pp^- + \frac{u}{L^{1/2}}, 
    \qquad \xi^2= \pp^+ +\frac{v_0}{L^{1/2}}. 
\eeqq
Noting $\pp^+- \pp^-= \sqrt{D}/\lv$, we find from \eqref{eq:Piina} that 
\beqq  \begin{split}
    \Pi^{(a)}(\bsxi, \bseta)= \frac{\lv L/\sqrt{D} (1 + o(1))}{(v_2-v_1)(v_3-v_2)} , \quad 
    \Pi^{(b)}(\bsxi, \bseta) = \frac{\ell L^{1/2}/\sqrt{D} (1 + o(1))}{v_3-v_{12}},\quad 
    \Pi^{(c)}(\bsxi, \bseta) = \frac{\ell L/\sqrt{D} (1 + o(1))}{(v_{23}-v_0)(v_0-v_{12})} 
\end{split} \eeqq
for variables $|u|, |v_0|, |v_1|, |v_2|, |v_3|, |v_{12}|, |v_{23}|\le L^{\epsilon/3}$ on appropriate contours. 
Using Lemma \ref{lem:asymptotics_f1} and recalling \eqref{eq:mcHf}, we also find that 
\beqq  \begin{split}
    &\frac{\FF_{L}^{(a)} (\bsxi, \bseta)}{\cK_L}
    = \frac{e^{\frac{1}{2}\Bt u^2 } }{e^{-\frac{1}{2}\At_{1}v^2_1+  \sqrt{2} \ab  \mr r_1v_1 - \frac{1}{2}\At_{2}v^2_2 + \sqrt{2} \ab (\mr r_2 - \mr r_1)v_2  - \frac{1}{2}\At_{3}v^2_3 - \sqrt{2} \ab \mr r_2v_3} }
    (1+o(1)), \\
    &\frac{F_{L}^{(b)} (\bsxi, \bseta)}{\cK_L} 
    = \frac{e^{\frac{1}{2}\Bt u^2 }}
    {e^{ - \frac{1}{2}\At_{12}v^2_{12} +\sqrt{2} \ab\mr r_2v_{12} - \frac{1}{2}\At_{3}v^2_3 - \sqrt{2} \ab\mr r_2v_3}} 
    (1+o(1)), \\
    &\frac{F_L^{(c)} (\bsxi, \bseta)}{\cK_L} 
    =  \frac{ e^{\frac12 \Bt u^2 + \frac12 \Bt_{2}v^2_0 + \sqrt{2} \ab(\mr r_2- \mr r_1)v_0 } }
    {e^{- \frac12 \At_{12}v^2_{12} + \sqrt{2} \ab \mr r_2v_{12}  - \frac12 \At_{23}v^2_{23} - \sqrt{2} \ab \mr r_1v_{23}} }
    (1+o(1))
\end{split} \eeqq
for the same variables, where we set 
\beqq
    \At_{*} = -\mcG''_{*}(\pp^+)>0 \quad \text{for} \quad *=1, 3,12,23, 
    \quad \Bt= \mcG''_{123}(\pp^-) =2\ab^2\cd^2_->0, 
    \quad \At_{2} = -\mcG''_{2}(\pp^+)= -\Bt_{2}. 
\eeqq
Note that $\At_2>0$ if $\w \in \rgo \cup \cdots \cup \rgfi$, since $\pp^+=z_2^+$ in this case, and $\Bt_2>0$ if $\w \in \rgs \cup \rgse$, since $\pp^+=z_2^-$ in this case. 
Hence, noting that $\dd\bsxi\dd\bseta$ is equal to $L^{-2} \dd u \dd v_1\dd v_2\dd v_3$, $L^{-3/2} \dd u \dd v_{12} \dd v_3$, or $L^{-2} \dd u \dd v_0\dd v_{12}\dd v_{23}$, respectively, 
we conclude that 
\beqq
    \lim_{L \to \infty} \frac{L\sqrt{D}}{\ell \cK_L} \ing^{(123)}_{123} = P_0P_a, \qquad
    \lim_{L \to \infty} \frac{L\sqrt{D}}{\ell \cK_L} \ing^{(123)}_{(12)3} = P_0P_b, \qquad
    \lim_{L \to \infty} \frac{L\sqrt{D} }{\ell \cK_L} \ing^{(123)2}_{(23)(12)} = P_0P_c, \qquad
\eeqq
where
\beqq
    P_0= \frac1{2\pi\ii} \int_{\ii \R} e^{\frac{1}{2} \Bt u^2 }  \dd u = \frac{1}{\sqrt{2\pi \Bt}}
    = \frac1{\ab\cd_-\sqrt{4\pi}},
\eeqq
and
\beqq  \begin{split}
    &P_a = \frac{1}{(2\pi \ii)^3}   \int_{3+\ii\R}  \d v_1  \int_{2+\ii\R} \d v_2  \int_{1+\ii\R} \d v_3
    \frac{ e^{ \frac{1}{2}\At_{1}v^2_1 -\sqrt{2} \ab \mr r_1v_1 + \frac{1}{2}\At_{2}v^2_2- \sqrt{2} \ab (\mr r_2 - \mr r_1)v_2 
    +  \frac{1}{2}\At_{3}v^2_3 + \sqrt{2} \ab \mr r_2v_3} }{(v_2-v_1)(v_3-v_2)}, \\
    &P_b=  \frac{1}{(2\pi \ii)^2} \int_{3+\ii\R }\dd v_{12} \int_{2+\ii \R} \dd v_3 \, 
    \frac{ e^{ \frac{1}{2}\At_{12}v^2_{12} - \sqrt{2} \ab\mr r_2v_{12} + \frac{1}{2}\At_{3}v^2_3 + \sqrt{2} \ab\mr r_2v_3} }{v_3-v_{12}}, \\
    &P_c= \frac{1}{(2\pi i)^3}  \int_{\ii\R} \d v_0  \int_{3+\ii\R} \d v_{23}   \int_{2+\ii\R} \d v_{12}
    \frac{ e^{ \frac{1}{2}\At_{12}v^2_{12} - \sqrt{2} \ab \mr r_2v_{12} +
    \frac{1}{2}\Bt_{2}v^2_0 + \sqrt{2} \ab (\mr r_2 - \mr r_1)v_0 +
    \frac{1}{2}\At_{23}v^2_{23} + \sqrt{2} \ab \mr r_1v_{23}}}{(v_{23}-v_0)(v_0-v_{12})}. 
\end{split} \eeqq
Here, $\ii\R$ is oriented upward, while $1+\ii\R$, $2+\ii\R$, and $3+\ii \R$ are oriented downwards. 

We now evaluate $P_a$, $P_b$, and $P_c$ using Lemma \ref{lem:bridge}. 
Noting that $\GG_1(z)+\GG_2(z)+\GG_3(z)= \GG_{12}(z)+\GG_{3}(z)= \GG_{12}(z)+\GG_{23}(z)-\GG_2(z)=\GG_{123}(z)$, set
\beqq
    \At := \At_1+\At_2+\At_3= \At_{12}+\At_{3}=\At_{12}+\At_{23} + \Bt_2= - \mcG''_{123}(z^+_{c}) = 2\ab^2\cd^2_+. 
\eeqq
Changing variables $v_*\mapsto -v_*$ in $P_a$ and $P_b$, we find from Lemma \ref{lem:bridge} (noting $\At_1+\At_2=\At_{12}$) that
\beqq  \begin{split}
    &P_a = - \frac1{\sqrt{2\pi\At} } \prob \left[ \frac{\sqrt{\At}}{\sqrt{2}\ab} \B\left(\frac{\At_1}{\At}\right) > \mr r_1, \ \frac{\sqrt{\At}}{\sqrt{2}\ab}  \B\left(\frac{\At_{12}}{\At}\right) > \mr r_2 \right], \\
    &P_b = - \frac1{\sqrt{2\pi\At} } \prob \left[ \frac{\sqrt{\At}}{\sqrt{2}\ab}  \B\left(\frac{\At_{12}}{\At}\right) > \mr r_2 \right]. 
\end{split} \eeqq
For $P_c$, we first move the contour for $v_{12}$ to the right of the contour for $v_{23}$. This can be done without changing the value of the integral since the integrand is analytic at $v_{12}=v_{23}$. 
We then change all variables $v_*\mapsto -v_*$. Reversing the orientation of the contour for $v_0$, we find that 
\beqq  \begin{split}    
    &P_c= \frac{1}{(2\pi i)^3}  \int_{\ii\R} \d v_0  \int_{-2+\ii\R} \d v_{23}   \int_{-3+\ii\R} \d v_{12}
    \frac{ e^{ \frac{1}{2}\At_{12}v^2_{12} + \sqrt{2} \ab \mr r_2v_{12} +
    \frac{1}{2}\Bt_{2}v^2_0 - \sqrt{2} \ab (\mr r_2 - \mr r_1)v_0 +
    \frac{1}{2}\At_{23}v^2_{23} - \sqrt{2} \ab \mr r_1v_{23}}}{(v_{23}-v_0)(v_0-v_{12})}
\end{split} \eeqq
where all contours are oriented upwards. 
Moving the $v_0$-contour across the $v_{23}$-contour to the left, and taking into account the simple pole $v_0=v_{23}$, we find that 
\beqq  \begin{split}
    P_c= &  \frac{1}{(2\pi i)^3}  \int_{-2+\ii\R} \d v_0  \int_{\ii\R} \d v_{23}   \int_{-3+\ii\R} \d v_{12}
    \frac{ e^{ \frac{1}{2}\At_{12}v^2_{12} + \sqrt{2} \ab \mr r_2v_{12} +
    \frac{1}{2}\Bt_{2}v^2_0 - \sqrt{2} \ab (\mr r_2 - \mr r_1)v_0 +
    \frac{1}{2}\At_{23}v^2_{23} - \sqrt{2} \ab \mr r_1v_{23}}}{(v_{23}-v_0)(v_0-v_{12})} \\
    &
    -\frac{1}{(2\pi i)^2}  \int_{-2+\ii\R} \d v_{23}   \int_{-3+\ii\R} \d v_{12}
    \frac{ e^{ \frac{1}{2}\At_{12}v^2_{12} + \sqrt{2} \ab \mr r_2v_{12} +
    \frac{1}{2}(\Bt_2+ \At_{23}) v^2_{23} - \sqrt{2} \ab \mr r_2 v_{23}}}{v_{23}-v_{12}}.
\end{split} \eeqq
Noting that $\At_{12}+\Bt_2=\At_1$ and $\Bt_2+\At_{23}=\At_3$, Lemma \ref{lem:bridge} implies that
\beqq  \begin{split}
    \sqrt{2\pi\At}  P_c 
    =  \prob \left[ \frac{\sqrt{\At}}{\sqrt{2}\ab} \B\left(\frac{\At_{12}}{\At}\right) >   \mr r_2, \ \frac{\sqrt{\At}}{\sqrt{2}\ab}  \B\left(\frac{\At_{1}}{\At}\right) >  \mr r_1 \right] 
    - \prob \left[ \frac{\sqrt{\At}}{\sqrt{2}\ab} \B\left(\frac{\At_{12}}{\At}\right) >   \mr r_2 \right].
\end{split} \eeqq
We have $\sqrt{\At}=\sqrt{2} \ab\cd_+$, and, from \eqref{eq:Aratio}, $\frac{\At_1}{\At} = \frac{\slope y_1- x_1}{\slope-1}$ and $\frac{\At_{12}}{\At} = \frac{\slope y_2- x_2}{\slope-1}$. 
The result now follows since $\ab^2\cd_+\cd_-= \frac{\lv \sqrt{D}}{2\sqrt{ab}}$. 
\end{proof}

\subsection{Estimation of the remainder and the proof of the theorem} \label{sec:equalt}

In this section, we state estimates for the remaining integrals and use them to complete the proof of Theorem \ref{thm:offdiagflucsameside}. 
The estimates are given in two propositions, each applying to different choices of $\bsigma$ and $\btau$. 
The first proposition implies estimates for $\QQ^{(1,1,1)}_{L}$ when $\w\in \rgo\cup\cdots\cup \rgfi$, and for both $\QQ^{(1,1,1)}_{L}$ and $\QQ^{(1,2,1)}_{L}$ when $\w\in \rgs\cup\rgse$. 
The second proposition gives estimates on the remaining cases of $\QQ^{(\bn)}_L$. 
The proof of Theorem \ref{thm:offdiagflucsameside} is given at the end of this subsection. 

Recall that the integrals $\ing^{\bsigma}_{\btau}$ depend on $L>0$. 
Recall also the constant $\cK_L$ from \eqref{eq:cKdef2}. The proof of the following proposition is given in Subsection \ref{sec:bounds}. 

\begin{prop} \label{cor:incor}
For every $\w \in \rgo\cup \cdots \cup \rgse$,  there exist constants $C, c, L_0>0$ such that for every $L\ge L_0$ and $\vecn\in \N^3$, 
and for every $\bsigma, \btau\in \listn$ of the forms 
\begin{enumerate} [(a)]
\item $\bsigma =2^{a_2} (23)^{a_{23}}3^{a_3}(123)^{a_{123}}(12)^{a_{12}}1^{a_1}$ and $\btau = 3^{b_3'}2^{b_2'}1^{b_{1}}2^{b_2''}3^{b_3''}$ if $\w \in \rgo\cup \rgt$; 

\item $\bsigma = 3^{a_3}(23)^{a_{23}}2^{a_2}(123)^{a_{123}}(12)^{a_{12}}1^{a_1}$  and $\btau = 3^{b_3'}2^{b_2'}1^{b_{1}}2^{b_2''}3^{b_3''}$ if $\w \in \rgth\cup \rgf$;

\item $\bsigma =3^{a_3}(23)^{a_{23}}(123)^{a_{123}}1^{a_1} (12)^{a_{12}} 2^{a_2}$ and $\btau = 3^{b_3'}2^{b_2'}1^{b_{1}}2^{b_2''}3^{b_3''}$ if $\w \in \rgfi$; 

\item $\bsigma = 3^{a_3}(23)^{a_{23}}(123)^{a_{123}}1^{a_1} (12)^{a_{12}} 2^{a_2}$ and $\btau = 2^{b_2}(23)^{b_{23}}3^{b_{3}'}(12)^{b_{12}}1^{b_1}3^{b_{3}''}$ if $\w \in \rgs\cup \rgse$;
\end{enumerate} 
satisfying 
\begin{itemize}
\item $(a_{123}, a_{12}, a_{23}, a_1, a_2, a_3) \neq (1,0,0,0,0,0)$ when $\w \in \rgo\cup \cdots\cup \rgfi$, 
\item $a_{123}+a_1\ge 1$  when $\w \in \rgfi\cup \rgs\cup \rgse$, 
\item $(a_{123}, a_{12}, a_{23}, a_1, b_2, a_3)\neq (1,0,0,0,0,0)$ when $\w \in \rgs\cup \rgse$, 
\end{itemize}
we have 
\beq \label{eq:incorr}
    |\ing^{\bsigma}_{\btau}| 
    \le C^{|\vecn|} \sqrt{n_1! (n_2-n_1+a_1+b_1)! (n_2-n_3+a_3+b_3)!n_3!} e^{-cL} \cK_L, 
\eeq
where $b_3=b_3'+b_3''$. 
\end{prop}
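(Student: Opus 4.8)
\textbf{Proof proposal for Proposition \ref{cor:incor}.}

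The plan is to bound $|\ing^{\bsigma}_{\btau}|$ by taking the contours in the definition \eqref{eq:ingdef} to be the nested circles coming from Lemma \ref{lem:asymptotics_f2}, then estimating the integrand termwise: the Cauchy-determinant factors in $\Pi^{\bsigma}_{\btau}$ via Hadamard's inequality, and the exponential factors $\FF_L^{\bsigma|\btau}$ via the decay estimates of Lemma \ref{lem:asymptotics_f2}. First I would set up the contours according to the critical point ordering given in Lemma \ref{cor:criticalptofGlo}. For each region $\rgo,\dots,\rgse$, the ordering dictated by that lemma tells us which $\mcG_*$-critical point each variable should be sent to; the key feature is that the $\xi$-variables of type $*$ should sit on circles around $-1$ passing through $z_*^-$ and the $\eta$-variables of type $*$ on circles around $0$ passing through $z_*^+=\pp$ (or $z_2^-=\pp$ for the type-$2$ variables when $\w\in\rgs\cup\rgse$). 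The crucial point is that the specific orderings of $\bsigma$ and $\btau$ listed in items (a)--(d) are exactly those for which the nested-circle structure in \eqref{eq:ingdef} is compatible with placing each variable on the circle through its designated critical point — this is why the list depends on the region. Because these circles are all at mutual distance $\ge cL^{-1/2}$ (for variables of distinct types the distance is bounded below by a constant, and for variables of the same type we may slightly perturb the radii to get $\ge cL^{-1/2}$), Hadamard's inequality \eqref{eq:Cayd1} gives, exactly as in \eqref{eq:Cayd1}--\eqref{eq:Piest12}, a bound of the form
\beqq
    |\Pi^{\bsigma}_{\btau}(\bsxi,\bseta)| \le C^{|\bn|}\sqrt{n_1!\,(n_2-n_1+a_1+b_1)!\,(n_2-n_3+a_3+b_3)!\,n_3!}\; L^{K|\bn|}
\eeqq
for the four Cauchy factors (whose sizes were identified in the discussion following Definition \ref{def:Pisigmaxi}), with the $\ST$ factor contributing only a harmless factor of $O(|\bn|)$ since all contours lie in $\{|z|\le 2\}$.

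Next I would estimate $\FF_L^{\bsigma|\btau}$. Write it as a product over $*\in\mc A_3$ of $\prod_i \ff_{L,*}(\xi_i^*)/\prod_i\ff_{L,*}(\eta_i^*)$. The telescoping identity $\prod_{i=1}^m\ff_{L,i}(z)=f_{M_{L,m},N_{L,m},T_{L,m}}(z)$ (and its analogues for the grouped functions) gives, as in \eqref{eq:fmanycK}, that the product of ratios evaluated at the designated critical points equals $\cK_L$ times a product of ``gap'' factors of the form $\ff_{L,*}(z_*^-)/\ff_{L,*}(\pp)$, one for each variable beyond the single $(123)$-variable that is present in the genuinely leading integral $\ing^{(123)}_{123}$ (or $\ing^{(123)2}_{(23)(12)}$ in the $\rgs\cup\rgse$ case). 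Each such gap factor is bounded, by the argument around \eqref{eq:frati} using $\GG_*(\pp)-\GG_*(z_*^-)>0$, by $e^{-c_*L}$ for some $c_*>0$; here $\GG_*(\pp)-\GG_*(z_*^-)$ is a positive constant because $\pp$ and $z_*^-$ are distinct critical points of $\GG_*$ (which one checks from Lemma \ref{cor:criticalptofGlo}, using that $z_*^-<\pp$ strictly in all cases, or that $z_2^-=\pp<z_2^+$ in the relevant case). Combining with Lemma \ref{lem:asymptotics_f2}'s uniform bound $|\ff_{L,*}(z)/\ff_{L,*}(z_c)|\le C$ on the circles, we get
\beqq
    \frac{1}{\cK_L}\left|\FF_L^{\bsigma|\btau}(\bsxi,\bseta)\right| \le C^{|\bn|}\,e^{-c(|\bn|-1)L}
\eeqq
\emph{provided} the type $\type(\bsigma)\ne(1,0,0,0,0,0)$ — this is precisely where the hypothesis on $\type(\bsigma)$ (resp.\ the hypothesis $(a_{123},a_{12},a_{23},a_1,b_2,a_3)\ne(1,0,0,0,0,0)$ in the $\rgs\cup\rgse$ case) enters: it guarantees at least one genuine gap factor is present, giving the overall factor $e^{-cL}$. (The auxiliary conditions $(a_{123},a_{12},a_{23},a_1,a_2,a_3)\ne(1,0,0,0,0,0)$, resp.\ $a_{123}+a_1\ge1$, serve to ensure the contour-deformation/pole-avoidance setup is the one we want; I would verify case by case that the nesting is consistent.) Multiplying the two bounds and the factor $L^{K|\bn|}$, then noting $(2\pi)^{|\bsigma|+|\btau|}\le C^{|\bn|}$ and that $L^{K|\bn|}e^{-c(|\bn|-1)L}\le C^{|\bn|}e^{-c'L}$ for large $L$, yields \eqref{eq:incorr}.

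The main obstacle will be the contour bookkeeping in the second step: one must check that for each of the four region-cases (a)--(d), the orderings of $\bsigma$ and $\btau$ prescribed there really do allow every variable to be moved onto the circle through its designated critical point \emph{without crossing any pole} of $\Pi^{\bsigma}_{\btau}$ — that is, that the nesting order of the circles in \eqref{eq:ingdef} (inside-to-outside) is consistent with the order of the critical points $z_*^\pm$ along the real axis from Lemma \ref{cor:criticalptofGlo}. When it is not directly consistent, one picks up residues, which is exactly the mechanism that produced the various terms in Lemmas \ref{lem:QQ111list} and \ref{lem:QQ121list}; here the claim is that for the \emph{particular} $\bsigma,\btau$ listed, the orders \emph{are} consistent, so no further residues appear and the clean bound holds. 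I would organize this by, for each region, explicitly writing down the circle radii (in terms of $|z_*^-+1|$ and $|\pp|$ shifted by $O(L^{-1/2})$) and checking the two chains of inequalities in Lemma \ref{cor:criticalptofGlo} match the required nesting; the verification that same-type variables can be perturbed to distance $\ge cL^{-1/2}$ while staying on the correct side of all poles is routine but must be done with care when $a_*\ge 2$ or $b_*\ge 2$.
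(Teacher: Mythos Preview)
Your overall strategy matches the paper's: choose the contours to pass through the appropriate critical points (compatible with the nesting dictated by the prescribed forms of $\bsigma,\btau$), bound $\Pi^{\bsigma}_{\btau}$ by Hadamard's inequality, and bound $\FF_L^{\bsigma|\btau}$ using Lemma~\ref{lem:asymptotics_f2}. The contour bookkeeping you worry about at the end is indeed routine, and the $\Pi$ bound you write is exactly what the paper obtains (Lemma~\ref{lem:Pibound}).

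However, there is a genuine gap in the $\FF_L$ estimate. Your claim that the product of critical-point ratios equals ``$\cK_L$ times one gap factor $\ff_{L,*}(z_*^-)/\ff_{L,*}(\pp)$ per variable beyond the single $(123)$-variable'' is only correct when $a_{123}\ge1$. In general, after the telescoping identity you cite, one obtains
\[
\frac{|\FF_L^{\bsigma|\btau}|}{\cK_L}\;\lesssim\; e^{-L\big(\sum_{*\in\mc A_3} v_*\,\Delta\mcG_* - \Delta\mcG_{123}\big)},
\qquad \Delta\mcG_*:=\mcG_*(z_*^+)-\mcG_*(z_*^-),
\]
and one must then prove $\sum_* v_*\Delta\mcG_* \ge \Delta\mcG_{123}+c|\bv|$ for some $c>0$. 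When $a_{123}=0$ this is \emph{not} automatic: for instance with $\veca=(0,1,0,0,0,1)$ one needs the strict inequality $\Delta\mcG_{12}+\Delta\mcG_3>\Delta\mcG_{123}$, which follows only after observing that $z_{12}^-,z_3^-\neq z_{123}^-$ and using the shape of each $\mcG_*$. More delicately, the combination $\Delta\mcG_{12}+\Delta\mcG_{23}>\Delta\mcG_{123}$ (needed when $\veca=(0,1,1,0,0,0)$) is established in the paper only for $\w\in\rgo\cup\cdots\cup\rgf$ (Lemma~\ref{eq:delGin}(e)), and may fail for $\w\in\rgfi$. This is precisely why the hypothesis $a_{123}+a_1\ge1$ is imposed for $\w\in\rgfi\cup\rgs\cup\rgse$: it is \emph{not} a contour-deformation condition as you suggest, but the condition that guarantees one can instead invoke $\Delta\mcG_1+\Delta\mcG_{23}>\Delta\mcG_{123}$ (Lemma~\ref{eq:delGin}(c)) in the combinatorial argument. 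The paper packages this as Lemmas~\ref{lem:Gprop}--\ref{lem:deGlb}, which carry the real weight of the proof; your sketch does not identify this step.
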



Together with Lemma \ref{lem:QQ111list}, Lemma \ref{lem:QQ121list}, Proposition \ref{prop:leading}, the above result implies the following.  

\begin{cor} \label{cor:Q1ll}
As $L\to \infty$, the following hold:
\beqq  \begin{split}
    &\QQ^{(1,1,1)}_{L} 
    =  \frac{\sqrt{ab}\cK_L}{2\pi  LD }  \prob \left[ \cd_{+} \B\left(\frac{\slope y_i- x_i}{\slope -1} \right) > \mr r_i,  i=1,2 \right] (1+o(1)) \qquad  \text{for $\w \in \rgo \cup \cdots \cup \rgfi$,}\\
    & \QQ^{(1,1,1)}_{L} + \frac14 \QQ^{(1,2,1)}_{L}
    = \frac{\sqrt{ab}\cK_L}{2\pi  LD} \prob \left[ \cd_{+} \B\left(\frac{\slope y_i- x_i}{\slope -1} \right) > \mr r_i,  i=1,2 \right] (1+o(1)) \quad \text{for $\w \in \rgs \cup \rgse$.}
\end{split} \eeqq
\end{cor}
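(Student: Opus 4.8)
The plan is to read off Corollary~\ref{cor:Q1ll} from the expansions of $\QQ^{(1,1,1)}_L$ and $\QQ^{(1,2,1)}_L$ in Lemmas~\ref{lem:QQ111list} and \ref{lem:QQ121list}, retaining only the integrals whose asymptotics are supplied by Proposition~\ref{prop:leading} and absorbing every remaining integral into the error via Proposition~\ref{cor:incor}. Since only $\bn=(1,1,1)$ and $\bn=(1,2,1)$ occur, the factorials on the right-hand side of \eqref{eq:incorr} are $L$-independent, so Proposition~\ref{cor:incor} bounds each ``non-leading'' integral by $O(e^{-cL}\cK_L)$. Because $e^{-cL}\cK_L=o(\cK_L/L)$ and, for fixed $\mr r_1,\mr r_2$, the bridge arguments $s_i:=\tfrac{\slope y_i-x_i}{\slope-1}$ lie in $(0,1)$ (using $\tfrac1{\slope}<\tfrac{y_i}{x_i}<1$ and $\slope>1$), these errors vanish against the leading terms of Proposition~\ref{prop:leading}.

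First I would treat $\w\in\rgo\cup\cdots\cup\rgfi$. Using the decomposition of $\ing^{123}_{123}$ from Lemma~\ref{lem:QQ111list}(a) for $\w\in\rgo\cup\rgt$, (b) for $\w\in\rgth\cup\rgf$, or (c) for $\w\in\rgfi$, one checks that every summand other than $\ing^{(123)}_{123}$ has $\bsigma,\btau$ of exactly the shape listed in the matching case (a)--(c) of Proposition~\ref{cor:incor}, with $\type(\bsigma)\neq(1,0,0,0,0,0)$ and (in case (c)) $a_{123}+a_1\ge1$, so each is $O(e^{-cL}\cK_L)$; the integral $\ing^{(123)}_{123}$ has $\bsigma=(123)$, $\type(\bsigma)=(1,0,0,0,0,0)$, and is precisely the one not covered. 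Since $\QQ^{(1,1,1)}_L=-\ing^{123}_{123}$, this gives $\QQ^{(1,1,1)}_L=-\ing^{(123)}_{123}+O(e^{-cL}\cK_L)$, and the first line of Proposition~\ref{prop:leading} yields the first assertion of the corollary.

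For $\w\in\rgs\cup\rgse$ I would use Lemma~\ref{lem:QQ111list}(d) together with Lemma~\ref{lem:QQ121list}. Every integral appearing there has $\bsigma,\btau$ of the normal form of Proposition~\ref{cor:incor}(d) and satisfies $a_{123}+a_1\ge1$; the remaining constraint $(a_{123},a_{12},a_{23},a_1,b_2,a_3)\neq(1,0,0,0,0,0)$ (which mixes $\type(\bsigma)$ with the number $b_2$ of bare $2$'s in $\btau$) fails, among the summands of the (d)-decomposition of $\ing^{123}_{123}$, only for $\ing^{(123)}_{(12)3}$ — note $\ing^{(123)}_{213}$ has $b_2=1$ and is still covered — and, among the thirteen integrals of Lemma~\ref{lem:QQ121list}, only for $\ing^{(123)2}_{(23)(12)}$. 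Hence $\QQ^{(1,1,1)}_L=-\ing^{(123)}_{(12)3}+O(e^{-cL}\cK_L)$ and $\tfrac14\QQ^{(1,2,1)}_L=\ing^{(123)2}_{(23)(12)}+O(e^{-cL}\cK_L)$. Adding these and invoking the last two lines of Proposition~\ref{prop:leading}, $\QQ^{(1,1,1)}_L+\tfrac14\QQ^{(1,2,1)}_L$ equals $\tfrac{\sqrt{ab}\,\cK_L}{2\pi LD}\big(\prob[\cd_+\B(s_2)>\mr r_2]-\prob[\cd_+\B(s_1)\le\mr r_1,\,\cd_+\B(s_2)>\mr r_2]\big)$ up to a factor $(1+o(1))$; the identity $\prob[B]-\prob[A^c\cap B]=\prob[A\cap B]$ with $A=\{\cd_+\B(s_1)>\mr r_1\}$ and $B=\{\cd_+\B(s_2)>\mr r_2\}$ turns this into $\tfrac{\sqrt{ab}\,\cK_L}{2\pi LD}\prob[\cd_+\B(s_i)>\mr r_i,\ i=1,2](1+o(1))$, the second assertion.

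The only delicate part — though not an analytic one, since all the hard estimates are packaged in Propositions~\ref{prop:leading} and \ref{cor:incor} — is the bookkeeping: for each summand of Lemmas~\ref{lem:QQ111list} and \ref{lem:QQ121list} one must confirm that its $\bsigma,\btau$ fit the precise normal forms of the relevant case of Proposition~\ref{cor:incor} and obey the listed type constraints, so that exactly the leading integrals of Proposition~\ref{prop:leading} remain. One should also note, when adding two asymptotics $c^{(1)}_L(1+o(1))$ and $c^{(2)}_L(1+o(1))$ of possibly opposite signs, that the combined leading coefficient $c^{(1)}_L+c^{(2)}_L=\tfrac{\sqrt{ab}\,\cK_L}{2\pi LD}\prob[\cd_+\B(s_1)>\mr r_1,\cd_+\B(s_2)>\mr r_2]$ is still of order $\cK_L/L$ (as this probability is strictly positive for $s_1,s_2\in(0,1)$), so the combined relative error remains $o(1)$.
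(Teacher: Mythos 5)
Your proof is correct and follows the same route as the paper: expand via Lemmas~\ref{lem:QQ111list} and \ref{lem:QQ121list}, discard the sub-leading terms via Proposition~\ref{cor:incor}, and invoke Proposition~\ref{prop:leading} for the surviving integrals, including the set-identity $\prob[B]-\prob[A^c\cap B]=\prob[A\cap B]$ in the $\rgs\cup\rgse$ case. Your explicit note that the combined leading coefficient remains of order $\cK_L/L$ (because the bridge probability is strictly positive for $s_1,s_2\in(0,1)$) is a helpful observation that the paper leaves implicit, and your type-vector bookkeeping correctly identifies exactly which integrals escape Proposition~\ref{cor:incor}(d).
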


\begin{proof}
For $\w \in \rgo \cup\rgt$, we use Lemma \ref{lem:QQ111list} (a) to see that 
$\QQ^{(1,1,1)}_{L} =-\ing^{231}_{123}-\ing^{3(12)}_{123}-\ing^{(123)}_{123}$. 
The integrals $\ing^{231}_{123}$ and $\ing^{3(12)}_{123}$ are of the forms in Proposition \ref{cor:incor} (a). Thus, comparing the estimate \eqref{eq:incorr} with the asymptotics of $\ing^{(123)}_{123}$ evaluated in Proposition \ref{prop:leading}, we obtain the result for $\w \in \rgo \cup\rgt$. 

For $\w \in \rgth \cup\rgf$, we use Lemma \ref{lem:QQ111list} (b), Proposition \ref{cor:incor} (b), and Proposition \ref{prop:leading}. 

For $\w \in \rgfi$, we use Lemma \ref{lem:QQ111list} (c), Proposition \ref{cor:incor} (c), and Proposition \ref{prop:leading}. 

For $\w \in \rgs\cup\rgse$, we use Lemma \ref{lem:QQ111list} (d), Lemma \ref{lem:QQ121list}, Proposition \ref{cor:incor} (d), and Proposition \ref{prop:leading}. Here we note that $(a_{123}, a_{12}, a_{23}, a_1, b_2, a_3)= (1,0,0,0,0,0)$ for 
the integral $\ing^{(123)}_{(12)3}$ from Lemma \ref{lem:QQ111list} (d), as well as for the integral $\ing^{(123)2}_{(23)(12)}$ from Lemma \ref{lem:QQ121list}. Thus, Proposition \ref{cor:incor} (d) does not apply to these integrals; they are instead evaluated in Proposition \ref{prop:leading}. 
\end{proof}


The next proposition is proved in Subsection \ref{sec:deform}. It will be used to estimate the remainder of the series \eqref{eq:Qld}. 

\begin{prop} \label{cor:decomposition}
For every $\w \in \rgo\cup \cdots \cup \rgse$, there exist constants $C, c, L_0>0$ such that for every $\vecn \in \N^3$ with $\vecn\neq (1,1,1)$, and for $\bsigma, \btau \in \listn$ of the forms 
\beq \label{eq:todecomp}
    \bsigma = 3^{n_{31}}2^{n_{21}}1^{n_{1}}2^{n_{22}}3^{n_{32}}, \qquad \btau = 3^{n'_{31}}2^{n'_{21}}1^{n_{1}}2^{n'_{22}}3^{n'_{32}}
\eeq 
satisfying 
\begin{itemize}
\item $\vecn\neq (1,2,1)$ when $\w\in \rgs\cup \rgse$, 
\item $n_1\ge n_{21}+1$ when $\w\in \rgfi\cup \rgs\cup \rgse$, 
\end{itemize}
we have, for every $L\ge L_0$, 
\beq \label{eq:decap}
    |\ing^{\bsigma}_{\btau} |  
    \le  C^{|\vecn|} 
    \frac{ (n_1!)^{3/2} (n_2!)^2 (n_3!)^{3/2}}{\sqrt{(n_1\vee n_2- n_1\wedge n_2)! (n_2\vee n_3- n_2\wedge n_3)!}}  e^{-cL} \cK_L .
\eeq
\end{prop}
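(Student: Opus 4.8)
The plan is to bound each integral $\ing^{\bsigma}_{\btau}$ of the form \eqref{eq:todecomp} by combining three ingredients: a Hadamard-type estimate on the Cauchy determinants in $\Pi^{\bsigma}_{\btau}$, the exponential-decay estimates for $\ff_{L,*}$ from Lemma \ref{lem:asymptotics_f2} (applicable via Lemma \ref{lem:fl_asym_sameside}), and the large-deviation gap $\J(\lv)>0$ that appears through the ratios $\ff_{L,*}(z^-_*)/\ff_{L,*}(z^+_*)$. This mirrors the proof of Lemma \ref{lem:diag_error}, but now the functions $\ff_{L,i}$ have distinct leading exponents $\mcG_i$, so the relevant critical points depend on $*\in\mc A_3$, and the contours must be chosen according to the ordering in Lemma \ref{cor:criticalptofGlo}.

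First I would fix a choice of nested contours adapted to the region $\w\in\rgo\cup\cdots\cup\rgse$ under consideration: small circles around $-1$ at radii $|z^-_*+1|$ (or $|\pp^++1|$ for the ``$+$'' critical points) spaced by $O(L^{-1/2})$, and small circles around $0$ at radii $|\pp^+|$ spaced by $O(L^{-1/2})$. Since $\bsigma,\btau$ have the ``block'' form \eqref{eq:todecomp} and we may always further deform using Lemma \ref{lem:QQ111list}/\ref{lem:QQ121list}-type residue expansions (treated uniformly in Subsection \ref{sec:deform}), the integrand will be analytic between the chosen contours apart from the poles of $\Pi^{\bsigma}_{\btau}$ that we have already accounted for. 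On these contours, Hadamard's inequality applied exactly as in \eqref{eq:Cayd1}, with $d=L^{-1/2}$ for the spacing between a contour and its neighbours of the same type and $d$ bounded below for contours around $-1$ versus $0$, gives
\beqq
    |\Pi^{\bsigma}_{\btau}(\bsxi,\bseta)| \le C^{|\vecn|} L^{|\vecn|}\, \sqrt{n_1!\,(n_2\vee n_3 - n_2\wedge n_3)!\, (n_1\vee n_2 - n_1\wedge n_2)!\, n_3!}\,\cdot(\text{small factor}),
\eeqq
using the factorial bound $n^n\le 4^n n!$ and $(a+b)^{a+b}\le 8^{a+b}a!\,b!$ to convert the $n^{n/2}$ factors coming from the four Cauchy determinants (of sizes $n_1$, $n_2-n_1+a_1+b_1$, $n_2-n_3+a_3+b_3$, $n_3$) into the product of square-roots of factorials in \eqref{eq:decap}; the $\ST$ factor contributes only an $O(n_3)$ term, absorbed into $C^{|\vecn|}$. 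The $L^{|\vecn|}$ prefactor is then killed by the change of variables near the critical points, which contributes $L^{-|\vecn|}$ (one factor of $L^{-1/2}$ per integration variable, and there are $2|\vecn|$ of them), leaving at most a fixed power of $L$ that is harmless against the exponential decay.

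Next I would estimate $\FF^{\bsigma|\btau}_L$. By Lemma \ref{lem:fl_asym_sameside}, for $\w$ in the given region Lemma \ref{lem:asymptotics_f2} applies to every $\ff_{L,*}$ with the appropriate ``$+$''/``$-$'' critical point, and the contours above are exactly the circles $\con^{b,L}_{\pm}$ of that lemma. Hence $|\ff_{L,*}(\xi^*_i)/\ff_{L,*}(z^-_*)|\le C$ and $|\ff_{L,*}(z^+_*)/\ff_{L,*}(\eta^*_i)|\le C$ uniformly, so $|\FF^{\bsigma|\btau}_L(\bsxi,\bseta)|\le C^{|\vecn|}\prod_{*}|\ff_{L,*}(z^-_*)/\ff_{L,*}(z^+_*)|^{(\#\ *\ \text{in }\bsigma)}$, where I count $a_*$ numerator factors and $b_*$ denominator factors and use $a_*=b_*$ type-by-type only after the deformation has symmetrized things. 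The telescoping identities \eqref{eq:fmanycK}, together with $\prod_{i=1}^3 \ff_{L,i}(z)=f_{M_{L,3},N_{L,3},T_{L,3}}(z)$ and the fact that $z^\pm_*$ is one of $\{\pp^-,\pp^+\}$ for most $*$, let me pull out exactly one factor of $\cK_L$; the remaining ratios are each bounded by $e^{-c'L}$ because each $\mcG_*(z^+_*)-\mcG_*(z^-_*)$ is a positive multiple of $\J(\lv)$ (for the ``diagonal'' blocks) or otherwise strictly positive by the critical-point analysis — this is the analogue of \eqref{eq:frati}. The hypotheses $\vecn\neq(1,1,1)$ (and $\vecn\neq(1,2,1)$, $n_1\ge n_{21}+1$ in the indicated subregions) guarantee that at least one ``extra'' such ratio survives beyond the single $\cK_L$, producing the $e^{-cL}$ in \eqref{eq:decap}; the stated exclusions are precisely the configurations where every ratio is used up producing $\cK_L$ and no gain remains.

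The main obstacle, and the reason this is deferred to Subsection \ref{sec:deform}, is that for an arbitrary block list $\bsigma=3^{n_{31}}2^{n_{21}}1^{n_1}2^{n_{22}}3^{n_{32}}$ the \emph{original} nested contours of Definition in \eqref{eq:D_hat_n} are not ordered compatibly with the critical-point ordering in Lemma \ref{cor:criticalptofGlo}, so one cannot directly deform onto the steepest-descent circles without crossing poles of $\Pi^{\bsigma}_{\btau}$. One must first carry out a bookkeeping of all residues generated when the $\xi$-contours (resp. $\eta$-contours) are reordered — which is exactly what replaces a given $\ing^{\bsigma}_{\btau}$ by a finite signed sum of $\ing^{\bsigma'}_{\btau'}$ with $|\bsigma'|+|\btau'|\le|\bsigma|+|\btau|$ and with $\bsigma',\btau'$ now in an order matching the nesting needed — and then check that the combinatorial count of such residue terms is at most $C^{|\vecn|}$ (not, say, $|\vecn|!$), so that it is absorbed into the constant. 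I expect the residue calculus itself is mechanical (each residue merges two adjacent labels $*,*'$ into $**'$, consistent with $\mc A_3$ being closed under the relevant merges and with the identities \eqref{eq:ftwoth}), but keeping the $C^{|\vecn|}$ control on the number of terms and verifying that every term that survives the deformation still has an unspent large-deviation gap is the delicate part; once that is in place, \eqref{eq:decap} follows by multiplying the $\Pi$-bound, the $L^{-|\vecn|}$ Jacobian, and the $\FF$-bound, exactly as in Section \ref{sec:diag_error}.
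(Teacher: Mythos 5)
Your high-level strategy matches the paper's: deform the contours to the steepest-descent circles, collect residues, bound each resulting integral via a Hadamard estimate on $\Pi$ and a large-deviation estimate on $\FF_L$, and then sum up. But the steps you explicitly defer are not ``mechanical bookkeeping''; they are the content of the proof, and where you do sketch them you go slightly wrong. Specifically, the paper first establishes closed-form residue expansions (Lemmas \ref{lem:integralordering}, \ref{lem:integralordering2}) expressing $\ing^{\bsigma}_{\btau}$ for block lists \eqref{eq:todecomp} as signed sums of $\ing^{\bsigma'}_{\btau'}$ with $\bsigma',\btau'$ in the orders required by the critical-point ordering of Lemma~\ref{cor:criticalptofGlo}, and it must then \emph{check} that every such $\bsigma',\btau'$ satisfies the combinatorial hypotheses of Proposition~\ref{cor:incor} (e.g.\ that $\type(\bsigma')\neq(1,0,0,0,0,0)$, that $a_{123}+a_1\ge1$ in the relevant subregions, etc.); this is where the exclusions $\vecn\neq(1,1,1),(1,2,1)$ and $n_1\ge n_{21}+1$ actually get used, and you only assert this rather than verify it.

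More importantly, your control of the residue sum is framed incorrectly. You write that one should ``check that the combinatorial count of such residue terms is at most $C^{|\vecn|}$ (not, say, $|\vecn|!$).'' The \emph{number} of terms is polynomially bounded and is never an issue; the problem is that the residue-expansion \emph{coefficients} $\dc^{m,n}_i=i!\binom{m}{i}\binom{n}{i}$ contain a genuine $i!$ which grows up to $(m\wedge n)!$. The reason the sum survives is not absorption into $C^{|\vecn|}$ but a cancellation between these $i!$'s and the $\sqrt{(n_1+n_2-i)!}$-type factors in the Hadamard bound of Proposition~\ref{cor:incor}; making this precise is exactly Lemma~\ref{lem:sumsss}, which you do not supply and which is the crux of bounding the constant $C^{|\vecn|}$ in \eqref{eq:decap}. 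A related minor slip: the $L^{|\vecn|}$ prefactor from the Hadamard bound is not ``killed by the Jacobian of a change of variables near the critical points'' in these error estimates---it is simply dominated by the surplus exponential decay $e^{-c|\bv|L}$ proven in Lemma~\ref{lem:deGlb}. And the statement ``using $a_*=b_*$ type-by-type'' is not correct in general for the residue terms; the paper works with unequal $\type(\bsigma')$ and $\type(\btau')$ subject only to \eqref{eq:setBdf}. In short, the plan is pointed in the right direction, but the residue expansions, the verification of the hypotheses of Proposition~\ref{cor:incor} on every residue term, and the factorial-cancellation lemma that tames the $\dc^{m,n}_i$ are all missing and are not merely routine.
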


To estimate the series \eqref{eq:Qld} using the above result, we also need the following lemma.
 
\begin{lem} \label{lem:serco}
For every $A>0$, the following series is convergent: 
\beq
    \sum_{n_1, n_2, n_3=1}^\infty \frac{A^{n_1+n_2+n_3}}{\sqrt{n_1!(n_1\vee n_2 - n_1\wedge n_2)!(n_2\vee n_3 - n_2\wedge n_3)! n_3! }}.   
\eeq
\end{lem}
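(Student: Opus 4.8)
The plan is to bound the general term so that the triple series is dominated by a product of three one-dimensional series, each of which is manifestly convergent. The key observation is that for any nonnegative integers $p, q$ one has the elementary bound
\beq
    \frac{A^{p+q}}{\sqrt{(p\vee q - p\wedge q)!}} \le A^{p+q} \le (A^2)^{\max(p,q)} \le (A^2)^p (A^2)^q,
\eeq
but this is too lossy; instead I would keep the factorials in the denominator. The cleaner route: write $p = n_1$, and note $n_1 \vee n_2 - n_1 \wedge n_2 = |n_1 - n_2|$ and $n_2 \vee n_3 - n_2 \wedge n_3 = |n_2 - n_3|$. So the term is
\beq
    \frac{A^{n_1+n_2+n_3}}{\sqrt{n_1!\,|n_1-n_2|!\,|n_2-n_3|!\,n_3!}}.
\eeq
The first step is to split the exponent symmetrically. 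Since $n_1 \le n_1$, $n_3 \le n_3$, and for the middle indices we can write $n_2 \le |n_1 - n_2| + n_1$ and also $n_2 \le |n_2 - n_3| + n_3$, hence $2n_2 \le |n_1-n_2| + |n_2-n_3| + n_1 + n_3$, so
\beq
    n_1 + n_2 + n_3 \le \tfrac32 n_1 + \tfrac32 n_3 + \tfrac12|n_1-n_2| + \tfrac12 |n_2 - n_3|.
\eeq
Using $A \le 1 + A =: B$ and absorbing constants, it suffices to bound $\sum B^{\frac32 n_1} B^{\frac32 n_3} B^{\frac12|n_1-n_2|}B^{\frac12|n_2-n_3|} / \sqrt{n_1!\,|n_1-n_2|!\,|n_2-n_3|!\,n_3!}$.

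The second step is a change of summation variables. For fixed $n_1$ and $n_3$, sum over $n_2 \ge 1$; write $j = |n_1 - n_2|$ and $k = |n_2 - n_3|$. For each value of $n_2$ there is a unique pair $(j,k)$, and as $n_2$ ranges over $\N$ the pair $(j,k)$ ranges over a subset of $\N_0^2$ with each pair hit at most... actually each pair $(j,k)$ corresponds to at most one $n_2$ for fixed $n_1, n_3$ (since $n_2 = n_1 \pm j$ must also equal $n_3 \pm k$). So
\beq
    \sum_{n_2=1}^\infty \frac{B^{\frac12 j + \frac12 k}}{\sqrt{j!\, k!}} \le \left(\sum_{j=0}^\infty \frac{B^{j/2}}{\sqrt{j!}}\right)\left(\sum_{k=0}^\infty \frac{B^{k/2}}{\sqrt{k!}}\right) =: C_B^2 < \infty,
\eeq
the finiteness being immediate from $\sum_j x^j/\sqrt{j!} < \infty$ for every $x \ge 0$ (ratio test, since $\sqrt{j!}$ grows superexponentially). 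The third and final step: the remaining sum over $n_1$ and $n_3$ factorizes as $\left(\sum_{n_1=1}^\infty B^{3n_1/2}/\sqrt{n_1!}\right)\left(\sum_{n_3=1}^\infty B^{3n_3/2}/\sqrt{n_3!}\right)$, again finite by the same ratio-test argument. Combining the three steps gives convergence of the original series. There is no genuine obstacle here; the only point requiring a little care is the bookkeeping in the change of variables $n_2 \mapsto (j,k) = (|n_1-n_2|, |n_2-n_3|)$ to ensure the map is at worst finite-to-one (in fact injective for fixed $n_1, n_3$), so that the double sum over $(j,k)$ genuinely dominates the sum over $n_2$.
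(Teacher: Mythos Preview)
Your proof is correct in spirit and the approach works, though there is one small slip: the map $n_2 \mapsto (|n_1 - n_2|, |n_2 - n_3|)$ is \emph{not} injective when $n_1 = n_3$ --- for any $1 \le j \le n_1 - 1$, both $n_2 = n_1 - j$ and $n_2 = n_1 + j$ yield the pair $(j,j)$. However, the map is always at most two-to-one, so inserting a harmless factor of $2$ in front of the double sum over $(j,k)$ fixes the argument completely. Everything else (the triangle-inequality bound on $n_1+n_2+n_3$, the replacement $A \le B = 1+A$ to ensure $B \ge 1$, and the convergence of $\sum x^m/\sqrt{m!}$) is fine.

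Your route is genuinely different from the paper's. The paper bounds the denominator from below via the binomial inequality $a!\,|a-b|! \ge (a \vee b)!/2^{a \vee b}$, applied twice, to obtain $n_1!\,|n_1-n_2|!\,|n_2-n_3|!\,n_3! \ge (n_1 \vee n_2 \vee n_3)!/4^{n_1 \vee n_2 \vee n_3}$. This reduces everything to the single variable $n = n_1 \vee n_2 \vee n_3$: the general term is dominated by $(2A^3)^n/\sqrt{n!}$, and the number of triples with maximum exactly $n$ is $n^3 - (n-1)^3 = O(n^2)$. Your approach instead uses the triangle inequality to redistribute the exponent $n_1+n_2+n_3$ among the four factorials in the denominator, then decouples via the change of variables $(n_1,n_2,n_3) \to (n_1,j,k,n_3)$ into a product of four one-dimensional sums of the type $\sum_m x^m/\sqrt{m!}$. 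Your method avoids the binomial bound entirely and is arguably more elementary; the paper's method has the virtue of collapsing to a single sum in one step, at the cost of a somewhat more wasteful estimate on the denominator.
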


\begin{proof}
Using the inequality $\frac{M!(N-M)!}{N!} \ge \frac1{2^N}$, we find that 
$a! (a\vee b - a\wedge b)! \ge a! (a\vee b - a)!\ge \frac{(a\vee b)!}{2^{a\vee b}}$ for all  positive integers $a$ and $b$. 
Thus, 
\beqq
    n_1! (n_1\vee n_2 - n_1\wedge n_2)!(n_2\vee n_3 - n_2\wedge n_3)! n_3!
    \ge \frac{(n_1\vee n_2)! (n_2\vee n_3)!}{2^{n_1\vee n_2+ n_2\vee n_3}}
    \ge \frac{(n_1\vee n_2\vee n_3)!}{2^{2(n_1\vee n_2\vee n_3)}} . 
\eeqq
Hence, the series is dominated by 
\beq
    \sum_{n_1, n_2, n_3=1}^\infty \frac{(2A^3)^{n_1\vee n_2\vee n_3}}{\sqrt{(n_1\vee n_2 \vee n_3)!}}   
    = \sum_{n=1}^\infty  \frac{(2A^3)^{n}}{\sqrt{n!}} (n^3-(n-1)^3). 
\eeq
The last series is convergent. 
\end{proof}

We now obtain an estimate for the remainder of the series \eqref{eq:Qld}. 

\begin{cor} \label{cor:errorsum}
For every $\w \in \rgo\cup \cdots \cup \rgse$, there exists a constant $c>0$ such that, as $L\to \infty$, 
\beqq
    \sum_{\vecn\in \N^3\setminus \{(1,1,1)\}} \frac{1}{(\vecn!)^2} | \QQ_L^{(\vecn)}|
    =  O\left( e^{- cL}\cK_L  \right) \qquad \text{if $\w\in \rgo\cup \cdots \cup \rgfi$,}
\eeqq
and
\beqq
    \sum_{\vecn\in \N^3\setminus\{(1,1,1), (1,2,1)\}} \frac{1}{(\vecn!)^2} | \QQ_L^{(\vecn)}|
    =  O\left( e^{- cL} \cK_L  \right) \qquad \text{if $\w\in \rgs\cup \rgse$.} 
\eeqq
\end{cor}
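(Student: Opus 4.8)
The statement is a bookkeeping corollary of the decomposition in Lemma~\ref{lem:Dbaseint2}, the uniform bound of Proposition~\ref{cor:decomposition}, and the summability of Lemma~\ref{lem:serco}. The plan is to bound $|\QQ_L^{(\vecn)}|$ termwise by the largest of the integrals $\ing^{\bsigma}_{\btau}$ produced by Lemma~\ref{lem:Dbaseint2}, insert the estimate \eqref{eq:decap}, divide by $(\vecn!)^2$, and sum.

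First I would use Lemma~\ref{lem:Dbaseint2} to write $\QQ_L^{(\vecn)}$ as a finite combination of integrals $\ing^{\bsigma}_{\btau}$ with $\bsigma=3^{n_{31}}2^{n_{21}}1^{n_1}2^{n_{22}}3^{n_{32}}$ and $\btau=3^{n'_{31}}2^{n'_{21}}1^{n_1}2^{n'_{22}}3^{n'_{32}}$ of the form \eqref{eq:todecomp}, where each $\ing^{\bsigma}_{\btau}$ is weighted by a contour integral over two circles of a fixed radius $R>1$ centered at the origin. The number of summands is at most $(2n_2+1)(2n_3+1)\,4^{n_2+n_3}\le C^{|\vecn|}$, and on the fixed circles the factors $\frac{(z_1+1)^{n_1-n_2-1}}{z_1^{n_2-i+1}}$ and $\frac{(z_2+1)^{n_2-n_3-1}}{z_2^{n_3-j+1}}$ are bounded in modulus by $C^{|\vecn|}$; hence $|\QQ_L^{(\vecn)}|\le C^{|\vecn|}\max_{\bsigma,\btau}|\ing^{\bsigma}_{\btau}|$, the maximum over the $\bsigma,\btau$ listed in Lemma~\ref{lem:Dbaseint2}. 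A small but essential point is that these $\bsigma$ automatically satisfy $n_1\ge n_{21}+1$, the hypothesis needed to invoke Proposition~\ref{cor:decomposition} in the regions $\rgfi\cup\rgs\cup\rgse$: the constraints $n_{22}+n'_{22}=i$, $n_{21}+n_{22}=n'_{21}+n'_{22}=n_2$ together with $i\ge 2n_2-n_1+1$ (when this is positive) force $n_{21}+n'_{21}\le n_1-1$, and when $2n_2-n_1+1\le 0$ one has $n_{21}\le n_2\le (n_1-1)/2$; in either case $n_{21}\le n_1-1$.

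Thus, for every $\vecn\neq(1,1,1)$ (and $\vecn\neq(1,2,1)$ when $\w\in\rgs\cup\rgse$), Proposition~\ref{cor:decomposition} applies to each relevant $\ing^{\bsigma}_{\btau}$, so with uniform constants $C,c,L_0>0$ and for $L\ge L_0$,
\beqq
    |\QQ_L^{(\vecn)}| \le C^{|\vecn|}\,\frac{(n_1!)^{3/2}(n_2!)^2(n_3!)^{3/2}}{\sqrt{(n_1\vee n_2-n_1\wedge n_2)!\,(n_2\vee n_3-n_2\wedge n_3)!}}\,e^{-cL}\cK_L.
\eeqq
Dividing by $(\vecn!)^2=(n_1!n_2!n_3!)^2$, the powers of $n_2!$ cancel and the powers of $n_1!$, $n_3!$ reduce to $(n_1!)^{-1/2}$, $(n_3!)^{-1/2}$, giving
\beqq
    \frac{1}{(\vecn!)^2}|\QQ_L^{(\vecn)}| \le \frac{C^{|\vecn|}}{\sqrt{n_1!\,(n_1\vee n_2-n_1\wedge n_2)!\,(n_2\vee n_3-n_2\wedge n_3)!\,n_3!}}\,e^{-cL}\cK_L.
\eeqq
Summing over $\vecn\in\N^3\setminus\{(1,1,1)\}$ (resp. $\N^3\setminus\{(1,1,1),(1,2,1)\}$) and applying Lemma~\ref{lem:serco} with $A=C$ to bound the resulting series by a finite constant independent of $L$, we obtain $O(e^{-cL}\cK_L)$, as claimed.

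\textbf{Main obstacle.} The only genuinely non-routine checkpoint is verifying that the index constraints in Lemma~\ref{lem:Dbaseint2} force $n_{21}\le n_1-1$, which is what legitimizes the use of Proposition~\ref{cor:decomposition} in the regions $\rgfi\cup\rgs\cup\rgse$; all the remaining steps are the factorial bookkeeping and the crude contour bounds described above.
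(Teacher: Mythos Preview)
Your proposal is correct and follows essentially the same route as the paper: bound the contour weights on fixed-radius circles by $C^{|\vecn|}$, count the $\ing^{\bsigma}_{\btau}$ terms from Lemma~\ref{lem:Dbaseint2} as $C^{|\vecn|}$, verify $n_1\ge n_{21}+1$ so that Proposition~\ref{cor:decomposition} applies, divide by $(\vecn!)^2$, and invoke Lemma~\ref{lem:serco}. One minor simplification: your case split on the sign of $2n_2-n_1+1$ is unnecessary, since the lower index in Lemma~\ref{lem:Dbaseint2} is $0\vee(2n_2-n_1+1)\ge 2n_2-n_1+1$ in all cases, so $n_{21}+n'_{21}=2n_2-i\le n_1-1$ holds unconditionally.
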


\begin{proof} 
We use the formula for $\QQ^{(\bn)}_L$ given in Lemma \ref{lem:Dbaseint2}. 
We take the $z_i$-contours in the sum to be circles of fixed radii larger than $1$; for concreteness, we choose them to be the circles of radii $2$ centered at the origin. Since
\beqq 
    \left| \oint_{|z|=2}  \frac{(z+1)^{n-n'-1}}{z^{n'-i+1}} \frac{\dd z}{2\pi \ii} \right|
    \le \frac{3^n}{2^{n'-i}} \le 3^{n+n'} 
\eeqq
for $0\le i\le 2n'$, we find that, for each $\vecn=(n_1, n_2, n_3)\in \N^3$, 
\beq \label{eq:QQbasicb}
    |\QQ^{(\bn)}_{L} |
    \le  3^{2|\bn|}\sum_{i= 0 \vee (2n_2-n_1+1)}^{2n_2} \sum_{j=0 \vee (2n_3-n_2+1)}^{2n_3} |\alpha_{ij} |, 
\eeq
where $\alpha_{ij}$ is a sum of $\binom{2n_2}{i}\binom{2n_3}{j}$ terms, each of the form  $\ing^{\bsigma}_{\btau}$,  with $\bsigma, \btau\in\listn$ of the forms indicated in Lemma \ref{lem:Dbaseint2}. 
Since $i\ge 2n_2-n_1+1$ in the sum, we find that $n_{22}+n_{22}'=i\ge 2n_2-n_1+1$, which implies  $n_1\ge n_{21}+n_{21}'+1$. 
Hence, $n_1\ge n_{21}+1$, which is one of the conditions of Proposition \ref{cor:decomposition}. 
Using $\binom{2n_2}{i}\binom{2n_3}{j}\le 2^{2n_2+2n_3}\le 2^{2|\vecn|}$ and applying Proposition \ref{cor:decomposition}, 
\eqref{eq:QQbasicb} implies that for every $\w \in \rgo\cup \cdots \cup \rgse$, there exist  constants $C, c, L_0>0$ such that
\beq 
    |\QQ^{(\bn)}_{L} | \le  4n_2 n_3 3^{2|\vecn|}2^{2|\vecn|} C^{|\vecn|} 
    \frac{ (n_1!)^{3/2} (n_2!)^2 (n_3!)^{3/2}}{\sqrt{(n_1\vee n_2- n_1\wedge n_2)! (n_2\vee n_3- n_2\wedge n_3)!}}  e^{-cL} \cK_L 
\eeq
for every $L\ge L_0$, 
for every $\bn\in \N^3\setminus \{(1,1,1)\}$ if $\w\in \rgo\cup \cdots\cup \rgfi$, and for every $\bn\in \N^3\setminus \{(1,1,1), (1,2,1)\}$ if $\w\in \rgs\cup \rgse$. 
Thus, setting $A= 48C$, we have 
\beqq
    \sum_{\vecn\in \N^3\setminus \{(1,1,1)\}} \frac{1}{(\vecn!)^2} | \QQ_L^{(\vecn)}|
    \le  e^{-cL} \cK_L\sum_{\vecn\in \N^3\setminus \{(1,1,1)\}} 
    \frac{A^{|\vecn|} }{\sqrt{n_1!(n_1\vee n_2- n_1\wedge n_2)! (n_2\vee n_3- n_2\wedge n_3)!n_3!} }  
\eeqq
for $\w\in \rgo\cup \cdots\cup \rgfi$, and 
\beqq
    \sum_{\substack{\vecn\in \N^3 \\ \vecn\neq (1,1,1), (1,2,1)}} \frac{1}{(\vecn!)^2} | \QQ_L^{(\vecn)}|
    \le  e^{-cL} \cK_L \sum_{\substack{\vecn\in \N^3 \\ \vecn\neq (1,1,1), (1,2,1)}} 
    \frac{A^{|\vecn|} }{\sqrt{n_1!(n_1\vee n_2- n_1\wedge n_2)! (n_2\vee n_3- n_2\wedge n_3)!n_3!} }
\eeqq
for $\w\in \rgs\cup \rgse$. The series on the right converges due to Lemma \ref{lem:serco}, and we obtain the result. 
\end{proof}

We now complete the proof of Theorem \ref{thm:offdiagflucsameside}. 

\begin{proof}[Proof of Theorem \ref{thm:offdiagflucsameside}]
Suppose that (see \eqref{eq:x1x2y1y2_sameside}) $\frac{1}{\slope} < \frac{y_1}{x_1}, \frac{y_2}{x_2} < 1$ 
and $\mv(x_1, y_1)<\mv(x_2, y_2)$, i.e. $\w\in \rg$ (see \eqref{eq:rg}). 
Then, Corollary \ref{cor:Q1ll} and Corollary \ref{cor:errorsum}, together with \eqref{eq:Qld}, imply that for every $\w \in \rgo \cup \cdots \cup \rgse$, 
\beq \label{eq:rpa}  \begin{split}
    \lim_{L\to \infty} \frac{2\pi  LD }{\sqrt{ab}\cK_L}   \QQ_3(\bM_L, \bN_L, \bT_L)
    =  \prob \left[ \cd_{+} \B\left(\frac{\slope y_i- x_i}{\slope -1} \right) > \mr r_i,  i=1,2 \right].
\end{split} \eeq
The analysis for $\QQ_1(aL,bL,\lv L)$ is similar (and easier), and we find 
$\lim_{L\to \infty} \frac{2\pi  LD }{\sqrt{ab}\cK_L} \QQ_1(aL,bL,\lv L)=1$. 
Thus, we obtain \eqref{eq:Qlimitoffdiagsameside}, proving Theorem \ref{thm:offdiagflucsameside} in this case. 

Now consider $\w \in \rg \setminus (\rgo \cup \cdots \cup \rgse)$. 
In this situation, $\w$ lies on the boundary of two sub-regions $\rg_i$ and $\rg_{i+1}$ for some $i = 1, \cdots, 6$. 
The boundary between $\rg_i$ and $\rg_{i+1}$ is a subset of the hypersurface $\{(x_1, y_1, x_2, y_2) \in (0,1)^4 : g(x_1, y_1) = g(x_2, y_2)\}$, where $g(x,y)$ equals $x$, $\frac{1-y}{1-x}$, $y-x$, $\frac{y}{x}$, $\slope y-x$, and $y$ for $i = 1, \cdots, 6$, respectively. 
Note that the right-hand side of \eqref{eq:rpa} is continuous in $x_1, x_2, y_1, y_2$. 
Hence, by applying Lemma \ref{lem:bootstrap} (where $y$ in the lemma is either $x_1$ or $y_1$, depending on the regime), we find that Theorem \ref{thm:offdiagflucsameside} also holds for $\w \in \rg \setminus (\rgo \cup \cdots \cup \rgse)$. 
Therefore, we have now proved Theorem \ref{thm:offdiagflucsameside} when $\frac{1}{\slope} < \frac{y_1}{x_1}, \frac{y_2}{x_2} < 1$ and $\mv(x_1, y_1) < \mv(x_2, y_2)$. 

If $\mv(x_1, y_1) > \mv(x_2, y_2)$, the result follows by relabeling the points, since the limit is invariant under interchanging $(x_1, y_1)$ and $(x_2, y_2)$. 
If $\mv(x_1, y_1) = \mv(x_2, y_2)$, the result again follows from Lemma \ref{lem:bootstrap}. 
Thus, Theorem \ref{thm:offdiagflucsameside} is proved when $\frac{1}{\slope} < \frac{y_1}{x_1}, \frac{y_2}{x_2} < 1$. 

Now suppose that $1 < \frac{y_1}{x_1}, \frac{y_2}{x_2} < \slope$. 
Since $\LPP(m, n)\eqind \LPP(n,m)$, 
\beqq \begin{split}
    \prob \left[\frac{\mc L(x_iaN, y_ibN) - \mv(x_i,y_i)N}{\sqrt{2} \ab   N^{1/2}} > \mr r_i, \ i=1, 2 \bigg| \LPP(aN, bN)= \lv N \right] 
\end{split} \eeqq
is equal to 
\beqq
    \prob \left[\frac{\mc L(y_ibN, x_iaN) - \mv(x_i,y_i)N}{\sqrt{2} \ab   N^{1/2}} > \mr r_i, \ i=1, 2 \bigg| \LPP(bN, aN)= \lv N \right]. 
\eeqq 
We observe that $D$ in \eqref{eq:Ddefn} and $\ab$ in \eqref{eq:abdf} are symmetric with respect to $a$ and $b$. 
The function $\mv(x,y)$ in \eqref{eq:mvxyshaded}, which involves $a$ and $b$, is invariant under simultaneous exchange of $a \leftrightarrow b$ and $x \leftrightarrow y$. 
Finally, $\cd_\pm$ in \eqref{eq:abdf} become $\cd_\mp$ when $a$ and $b$ are swapped. 
From these observations, the part of Theorem \ref{thm:offdiagflucsameside} for $1 < \frac{y_1}{x_1}, \frac{y_2}{x_2} < \slope$ follows from the case $\frac{1}{\slope} < \frac{y_1}{x_1}, \frac{y_2}{x_2} < 1$. 
This completes the proof. 
\end{proof}

\subsection{Bounds of the integrals and proof of Proposition \ref{cor:incor}}\label{sec:bounds}

We estimate the integrals appearing in Proposition \ref{cor:incor}. From \eqref{eq:ingdef}, the integrals are of the form 
\beq \label{eq:intaga}
    \ing^{\bsigma}_{\btau}
    =  \frac{1}{(2\pi \ii)^{|\bsigma|+|\btau|}} \int \dd \bsxi^{\bsigma} \int \dd \bseta^{\btau} \, 
    \Pi^{\bsigma}_{\btau}(\bsxi, \bseta) \FF_{L}^{\bsigma|\btau} (\bsxi, \bseta) . 
\eeq
We note that if $\bsigma\in \listn$ and $\type(\bsigma)=\veca=(a_{123}, a_{12}, a_{23}, a_1, a_2, a_3)$, then 
\beq \label{eq:atypen}
    n_1=a_1+a_{12}+a_{123}, \quad n_2=a_2+a_{12}+a_{23}+a_{123}, \quad n_3=a_3+a_{23}+a_{123}, 
\eeq
and thus,
\beq \label{eq:atyeq}
    |\veca|= a_1+a_2+a_3+a_{12}+a_{23}+a_{123} \le |\vecn| \le 3|\veca|
    \quad \text{and} \quad n_2+a_1+a_3= |\veca|
\eeq

The rational function $\Pi^{\bsigma}_{\btau}$ satisfies the following estimate. Note that the well-known bound $N! \ge N^N e^{-N}$ implies that $N^N\le e^N N!\le 4^N N!$ for every positive integer $N$. 

\begin{lem} \label{lem:Pibound}
Let $\bn=(n_1, n_2, n_3) \in \N^3$ and $\bsigma,\btau \in \listn$. 
Set $\veca = \type(\bsigma)$ and $\vecb = \type(\btau)$. 
Suppose that $\gamma_*$ and $\Gamma_*$ for $*\in \mc A_3$ are twelve contours, all contained in the disk of radius $2$ centered at the origin, and that every pair is separated by a distance of at least $d>0$. 
Then, 
\beq \label{eq:aori}
    |\Pi^{\bsigma}_{\btau}(\bsxi, \bseta) | \le \frac{2^{4|\vecn|}}{d^{|\veca|+|\vecb|}}
    \sqrt{n_1! (n_2-n_1+a_1+b_1)! (n_2-n_3+a_3+b_3)! n_3!}
\eeq 
for every $\bsxi = ( \bsxi^{123}, \bsxi^{12}, \bsxi^{23}, \bsxi^1, \bsxi^2, \bsxi^3)$ and $\bseta = ( \bseta^{123}, \bseta^{12}, \bseta^{23}, \bseta^1, \bseta^2, \bseta^3)$ satisfying $\bsxi^*\in (\gamma_*)^{a_*}$ and $\bseta^*\in (\Gamma_*)^{b_*}$ for each $*\in \mc A_3$.
\end{lem}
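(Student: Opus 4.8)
\textbf{Proof strategy for Lemma \ref{lem:Pibound}.}

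The plan is to bound each of the five factors in the definition \eqref{eq:Pisigmaxi} of $\Pi^{\bsigma}_{\btau}(\bsxi,\bseta)$ separately and then multiply. Recall that $\Pi^{\bsigma}_{\btau}$ is a product of four Cauchy determinants $\K(\cdot|\cdot)$ together with the single scalar factor $\ST(\cdot|\cdot)$. For the Cauchy determinants I would invoke exactly the Hadamard-type bound already used in \eqref{eq:Cayd1}: if all the entries $w_i$ of the first argument and all entries $w_j'$ of the second argument are separated by at least $d$, then $|\K(\mathbf w|\mathbf w')| \le N^{N/2} d^{-N}$ where $N$ is the common size. The key point is to identify the correct size $N$ of each of the four Cauchy matrices and check that in each case all the relevant arguments genuinely lie on distinct contours (so that the separation hypothesis $d>0$ applies). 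From the discussion immediately following Definition \ref{def:Pisigmaxi}, the four sizes are $n_1$, $n_2-n_1+a_1+b_1$ (which also equals $a_1+b_{23}+b_2 = b_1+a_{23}+a_2$), $n_2-n_3+a_3+b_3$ (which equals $a_{12}+a_2+b_3 = b_{12}+b_2+a_3$), and $n_3$. Using $N^{N/2}\le (4^N N!)^{1/2} = 2^N\sqrt{N!}$ for each, the product of the four determinant bounds is at most
\beqq
    2^{n_1+(n_2-n_1+a_1+b_1)+(n_2-n_3+a_3+b_3)+n_3} \cdot d^{-(n_1+(n_2-n_1+a_1+b_1)+(n_2-n_3+a_3+b_3)+n_3)}\sqrt{n_1!(n_2-n_1+a_1+b_1)!(n_2-n_3+a_3+b_3)!n_3!}.
\eeqq
For the exponent of $2$ and of $d^{-1}$: the sum of the four sizes is $n_1+2n_2-n_3+a_1+b_1+a_3+b_3+n_3 = 2n_2+a_1+a_3+b_1+b_3 = |\veca|+|\vecb|$ by the second identity in \eqref{eq:atyeq} applied to both $\veca$ and $\vecb$; and since $|\veca|,|\vecb|\le|\vecn|$, this exponent is at most $2|\vecn|$, giving the $d^{-(|\veca|+|\vecb|)}$ in the statement and a factor $2^{2|\vecn|}$ (or cleanly $\le 2^{3|\vecn|}$, leaving slack) toward the prefactor $2^{4|\vecn|}$.

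For the scalar factor $\ST(\bxa,\bxtr,\bxr|\bea,\betr,\ber)$, since all contours lie in the disk of radius $2$, each argument has absolute value at most $2$, so the sum telescopes to at most $2\cdot 2\cdot(a_{123}+a_{23}+a_3) + 2\cdot 2\cdot(b_{123}+b_{23}+b_3) = 4n_3+4n_3 = 8n_3 \le 8|\vecn|$; this is absorbed into the remaining slack in $2^{4|\vecn|}$ since $8n_3\le 2^{3|\vecn|}$ for $|\vecn|\ge 1$, or more simply one notes $\ST$ has $n_3$ summands each bounded by $4$ in modulus, giving a factor $4n_3$, and $4n_3 \cdot 2^{3|\vecn|} \le 2^{4|\vecn|}$ for all $\vecn\in\N^3$.

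The main obstacle — and it is a bookkeeping obstacle rather than a conceptual one — is verifying that the separation hypothesis actually holds for every Cauchy matrix appearing. Concretely, one must check that within each $\K(\cdot|\cdot)$ the entries of the first slot and the entries of the second slot are drawn from contours that are pairwise distinct, so that the distance $d$ between them is a genuine lower bound; the entries within a single slot may coincide (e.g.\ several $\xi^*_i$ on the same circle $\gamma_*$) but this is harmless since $\K$ is a determinant of differences between the two slots only. The slot structure in \eqref{eq:Pisigmaxi} must be read off carefully: for instance in $\K(\bxo,\betr,\bet\,|\,\beo,\bxtr,\bxt)$ the first slot mixes $\xi^1$-variables with $\eta^{23}$- and $\eta^2$-variables, and the second slot mixes $\eta^1$- with $\xi^{23}$- and $\xi^2$-variables, and one needs that $\gamma_1,\Gamma_{23},\Gamma_2$ are disjoint from $\Gamma_1,\gamma_{23},\gamma_2$ — which is part of the standing assumption that all twelve contours are mutually separated by $d$. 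Once the slot/size accounting is pinned down, the estimate is immediate from \eqref{eq:Cayd1} and the factorial inequality, and the final multiplication of the five bounds yields \eqref{eq:aori}.
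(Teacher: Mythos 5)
Your proposal is correct and follows essentially the same route as the paper's proof: Hadamard's inequality giving $|\K_N|\le N^{N/2}d^{-N}$, the factorial bound $N^{N/2}\le 2^N\sqrt{N!}$, the identification of the four Cauchy-matrix sizes $n_1$, $n_2-n_1+a_1+b_1$, $n_2-n_3+a_3+b_3$, $n_3$ whose sum is $|\veca|+|\vecb|\le 2|\vecn|$, and the crude bound $|\ST_{n_3}|\le 4n_3$. (Your first pass at the $\ST$ bound, $8n_3$, double-counts; the correction to $4n_3$ that you give afterward is the right one, and the resulting prefactor bookkeeping matches the paper's $4n_3\le 4^{n_3}\le 2^{2|\vecn|}$ almost exactly.)
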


\begin{proof}
By the definition \eqref{eq:Pisigmaxi}, $\Pi^{\bsigma}_{\btau}(\bsxi, \bseta)$ is the product of four Cauchy determinants of sizes $n_1, n_2-n_1+a_1+b_1, n_2-n_3+a_3+b_3, n_3$, respectively, and the polynomial $ \ST_{n_3}(\bsxi_{123}, \bsxi_{23}, \bsxi_3 |  \bseta_{123} , \bseta_{23}, \bseta_3)$ is given by \eqref{eq:STdf}. 
Hadamard's inequality implies that 
\beqq
    |\K_n(\ms r|\ms s)| \le \prod_{j=1}^n \left( \sum_{i=1}^n \frac{1}{(r_i-s_j)^2} \right)^{1/2} \le \frac{n^{n/2}}{(\min_{i,j}|r_i-s_j|)^n} \le \frac{2^n \sqrt{n!}}{(\min_{i,j}|r_i-s_j|)^n}. 
\eeqq 
On the other hand, $|\ST_n(\bfr | \bfs)| \le n \max_{i=1}^n |r_i-s_i|$. Thus, we obtain 
\beqq
    |\Pi^{\bsigma}_{\btau}(\bsxi, \bseta) | \le 
    4 n_3 \frac{2^{2n_2+a_1+b_1+a_3+b_3}\sqrt{n_1! (n_2-n_1+a_1+b_1)!(n_2-n_3+a_3+b_3)! n_3!}}{d^{2n_2+a_1+b_1+a_3+b_3}} 
\eeqq
for $(\bsxi, \bseta)$ on the contour. 
From \eqref{eq:atyeq}, we have $2n_2+a_1+b_1+a_3+b_3=|\veca|+|\vecb|\le 2|\vecn|$. 
Furthermore, since $4 n_3\le 4^{n_3}\le 2^{2|\vecn|}$, we obtain the result.
\end{proof}

\begin{lem} \label{prop:instest}
For every $\w \in \rgo\cup \cdots \cup \rgse$, define the constants 
\begin{equation} \label{eq:DelGdef}
    \Delta \mcG_{*} = \mcG_{*}(z_*^+) - \mcG_{*}(z_*^-), \qquad * \in \{1, 2, 3, 12, 23, 123\}. 
\end{equation}
There exist constants $C, c, L_0>0$ such that for every $L \ge L_0$ and $\vecn\in \N^3$, 
\beq \label{eq:Iste1}
    |\ing^{\bsigma}_{\btau}| 
    \le C^{|\vecn|} \sqrt{n_1! (n_2-n_1+a_1+b_1)! (n_2-n_3+a_3+b_3)!n_3!} 
    \, L^{3|\bv|} e^{c|\bv| L^{1/2}}
    e^{- L \sum_{* \in \mc A_3} v_* \De \mcG_*}
\eeq
for every $\bsigma, \btau\in \listn$  of the following forms: 
\begin{enumerate} [(a)]
\item 
$\bsigma =2^{a_2} (23)^{a_{23}}3^{a_3}(123)^{a_{123}}(12)^{a_{12}}1^{a_1}$ and 
$\btau = 3^{b_3'}2^{b_2'}1^{b_{1}}2^{b_2''}3^{b_3''}$ 
if $\w\in \rgo\cup \rgt$; 
\item 
$\bsigma = 3^{a_3}(23)^{a_{23}}2^{a_2}(123)^{a_{123}}(12)^{a_{12}}1^{a_1}$ and $\btau = 3^{b_3'}2^{b_2'}1^{b_{1}}2^{b_2''}3^{b_3''}$  if $\w \in \rgth\cup \rgf$; 
\item 
$\bsigma = 3^{a_3}(23)^{a_{23}}(123)^{a_{123}}1^{a_1} (12)^{a_{12}} 2^{a_2}$ and $\btau = 3^{b_3'}2^{b_2'}1^{b_{1}}2^{b_2''}3^{b_3''}$   if $\w \in \rgfi$; 
\item 
$\bsigma = 3^{a_3}(23)^{a_{23}}(123)^{a_{123}}1^{a_1} (12)^{a_{12}} 2^{a_2}$ and $\btau = 2^{b_2}(23)^{b_{23}}3^{b_{3}'}(12)^{b_{12}}1^{b_1}3^{b_{3}''}$ if $\w \in \rgs\cup \rgse$,
\end{enumerate}
where $\bv=(v_{123}, v_{12}, v_{23}, v_1, v_2, v_3)$ is given by 
\beqq
    \bv= \begin{cases}
    (a_{123}, a_{12}, a_{23}, a_1, a_2, a_3) \qquad \text{for $\w\in \rgo\cup\cdots \cup \rgfi$,}\\
    (a_{123}, a_{12}, a_{23}, a_1, b_2, a_3) \qquad \text{for $\w\in \rgs\cup \rgse$.}
    \end{cases}
\eeqq
Furthermore, if $\w\in \rgo$ and $a_2>0$ in (a), then $\ing^{\bsigma}_{\btau}=0$. Similarly, if $\w\in \rgse$ and $b_2>0$ in (d), then $\ing^{\bsigma}_{\btau}=0$. 
\end{lem}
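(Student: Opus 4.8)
\textbf{Proof proposal for Lemma \ref{prop:instest}.}

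The plan is to bound the integrand of \eqref{eq:intaga} on suitably chosen contours, using the asymptotics of Lemma~\ref{lem:asymptotics_f2} (applicable by Lemma~\ref{lem:fl_asym_sameside}), and the rational-function bound of Lemma~\ref{lem:Pibound}. The key point is that the ordering of critical points in Lemma~\ref{cor:criticalptofGlo} is precisely compatible, in each of the four regimes, with the nesting structure of the contours required in the definition of $\ing^{\bsigma}_{\btau}$. First I would fix, for each $*\in\mc A_3$, a circle $\con^{b_*,L}_{\lt}$ around $-1$ for the $\bsxi^*$-variables and a circle $\con^{b_*,L}_{\rt}$ around $0$ for the $\bseta^*$-variables, where $z^\pm_c=z^\pm_*$ are the critical points of $\mcG_*$, with the small shifts $b_*L^{-1/2}$ chosen so that the radii are strictly increasing in the nesting order prescribed by $\bsigma$ on the left and by $\btau$ on the right; Lemma~\ref{cor:criticalptofGlo} guarantees this is possible, since in case (a)/(b) the $\xi$-circles are ordered $z_2^-<z_{23}^-<z_3^-<z_{123}^-<z_{12}^-<z_1^-$ (resp.\ $z_3^-<z_{23}^-<z_2^-<\dots$), and similarly for the others, while the $\eta$-circles are ordered by the $z^+_*$ which all collapse to $\pp$ — there the required separation $d\gtrsim L^{-1/2}$ is produced by the shift terms. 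All twelve circles stay inside $|z|\le 2$ for large $L$, and pairwise distances are $\ge d$ with $d^{-1}=O(L^{1/2})$.

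Next I would plug these into \eqref{eq:intaga}: the contour lengths contribute a factor $(2\pi)^{|\bsigma|+|\btau|}$ cancelling the prefactor up to $C^{|\vecn|}$; Lemma~\ref{lem:Pibound} gives $|\Pi^{\bsigma}_{\btau}|\le 2^{4|\vecn|}d^{-|\veca|-|\vecb|}\sqrt{n_1!(n_2-n_1+a_1+b_1)!(n_2-n_3+a_3+b_3)!n_3!}$, and with $d^{-1}=O(L^{1/2})$ and $|\veca|+|\vecb|\le 2|\bv|+ (\text{bounded multiples of }n_2,a_1,a_3)\le C|\bv|$ this is absorbed into $C^{|\vecn|}L^{3|\bv|}$ together with the combinatorial square root. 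For the analytic factor $\FF^{\bsigma|\btau}_L$, I would use \eqref{eq:f2aub_xi} and \eqref{eq:f2aub_eta} of Lemma~\ref{lem:asymptotics_f2} to bound each ratio $|\ff_{L,*}(\xi^*_i)/\ff_{L,*}(z^-_*)|\le C$ and $|\ff_{L,*}(z^+_*)/\ff_{L,*}(\eta^*_j)|\le C$ on the respective circles, so that
\[
    |\FF^{\bsigma|\btau}_L(\bsxi,\bseta)|\le C^{|\veca|+|\vecb|}\,\prod_{*\in\mc A_3}\left|\frac{\ff_{L,*}(z^-_*)}{\ff_{L,*}(z^+_*)}\right|^{v_*}\cdot(\text{leftover ratios}),
\]
and $|\ff_{L,*}(z^-_*)/\ff_{L,*}(z^+_*)|=e^{-L\De\mcG_*}e^{O(L^{1/2})}$ by \eqref{eq:fLstard}. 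The choice $\bv=(a_{123},a_{12},a_{23},a_1,a_2,a_3)$ in regimes $\rgo$--$\rgfi$ and $\bv=(a_{123},a_{12},a_{23},a_1,b_2,a_3)$ in $\rgs\cup\rgse$ is dictated by which of Lemma~\ref{lem:asymptotics_f2}(a) or (b) is available for $\ff_{L,2}$: in the first case $z_2^+=\pp$ is the \emph{larger} critical point so $\ff_{L,2}$ appears with its $\xi$-variables near $z_2^-$ and its $\eta$-variables near $z_2^+=\pp$ (handled by part (a) and (b) exactly as for $*\neq 2$, contributing $e^{-a_2 L\De\mcG_2}$), whereas in $\rgs\cup\rgse$ the roles swap: $z_2^-=\pp$, only part (a) (resp.\ (b)) is available, so it is the $\eta^2$-count $b_2$ that multiplies $\De\mcG_2$. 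Collecting everything gives \eqref{eq:Iste1}; the $e^{c|\bv|L^{1/2}}$ factor is the accumulated $e^{O(L^{1/2})}$ from each of the $|\bv|$ (plus boundedly many extra) ratios and from $\HH_*,\EE_{L,*}$.

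Finally, the vanishing claims: if $\w\in\rgo$ then by \eqref{eq:signofxy} we are in case (b) of Lemma~\ref{lem:basic} for $\mcG_2$ (parameters $\al_1<0$, $\al_2,\al_3>0$), so $\mcG_2(z)=\ff_{L,2}$ has a genuine pole structure only at $z=-1$ and is in fact \emph{analytic} at $z=0$ when the relevant exponent is a nonnegative integer — more precisely, in the arrangement of case (a) of the lemma the $\xi^2$-variables sit on the innermost circle around $-1$, inside all other $\xi$-circles and with the $\ff_{L,2}$-numerator the only factor depending on $\xi^2$ apart from rational terms that are analytic at $-1$; since $\mcG_2$ has no singularity inside that circle when $a_2>0$ forces the relevant residue configuration to be trivial, Cauchy's theorem collapses the $\xi^2$-integral to zero. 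I would argue this by observing that for $\w\in\rgo$, the only pole of $\ff_{L,2}(z)$ relevant to the $\xi^2$-contour is at $z=-1$, while $z^-_2<-1$ means the $\xi^2$-circle does not enclose it, and $\Pi^{\bsigma}_{\btau}$ is analytic in $\xi^2$ inside that circle (the nesting puts $\gamma_{\xi^2}$ innermost, so no $\xi^i-\xi^j$ pole lies inside), hence the integral vanishes; symmetrically for $\w\in\rgse$ and $b_2>0$, using part (c) of Lemma~\ref{lem:basic} for the $\eta^2$-integral. \emph{The main obstacle} I anticipate is making the contour-ordering argument fully rigorous in all four regimes simultaneously — one must check that the strict inequalities of Lemma~\ref{cor:criticalptofGlo}, after the $L^{-1/2}$-perturbations needed to separate the coincident $z^+_*=\pp$, still respect the nesting demanded by each specific $\bsigma,\btau$ listed in (a)--(d), and that the leftover ratios (the $\ff_{L,*}$-factors not matched to a $\De\mcG_*$) are genuinely $O(1)$ rather than exponentially large; this is where the precise forms of $\bsigma$ and $\btau$ in the statement are used, and it will require a careful case-by-case bookkeeping of which circle each variable lives on.
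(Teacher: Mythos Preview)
Your overall strategy matches the paper's: choose the $\xi^*$-contours as circles around $-1$ through the critical points $z_*^-$ (ordered as in Lemma~\ref{cor:criticalptofGlo}), put the $\eta^*$-contours on circles around $0$ near $\pp$ with $L^{-1/2}$-spacing, then bound $\Pi^{\bsigma}_{\btau}$ by Lemma~\ref{lem:Pibound} and $\FF_L^{\bsigma|\btau}$ by Lemma~\ref{lem:asymptotics_f2}. Two points, however, need correcting.

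\textbf{The vanishing argument is backwards.} For $\w\in\rgo$ you argue that the $\xi^2$-circle ``does not enclose'' the pole at $-1$. But by definition of $\ing^{\bsigma}_{\btau}$ the $\xi$-contours are circles \emph{around} $-1$; what must be shown is that the integrand has no pole there. The correct reason is integer-arithmetic, not critical-point geometry: for $\w\in\rgo$ one has $x_2<x_1$, so $M_{L,2}-M_{L,1}=\lceil x_2 aL\rceil-\lceil x_1 aL\rceil\le 0$, hence $\ff_{L,2}(z)=z^{N_{L,2}-N_{L,1}}(1+z)^{-(M_{L,2}-M_{L,1})}e^{(T_{L,2}-T_{L,1})z}$ is analytic at $z=-1$. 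Since in the arrangement of case~(a) the $\xi^2$-variables sit on the innermost left circle, the integrand is analytic inside that circle and Cauchy's theorem gives zero. The case $\w\in\rgse$, $b_2>0$ is symmetric: $y_2<y_1$ makes $1/\ff_{L,2}$ analytic at $z=0$.

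\textbf{The ``leftover ratios'' are handled by an algebraic identity you are missing.} You correctly flag as the main obstacle that after writing $|\FF_L^{\bsigma|\btau}|\le C^{|\veca|+|\vecb|}\prod_*|\ff_{L,*}(z_*^-)|^{a_*}/\prod_*|\ff_{L,*}(z_*^+)|^{b_*}$, the exponents $a_*$ and $b_*$ on numerator and denominator do not match. The paper resolves this with the identity
\[
    \prod_{*\in\mc A_3}|\ff_{L,*}(z)|^{a_*}=\prod_{i=1}^3|\ff_{L,i}(z)|^{n_i}=\prod_{*\in\mc A_3}|\ff_{L,*}(z)|^{b_*}
    \qquad\text{for every }z,
\]
which follows from \eqref{eq:atypen} and the multiplicative relations $\ff_{L,12}=\ff_{L,1}\ff_{L,2}$, etc. Since $z_*^+=\pp$ for all $*$ in regimes $\rgo$--$\rgfi$, the denominator becomes $\prod_*|\ff_{L,*}(\pp)|^{b_*}=\prod_*|\ff_{L,*}(\pp)|^{a_*}=\prod_*|\ff_{L,*}(z_*^+)|^{a_*}$, and the whole product collapses to $\prod_*|\ff_{L,*}(z_*^-)/\ff_{L,*}(z_*^+)|^{a_*}$ with no leftover. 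In regimes $\rgs,\rgse$ the same identity, applied at $z=\pp=z_2^-$, converts the $a_2$ exponent on $\ff_{L,2}$ into the $b_2$ exponent, which is why $\bv$ carries $b_2$ there. This identity is the missing ingredient; without it the bound you sketch would carry an uncontrolled factor. The inequality $|\veca|+|\vecb|\le 6|\bv|$ (needed to absorb $d^{-|\veca|-|\vecb|}$ into $L^{3|\bv|}$ in case~(d)) also requires a short combinatorial argument from \eqref{eq:atypen}, not just ``bounded multiples''.
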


\begin{proof}
Suppose that $\w\in \rgo$ and $a_2>0$. Since $\w\in \rgo$, \eqref{eq:signofxy} implies that $x_2-x_1<0$ and $y_2-y_1>0$. 
Thus, recalling \eqref{eq:MNTLiorj},  $M_{L,2}-M_{L,1}= \lceil x_2 aL \rceil - \lceil x_1 aL \rceil$ is a non-positive integer
and $N_{L,2}-N_{L,1}= \lceil y_2 bL \rceil - \lceil y_1 bL \rceil$ is a non-negative integer. 
Therefore, 
\beqq
    f_{L,2}(z)= \frac{z^{N_{L,2}-N_{L,1}}e^{(T_{L,2}-T_{L,1})z}}{(1+z)^{M_{L,2}-M_{L,1}}} 
\eeqq
is analytic at $z=-1$. Hence, the function $\FF_{L}^{\bsigma|\btau} (\bsxi, \bseta)$ is analytic at $\xi^{2}_i=-1$. 
Since the $\xi^{2}_i$-contour is the innermost among all $\bsxi$-contours, it follows by Cauchy's theorem that 
$\ing^{\bsigma}_{\tau}=0$. 

Similarly, suppose that $\w\in \rgse$ and $b_2>0$. 
Since $\w\in \rgse$, \eqref{eq:signofxy} implies that $x_2-x_1>0$ and $y_2-y_1<0$. 
Thus, in this case, $\frac1{f_{L,2}(z)}$ is analytic at $z=0$, so  $\FF_{L}^{\bsigma|\btau} (\bsxi, \bseta)$ is analytic at $\eta^{2}_i=0$. 
Again, the $\eta^2_i$-contour is the innermost among all $\bseta$-contours, and thus $\ing^{\bsigma}_{\tau}=0$ by Cauchy's theorem.

In what follows, we assume that $a_2 = 0$ if $\w \in \rgo$ and $b_2 = 0$ if $\w \in \rgse$.

(i) Let $\w \in  \rgt$ and consider the integral $\ing^{\bsigma}_{\btau}$, where $\bsigma$ and $\btau$ are as in (a). 
From Lemma \ref{cor:criticalptofGlo}, the critical points satisfy 
\beqq
    -1 < z^-_2 < z^-_{23} < z^-_{3} < z_{123}^- < z^-_{12} < z^-_1 < z_1^+=z^+_2 = z^+_3 =\pp< 0.
\eeqq 
For each $*$, we take the $\xi_i^*$-contour to be the circle $\{ z\in \C \, : \, |z+1|= |1 + z^-_*| \}$. 
On the other hand, we take the contours for the $\bseta$-variables to be
\beqq
    ( \Sigma_{1,L})^{b_3'} \times (\Sigma_{2,L})^{b_2'} \times (\Sigma_{3,L})^{b_1} \times (\Sigma_{4,L})^{b_2''} \times (\Sigma_{5,L})^{b_3''}, 
\eeqq
where $\Sigma_{k, L} = \{ z\in \C \, : \, |z|= |\pp | - (6-k)L^{-1/2} \}$. 
We may choose these contours as above without changing the value of the integral, since $\bsigma$ and $\btau$ are of the forms specified in (a).

Note that all circles are contained in the disk of radius $2$ centered at the origin, and each pair is separated  by $L^{-1/2}$, for all sufficiently large $L$.
From Lemma \ref{lem:Pibound}, with $d=L^{-1/2}$, and using $|\veca|+|\vecb|\le |\veca|+|\vecn|\le 4|\veca|$ from \eqref{eq:atyeq}, 
we find that
\beq \label{eq:aori}
    |\Pi^{\bsigma}_{\btau}(\bsxi, \bseta) | \le 
    2^{4|\vecn|} L^{2|\veca|} 
    \sqrt{n_1! (n_2-n_1+a_1+b_1)! (n_2-n_3+a_3+b_3)! n_3!}
\eeq 
uniformly for every $(\bsxi, \bseta)$ on the contour, for all sufficiently large $L$.  

On the other hand, from Lemmas \ref{lem:fl_asym_sameside} and  \ref{lem:asymptotics_f2}, 
also using $|\veca|+|\vecb|\le 2|\vecn|$, there exists a constant $C>0$, independent of $\bn$ and $L$, such that
\beq \label{eq:FLFF}
    \left|\FF_{L}^{\bsigma|\btau} (\bsxi, \bseta)\right| 
    = \prod_{*\in \mc A_3}   \frac{\prod_{i=1}^{a_*} \left| f_{L, *} (\xi_i^*)\right|}{ \prod_{i=1}^{b_*} \left| f_{L, *} (\eta_i^*)\right|} 
    \leq C^{2|\vecn|} \prod_{*\in \mc A_3} \frac{\left| f_{L, *} (z^-_*)\right|^{a_*}}{\left| f_{L, *} (z^+_*)\right|^{b_*}}
\eeq 
uniformly for every $(\bsxi, \bseta)$ on the contour, for all sufficiently large $L$.  
From \eqref{eq:ftwoth} and the relation  \eqref{eq:atypen}, we find that  
\beq \label{eq:prdfo}
    \prod_{*\in \mc A_3} |f_{L, *}(z)|^{a_*}
    = \prod_{i=1}^3 |f_{L,i}(z)|^{n_i} = \prod_{*\in \mc A_3} |f_{L, *}(z)|^{b_*}
    \qquad \text{for every $z$.} 
\eeq
Thus, since $z_*^+=\pp$ for all $*$, we have 
\beqq
    \prod_{*\in \mc A_3} \frac{\left| f_{L, *} (z^-_*)\right|^{a_*}}{\left| f_{L, *} (z^+_*)\right|^{b_*}}
    = \prod_{*\in \mc A_3} \frac{\left| f_{L, *} (z^-_*)\right|^{a_*}}{\left| f_{L, *} (\pp)\right|^{a_*}}
    = \prod_{*\in \mc A_3}  \left| \frac{ f_{L, *} (z^-_*) }{f_{L, *} (z^+_*)} \right|^{a_*} . 
\eeqq
Hence, from the formula \eqref{eq:fLstard}, we find that there exists a constant $c>0$ such that 
\beq \label{eq:forif}
    \left|\FF_{L}^{\bsigma|\btau} (\bsxi, \bseta)\right| 
    \le  C^{2|\vecn|} e^{c |\veca| L^{1/2}} e^{- L \sum_{* \in \mc A_3} a_* \De \mcG_*} 
\eeq
uniformly for every $(\bsxi, \bseta)$ on the contour, for all sufficiently large $L$.  
Applying estimates \eqref{eq:aori} and \eqref{eq:forif} to \eqref{eq:intaga} yields \eqref{eq:Iste1}, possibly after adjusting the constants.

(ii) When $\w\in \rgo$, the analysis is the same as in (i), except that we do not have the $\bsxi^2$-integrals since we assumed that $a_2=0$. 

(iii) When $\w \in \rgth\cup \rgf$ or  $\w \in \rgfi$, the proof is nearly the same as in (i). We omit the details. 

(iv) Let $\w \in \rgs$. The analysis is again similar, but with some modifications, since in this case the critical points satisfy
\beqq
    -1 < z^-_3 < z^-_{23} < z_{123}^- < z^-_{1} < z^-_{12} < z^-_2 = z^+_{1} = z^+_3 = z^+_{12}  = z^+_{23}  =\pp < z^+_2 < 0.
\eeqq
Now $z_2^-=\pp<z_2^+$, unlike in cases (i)--(iii). 
For every $*$, we take the $\xi_i^*$-contour to be the circle $\{ z\in \C \, : \, |z+1|= |1 + z^-_*| \}$. 
On the other hand, we take the contours for the $\bseta$-variables to be
\beqq
    \Sigma^{b_2} \times ( \Sigma_{1,L})^{b_{23}} \times (\Sigma_{2,L})^{b_{3}'} \times (\Sigma_{3,L})^{b_{12}} \times (\Sigma_{4,L})^{b_{1}} \times (\Sigma_{5,L})^{b_3''}  
\eeqq
where $\Sigma= \{ z\in \C \, : \, |z|= |z^+_2| \}$ 
and $\Sigma_{k, L} = \{ z\in \C \, : \, |z|= |\pp | - (6-k)L^{-1/2} \}$. 

From \eqref{eq:atypen}, we see that
$a_2\le n_2= b_2+b_{12}+b_{23}+b_{123}$ and 
$b_1+b_3+2(b_{12}+b_{23}+b_{123})\le 2(n_1+n_3)=2(a_1+a_3+a_{12}+a_{23}+2a_{123})\le 4 (|\veca|-a_2)$. 
Thus,
\beqq
    |\veca|+|\vecb| \le 
    (|\veca|-a_2)+ 2b_2+b_1+b_3+2(b_{12}+b_{23}+b_{123})
    \le 2b_2+ 5 (|\veca|-a_2)\le 6(|\veca|-a_2+b_2) .
\eeqq
Hence, Lemma \ref{lem:Pibound} with $d=L^{-1/2}$ implies that 
\beqq
    |\Pi^{\bsigma}_{\btau}(\bsxi, \bseta) | \le  2^{4|\vecn|} L^{3(|\veca|-a_2+b_2)}
    \sqrt{n_1! (n_2-n_1+a_1+b_1)! (n_2-n_3+a_3+b_3)!n_3!} 
\eeqq
uniformly for every $(\bsxi, \bseta)$ on the contour, for all sufficiently large $L$.  

From Lemmas \ref{lem:fl_asym_sameside} and  \ref{lem:asymptotics_f2}, the estimate \eqref{eq:FLFF} still holds. 
Since $z_2^-  = z^+_{1} = z^+_{3}= z^+_{12} = z^+_{23} = \pp$, the identity \eqref{eq:prdfo} implies that  
\beqq
    \prod_{*\in \mc A_3} \frac{\left| f_{L, *} (z^-_*)\right|^{a_*}}{\left| f_{L, *} (z^+_*)\right|^{b_*}}
    = \frac{ |f_{L, 2}(\pp)|^{b_2}}{|f_{L, 2}(z_2^+)|^{b_2}} 
     \prod_{*\neq 2}   \frac{ \left| f_{L, *} (z^-_*) \right|^{a_*}}{ \left|f_{L, *} (\pp)\right|^{a_*}}  
    = \frac{ |f_{L, 2}(z_2^-)|^{b_2}}{|f_{L, 2}(z_2^+)|^{b_2}}  \prod_{*\neq 2}  \left| \frac{ f_{L, *} (z^-_*) }{f_{L, *} (z^+_*)} \right|^{a_*} . 
\eeqq
Hence, there exists a constant $c>0$ such that 
\beqq
    \left|\FF_{L}^{\bsigma|\btau} (\bsxi, \bseta)\right| 
    \le  C^{2|\vecn|} e^{c (|\veca|-a_2+b_2) L^{1/2}} e^{- L (b_2\De\mcG_2+ \sum_{* \neq 2} a_*\De \mcG_* ) } 
\eeqq
uniformly for $(\bsxi, \bseta)$ on the contour, for all sufficiently large $L$. 
Hence, \eqref{eq:Iste1} follows, after adjusting the constants if necessary.

(v) When $\w \in  \rgse$, the analysis is the same as in (iv), except that there are no $\eta^2_i$-integrals since we have assumed that $b_2=0$.
\end{proof}

\medskip

We now estimate the terms $\sum_{* \in \mc A_3} v_* \De \mcG_*$ appearing in \eqref{eq:Iste1}. 
This estimate is provided in Lemma \ref{lem:deGlb} below, which makes use of the following two lemmas. 

\begin{lem} \label{lem:Gprop}
\begin{enumerate} [(a)]
    \item If $\w\in \rgo\cup \cdots \cup \rgse$, then for every $*\in \{1,3,12, 23, 123\}$, the function 
    $\mcG_*$ is strictly decreasing on $(-1, z_*^-]$, strictly increasing on $[z_*^-, z_*^+]$, and strictly decreasing on $[z_*^+, 0)$. 
    \item If $\w\in \rgt\cup \cdots \cup \rgs$,  then $\mcG_2$ is strictly decreasing on $(-1, z_2^-]$, strictly increasing on $[z_2^-, z_2^+]$, and strictly decreasing on $[z_2^+, 0)$. 
    \item If $\w\in \rgo$, then $\mcG_2$ is strictly increasing on $(-1, z_2^+]$ and strictly decreasing on $[z_2^+, 0)$. 
    \item If $\w\in \rgse$, then $\mcG_2$ is strictly decreasing on $(-1, z_2^-]$ and strictly increasing on $[z_2^-, 0)$. 
\end{enumerate}
\end{lem}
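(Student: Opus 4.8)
The plan is to reduce each assertion to the sign of a single factored rational function. The functions $\mcG_*$ are all of the form \eqref{eq:Gqmcd}, so, as in the computation in the proof of Lemma~\ref{lem:general_crit_behavior}, I would write
\[
    \mcG_*'(z) = \frac{h_*\,(z-z_*^-)(z-z_*^+)}{z(z+1)},
\]
where $h_*$ is the coefficient of the linear term of $\mcG_*$ and $z_*^-\le z_*^+$ are its two critical points --- these are exactly the roots of the quadratic numerator, so the factorization is immediate. On the interval $(-1,0)$ the denominator $z(z+1)$ is strictly negative, so once $h_*>0$ is established, $\sign \mcG_*'(z) = -\sign\big((z-z_*^-)(z-z_*^+)\big)$ there, and the monotonicity on each subinterval is decided purely by which of the factors $z-z_*^\pm$ are positive.

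The second step is to check $h_*>0$. For $*\in\{1,3,12,23,123\}$ the linear coefficient of $\mcG_*$ is one of $h_1, h_2, \lv-h_1, \lv-h_2, \lv$ (cf.\ \eqref{eq:mcGdf}--\eqref{eq:Gsubide}), and all of these lie in $(0,\lv]$ because $0<h_1,h_2<\lv$ (using $x_i,y_i\in(0,1)$ and the inequalities $\lv\pm(a-b)\pm\sqrt{D}>0$ recorded in the proof of Lemma~\ref{lem:general_crit_behavior}, exactly as in the proof of Lemma~\ref{lem:fl_asym_sameside}). For $*=2$, the linear coefficient of $\mcG_2$ is $h_2-h_1$ by \eqref{eq:mcGdf}, which is positive precisely by the standing assumption \eqref{eq:x2y2region00}. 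With $h_*>0$ in hand, the sign of $\mcG_*'$ on $(-1,0)$ is opposite to that of $(z-z_*^-)(z-z_*^+)$.

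The last step is to substitute the orderings of the critical points from Lemma~\ref{cor:criticalptofGlo} and read off the signs. For (a): for $*\ne 2$ and $\w\in\rgo\cup\cdots\cup\rgse$, Lemma~\ref{cor:criticalptofGlo} gives $-1<z_*^-<z_*^+=\pp<0$; hence $(z-z_*^-)(z-z_*^+)>0$ on $(-1,z_*^-)$, $<0$ on $(z_*^-,z_*^+)$, and $>0$ on $(z_*^+,0)$, which yields the claimed decreasing/increasing/decreasing pattern. For (b): for $\w\in\rgt\cup\cdots\cup\rgs$ the same lemma places both $z_2^\pm$ in $(-1,0)$ with $z_2^-<z_2^+$ ($z_2^+=\pp$ in $\rgt,\dots,\rgfi$ and $z_2^-=\pp<z_2^+$ in $\rgs$), so the identical trichotomy applies to $\mcG_2$. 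For (c): $\w\in\rgo$ gives $z_2^-<-1<z_2^+=\pp<0$, so $(z-z_2^-)>0$ throughout $(-1,0)$ while $(z-z_2^+)$ is negative on $(-1,z_2^+)$ and positive on $(z_2^+,0)$, giving increasing then decreasing. For (d): $\w\in\rgse$ gives $-1<z_2^-=\pp<0<z_2^+$, so $(z-z_2^+)<0$ throughout $(-1,0)$ while $(z-z_2^-)$ is negative on $(-1,z_2^-)$ and positive on $(z_2^-,0)$, giving decreasing then increasing. There is no serious obstacle here; the only thing demanding care --- and the natural place to make an error --- is invoking the correct case of Lemma~\ref{cor:criticalptofGlo} for each of the seven subregions and checking that the inequalities between the critical points are strict, so that the monotonicity statements are genuinely strict as claimed.
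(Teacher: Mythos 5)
Your proposal is correct and takes essentially the same route as the paper: the authors' one-line proof observes that $\mcG_*'(z)=q_*(z)/\big(z(z+1)\big)$ with $q_*$ a convex quadratic, which is exactly your factorization $\mcG_*'(z)=h_*(z-z_*^-)(z-z_*^+)/\big(z(z+1)\big)$ with $h_*>0$, combined with the sign of $z(z+1)$ on $(-1,0)$ and the critical-point orderings from Lemma~\ref{cor:criticalptofGlo}. You have simply made explicit the verification that $h_*>0$ (via $0<h_1,h_2<\lv$ and, for $*=2$, assumption~\eqref{eq:x2y2region00}) and the case-by-case reading of signs, which the paper leaves to the reader.
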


\begin{proof}
The proof follows from Lemma \ref{lem:general_crit_behavior}, since $\mcG'_*(z) = \frac{q_*(z)}{z(z+1)}$ for a convex quadratic polynomial  $q_*$.
\end{proof}

\begin{lem} \label{eq:delGin} 
For every $\w\in \rgo\cup \cdots \cup \rgse$, there exists a constant $\epsilon_0>0$ such that the following statements hold: 
\begin{enumerate} [(a)]
    \item $\Delta \mcG_{1} + \Delta \mcG_{2} + \Delta \mcG_{3} \ge \Delta \mcG_{123} + \epsilon_0$
    if $\w  \in \rgt \cup \cdots \cup \rgs$. 
    \item $\Delta \mcG_{12} + \Delta \mcG_{3} \ge \Delta \mcG_{123} + \epsilon_0 $ if $\w\in \rgo\cup \cdots \cup \rgse$. 
    \item $\Delta \mcG_{1} + \Delta \mcG_{23} \ge \Delta \mcG_{123} +\epsilon_0 $ if $\w\in \rgo\cup \cdots \cup \rgse$. 
    \item $\Delta \mcG_{1} + \Delta \mcG_{3} \ge \Delta \mcG_{123} + \epsilon_0 $ if $\w  \in \rgs \cup \rgse$.
    \item $\Delta \mcG_{12} + \Delta \mcG_{23} \ge \Delta \mcG_{123} +  \epsilon_0$  if $\w \in \rgo \cup \cdots \cup \rgf$.  
\end{enumerate}
\end{lem}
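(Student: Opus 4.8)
The key observation is that each quantity $\Delta\mcG_* = \mcG_*(z_*^+)-\mcG_*(z_*^-)$ is the "barrier" that the function $\mcG_*$ must climb between its two critical points, and each $\mcG_*$ is one of the functions $\GG_{X,Y}$ of Lemma~\ref{lem:general_crit_behavior} with the parameters listed in \eqref{eq:XYpa}. Since $\mcG_{123}=\GG_{1,1}$ and the map $(X,Y)\mapsto\GG_{X,Y}$ is \emph{linear}, all five inequalities are assertions that a certain linear combination $\sum c_* \Delta\mcG_*$ with $\sum c_* \text{(parameters)}_* = (1,1)$ is bounded below by $\Delta\mcG_{123}$ plus a positive constant. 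The plan is to introduce, for each function $\GG_{X,Y}$, the auxiliary quantity
\beqq
    \Phi(X,Y) := \mcG_*(z_*^+) - \mcG_*(z_*^-) = \GG_{X,Y}(\pp) - \GG_{X,Y}(\pq),
\eeqq
where $\pp$ is the common critical point and $\pq=\pq(X,Y)$ the other, given by \eqref{eq:mean_levelcurve_slope}. Because $\pp$ does not depend on $(X,Y)$, the map $(X,Y)\mapsto \GG_{X,Y}(\pp)$ is linear, so $\Phi(X,Y) = \text{(linear in }(X,Y)) - \GG_{X,Y}(\pq(X,Y))$, and the only nonlinearity sits in the term $\GG_{X,Y}(\pq)$. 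I would then compute $\GG_{X,Y}(\pq)$ explicitly (plugging $\pq=-\frac{Y/X}{1/\mu+Y/X}$ into \eqref{eq:Gqmcd}) to get a clean closed form, showing in particular that $\Phi$ is a \emph{strictly convex} function of the ratio $Y/X$ along rays, or—more usefully—that $\Phi$ is strictly convex as a function on the relevant domain of $(X,Y)$.

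Granting strict convexity of $\Phi$, each of (a)--(e) follows from Jensen-type strict inequalities. For instance, (b) reads $\Phi(x_2,y_2)+\Phi(1-x_2,1-y_2) > \Phi(1,1)$; writing $(1,1) = (x_2,y_2)+(1-x_2,1-y_2)$ and noting that $\Phi$ is positively homogeneous of degree $1$ (since $\GG_{X,Y}$ and $\pq$ scale correctly—$\pq$ depends only on $Y/X$, and $\GG$ is linear in $(X,Y)$ at fixed $z$), strict convexity gives $\Phi(u)+\Phi(v) > \Phi(u+v)$ whenever $u,v$ are not positively proportional, i.e. whenever $\frac{y_2}{x_2}\neq\frac{1-y_2}{1-x_2}$, which holds on $\rg$. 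Similarly (c) is $\Phi(x_1,y_1)+\Phi(1-x_1,1-y_1)>\Phi(1,1)$; (a) is the three-term version $\Phi(x_1,y_1)+\Phi(x_2-x_1,y_2-y_1)+\Phi(1-x_2,1-y_2) > \Phi(1,1)$, valid provided the three summands are not all positively proportional and—crucially—provided $(x_2-x_1,y_2-y_1)$ lies in the cone where $\Phi$ was shown convex, which by \eqref{eq:signofxy} and the definitions \eqref{eq:all_regions} requires $\w\in\rgt\cup\cdots\cup\rgs$ (there $x_2>x_1$, $y_2>y_1$); (d) is $\Phi(x_1,y_1)+\Phi(1-x_2,1-y_2)+\Phi(x_2-x_1,y_2-y_1) > \Phi(1,1)$ again, now needing $y_2<y_1$-type sign control that holds on $\rgs\cup\rgse$; and (e) is $\Phi(x_2,y_2)+\Phi(1-x_1,1-y_1)>\Phi(1,1)+\Phi(x_2-x_1,y_2-y_1)$, i.e. $\Phi(u+w)+\Phi(v+w) > \Phi(u+v+w)+\Phi(w)$ with $u=(x_1,y_1),v=(1-x_2,1-y_2),w=(x_2-x_1,y_2-y_1)$, which is exactly strict midpoint-convexity of $\Phi$ applied to the pair $(u+w, v+w)$ combined with homogeneity — valid on $\rgo\cup\cdots\cup\rgf$ where the relevant vectors lie in the convexity cone. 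The uniform constant $\epsilon_0>0$ comes from compactness: on each closed sub-region the gap is a continuous, strictly positive function, hence bounded below, and one takes the minimum over the finitely many regions involved (or argues locally and uses that only finitely many $\w$-regions appear).

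The main obstacle I anticipate is \textbf{establishing strict convexity (and the sign/cone bookkeeping) cleanly}. The closed form of $\Phi$ will involve logarithms — schematically $\Phi(X,Y) = (\text{linear}) + aX\log(\text{something in }Y/X) + bY\log(\text{something else})$ — and proving convexity amounts to checking that a Hessian (a $2\times2$ matrix whose entries are rational in $X,Y$) is positive semidefinite on the relevant cone, which is a short but fiddly computation; alternatively one can reduce to a one-variable statement by homogeneity and show $t\mapsto \Phi(1,t)$ is strictly convex in $t$ over the interval $t\in(1/\slope,\infty)$ (and separately over the sub-intervals corresponding to $\pq<-1$, etc.), which is a single second-derivative sign check. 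A subtlety is that $\Phi$ is only convex on parts of parameter space — when $\pq$ crosses $-1$ or $0$ (cases (a),(b) of Lemma~\ref{lem:general_crit_behavior}) the geometry changes — so I must verify that for each of (a)--(e), after translating via \eqref{eq:XYpa} and \eqref{eq:signofxy}, every vector fed into the convexity inequality lies in the region $\{Y/X > 1/\slope\}$ where, by Lemma~\ref{lem:general_crit_behavior}(d) and the earlier parts of Lemma~\ref{cor:criticalptofGlo}, the relevant $\mcG_*$ all have the same critical-point structure. This matching of "which region forces which sign pattern" is precisely why the hypotheses of (a)--(e) restrict to different unions of the $\rg_i$, and getting that alignment exactly right is the delicate part; the convexity inequality itself is then automatic.
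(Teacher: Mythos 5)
Your plan---write $\Delta\mcG_* = \Phi(X_*,Y_*)$ for the positively homogeneous, degree-one function $\Phi(X,Y):=\GG_{X,Y}(\pp)-\GG_{X,Y}(\pq(Y/X))$, and deduce (a)--(e) from strict subadditivity of $\Phi$ on the cone $\{Y/X>1/\slope\}$ where it is convex---is an appealing reorganization, and it does give (b), (c), and (a) restricted to $\rgt\cup\cdots\cup\rgfi$. But it does not reach the remaining cases, for three independent reasons. (i) For (a) on $\rgs$, the vector $w=(x_2-x_1,y_2-y_1)$ satisfies $0<(y_2-y_1)/(x_2-x_1)<1/\slope$ and so lies \emph{outside} the convexity cone; your claim that membership in $\rgt\cup\cdots\cup\rgs$ forces $w$ into the cone is wrong. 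Worse, on $\rgs\cup\rgse$ the critical-point labels of $\mcG_2$ flip ($z_2^-=\pp$ there, Lemma~\ref{cor:criticalptofGlo}(f),(g)), so $\Delta\mcG_2=-\Phi(w)$, not $+\Phi(w)$, and the correct $\Phi$-form of (a) is $\Phi(u)+\Phi(v)-\Phi(w)\ge\Phi(u+v+w)$, which subadditivity does not address. (ii) Your translation of (d) is a transcription error: you wrote the same three-term inequality as for (a), whereas (d) asserts $\Delta\mcG_1+\Delta\mcG_3\ge\Delta\mcG_{123}$, i.e.\ $\Phi(x_1,y_1)+\Phi(1-x_2,1-y_2)\ge\Phi(1,1)$, whose two summands add to $(1,1)-w\ne(1,1)$. (iii) Most seriously, the inequality you invoke for (e), $\Phi(u+w)+\Phi(v+w)\ge\Phi(u+v+w)+\Phi(w)$, runs the \emph{wrong way} for a convex, degree-one homogeneous function: subadditivity yields $\Phi(u+w)+\Phi(v+w)\ge\Phi(u+v+2w)$ and separately $\Phi(u+v+2w)\le\Phi(u+v+w)+\Phi(w)$, so the chain cannot close. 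A concrete counterexample at this level of generality is $\Phi(a)=|a|$ on $\R$ with $u=v=1$, $w=-1$: the left side is $0$, the right side is $2$.

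Each of the problematic cases genuinely needs more than convexity of $\Phi$. The paper's proof supplies the missing input quite differently: it keeps the identities $\mcG_1+\mcG_2+\mcG_3=\mcG_{123}$, $\mcG_{12}+\mcG_{23}=\mcG_{123}+\mcG_2$, etc.\ at the level of the $\mcG_*$ themselves, and combines the interval-wise strict monotonicity from Lemma~\ref{lem:Gprop} with the critical-point ordering of Lemma~\ref{cor:criticalptofGlo} to compare $\mcG_2$ at two displaced points; the decisive steps are of the form $\mcG_2(z_{123}^-)>\mcG_2(z_2^-)$ (for (d)) and $\mcG_2(z_2^+)>\mcG_2(z_1^-)$ (for (e)). In your framework these are monotonicity statements about the directional derivative $\nabla\Phi\cdot w$ along the line toward $(1,1)$, and they do \emph{not} follow from convexity of $\Phi$ on a sub-cone; you would have to import them as a separate estimate, at which point the convexity framing offers little over the paper's direct argument.
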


\begin{proof}
\begin{enumerate} [(a)]
\item Suppose $\w  \in \rgt \cup \cdots \cup \rgs$. 
By definition, $\mcG_{1} + \mcG_{2} +  \mcG_{3}= \mcG_{123}$. 
If $\w  \in \rgt \cup \cdots \cup \rgfi$, then $z_*^+=\pp$ for all $*\in \{1,2,3,123\}$ by Lemma \ref{cor:criticalptofGlo}, and thus, $\mcG_{1}(z_1^+) + \mcG_{2}(z_2^+) +  \mcG_{3}(z_3^+)= \mcG_{123}(\pp)$. 
If $\w \in \rgs$, then by Lemma \ref{lem:Gprop} (b), $\mcG_{2}(\pp) = \mcG_2(z^-_2) < \mcG_{2}(z^+_{2})$, and thus, 
$\mcG_{1}(z_1^+) + \mcG_{2}(z_2^+) +  \mcG_{3}(z_3^+) > \mcG_{1}(\pp) + \mcG_{2}(\pp) +  \mcG_{3}(\pp) = \mcG_{123}(\pp)$.
In either case, $\mcG_{1}(z_1^+) + \mcG_{2}(z_2^+) +  \mcG_{3}(z_3^+) \ge \mcG_{123}(z_{123}^+)$.
On the other hand, since $z_{123}^-\in  (-1, z_*^+)\setminus \{z_*^-\}$ 
for each $*=1,2,3$ by Lemma 
\ref{cor:criticalptofGlo}, Lemma \ref{lem:Gprop} (a) and (b) imply that $ \mcG_{*}(z_{123}^-)>\mcG_*(z_*^-)$ for $*=1,2,3$. Thus, $\mcG_{1}(z_1^-) + \mcG_{2}(z_2^-) +  \mcG_{3}(z_3^-)< \mcG_{1}(z_{123}^-) + \mcG_{2}(z_{123}^-) +  \mcG_{3}(z_{123}^-)=  \mcG_{123}(z_{123}^-)$. Therefore, we find that $\Delta \mcG_{1} + \Delta \mcG_{2} + \Delta \mcG_{3} > \Delta \mcG_{123}$. This implies the result. 
     
\item Suppose $\w\in \rgo\cup \cdots \cup \rgse$. The result follows by noting that $\mcG_{12} + \mcG_{3}= \mcG_{123}$, that $z_*^+=\pp$ for all $*\in\{12, 3, 123\}$, and that $z_{123}^-\in  (-1, z_*^+) \setminus \{z_*^-\}$  for $*\in \{12,3\}$. 
    
\item Suppose $\w\in \rgo\cup \cdots \cup \rgse$. The result follows by noting that  $\mcG_{1} + \mcG_{23}= \mcG_{123}$, that $z_*^+=\pp$ for all $*\in\{1, 23, 123\}$, and that $z_{123}^-\in  (-1, z_*^+) \setminus \{z_*^-\}$  for $*\in \{1, 23\}$.
    
\item Suppose $\w  \in \rgs \cup \rgse$. Note that $\mcG_1+\mcG_3=\mcG_{123}-\mcG_2$. 
Since $z_1^+=z_3^+=z_{123}^+= z_2^-$, we have
$\mcG_1(z_1^+)+ \mcG_3(z_3^+) = \mcG_{123}(z_{123}^+) - \mcG_2(z_2^-)$. 
On the other hand, since $z_{123}^-\in  (-1, z_*^+) \setminus \{z_*^-\}$ for $*=1,3$, Lemma \ref{lem:Gprop} (a) implies that 
$\mcG_1(z_1^-) + \mcG_3(z_3^-)< \mcG_1(z_{123}^-)+  \mcG_3(z_{123}^-) = \mcG_{123}(z_{123}^-)- \mcG_2(z_{123}^-)$. 
Hence, $\Delta \mcG_1 + \Delta \mcG_3 >  \Delta \mcG_{123} + \mcG_2(z_{123}^-)- \mcG_2(z_2^-)$. 
Since $z_{123}^-\in (-1, z_2^-)$, Lemma \ref{lem:Gprop} (b) and (d) imply that $\mcG_2(z_{123}^-)>\mcG_2(z_2^-)$. 
Thus, we obtain the result. 

\item Suppose $\w  \in \rgo \cup \cdots \cup \rgf$. The proof is trickier in this case. 
Since $\mcG_{12}+\mcG_{23}=\mcG_{123}+\mcG_2$ and $z_*^+$ are equal for all $*$, $\mcG_{12}(z_{12}^+)+\mcG_{23}(z_{23}^+) = \mcG_{123}(z_{123}^+)+\mcG_{2}(z_{2}^+)$. 
On the other hand, since $z_{123}^-\in  (z_{23}^- , z_{23}^+)$, Lemma \ref{lem:Gprop} (a) implies that $\mcG_{23}(z_{23}^-) < \mcG_{23}(z_{123}^-)$. 
Furthermore, noting that $z^-_1 \in (z_{12}^-, z^+_{12})$, we find from Lemma \ref{lem:Gprop} (a) that 
$\mcG_{12}(z_{12}^-)< \mcG_{12}(z_1^-)= \mcG_{1}(z_1^-)+\mcG_{2}(z_1^-)$. 
Since $z_{123}^-\in (-1, z_1^-)$, applying Lemma \ref{lem:Gprop} (a) again, we have $\mcG_{1}(z_{1}^-)<\mcG_{1}(z_{123}^-)$.  
Hence, $\mcG_{12}(z_{12}^-)+\mcG_{23}(z_{23}^-) < \mcG_{1}(z_{123}^-)+\mcG_{2}(z_1^-)+\mcG_{23}(z_{123}^-)= \mcG_{123}(z_{123}^-)+\mcG_{2}(z_1^-)$. 
Thus, $\Delta \mcG_{12} + \Delta \mcG_{23} > \Delta \mcG_{123} + \mcG_{2}(z_2^+)-\mcG_{2}(z_1^-)$. 
Since $z_1^-\in (\max\{-1, z_2^-\}, z_2^+)$, Lemma \ref{lem:Gprop} (a) and (c) imply that $\mcG_2(z_2^+)> \mcG_2(z_{1}^-)$. We thus obtain the result.  
\end{enumerate}
\end{proof}

We now estimate $\sum_{* \in \mc A_3} v_* \De \mcG_*$. We note that, for positive integers $p$ and $q$, 
\beq \label{eq:Mktr}
    \text{if $p\ge q$, then $p-q+1\ge \frac{p}{q} $.} 
\eeq

\begin{lem} \label{lem:deGlb}
For every $\w \in \rgo \cup \cdots \cup \rgse$,  there exists a constant $c > 0$ such that 
\begin{equation} \label{eq:llll}
    \sum_{* \in \mc A_3} v_*\De \mcG_{*}  \geq \Delta \mcG_{123} + c|\bv| 
\end{equation}  
for every 
$\bv= (v_{123}, v_{12}, v_{23}, v_1, v_2, v_3)\in \N_0^6 \setminus \{(1,0,0,0,0,0)\}$ satisfying
\beq \label{eq:acases0}
    v_1+v_{12}+v_{123}\ge 1, \qquad v_3+v_{23}+v_{123}\ge 1 
\eeq 
with the following extra assumptions:
\begin{itemize}
\item $v_2+v_{12}+v_{23}+v_{123}\ge 1$ when $\w\in \rgo\cup \cdots \cup \rgfi$, 
\item $v_2=0$ when $\w\in \rgo\cup \rgse$, 
\item $v_{123}+v_1\ge 1$ when $\w\in \rgfi \cup \rgs\cup \rgse$. 
\end{itemize}
\end{lem}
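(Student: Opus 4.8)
\textbf{Proof plan for Lemma \ref{lem:deGlb}.} The goal is a lower bound of the shape $\sum_* v_*\De\mcG_* \ge \De\mcG_{123} + c|\bv|$ over the admissible non-negative integer vectors $\bv$. The strategy is a two-layer argument: first isolate a single ``unit'' of mass that absorbs the $\De\mcG_{123}$ on the right-hand side, then show every remaining unit of mass contributes a uniformly positive amount. Concretely, I would first record (using Lemma \ref{lem:Gprop} together with Lemma \ref{cor:criticalptofGlo}) the sign and positivity facts for each $\De\mcG_* = \mcG_*(z_*^+)-\mcG_*(z_*^-)$; in all regimes $z_*^-<z_*^+$ lies in the increasing branch of $\mcG_*$, so $\De\mcG_*>0$ for every $*\in\mc A_3$ that can appear, and moreover $\De\mcG_*$ stays bounded below by a positive constant on each region $\rg_i$ (the critical points depend continuously on $\w$ and are separated). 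Set $\epsilon_1 := \min_{*} \inf_{\rg_i}\De\mcG_* > 0$.

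The main structural input is Lemma \ref{eq:delGin}: its five inequalities say that whenever we ``split'' an index into two smaller ones — $123\to\{12,3\}$, $123\to\{1,23\}$, $123\to\{1,3\}$ (on $\rgs\cup\rgse$), or $12,23\to 123,2$ (on $\rgo\cup\cdots\cup\rgf$), or $1,2,3$ recombining (on $\rgt\cup\cdots\cup\rgs$) — the sum of the split pieces exceeds $\De\mcG_{123}$ by at least $\epsilon_0$. So the plan is: given $\bv$ satisfying \eqref{eq:acases0} and the regime-dependent extra constraints, exhibit one ``virtual unit of $123$'' inside $\bv$. Precisely, the constraints $v_1+v_{12}+v_{123}\ge1$ and $v_3+v_{23}+v_{123}\ge1$ guarantee that $\bv$ contains either a genuine $v_{123}\ge1$, or else a pair whose indices cover both ``slot $1$'' and ``slot $3$'' — e.g. a $12$ and a $23$, or a $12$ and a $3$, or a $1$ and a $23$, or a $1$ and a $3$ — and in each such case the relevant inequality of Lemma \ref{eq:delGin} lets me replace $\De\mcG_{123}$ on the right by the corresponding sum of $\De\mcG$'s minus $\epsilon_0$, consuming that pair (or the single $123$) and leaving a residual vector $\bv'$ with $|\bv'| = |\bv| - (\text{1 or 2})$, all of whose entries still contribute at least $\epsilon_1$ each. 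One has to be slightly careful about which splitting to use in which region: on $\rgs\cup\rgse$ one may need (d) of Lemma \ref{eq:delGin} because $\mcG_2$ is monotone there rather than unimodal, while on $\rgo$ and $\rgse$ the hypothesis $v_2=0$ removes the problematic term entirely, and on $\rgfi\cup\rgs\cup\rgse$ the extra hypothesis $v_{123}+v_1\ge1$ is exactly what is needed to guarantee a usable ``slot $1$'' token. The excluded vector $(1,0,0,0,0,0)$ is precisely the case where the only mass is a single $123$ and there is nothing left over — i.e. equality $\sum v_*\De\mcG_* = \De\mcG_{123}$, which is why it must be thrown out.

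After this ``one unit absorbs $\De\mcG_{123}$'' step, the residual estimate is routine: $\sum_{*}v_*\De\mcG_* \ge \De\mcG_{123} - \epsilon_0 \cdot 0 + \epsilon_1\cdot(\text{leftover mass})$, wait — more carefully, the splitting gives $\sum_* v_*\De\mcG_* \ge (\De\mcG_{123}+\epsilon_0) + \sum_{*}v'_*\De\mcG_* \ge \De\mcG_{123}+\epsilon_0 + \epsilon_1|\bv'| \ge \De\mcG_{123} + \min(\epsilon_0,\epsilon_1)|\bv|$, using $|\bv|\le |\bv'|+2$ and absorbing the additive $\epsilon_0$ against the at-most-$2$ units consumed. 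Taking $c := \tfrac13\min(\epsilon_0,\epsilon_1)$ (the $\tfrac13$ is slack to handle $|\bv|$ vs.\ $|\bv'|+2$ and the small-$|\bv|$ cases) finishes it. I would also handle the finitely many small vectors ($|\bv|\le 2$) by direct inspection against Lemma \ref{eq:delGin}, since for those the asymptotic ``$\epsilon_1|\bv'|$'' gain is vacuous and one simply checks the claimed inequality term-by-term.

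\textbf{Main obstacle.} The delicate point is the bookkeeping of \emph{which} pair of tokens in $\bv$ to sacrifice in each of the seven regions, together with tracking $\mcG_2$'s behavior: in $\rgo$ and $\rgse$ the function $\mcG_2$ is not unimodal (it is monotone), so any argument that naively treats $\De\mcG_2=\mcG_2(z_2^+)-\mcG_2(z_2^-)$ as a ``cost'' can have the wrong sign — the hypotheses ($v_2=0$ on $\rgo\cup\rgse$, and the $v_{123}+v_1\ge1$ condition on $\rgfi\cup\rgs\cup\rgse$) are precisely engineered to route around this, and verifying that every admissible $\bv$ does admit a legal splitting consistent with the available inequality in its region is the real content. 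I expect a short case table indexed by the seven regions, each row citing one of (a)--(e) of Lemma \ref{eq:delGin} plus the positivity $\De\mcG_*\ge\epsilon_1$, to be the cleanest way to present it.
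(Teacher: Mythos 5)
Your high-level plan is the same as the paper's (fix one ``unit'' via Lemma \ref{eq:delGin} to absorb $\Delta\mcG_{123}$, then bound the leftover mass by a uniform positive constant, with the slack factor $\tfrac13$ coming out the same). But there is a concrete hole in the case enumeration that you would hit the moment you tried to fill in the ``short case table.''

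You list the four pairs $(12,23)$, $(12,3)$, $(1,23)$, $(1,3)$ as the possible splittings when $v_{123}=0$, and claim ``in each such case the relevant inequality of Lemma \ref{eq:delGin} lets me replace $\Delta\mcG_{123}$.'' Two of these fail in some regions. First, the $(1,3)$ splitting, Lemma \ref{eq:delGin}(d), is only stated for $\w\in\rgs\cup\rgse$. On $\rgt\cup\rgth\cup\rgf$, the only inequality available that involves $\mcG_1$ and $\mcG_3$ is (a), $\Delta\mcG_1+\Delta\mcG_2+\Delta\mcG_3\ge\Delta\mcG_{123}+\epsilon_0$, which requires a $2$-token as well; since $\Delta\mcG_2>0$ there, you cannot simply drop it. This is precisely where the hypothesis $v_2+v_{12}+v_{23}+v_{123}\ge1$ (which you never invoke anywhere in your outline) is needed: combined with $v_{123}=v_{12}=v_{23}=0$ it forces $v_2\ge1$, allowing the three-token $(1,2,3)$ splitting. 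That case also consumes three tokens, not ``$1$ or $2$'' as you wrote, though your $\tfrac13$ slack happens to still be enough. Second, the $(12,23)$ splitting, Lemma \ref{eq:delGin}(e), only holds on $\rgo\cup\cdots\cup\rgf$; on $\rgfi$ you must fall back to $(1,23)$ via (c), using $v_{123}+v_1\ge1$ to produce the needed $v_1\ge1$. You do gesture at ``the $v_{123}+v_1\ge1$ condition \dots is precisely engineered to route around this,'' but you attribute it to $\mcG_2$'s monotonicity rather than to the unavailability of (e), and you also misstate where $\mcG_2$ is monotone (that happens only on $\rgo$ and $\rgse$, per Lemma \ref{lem:Gprop}(c)--(d); on $\rgs$ it is still unimodal). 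Once those two cases are repaired using the omitted hypothesis, the rest of your argument, including the choice of $c$, matches the paper's proof line by line.
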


\begin{proof} 
Fix $\w \in \rgo \cup \cdots \cup \rgse$. 
Let $\epsilon_0>0$ be the constant from Lemma \ref{eq:delGin}. Define the constants
\beq \label{eq:conc12} \begin{split}
    &c_1= \min\left\{ \De \mcG_{123} , \De \mcG_{12}, \De \mcG_{23},\De \mcG_{1}, \De \mcG_{2}, \De \mcG_{3}  \right\} ,
    \qquad 
    c_2= \min\{c_1, \epsilon_0 \}, \\    
    &c_1'= \min\left\{ \De \mcG_{123} , \De \mcG_{12}, \De \mcG_{23},\De \mcG_{1}, \De \mcG_{3}  \right\}, 
    \qquad
    c_2'= \min\{ c_1', \epsilon_0 \} . 
\end{split}  \eeq 
Lemma \ref{lem:Gprop} (a) and (b) imply that $c_1, c_2>0$ when $\w\in  \rgt \cup \cdots \cup \rgs$, 
and $c_1', c_2'>0$ when $\w \in  \rgo \cup \cdots \cup \rgse$. 
Set $\tn{LHS} := \sum_{* \in \mc A_3} v_*\De \mcG_{*}$. 

\begin{enumerate}[(a)]
\item Suppose $\w\in  \rgt\cup \rgth \cup \rgf$. 

\begin{itemize}
\item Suppose $v_{123}\ge 1$. Then  $\tn{LHS} \ge \Delta \mcG_{123} + (|\bv|-1)c_1$. 
If $v_{123}\ge 2$, then $|\bv| \ge 2$. 
On the other hand, if $v_{123}=1$, then by the assumption that $\bv \neq (1,0,0,0,0,0)$,  there is at least one $*\neq 123$ with $v_*\ge 1$. 
Hence, $|\bv| \ge 2$ in this case as well. 
Thus, from \eqref{eq:Mktr}, we find that $\tn{LHS}\ge\Delta \mcG_{123}+\frac12|\bv| c_1$. 

\item Suppose $v_{123}=0$. 
    \begin{enumerate}[(i)]
    \item Suppose $v_{12}\ge 1$ and $v_{23}\ge 1$. 
    Then,  $\tn{LHS} \ge \De \mcG_{12} + \De \mcG_{23} + (|\bv| - 2)c_1 \ge \Delta \mcG_{123}  + \epsilon_0 + (|\bv|-2) c_1$ by Lemma \ref{eq:delGin} (e). Since $\epsilon_0\ge c_2$ and $c_1\ge c_2$, we find
    $\tn{LHS} \ge \Delta \mcG_{123}  +  (|\bv|-1) c_2$. Since $|\bv |\ge v_{12}+v_{23}\ge 2$, we conclude from \eqref{eq:Mktr} that $\tn{LHS} \ge \Delta \mcG_{123}  +  \frac12 |\bv| c_2$. 
  
    \item Suppose $v_{12}\ge 1$ and $v_{23}=0$. Then the second inequality of \eqref{eq:acases0} implies $v_{3}\ge 1$. Using Lemma \ref{eq:delGin} (b) and the fact that  $|\bv|\ge 2$, we obtain 
    $\tn{LHS} \ge \De \mcG_{12} + \De \mcG_{3} + (|\bv| - 2)c_1\ge \Delta \mcG_{123} +  \epsilon_0 + (|\bv|-2)c_1
    \ge \Delta \mcG_{123} + (|\bv|-1)c_2 \ge\Delta \mcG_{123}+ \frac12 |\bv| c_2$ . 
    
    \item Suppose $v_{12}=0$ and $v_{23}\ge 1$. Then the first inequality of \eqref{eq:acases0} implies $v_{1}\ge 1$. Thus, applying Lemma \ref{eq:delGin} (c), we again obtain $\tn{LHS} \ge\Delta \mcG_{123}+ \frac12 |\bv| c_2$. 
    
    \item Suppose $v_{12}=v_{23}=0$.  Then, $v_1,v_3 \ge 1$ by \eqref{eq:acases0}. Additionally, from the condition $v_2+v_{12}+v_{23}+v_{123}\ge 1$, we find that $v_2\ge 1$. Thus, by Lemma \ref{eq:delGin} (a) and the fact that $|\bv|\ge 3$, we obtain $\tn{LHS}  \ge \De \mcG_1 + \De \mcG_{2} + \De \mcG_{3} + (|\bv| - 3)c_1 \ge \De \mcG_{123}+ \epsilon_0 + (|\bv| - 3)c_1  \ge \De \mcG_{123}+ (|\bv| - 2)c_2\ge \Delta \mcG_{123} +\frac13  |\bv| c_2$. 
    \end{enumerate}
\end{itemize}

\item Suppose $\w \in \rgfi$. The proof of case (a) holds except for part (i), since Lemma \ref{eq:delGin} (e) is not applicable when $\w \in \rgfi$. The part (i) is modified as follows. 
    \begin{itemize}
    \item Suppose $v_{123}=0$, $v_{12}\ge 1$, and $v_{23}\ge 1$. Since $v_{123}+v_1\ge 1$ when $\w\in \rgfi$, we find that $v_1 \ge 1$. Thus, by Lemma \ref{eq:delGin} (c), $\tn{LHS} \ge \De \mcG_{1} + \De \mcG_{23} +  (|\bv| - 2)c_1
    \ge \De \mcG_{123} + \epsilon_0 +   (|\bv| - 2)c_1 \ge \Delta \mcG_{123} + (|\bv|-1)c_2 \ge \Delta \mcG_{123} +\frac12 |\bv|c_2$, since $|\bv|\ge 3\ge 2$. 
    \end{itemize}

\item Suppose $\w \in \rgo$. 
The proof of case (a) again holds, with $c_1$ and $c_2$ replaced by $c_1'$ and $c_2'$, respectively, with $v_2 = 0$, except for  part (iv), since Lemma \ref{eq:delGin} (a) is not applicable when $\w\in \rgo$. 
However, part (iv) does not occur because $v_2 = 0$ by assumption. 

\item Suppose $\w \in \rgs$. If $v_{123}\ge 1$, the result follows from the same proof as in case (a). On the other hand, suppose  $v_{123}=0$. Then, by the assumption that $v_{123}+v_1\ge 1$, we have $v_1\ge 1$. 
\begin{itemize}
    \item If $v_{123}=0$ and $v_{23} \ge 1$, then, $\tn{LHS} \ge \De \mcG_1 + \De \mcG_{23} +  (|\bv| - 2)c_1\ge \Delta \mcG_{123} + \epsilon_0 + (|\bv| - 2)c _1$ by Lemma \ref{eq:delGin} (c). Hence, 
     $\tn{LHS} \ge \Delta \mcG_{123} +  (|\bv| - 1)c _2 \ge \Delta \mcG_{123}+ \frac12 |\bv| c_2$ since $ |\bv|\ge 2$. 

    \item If $v_{123}=0$ and $v_{23} = 0$, then the second inequality of \eqref{eq:acases0} implies $v_{3}\ge 1$. 
    Thus, $\tn{LHS} \ge \De \mcG_1 + \De \mcG_{3} +  (|\bv| - 2)c_1$. By Lemma \ref{eq:delGin} (d), $\tn{LHS} \ge \Delta \mcG_{123} + \epsilon_0 + (|\bv| - 2)c_1 \ge \Delta \mcG_{123} + (|\bv| - 1)c_2 \ge \Delta \mcG_{123}+ \frac12 |\bv| c_2$ since $|\bv|\ge 2$. 
\end{itemize}

\item Suppose $\w \in \rgse$. The proof is exactly the same as case (d), with $c_1$ and $c_2$ replaced by $c_1'$ and $c_2'$, respectively, and with $v_2=0$.
\end{enumerate}
\end{proof}

We now prove Proposition \ref{cor:incor}. 

\begin{proof}[Proof of Proposition \ref{cor:incor}] 
If $\w \in \rgo$ and $a_2> 0$, or if  $\w \in \rgse$ and $b_2> 0$, then $\ing^{\bsigma}_{\tau}=0$ by Lemma \ref{prop:instest}. 
Thus, we may assume that $a_2=0$ if $\w\in \rgo$, and $b_2=0$ if $\w\in \rgse$. 
By Lemma \ref{prop:instest}, it remains to estimate $L^{3|\bv|} e^{c|\bv|L^{1/2}} e^{- L \sum_{* \in \mc A_3} v_* \De \mcG_*}$. 
We verify that Lemma \ref{lem:deGlb} applies. 
We have $v_1=a_1, v_3=a_3, v_{12}=a_{12}, v_{23}=a_{23}, v_{123}=a_{123}$, 
while $v_2=a_2$ if $\w\in \rgo\cup \cdots \cup \rgfi$, and $v_2=b_2$ if $\w\in \rgs\cup \rgse$.  
From \eqref{eq:atypen}, we see that $v_1+v_{12}+v_{123}=n_1\ge 1$ and $v_3+v_{23}+v_{123}=n_3\ge 1$, and thus \eqref{eq:acases0} is satisfied. 
Furthermore, if $\w\in \rgo\cup \cdots \cup \rgfi$, then $v_2+v_{12}+v_{23}+v_{123}=n_2\ge 1$ again by \eqref{eq:atypen}. 
If $\w\in \rgo\cup \rgse$, then $v_2=0$ by our assumption that $a_2=0$ if $\w\in \rgo$, and $b_2=0$ if $\w\in \rgse$. 
If $\w\in \rgfi \cup \rgs\cup \rgse$, then $v_{123}+v_1= a_{123}+a_1\ge 1$ by assumption. 
Therefore, all conditions of Lemma \ref{lem:deGlb} are satisfied, and hence there exists $c'>0$ such that $\sum_{* \in \mc A_3} v_* \De \mcG_* \ge \De \mcG_{123} +c'|\bv|$. 
Thus, there exists $L_0 > 0$ such that  
\beqq
    L^{3|\bv|} e^{c|\bv|L^{1/2}} e^{- L \sum_{* \in \mc A_3} v_* \De \mcG_*}
    \le  e^{ -L\De \mcG_{123} -|\bv|(c'L- cL^{1/2}-3\ln L)}  \le e^{-L \De \mcG_{123} - \frac12 c' |\bv|L}
\eeqq
for  every $L \ge L_0$. 
Now, from the explicit formulas, $\De \mcG_{123} = \J(\lv)$ in the equation \eqref{eq:large_deviation_fn}. 
Comparing with the formula \eqref{eq:cKdef2} of $\cK_L$, we see that $e^{-L \De \mcG_{123} }\le \cK_L$. 
Thus, we obtain the result. 
\end{proof}

\subsection{Deformation of Integrals and proof of Proposition \ref{cor:decomposition}} \label{sec:deform}

Unlike those in Proposition \ref{cor:incor}, the integrals arising in Proposition \ref{cor:decomposition} cannot be evaluated directly using the method of steepest descent, since the ordering of the critical points does not match the nesting of the contours.
To address this, we deform the contours and, after accounting for residues, rewrite these integrals as sums of integrals to which the method of steepest descent applies directly.
Because a residue term may produce integrals with critical points still incompatible with the contour nesting, this procedure must sometimes be repeated multiple times until all resulting integrals have compatible critical points and contour structures.
This reduction is accomplished by Lemmas \ref{lem:integralordering} and \ref{lem:integralordering2}.
In this way, we express the integrals appearing in Proposition \ref{cor:decomposition} as sums of those appearing in Proposition \ref{cor:incor}. These lemmas also yield the proofs of Lemmas \ref{lem:QQ111list} and \ref{lem:QQ121list}.
The formal proof of Proposition \ref{cor:decomposition} is given at the end of this subsection.

\begin{lem} \label{lem:defbsc}
Let $\Omega$ be a region, and let $\Gamma_1$, $\Gamma_2$, and $\Gamma_3$ be Jordan curves in $\Omega$ that are nested from innermost $(\Gamma_1)$ to outermost $(\Gamma_3)$, and can be continuously deformed into one another within $\Omega$. Let $F(\bu, \bv)$ be a meromorphic function with
\begin{itemize}
    \item simple poles at $u_i = v_j$ for all $i$ and $j$,
    \item simple zeros at $u_i = u_j$ and at $v_i = v_j$ for all $i \neq j$,
    \item symmetry in $u_1, \ldots, u_m$, and separately in $v_1, \ldots, v_n$.
\end{itemize}
Then, 
\beq \label{eq:defel} \begin{split}
    &\int_{\Gamma_1^m} \dd \bu \int_{\Gamma_2^n} \dd \bv  \, \, F(\bu, \bv)  
    =  \sum_{i=0}^{m\wedge n}  (-2\pi \ii)^i i! \binom{m}{i} \binom{n}{i}   \int_{\Gamma_3^{m-i}} \dd u_{i+1}\cdots \dd u_m \int_{\Gamma_2^n} \dd \bv  \, \, \res_{u_1=v_1, \cdots, u_i=v_i} F(\bu, \bv).
\end{split} \eeq
\end{lem}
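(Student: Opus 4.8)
The plan is to move the $\bu$-contours from $\Gamma_1$ out to $\Gamma_3$, one variable at a time, picking up residues at the poles $u_i = v_j$ that are crossed. First I would set up the base case: with a single $\bu$-variable $u$ on $\Gamma_1$ and $\bv$ on $\Gamma_2$ (the only singularities of $\bu \mapsto F$ inside the annulus bounded by $\Gamma_1$ and $\Gamma_3$ are the simple poles at $u = v_j$, since the zeros are not singularities), the residue theorem gives
\beq \label{eq:defbase}
    \int_{\Gamma_1} \dd u \int_{\Gamma_2^n} \dd \bv \, F
    = \int_{\Gamma_3} \dd u \int_{\Gamma_2^n} \dd \bv \, F
    - 2\pi\ii \sum_{j=1}^n \int_{\Gamma_2^n} \dd \bv \, \res_{u=v_j} F.
\eeq
By the symmetry of $F$ in $v_1, \dots, v_n$, each of the $n$ terms in the residue sum contributes equally after relabeling, so the sum equals $-2\pi\ii\, n \int_{\Gamma_2^n}\dd\bv\, \res_{u_1=v_1} F$, and this is precisely the $i=0,1$ terms of \eqref{eq:defel} with $m=1$. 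The orientations here use the standard counterclockwise convention, so crossing an inner contour outward contributes $-2\pi\ii$ times the residue.

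Next I would induct on $m$. Assume \eqref{eq:defel} holds with $m-1$ unintegrated $\bu$-variables. I apply the base-case move to the variable $u_m$ (which still sits on $\Gamma_1$ while $u_1, \dots, u_{m-1}$ are now on $\Gamma_3$): moving $u_m$ from $\Gamma_1$ to $\Gamma_3$ crosses the poles at $u_m = v_j$ for each $j$ \emph{that has not already been identified with some $u_k$}. The key bookkeeping point is that when we have already taken the residue at $u_1 = v_1, \dots, u_i = v_i$, the remaining integrand $\res_{u_1=v_1,\dots,u_i=v_i} F$ — viewed as a function of $u_{i+1},\dots,u_m$ and $v_1,\dots,v_n$ — retains simple poles exactly at $u_k = v_\ell$ for $k > i$ and $\ell > i$, and remains symmetric in $u_{i+1},\dots,u_m$ and in $v_{i+1},\dots,v_n$ (one checks that iterated residues of a Cauchy-type kernel preserve this structure). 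Hence moving $u_m$ out produces a residue term for each $v_\ell$ with $\ell > i$, i.e. $n - i$ new residue terms; combined with the $\binom{m-1}{i}$ ways the earlier pairing was chosen, the counting yields the multinomial coefficient $i!\binom{m}{i}\binom{n}{i}$ for the new index $i+1$ after using symmetry to collapse equivalent terms to the canonical pairing $u_1 = v_1, \dots, u_{i+1}=v_{i+1}$. Finally, the series terminates at $i = m\wedge n$ because one cannot pair more $u$-variables with $v$-variables than the smaller of the two counts.

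The main obstacle I anticipate is the combinatorial accounting: verifying that after using the symmetry of $F$ to reduce all residue terms to the single canonical nested-residue $\res_{u_1=v_1,\dots,u_i=v_i}F$, the multiplicities combine to exactly $(-2\pi\ii)^i i!\binom{m}{i}\binom{n}{i}$ — in particular that no pole is counted twice and that the iterated residue is well-defined independent of the order in which the pairings are taken (which follows from the simplicity of the poles and the factorized Cauchy structure). A secondary, more routine point to check is that all the curves $\Gamma_1, \Gamma_2, \Gamma_3$ lie in the region $\Omega$ where $F$ is meromorphic with no singularities other than the stated $u_i = v_j$, so that the only contributions when deforming $\Gamma_1 \to \Gamma_3$ are the residues at these poles and no boundary or essential-singularity terms intervene; this is guaranteed by the hypothesis that the three curves can be continuously deformed into one another within $\Omega$ and that $\Gamma_2$ is sandwiched between $\Gamma_1$ and $\Gamma_3$. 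I would present the base case and the inductive step explicitly but leave the precise symmetry-collapse counting as a short lemma or a remark, since it is bookkeeping rather than analysis.
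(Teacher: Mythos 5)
Your proposal is correct and follows essentially the same route as the paper: induction on $m$, moving one $\bu$-contour at a time from $\Gamma_1$ past $\Gamma_2$ out to $\Gamma_3$, using the residue theorem, and invoking the symmetry of $F$ together with the fact that the simple zeros at $u_i = u_j$ cancel the poles at $u_k = v_\ell$ once $u_\ell = v_\ell$ has been identified by an earlier residue (so no pole is double-counted). The paper states the induction without details and only illustrates $m=2$, $n\ge 2$; your sketch fills in the base case and the inductive bookkeeping — in particular the Pascal-type identity $\binom{m-1}{j}+\binom{m-1}{j-1}=\binom{m}{j}$ that makes the coefficients close up — which is exactly what "readily proved by induction" means here.
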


\begin{proof}
The general case can be readily proved by induction on $m$. We omit the details and instead illustrate the case $m = 2$ with $n \geq 2$ to show how the assumptions on $F$ are used. 
By moving the $\bu$-contour outside the $\bv$-contour, the Cauchy residue theorem implies  
\beqq \begin{split}
    &\int_{\Gamma_1^2} \dd u_1 \dd u_2 \int_{\Gamma_2^n} \dd \bv  \, \, F(\bu, \bv)  
    =  \int_{\Gamma_3^2} \dd u_1 \dd u_2 \int_{\Gamma_2^n} \dd \bv  \, \, F(\bu, \bv) 
    -  2\pi \ii \sum_{k=1}^n  \int_{\Gamma_3} \dd u_2 \int_{\Gamma_2^n} \dd \bv \res_{u_1=v_k} F(\bu, \bv)  \\
    &\qquad\qquad -  2\pi \ii \sum_{k=1}^n  \int_{\Gamma_3} \dd u_1 \int_{\Gamma_2^n} \dd \bv \res_{u_2=v_k} F(\bu, \bv)  
    + (2\pi \ii)^2 \sum_{k_1, k_2=1}^n \int_{\Gamma_2^n} \dd \bv \res_{u_1=v_{k_1}, u_2=v_{k_2}} F(\bu, \bv) . 
\end{split} \eeqq
Observe that $\res_{u_1=v_{k}, u_2=v_{k}} F(\bu, \bv)=0$, since $F$ has a simple zero at $u_1=u_2$ and simple poles at $u_1=v_k$,  $u_2=v_k$. 
Thus, the last double sum is only over $k_1\neq k_2$. 
By the symmetry of $F$ in both $\bu$ and $\bv$, we have $\res_{u_1=v_k} F(\bu, \bv)= \res_{u_2=v_k} F(\bu, \bv)= \res_{u_1=v_1} F(\bu, \bv)$, and $\res_{u_1=v_{k_1}, u_2=v_{k_2}} F(\bu, \bv) = \res_{u_1=v_{1}, u_2=v_{2}} F(\bu, \bv)$ whenever $k_1\neq k_2$. 
Therefore, the two middle sums are equal, and the double sum runs over all pairs of distinct indices. Thus, collecting terms and using the symmetry, the right-hand side is equal to  
\beqq \begin{split}
    \int_{\Gamma_3^2} \dd \bu  \int_{\Gamma_2^n} \dd \bv  \, \, F(\bu, \bv) 
    +2  n (-2\pi \ii)  \int_{\Gamma_3} \dd u_2 \int_{\Gamma_2^n} \dd \bv \res_{u_1=v_1} F(\bu, \bv)
    + 2 \binom{n}{2} (2\pi \ii)^2  \int_{\Gamma_2^n} \dd \bv \res_{u_1=v_1, u_2=v_2} F(\bu, \bv), 
\end{split} \eeqq
which coincides with the right side of \eqref{eq:defel} when $m=2$ and $n\ge 2$. 
\end{proof}

\begin{cor} \label{lem:deformbasic}
Let $g$ and $h$ be analytic functions in a region $\Omega$. 
Suppose $\Gamma_1$, $\Gamma_2$, and $\Gamma_3$ are nested Jordan curves in $\Omega$ that can be continuously deformed into one another within $\Omega$. 
Let $m,n\in \N$. 
Let $a,b,c$ be integers such that $a\ge m$, $b\ge m\vee n$, and $c\ge n$. 
For all vectors $\bfr_1\in \C^a$, $(\bfs_1, \bfs_2) \in \C^{a-m}$, $(\bfs_3, \bfs_4)\in \C^{b-m}$, $(\bfs_5, \bfs_6)\in \C^{b-n}$,  $(\bfs_7, \bfs_8)\in \C^{c-n}$, and $\bfr_2\in \C^c$, we have
\beq \label{eq:aGaa} 
\begin{split}
    &\int_{\Gamma_1^m} \dd \bu \int_{\Gamma_3^n} \dd \bv  \, \, 
    \K_a(\bfr_1 | \bfs_1, \bu, \bfs_2)  \K_b(\bfs_3, \bu, \bfs_4|  \bfs_5, \bv, \bfs_6 )  \K_c(\bfs_7, \bv, \bfs_8 |  \bfr_2)  \prod_{p=1}^m g(u_p) \prod_{q=1}^n h(v_q) \\
    &=  \sum_{i=0}^{m\wedge n} (-1)^{\#} (2\pi \ii)^i i! \binom{m}{i} \binom{n}{i} R_i, 
\end{split} \eeq
where $\#$ denotes an integer whose precise value is not specified here\footnote{If $\Gamma_1, \Gamma_2, \Gamma_3$ are nested such that $\Gamma_1$ is the innermost curve and $\Gamma_3$ is the outermost, then $\#=i(1+\sd(\bfs_3) +\sd(\bfs_5))$, where $\sd(\bfs)=d$ for vectors $\bfs\in \C^d$. If $\Gamma_1$ is the outermost and $\Gamma_3$ is the innermost, then $\#=i(\sd(\bfs_3) +\sd(\bfs_5))$.} and 
\beq \label{eq:RR} \begin{split}
    R_i &= \int_{\Gamma_1^{n-i}} \dd \bv  \int_{\Gamma_2^i} \dd \bw \int_{\Gamma_3^{m-i}} \dd \bu \, \,  
    \K_a(\bfr_1 | \bfs_1,  \bw,  \bu, \bfs_2)  \K_{b-i}(\bfs_3, \bu, \bfs_4|  \bfs_5, \bv, \bfs_6 )  \K_c(\bfs_7, \bw, \bv, \bfs_8 |  \bfr_2) \\
    &\qquad \qquad  \qquad \times \prod_{p=1}^{m-i} g(u_p) \prod_{q=1}^{n-i} h(v_q) \prod_{r=1}^i g(w_r)h(w_r). 
\end{split} \eeq
\end{cor}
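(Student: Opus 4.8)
\textbf{Proof proposal for Corollary \ref{lem:deformbasic}.}

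The plan is to derive this corollary from Lemma \ref{lem:defbsc} by identifying the right meromorphic function $F(\bu,\bv)$ and checking that it satisfies the three hypotheses of that lemma. First I would fix all the ``frozen'' variables $\bfr_1, \bfs_1, \ldots, \bfs_8, \bfr_2$ and define
\beqq
    F(\bu,\bv) = \K_a(\bfr_1 | \bfs_1, \bu, \bfs_2)\, \K_b(\bfs_3, \bu, \bfs_4 | \bfs_5, \bv, \bfs_6)\, \K_c(\bfs_7, \bv, \bfs_8 | \bfr_2)\, \prod_{p=1}^m g(u_p)\prod_{q=1}^n h(v_q),
\eeqq
viewed as a function of $\bu\in\C^m$ and $\bv\in\C^n$ only. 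Using the product formula for the Cauchy determinant in \eqref{eq:Cauchydd}, one reads off the pole and zero structure directly: the only factor that ties a $u_i$ to a $v_j$ is the middle determinant $\K_b$, whose denominator contributes simple poles at $u_i=v_j$; the numerators of $\K_a$ and $\K_b$ contribute simple zeros at $u_i=u_j$ for $i\neq j$, and those of $\K_b$ and $\K_c$ contribute simple zeros at $v_i=v_j$ for $i\neq j$ (the factors $g(u_p)$, $h(v_q)$ are analytic on $\Omega$ and contribute nothing). Symmetry of $F$ in $u_1,\ldots,u_m$ and separately in $v_1,\ldots,v_n$ is immediate since permuting rows or columns of a Cauchy matrix only changes the determinant by a sign that is cancelled by the corresponding sign in the Vandermonde numerator — equivalently, the product formula in \eqref{eq:Cauchydd} is manifestly symmetric. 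Thus Lemma \ref{lem:defbsc} applies with this $F$, $\Gamma_1$ the innermost curve and $\Gamma_3$ the outermost.

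Next I would compute the iterated residue $\res_{u_1=v_1,\ldots,u_i=v_i} F(\bu,\bv)$ explicitly. Because the poles come only from $\K_b$, I can take the residue there using the elementary fact that $\res_{u=v}\frac1{u-v}=-1$, together with the Cauchy determinant product formula: setting $u_1=v_1,\ldots,u_i=v_i$ and calling these common values $w_1,\ldots,w_i$, the determinant $\K_b(\bfs_3,\bu,\bfs_4|\bfs_5,\bv,\bfs_6)$ collapses — up to an explicit sign and after the $w_r$'s are removed from both its row-set and its column-set — to $\K_{b-i}(\bfs_3,\bu',\bfs_4|\bfs_5,\bv',\bfs_6)$ where $\bu'=(u_{i+1},\ldots,u_m)$, $\bv'=(v_{i+1},\ldots,v_n)$, and the $w_r$ variables get absorbed into $\K_a$ (as new columns, adjacent to $\bfs_1,\bfs_2$) and into $\K_c$ (as new rows, adjacent to $\bfs_7,\bfs_8$). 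This is exactly the structure displayed in $R_i$ in \eqref{eq:RR}, with the products $\prod g(u_p)$, $\prod h(v_q)$ for the surviving variables and $\prod g(w_r)h(w_r)$ for the ones that were merged. The precise sign that appears — the exponent $\#$ — is a bookkeeping matter: it is the sign produced when one pulls the $i$ merged columns/rows into their stated positions inside $\K_a$ and $\K_c$ and accounts for the orientation of the deformation; I would record it in the footnote exactly as stated and not belabor the general formula. Finally I would relabel the contours consistently with the statement — after the deformation the surviving $\bv'$-variables sit on $\Gamma_1$, the merged $\bw$-variables on the intermediate curve $\Gamma_2$, and the surviving $\bu'$-variables on $\Gamma_3$ — which is legitimate since all three curves are homotopic in $\Omega$ and the integrand for $R_i$ is analytic in the relevant annuli after the poles have been extracted.

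The main obstacle is the careful tracking of signs and of which index sets the merged variables $\bw$ land in within $\K_a$ and $\K_c$: when one computes $\res_{u_1=v_1}$ of a ratio of two Cauchy determinants, the row/column bookkeeping produces Vandermonde-type factors $\prod_{r<r'}(w_r-w_{r'})^2$ and cross-terms $(w_r - s)$ that must reassemble precisely into the larger Cauchy determinants $\K_a$ and $\K_c$ of the \emph{same} sizes $a$ and $c$ — this works only because the sizes were chosen with the inequalities $a\ge m$, $b\ge m\vee n$, $c\ge n$, so there is ``room'' to insert the $w_r$'s. I expect the cleanest route is to prove the $i=1$ case of the residue identity as a standalone algebraic lemma about Cauchy determinants (it is the only computation with real content), then obtain the general $R_i$ by iterating it $i$ times, the combinatorial prefactor $(-2\pi\ii)^i i!\binom mi\binom ni$ being supplied verbatim by Lemma \ref{lem:defbsc}. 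Since the corollary only needs $\#$ to be \emph{some} integer with the stated dependence, I would not attempt a slicker invariant description of it.
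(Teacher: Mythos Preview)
Your proposal is correct and follows essentially the same route as the paper: define the integrand as $F(\bu,\bv)$, verify the pole/zero/symmetry hypotheses of Lemma~\ref{lem:defbsc} from the Cauchy determinant product formula~\eqref{eq:Cauchydd}, compute the iterated residue of $\K_b$ (the only factor coupling $\bu$ to $\bv$), and relabel the merged variables as $\bw$. The paper is terser---it states the residue identity $\res_{u_1=v_1,\ldots,u_i=v_i}\K_b(\bfs_3,\bu,\bfs_4|\bfs_5,\bv,\bfs_6)=\pm\K_{b-i}(\bfs_3,\hat\bu,\bfs_4|\bfs_5,\hat\bv,\bfs_6)$ directly rather than building it up from $i=1$---but this is a purely stylistic difference. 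One small wording correction: the $w_r$'s are not ``absorbed into'' $\K_a$ and $\K_c$; they were already present there as the first $i$ entries of $\bu$ and $\bv$, and are simply renamed after the residue kills them from $\K_b$.
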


\begin{proof} 
Assume that $\Gamma_1$ is the innermost curve and $\Gamma_3$ is the outermost; the proof is similar if the nesting order is reversed.
From the Cauchy determinant formula, the integrand on the left side of \eqref{eq:aGaa} satisfies the conditions of Lemma \ref{lem:defbsc}. We now compute the residues. 
The Cauchy determinant formula also implies that 
\beqq
    \res_{u_1=v_1, \cdots, u_i=v_i}  \K_b(\bfs_3, \bu, \bfs_4|  \bfs_5, \bv, \bfs_6 )
    = \pm  \K_{b-i}(\bfs_3, \hat \bu, \bfs_4|  \bfs_5, \hat \bv, \bfs_6 ) 
\eeqq
where $\hat \bu$ denotes the vector $\bu$ with entries $u_1,\cdots, u_i$ removed and $\hat \bv$ denotes $\bv$ with entries $v_1,\cdots, v_i$ removed. The sign is not specified, as it is not relevant for our purposes. 
Thus, the result follows from Lemma \ref{lem:defbsc}, after setting the variables
\beqq
    (v_1, \cdots, v_i)=\bw, \quad (u_{i+1}, \cdots, u_m)=\bu, \quad (v_{i+1}, \cdots, v_n)=\bv. 
\eeqq 
\end{proof}

\begin{cor} \label{cor:rulea}
Define
\beq
    \dc^{m,n}_i:= i! \binom{m}{i}\binom{n}{i} .
\eeq
\begin{enumerate}[(a)]
\item 
If $\bsigma=\cdots \alpha^m\beta^n\cdots$ with $\{\alpha,\beta\}\notin \{ \{1, 2\}, \{2, 3\}, \{12, 3\}, \{1, 23\} \}$, then $\ing^{\bsigma}_{\btau} = \ing^{\bsigma'}_{\btau}$ where 
$\bsigma'= \cdots \beta^{m} \alpha^{n}\cdots$.

\item  
If $\bsigma=\cdots \alpha^m\beta^n\cdots$ with $\{\alpha,\beta\}\in \{ \{1, 2\}, \{2, 3\}, \{12, 3\}, \{1, 23\} \}$, then\footnote{For example, if $\bsigma=\cdots 3^m (12)^n\cdots$, then $\bsigma_i= \cdots (12)^{n-i} (123)^i 3^{m-i}\cdots$.} 
\beqq \begin{split}
    \ing^{\bsigma}_{\btau} 
    =  \sum_{i=0}^{m\wedge n} (-1)^{\#} \dc^{m,n}_i \ing^{\bsigma_i}_{\btau} , \qquad
    \bsigma_i= \cdots \beta^{n-i} (\alpha\beta)^i \alpha^{m-i}\cdots.
\end{split} \eeqq

\item 
If $\btau=\cdots \alpha^m\beta^n\cdots$ with $\{\alpha,\beta\}\notin \{ \{1, 2\}, \{2, 3\}, \{12, 3\}, \{1, 23\} \}$, then 
$\ing^{\bsigma}_{\btau} = \ing^{\bsigma}_{\btau'}$ where $\btau'= \cdots \beta^{m} \alpha^{n}\cdots$.

\item  
If $\btau=\cdots \alpha^m\beta^n\cdots$ with $\{\alpha,\beta\}\in \{ \{1, 2\}, \{2, 3\}, \{12, 3\}, \{1, 23\} \}$, then 
\beqq \begin{split}
    \ing^{\bsigma}_{\btau} 
    =  \sum_{i=0}^{m\wedge n} (-1)^{\#} \dc^{m,n}_i \ing^{\bsigma}_{\btau_i} , \qquad
    \btau_i= \cdots \beta^{n-i} (\alpha\beta)^i \alpha^{m-i}\cdots.
\end{split} \eeqq
\end{enumerate}
\end{cor}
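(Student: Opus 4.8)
The plan is to derive Corollary~\ref{cor:rulea} directly from Corollary~\ref{lem:deformbasic} by identifying the integrand of $\ing^{\bsigma}_{\btau}$ with the specific product of three Cauchy determinants and one $\ST$-factor appearing there. First I would recall that, by Definition~\ref{def:Pisigmaxi}, the function $\Pi^{\bsigma}_{\btau}(\bsxi,\bseta)$ is a product of four Cauchy determinants $\K$ times an $\ST$-factor, and $\FF_L^{\bsigma|\btau}$ is a ratio of products of the scalar functions $\ff_{L,*}$. The key structural observation is that when we move a block $\alpha^m\beta^n$ of adjacent letters in $\bsigma$ (with the corresponding variables $\bsxi^\alpha$ of length $\ge m$ and $\bsxi^\beta$ of length $\ge n$, but here only the $m$ "active" copies and $n$ "active" copies participate in the swap), only the middle Cauchy determinants of $\Pi^{\bsigma}_{\btau}$ — those indexed by the consecutive pair — actually involve these variables in a way that produces poles; the outer $\K$'s and $\ST$ contain them only as arguments that, under the residue $u_p = v_q$, simply get merged. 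The product $\prod g(u_p)\prod h(v_q)$ in Corollary~\ref{lem:deformbasic} corresponds to $\prod \ff_{L,\alpha}(\xi^\alpha_p)\prod \ff_{L,\beta}(\xi^\beta_q)$, and the product $\ff_{L,\alpha}(w_r)\ff_{L,\beta}(w_r)$ appearing in $R_i$ is exactly $\ff_{L,\alpha\beta}(w_r)$ by the definition \eqref{eq:ftwoth} — but this is a genuine function (analytic off $\{-1,0\}$) only when $\alpha\beta$ is one of $12$, $23$, $(12)3=123$, $1(23)=123$; in all other cases $\alpha\beta\notin\mc A_3$, so no residue can be taken and the block simply commutes, giving part (a). This dichotomy is precisely the case split $\{\alpha,\beta\}\in\{\{1,2\},\{2,3\},\{12,3\},\{1,23\}\}$ versus not.

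The steps, in order, would be: (1) Fix a list $\bsigma$ of the form $\cdots\alpha^m\beta^n\cdots$ and write out $\Pi^{\bsigma}_{\btau}\FF_L^{\bsigma|\btau}$, isolating the three consecutive Cauchy factors of $\Pi^{\bsigma}_{\btau}$ that contain the variables $\xi^\alpha_1,\dots,\xi^\alpha_m$ and $\xi^\beta_1,\dots,\xi^\beta_n$ (one "upstream" $\K$ in which the $\xi^\alpha$'s appear, one middle $\K$ in which both appear, one "downstream" $\K$ in which the $\xi^\beta$'s appear — this matches the $\K_a,\K_b,\K_c$ template of \eqref{eq:aGaa}, where the $\ST$-factor and the fourth $\K$ are absorbed into $\bfr_1$ or $\bfr_2$ or are simply unaffected), together with the factor $\prod_p\ff_{L,\alpha}(\xi^\alpha_p)\prod_q\ff_{L,\beta}(\xi^\beta_q)$. (2) Apply Corollary~\ref{lem:deformbasic} with $g=\ff_{L,\alpha}$, $h=\ff_{L,\beta}$, $\Gamma_1,\Gamma_2,\Gamma_3$ three consecutive circles around $-1$ in the nesting, and read off that the $i$-th residue term $R_i$ has $i$ new variables $w_r$ lying on an intermediate contour, carrying $\ff_{L,\alpha}(w_r)\ff_{L,\beta}(w_r)=\ff_{L,\alpha\beta}(w_r)$ and appearing in the Cauchy determinants exactly as a block labelled $(\alpha\beta)$ sitting between $\beta^{n-i}$ and $\alpha^{m-i}$. (3) Recognize the resulting integral as $\ing^{\bsigma_i}_{\btau}$ with $\bsigma_i=\cdots\beta^{n-i}(\alpha\beta)^i\alpha^{m-i}\cdots$, provided $(\alpha\beta)\in\mc A_3$; if $(\alpha\beta)\notin\mc A_3$, the residues vanish identically (there is no function $\ff_{L,\alpha\beta}$, equivalently the integrand has no pole at $\xi^\alpha_p=\xi^\beta_q$ because the relevant middle $\K$ contains them on opposite sides only through an overall analytic prefactor — more precisely, $\ff_{L,\alpha}\ff_{L,\beta}$ is still meromorphic with poles only at $-1,0$, so actually the point is subtler: one must check that in the non-special cases the variables $\xi^\alpha$ and $\xi^\beta$ do not both appear in the same $\K$ of $\Pi^{\bsigma}_{\btau}$), leaving only the $i=0$ term, which is $\ing^{\bsigma'}_{\btau}$. (4) The $(-1)^{\#}$ sign and the combinatorial coefficient $\dc^{m,n}_i=i!\binom{m}{i}\binom{n}{i}$ are carried over verbatim from \eqref{eq:aGaa}. (5) Parts (c) and (d) are obtained by the identical argument applied to $\btau$ and the $\bseta$-contours around $0$, using instead the first, middle, and last $\K$'s of $\Pi^{\bsigma}_{\btau}$ that contain the $\eta$-variables, with $g=1/\ff_{L,\alpha}$-type weights — actually by symmetry of the whole setup under $\bsxi\leftrightarrow\bseta$, $\gamma\leftrightarrow\Gamma$, and one invokes Corollary~\ref{lem:deformbasic} again.

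The main obstacle I anticipate is step (1): carefully matching the variable blocks of $\Pi^{\bsigma}_{\btau}$ with the $\K_a,\K_b,\K_c$ pattern of Corollary~\ref{lem:deformbasic}, and in particular verifying that when $\{\alpha,\beta\}$ is \emph{not} one of the four special pairs, the variables $\xi^\alpha$ and $\xi^\beta$ never jointly occur as an argument-pair of a single Cauchy determinant in $\Pi^{\bsigma}_{\btau}$ (so the integrand is genuinely analytic at $\xi^\alpha_p=\xi^\beta_q$ and the blocks commute freely), whereas when $\{\alpha,\beta\}$ \emph{is} special, they do, and $\ff_{L,\alpha}\ff_{L,\beta}=\ff_{L,\alpha\beta}$ assembles into a legitimate $(\alpha\beta)$-block. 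This is essentially a bookkeeping check on the indices $\{1,2,3,12,23,123\}$ and the way Definition~\ref{def:Pisigmaxi} pairs them, so it is routine in principle but must be done with care; once it is in place, the corollary is an immediate translation of Corollary~\ref{lem:deformbasic}. The sign $\#$ is deliberately left unspecified in the statement, which spares us from tracking it precisely.

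Finally, I would remark that the same argument, applied iteratively, proves Lemmas~\ref{lem:QQ111list} and~\ref{lem:QQ121list}: starting from $\QQ^{(1,1,1)}_L=-\ing^{123}_{123}$ (respectively $\QQ^{(1,2,1)}_L=\ing^{3122}_{1223}+\ing^{1223}_{3122}$) one repeatedly applies parts (a)–(d) of Corollary~\ref{cor:rulea} to reorder adjacent letters, generating residue terms labelled by $(23)$, $(12)$, $(123)$, until every integral that appears has its block ordering compatible with the critical-point ordering recorded in Lemma~\ref{cor:criticalptofGlo} for the relevant sub-region. Thus Corollary~\ref{cor:rulea} is the single combinatorial engine behind all the integral-deformation identities used in the proof of Theorem~\ref{thm:offdiagflucsameside}.
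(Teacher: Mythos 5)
Your proposal is correct and essentially mirrors the paper's proof: part (a) holds because $\Pi^{\bsigma}_{\btau}$ has no pole at $\xi^\alpha_i = \xi^\beta_j$ unless $\{\alpha,\beta\}$ is one of the four special pairs, and parts (b), (d) follow by applying Corollary~\ref{lem:deformbasic} with $\bu=\bsxi^\alpha$, $\bv=\bsxi^\beta$, $g=\ff_{L,\alpha}$, $h=\ff_{L,\beta}$, using $\ff_{L,\alpha}\ff_{L,\beta}=\ff_{L,(\alpha\beta)}$ to identify the residue terms with $\ing^{\bsigma_i}_{\btau}$. The one place where your phrasing is slightly loose is the criterion for part (a): the correct check is that $\xi^\alpha$ and $\xi^\beta$ never occur on \emph{opposite sides} of a Cauchy factor of $\Pi^{\bsigma}_{\btau}$ (merely both appearing in the same determinant on the same side, as $\xi^{123}$ and $\xi^1$ do in the first $\K$, produces no pole).
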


\begin{proof}
(a) From the formula \eqref{eq:Pisigmaxi}, $\Pi^{\bsigma}_{\btau}(\bsxi, \bseta)$ as a function of $\bsxi$ can have a pole at $\xi^{\alpha}_i=\xi^{\beta}_j$ only when $\{\alpha,\beta\}= \{1, 2\}, \{2, 3\}, \{12, 3\}$, or $\{1, 23\}$.
Therefore, (a) holds because $\Pi^{\bsigma}_{\btau}$ is analytic at $\xi^{\alpha}_i = \xi^{\beta}_j$ for all $i, j$ when ${\alpha, \beta}$ does not belong to these sets.

(b) From \eqref{eq:ingdef}, 
\beqq
    \ing^{\bsigma}_{\btau}
    =  \frac{1}{(2\pi \ii)^{|\bsigma|+|\btau|}} \int \dd \bseta^{\btau}  \int \dd \bsxi^{\bsigma} \, 
    \Pi^{\bsigma}_{\btau}(\bsxi, \bseta) \FF_{L}^{\bsigma|\btau} (\bsxi, \bseta) .
\eeqq
Fixing the $\bseta$-variables, we apply Corollary \ref{lem:deformbasic} 
with $\bu=\bsxi^\alpha$, $\bv=\bsxi^\beta$, and set $\bw=\bsxi^{(\alpha\beta)}$. 
Since 
$f_{L,\alpha}(z)f_{L, \beta}(z)=f_{L, (\alpha\beta)}(z)$, we see that $\FF_{L}^{\bsigma |\btau} (\bsxi, \bseta)$ becomes $\FF_{L}^{\bsigma_i |\btau} (\bsxi, \bseta)$ upon computing $R_i$ in \eqref{eq:RR}. 
Thus, the result follows. 

The arguments for (c) and (d) are similar.
\end{proof}

We now express the integrals in Proposition \ref{cor:decomposition} as sums of integrals to which Proposition \ref{cor:incor} applies. 
In doing so, we obtain the following two results, which are applicable to a broader class of integrals.

\begin{lem} \label{lem:integralordering}
Let $\bn =(n_1, n_2, n_3)\in \N^3$ and $\bsigma = 3^{n_{31}}2^{n_{21}}1^{n_{1}}2^{n_{22}}3^{n_{32}}$ with $n_{21}+n_{22}=n_2$ and $n_{31}+n_{32}=n_3$. 
Then the following hold for every $\btau \in \listn$. Here, $\#$ denotes an integer whose exact value is not specified.\footnote{It is possible to determine the value of $\#$. For example, $\#=i(1+b_1)+j+k(i-j) + (j+k)(b_3+n_2-n_3)$ for (a), if $\type(\btau)= (b_{123}, b_{12}, b_{23}, b_1, b_2, b_3)$.}
\begin{enumerate}[(a)]
\item We have 
\beqq
    \ing^{\bsigma}_{\btau} 
    =  \sum_{i=0}^{n_1 \wedge n_{22}} \sum_{j=0}^{i \wedge n_{32}}  \sum_{k=0}^{(n_2-i) \wedge n_{31}} 
    (-1)^{\#} \dc^{n_{1}, n_{22}}_i \dc^{i,n_{32}}_j \dc^{n_2-i,n_{31}}_{k}
    \, \ing^{\bsigma_{ijk}}_{\tau},
\eeqq
where  $\bsigma_{ijk} =2^{n_{2}-i-k}(23)^k3^{n_{3}-j-k}(123)^j(12)^{i-j}1^{n_{1}-i}$. 
   
\item We have  
\beqq
    \ing^{\bsigma}_{\btau} 
    =  \sum_{i=0}^{n_1 \wedge n_{22}}    \sum_{j=0}^{i \wedge n_{32}} \sum_{k=0}^{(n_2-i) \wedge (n_{32}-j)}
    (-1)^{\#} \dc^{n_1, n_{22}}_i \dc^{i,n_{32}}_j \dc^{n_2-i, n_{32}-j}_k 
    \, \ing^{\bsigma_{ijk}}_{\tau},
\eeqq
where $\bsigma_{ijk} =3^{n_{3}-j-k}(23)^k2^{n_{2}-i-k}(123)^j(12)^{i-j}1^{n_{1}-i}$. 

\item We have 
\beqq
    \ing^{\bsigma}_{\btau} 
    = \sum_{i=0}^{n_{22} \wedge n_{32}}  \sum_{j=0}^{n_{21} \wedge (n_{32}-i)}  \sum_{k=0}^{n_{1} \wedge i} \sum_{l=0}^{(n_{21}-j) \wedge (n_1-k)}
    (-1)^{\#} 
    \dc^{n_{22}, n_{32}}_i \dc^{n_{21}, n_{32}-i}_j \dc^{n_1, i}_k \dc^{n_{21}-j, n_1-k}_l
    \, \ing^{\bsigma_{ijkl}}_{\tau}, 
\eeqq
where  $\bsigma_{ijkl} =  3^{n_{3}-i-j}(23)^{i+j-k} (123)^{k} 1^{n_{1}-k-l} (12)^l 2^{n_{2}-i-j-l}$. 
\end{enumerate}
\end{lem}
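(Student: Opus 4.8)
\textbf{Proof plan for Lemma \ref{lem:integralordering}.}
The strategy is to repeatedly apply Corollary \ref{cor:rulea} to move the $\bsxi$-variables past one another until the list $\bsigma$ is rearranged into the desired order, collecting residue terms at each step. All three parts follow the same template, so I will describe the approach for part (a) in detail and indicate the modifications needed for (b) and (c). The starting list is $\bsigma = 3^{n_{31}}2^{n_{21}}1^{n_1}2^{n_{22}}3^{n_{32}}$. The goal in (a) is the list $2^{n_2-i-k}(23)^k 3^{n_3-j-k}(123)^j(12)^{i-j}1^{n_1-i}$, in which all the $1$'s have been pushed to the right end, the $2$'s (together with the fused blocks $(12)$ and $(123)$) sit in the middle, and the $3$'s (with $(23)$) are on the left. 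Because the pair $\{1,2\}$ is one of the ``bad'' pairs $\{\{1,2\},\{2,3\},\{12,3\},\{1,23\}\}$, moving a block $1^m$ past an adjacent block $2^n$ produces, via Corollary \ref{cor:rulea}(b), a sum over $i$ of terms with a fused $(12)^i$ block and coefficient $(-1)^\# \dc^{m,n}_i$. Subsequent moves of a $(12)$-block past a $3$-block (the pair $\{12,3\}$ is also bad) create $(123)$-blocks with coefficient $\dc^{i,\cdot}_j$, while moves of a $2$-block past a $3$-block create $(23)$-blocks with coefficient $\dc^{\cdot,\cdot}_k$. Moves involving only ``good'' pairs (e.g. sliding a $1$-block past a $3$-block, or past a $(23)$-block) contribute no residue by Corollary \ref{cor:rulea}(a) and merely permute the entries. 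Carrying out the moves in the right order and bookkeeping the three independent residue indices $i,j,k$ yields exactly the stated triple sum, with the product of binomial-type coefficients $\dc^{n_1,n_{22}}_i\dc^{i,n_{32}}_j\dc^{n_2-i,n_{31}}_k$ and the claimed exponents in $\bsigma_{ijk}$.

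The concrete order of operations for (a): first slide the interior $1^{n_1}$ block rightward past $2^{n_{22}}$, producing the index $i$ and a $(12)^i$ block; then slide the resulting $(12)^i$ (and, separately, what remains of the $2$-blocks) past the trailing $3^{n_{32}}$ block, producing $j$ (the $(123)$ count, from $(12)$ meeting $3$) and contributing the factor $\dc^{i,n_{32}}_j$; finally handle the leading $3^{n_{31}}$ block, sliding it rightward to meet the $2$-blocks, which produces $k$ and the factor $\dc^{n_2-i,n_{31}}_k$ via the bad pair $\{2,3\}$, together with a $(23)^k$ block. Tracking how many copies of each symbol survive each fusion gives the exponents $n_2-i-k$ (surviving plain $2$'s), $k$ (the $(23)$'s), $n_3-j-k$ (surviving plain $3$'s), $j$ (the $(123)$'s), $i-j$ (the $(12)$'s that did not get further fused), and $n_1-i$ (surviving plain $1$'s). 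The ranges of the summation indices are forced by how many symbols are available at each fusion: $i\le n_1\wedge n_{22}$, $j\le i\wedge n_{32}$, and $k\le (n_2-i)\wedge n_{31}$, matching the statement. Parts (b) and (c) are proved identically but with a different order of moves: in (b) the leading $3$-block is brought in only after the $(12)$-fusion, so the final $(23)$-count is limited by $n_{32}-j$ rather than $n_{31}$; in (c), because $\btau$ (not $\bsigma$) need not have a special structure, one must make four successive moves, generating the four indices $i,j,k,l$ with coefficients $\dc^{n_{22},n_{32}}_i\dc^{n_{21},n_{32}-i}_j\dc^{n_1,i}_k\dc^{n_{21}-j,n_1-k}_l$, and the bookkeeping of surviving symbols yields the stated $\bsigma_{ijkl}$.

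The sign $\#$ in each case is an integer whose value is determined by the orientation of the nested contours and by the number of ``crossings'' that reorder the vector arguments inside the Cauchy determinants; as the statement notes and as in Corollary \ref{lem:deformbasic}, its precise value is immaterial for the subsequent estimates (Proposition \ref{cor:decomposition} only uses absolute values), so I would record it as $(-1)^\#$ throughout and, if desired, pin it down in a footnote exactly as done for Corollary \ref{lem:deformbasic}. The only genuine subtlety — the main obstacle — is ensuring that each intermediate configuration still satisfies the hypotheses of Corollary \ref{cor:rulea}, namely that the block being moved is \emph{adjacent} to the block it fuses with and that the relevant pair is correctly classified as good or bad; with a list as structured as $\bsigma$ this is straightforward to verify move by move, but the combinatorics of which index bounds which subsequent range must be checked carefully so that the triple (resp. quadruple) sum is over exactly the claimed set and no term is double-counted. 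Once the order of moves is fixed, each individual step is a direct invocation of Corollary \ref{cor:rulea}, and the products of $\dc$-coefficients multiply in the obvious way.
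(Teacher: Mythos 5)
Your plan matches the paper's proof: both proceed by iterated application of Corollary \ref{cor:rulea}, accumulating a residue index and a $\dc$-coefficient at each ``bad pair'' swap, and your stated coefficients and lists $\bsigma_{ijk}$, $\bsigma_{ijkl}$ agree with the paper's. There is, however, an inaccuracy in your move-by-move account of part~(a). After the fusion $1^{n_1}\cdot 2^{n_{22}}\to(12)^i$, only the $(12)^i$ block is moved past $3^{n_{32}}$ (after a free swap with $1^{n_1-i}$), producing $(123)^j$; the surviving $2$-block $2^{n_2-i}$ stays in place and fuses with the leading $3^{n_{31}}$ in the third step. Your parenthetical ``(and, separately, what remains of the $2$-blocks) past the trailing $3^{n_{32}}$ block'' describes a move that does not occur, and that would generate a spurious additional residue sum since $\{2,3\}$ is a bad pair. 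Because your coefficients $\dc^{n_1,n_{22}}_i\dc^{i,n_{32}}_j\dc^{n_2-i,n_{31}}_k$ and the exponents in $\bsigma_{ijk}$ already encode the paper's actual sequence, this is a slip in exposition rather than in substance; a similar caveat applies to your gloss on (b), where the leading $3^{n_{31}}$ block never moves at all --- the $(23)^k$ fusion is with $3^{n_{32}-j}$ directly.
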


\begin{proof}
We repeatedly use Corollary \ref{cor:rulea} to yield the following. 

(a) First,
\beq \label{eq:23_sigmai}
    \ing^{\bsigma}_{\btau} 
    = \sum_{i=0}^{n_1 \wedge n_{22}} (-1)^{\#_i} \dc^{n_{1}, n_{22}}_i  \ing^{\bsigma_{i}}_{\btau}, 
    \qquad
    \bsigma_{i} = 3^{n_{31}}2^{n_{21}}2^{n_{22}-i}(12)^{i}1^{n_{1}-i}3^{n_{32}} 
    =  3^{n_{31}}2^{n_{2}-i}(12)^{i}1^{n_{1}-i}3^{n_{32}} .
\eeq
Moreover, $\ing^{\bsigma_i}_{\btau}= \ing^{\hat{\bsigma}_i}_{\btau}$ where $\hat{\bsigma}_i= 3^{n_{31}}2^{n_2-i}(12)^{i} 3^{n_{32}}1^{n_{1}-i}$. We further obtain  
\beq \label{eq:23_sigmaij}
    \ing^{\bsigma_i}_{\btau} = \ing^{\hat{\bsigma}_i}_{\btau}
    =  \sum_{j=0}^{i \wedge n_{32}} (-1)^{\#_j} \dc_j^{i, n_{32}} \ing^{\bsigma_{ij}}_{\tau}, 
    \qquad
    \bsigma_{ij} =3^{n_{31}}2^{n_{2}-i}3^{n_{32}-j}(123)^j(12)^{i-j}1^{n_{1}-i}, 
\eeq
where 
\beq \label{eq:23_sigmaijk}
    \ing^{\bsigma_{ij}}_{\btau} = \sum_{k=0}^{(n_2-i) \wedge n_{31}} (-1)^{\#_k} \dc_k^{n_2-i, n_{31}} , 
    \quad
    \bsigma_{ijk} =2^{n_{2}-i-k}(23)^k3^{n_{3}-j-k}(123)^j(12)^{i-j}1^{n_{1}-i} .
\eeq
Thus, we obtain (a).

(b) Instead of \eqref{eq:23_sigmaijk}, we write 
\beqq
    \ing^{\bsigma_{ij}}_{\btau} = \sum_{k=0}^{(n_2-i) \wedge (n_{32}-j)} (-1)^{\#} 
    \dc_k^{n_2-i, n_{32}-j} \ing^{\bsigma_{ijk}}_{\tau}	,
    \quad
    \bsigma_{ijk} =3^{n_{31}} 3^{n_{32}-j-k}(23)^k2^{n_{2}-i-k}(123)^j(12)^{i-j}1^{n_{1}-i}. 
\eeqq

(c) We find that
\beqq \label{eq:rr1}
    \ing^{\bsigma}_{\btau} = \sum_{i=0}^{n_{22} \wedge n_{32}} (-1)^{\#} \dc_i^{n_{22}, n_{32}} \ing^{\bsigma_{i}}_{\tau}, 
    \qquad \bsigma_{i} = 3^{n_{31}}2^{n_{21}}1^{n_{1}}3^{n_{32}-i}(23)^{i}2^{n_{22}-i}
\eeqq
and $\ing^{\bsigma_{i}}_{\tau}= \ing^{\hat\bsigma_{i}}_{\tau}$ where $\hat\bsigma_{i}=3^{n_{31}}2^{n_{21}}3^{n_{32}-i}1^{n_{1}}(23)^{i}2^{n_{22}-i}$. We further find that 
\beqq \label{eq:rr2}
    \ing^{\bsigma_{i}}_{\tau}= \ing^{\hat\bsigma_{i}}_{\tau}= \sum_{j=0}^{n_{21} \wedge (n_{32}-i)} 
    (-1)^{\#} \dc_j^{n_{21}, n_{32}-i}  \ing^{\bsigma_{ij}}_{\tau}, 
    \qquad
    \bsigma_{ij} =  3^{n_{3}-i-j}(23)^{j}2^{n_{21}-j}1^{n_{1}}(23)^{i}2^{n_{22}-i}. 
\eeqq
Additionally, 
\beqq \label{eq:rr3}
    \ing^{\bsigma_{ij}}_{\btau} = \sum_{k=0}^{n_{1} \wedge i} (-1)^{\#} \dc_{k}^{n_1, i} \ing^{\bsigma_{ijk}}_{\tau}, 
    \qquad
    \bsigma_{ijk} =  3^{n_{3}-i-j}(23)^{j}2^{n_{21}-j}(23)^{i-k}(123)^{k} 1^{n_{1}-k} 2^{n_{22}-i}
\eeqq
where $\ing^{\bsigma_{ijk}}_{\tau}= \ing^{\hat\bsigma_{ijk}}_{\tau}$, and  
$\hat\bsigma_{ijk} = 3^{n_{3}-i-j}(23)^{i+j-k} (123)^{k} 2^{n_{21}-j} 1^{n_{1}-k} 2^{n_{22}-i}$. 
Finally, 
\beqq \label{eq:rr4}
    \ing^{\bsigma_{ijk}}_{\btau} = \sum_{l=0}^{(n_{21}-j) \wedge (n_1-k)} (-1)^{\#} \dc_l^{n_{21}-j, n_1-k} 
    \ing^{\bsigma_{ijkl}}_{\tau}, \quad
    \bsigma_{ijkl} =  3^{n_{3}-i-j}(23)^{i+j-k} (123)^{k} 1^{n_{1}-k-l} (12)^l 2^{n_{2}-i-j-l}.
\eeqq
\end{proof}

\begin{lem} \label{lem:integralordering2}
Let $\bn \in \N^3$ and $\btau = 3^{n_{31}}2^{n_{21}}1^{n_{1}}2^{n_{22}}3^{n_{32}}$, where $n_{21}+n_{22}=n_2$ and $n_{31}+n_{32}=n_3$. 
Then, for every $\bsigma \in \listn$, 
\beqq
    \ing^{\bsigma}_{\btau} 
    =  \sum_{i=0}^{n_1 \wedge n_{22}} 
    \sum_{j=0}^{ n_{31} \wedge (n_2-i)} (-1)^{\#}
    \dc^{n_{1}, n_{22}}_i \dc^{n_2-i,n_{31}}_{j}
    \, \ing^{\bsigma}_{\btau_{ij}}
\eeqq
for some $\#\in \Z$, where $\btau_{ij} = 2^{n_{2}-i-j}(23)^{j}3^{n_{31}-j}(12)^{i}1^{n_{1}-i}3^{n_{32}}$. 
\end{lem}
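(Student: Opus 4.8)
\textbf{Plan for the proof of Lemma \ref{lem:integralordering2}.}
The strategy mirrors the proof of Lemma \ref{lem:integralordering}: we repeatedly apply Corollary \ref{cor:rulea}, but now to the lower list $\btau$ rather than the upper list $\bsigma$, using parts (c) and (d) of that corollary. The key structural fact is that the rational function $\Pi^{\bsigma}_{\btau}(\bsxi,\bseta)$, viewed as a function of the $\bseta$-variables, has poles only at $\eta^{\alpha}_i=\eta^{\beta}_j$ when $\{\alpha,\beta\}\in\{\{1,2\},\{2,3\},\{12,3\},\{1,23\}\}$ (this is exactly what drives the case distinction in Corollary \ref{cor:rulea}), so swapping adjacent blocks in $\btau$ either leaves the integral unchanged (part (c)) or produces a finite sum of residue terms in which a merged block $(\alpha\beta)$ appears (part (d)); the merging is consistent because $\ff_{L,\alpha}(z)\ff_{L,\beta}(z)=\ff_{L,(\alpha\beta)}(z)$ by \eqref{eq:ftwoth}.

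\textbf{The sequence of moves.}
Starting from $\btau=3^{n_{31}}2^{n_{21}}1^{n_{1}}2^{n_{22}}3^{n_{32}}$, I would first act on the adjacent pair $1^{n_1}2^{n_{22}}$: since $\{1,2\}$ is one of the special pairs, Corollary \ref{cor:rulea}(d) gives
\beqq
    \ing^{\bsigma}_{\btau}=\sum_{i=0}^{n_1\wedge n_{22}}(-1)^{\#}\dc^{n_1,n_{22}}_i\,\ing^{\bsigma}_{\btau_i},\qquad
    \btau_i=3^{n_{31}}2^{n_{21}}2^{n_{22}-i}(12)^i1^{n_1-i}3^{n_{32}}=3^{n_{31}}2^{n_2-i}(12)^i1^{n_1-i}3^{n_{32}}.
\eeqq
Next, in each $\btau_i$ the pair $2^{n_2-i}$ sits to the left of $3^{n_{31}}$ (after a harmless relabeling via part (c) to bring them adjacent, using that $\{2,12\}$ and $\{12,3\}$-adjacencies do not create new poles in the way needed, or more directly: the block $3^{n_{31}}$ is already immediately left of $2^{n_2-i}$), so I apply Corollary \ref{cor:rulea}(c)/(d) to the pair $3^{n_{31}}2^{n_2-i}$; since $\{2,3\}$ is special, this yields a sum over $j$ with merged block $(23)^j$ and the constant $\dc^{n_2-i,n_{31}}_j$, producing $\btau_{ij}=2^{n_2-i-j}(23)^j3^{n_{31}-j}(12)^i1^{n_1-i}3^{n_{32}}$ after reordering the non-interacting blocks. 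Collecting the two finite sums and absorbing all the sign bookkeeping into a single unspecified integer $\#\in\Z$ (exactly as is done throughout Subsection \ref{sec:deform}) gives the claimed formula. One should check that the intermediate reorderings needed to make the relevant blocks adjacent all fall under Corollary \ref{cor:rulea}(c) — i.e. involve non-special pairs such as $\{2,123\}$, $\{1,3\}$, $\{12,23\}$ — so that they contribute no residues; this is a routine verification of which unordered pairs lie outside $\{\{1,2\},\{2,3\},\{12,3\},\{1,23\}\}$.

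\textbf{Main obstacle.}
The only genuine subtlety is ensuring that the two residue-producing steps can be carried out in the stated order without an intervening non-special swap secretly being special, and that the residue of the middle Cauchy factor in $\Pi^{\bsigma}_{\btau}$ under $\eta$-collisions has the structure required by Corollary \ref{lem:deformbasic} (simple poles at $\eta_i^{\alpha}=\eta_j^{\beta}$, simple zeros at coinciding $\eta$'s, appropriate symmetry). This is inherited from the Cauchy-determinant identity \eqref{eq:Cauchydd} and the product structure of $\Pi^{\bsigma}_{\btau}$ in \eqref{eq:Pisigmaxi}, just as in Corollary \ref{cor:rulea}, so no new input is needed; it is merely a matter of recording that the $\bseta$-side of $\Pi^{\bsigma}_{\btau}$ has the same pole/zero pattern as the $\bsxi$-side. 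Once that is noted, the lemma follows by the same mechanical iteration as Lemma \ref{lem:integralordering}, and I would present it compactly by citing Corollary \ref{cor:rulea}(c),(d) at each step rather than re-deriving the residue computations.
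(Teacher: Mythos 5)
Your proposal is correct and follows essentially the same two-step argument as the paper: first merge the adjacent pair $1^{n_1}2^{n_{22}}$ in $\btau$ via Corollary \ref{cor:rulea}(d), giving $\btau_i=3^{n_{31}}2^{n_2-i}(12)^i1^{n_1-i}3^{n_{32}}$, and then merge the (already adjacent) pair $3^{n_{31}}2^{n_2-i}$ via another application of Corollary \ref{cor:rulea}(d), giving $\btau_{ij}$. The extra hedging about intermediate reorderings is unnecessary (as you yourself note in the parenthetical), but the mechanism is exactly that of the paper's proof.
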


\begin{proof}
Corollary \ref{cor:rulea} implies 
\beqq \label{eq:167_taui}
    \ing^{\bsigma}_{\btau} = \sum_{i=0}^{n_1 \wedge n_{22}} (-1)^{\#} \dc_i^{n_1, n_{22}} \ing^{\bsigma}_{\btau_i}, 
    \qquad
    \btau_{i} = 3^{n_{31}}2^{n_{2}-i}(12)^{i}1^{n_{1}-i}3^{n_{32}}, 
\eeqq
and that 
\beqq \label{eq:167_tauij}
    \ing^{\bsigma}_{\btau_i} = \sum_{j=0}^{n_{31} \wedge (n_2-i) } (-1)^{\#} \dc_j^{n_{31}, n_2-i}  \ing^{\bsigma}_{\btau_{ij}} ,
    \qquad
    \btau_{ij} =2^{n_{2}-i-j}(23)^{j}3^{n_{31}-j}(12)^{i}1^{n_{1}-i}3^{n_{32}}.
\eeqq
\end{proof}

\medskip

The above lemmas yield the proofs of Lemma \ref{lem:QQ111list} and \ref{lem:QQ121list}.

\begin{proof}[Proofs of Lemmas \ref{lem:QQ111list} and \ref{lem:QQ121list}] 
These results follow from Lemmas \ref{lem:integralordering} and \ref{lem:integralordering2} upon careful tracking of the signs $(-1)^{\#}$.
It is straightforward to verify the signs explicitly, and the results follow.
\end{proof}

\medskip

We now focus on the proof of Proposition \ref{cor:decomposition}.
The proof uses the following estimates.

\begin{lem} \label{lem:sumsss}
The following estimates hold for every $n, n'\in \N$: 
\begin{enumerate}[(a)]
\item $\displaystyle \sum_{0\le a\le n \wedge n'} a! \sqrt{(n+n'-a)!} \le \frac{2^{2(n+n')} n!n'!}{\sqrt{(n\vee n'-n\wedge n')!}}$.
\item $\displaystyle\sum_{0\le a+b \le n \wedge n'} a!b! \sqrt{(n+n'-a-b)!}\le  \frac{2^{3(n+n')} n!n'!}{\sqrt{(n\vee n'-n\wedge n')!}}$.
\item $\displaystyle\sum_{0\le  a+b, c \le n \wedge n'} a!b!c! \sqrt{(n+n'-a-b-c)!} \le  \frac{2^{4(n+n')} n!n'!}{\sqrt{(n\vee n'-n\wedge n')!}}$. 
\end{enumerate} 
\end{lem}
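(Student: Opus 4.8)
The three estimates in Lemma~\ref{lem:sumsss} all follow the same scheme, so I will set up the argument once and then indicate the minor changes needed for each part. The key elementary fact is the factorial bound that has already been used repeatedly in the paper: for positive integers $M\le N$ one has $\frac{M!(N-M)!}{N!}\ge \frac{1}{2^N}$, equivalently $M!(N-M)!\ge \frac{N!}{2^N}$. I will combine this with a crude bound on the number of terms in each sum, namely that the number of nonnegative integer solutions of $a\le n\wedge n'$ is at most $n\wedge n'+1\le 2^{n\wedge n'}$, and similarly $\#\{(a,b): a+b\le n\wedge n'\}\le 2^{n\wedge n'}\cdot 2^{n\wedge n'}$ type bounds (or simply $\le 2^{2(n\wedge n')}$, since each of $a,b$ ranges over at most $n\wedge n'+1$ values). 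These exponential-in-$n+n'$ factors are harmless because the claimed right-hand sides carry generous powers $2^{2(n+n')}$, $2^{3(n+n')}$, $2^{4(n+n')}$.

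\textbf{Key steps for part (a).} First I would bound each summand $a!\sqrt{(n+n'-a)!}$ for $0\le a\le n\wedge n'$. Write $N:=n+n'$. Using $a!\le (n\wedge n')!$ and monotonicity, and then the factorial bound with $M=a$ applied inside the square root: $(N-a)! = \frac{(N-a)!\,a!}{a!}\le \frac{N!}{a!}\cdot\frac{a!\,(N-a)!}{N!}\cdot\frac{1}{1}$ — this is circular, so instead I apply the bound directly as $(N-a)!\le \frac{N!}{a!}$ only when... no: the correct direction is $a!(N-a)!\ge N!/2^N$, i.e.\ $(N-a)!\ge \frac{N!}{2^N a!}$, which is a lower bound, not what I want. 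I instead need an \emph{upper} bound on $(N-a)!$: trivially $(N-a)!\le N!/\binom{N}{a}\cdot a!$... Let me reorganize: the clean route is $a!\,(N-a)! \le a!\cdot (N-a)!$ and then note $(N-a)! \le \frac{N!}{\binom{N}{a}} = \frac{a!\,(N-a)!\,}{1}$, again circular. The honest statement is simply $a!\,(N-a)!\le N!$ for $0\le a\le N$. Hence $a!\sqrt{(N-a)!}=\sqrt{a!}\cdot\sqrt{a!(N-a)!}\le \sqrt{(n\wedge n')!}\cdot\sqrt{N!}$. Summing over the at most $2^{n\wedge n'}$ values of $a$ gives $\le 2^{n\wedge n'}\sqrt{(n\wedge n')!\,(n+n')!}$. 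To finish I must show $2^{n\wedge n'}\sqrt{(n\wedge n')!\,(n+n')!}\le \frac{2^{2(n+n')}n!\,n'!}{\sqrt{(n\vee n'-n\wedge n')!}}$; squaring and rearranging, it suffices that $(n\wedge n')!\,(n+n')!\,(n\vee n'-n\wedge n')!\le 2^{2(n+n')}(n!\,n'!)^2$ (the factor $4^{n\wedge n'}\le 4^{n+n'}$ is absorbed). But $(n+n')!\,(n\vee n'-n\wedge n')! = (n+n')!\,(|n-n'|)!$ and $\binom{n+n'}{|n-n'|}\le 2^{n+n'}$ gives $(n+n')!\le 2^{n+n'}(|n-n'|)!\,(2(n\wedge n'))!$; combined with $(n\wedge n')!(2(n\wedge n'))!\le ((2(n\wedge n'))!)^2\le (2^{2(n\wedge n')}(n\wedge n')!^2)^2$ via the central binomial bound $\binom{2k}{k}\le 4^k$... this needs a couple more lines but is purely mechanical. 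The cleanest closing inequality to invoke is: $(n+n')!\le 2^{n+n'}\,n!\,n'!$ and $(n\wedge n')!\,(n\vee n'-n\wedge n')!\le (n\vee n')!\le 2^{n\vee n'} n!\, n'! / (n\wedge n')!\le \ldots$; I will pin down the exact chain of $\binom{\cdot}{\cdot}\le 2^{\cdot}$ bounds when writing the final proof.

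\textbf{Parts (b) and (c) and the main obstacle.} Parts (b) and (c) are the same argument with one and two extra summation variables: in (b) one bounds $a!\,b!\,\sqrt{(N-a-b)!}\le \sqrt{a!}\sqrt{b!}\sqrt{a!\,b!\,(N-a-b)!}\le (n\wedge n')!\,\sqrt{N!}$ (using $a!\,b!\le (n\wedge n')!$ and $a!\,b!\,(N-a-b)!\le N!$ by the multinomial inequality), and the number of pairs $(a,b)$ is at most $2^{2(n\wedge n')}\le 2^{2(n+n')}$, producing the slightly larger prefactor $2^{3(n+n')}$; part (c) adds a third variable $c$ bounded by $n\wedge n'$ with $a+b\le n\wedge n'$, contributing one more factor $2^{n+n'}$ for the count and one more $(n\wedge n')!$ absorbed into $N!$, giving $2^{4(n+n')}$. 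The main obstacle is not conceptual but bookkeeping: making the final ``absorption'' step — showing $C^{n\wedge n'}\sqrt{(n\wedge n')!^{k}\,(n+n')!}\le \frac{2^{c(n+n')}n!\,n'!}{\sqrt{(n\vee n'-n\wedge n')!}}$ — come out with exactly the stated exponents $2,3,4$ rather than something marginally larger. I will handle this by first proving the single clean lemma $(n+n')!\,\bigl(|n-n'|\bigr)!\le 2^{2(n+n')}(n!\,n'!)^2 / (n\wedge n')!^{2}\cdot(n\wedge n')!^2$, i.e.\ reducing everything to repeated use of $\binom{p+q}{p}\le 2^{p+q}$ with $p,q$ chosen appropriately ($p=q=n\wedge n'$ for the ``doubling'' part, and $p=|n-n'|$, $q=2(n\wedge n')$ for splitting $(n+n')!$), and then checking the exponent arithmetic is slack enough in each of the three cases. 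This is straightforward and I do not anticipate any genuine difficulty beyond keeping the constants tidy.
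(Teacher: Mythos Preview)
Your approach is correct and uses the same core idea as the paper: bound each summand via the multinomial inequality $a_1!\cdots a_k!(N-\sum a_i)!\le N!$, crudely count the terms, and absorb all leftover factors with $\binom{p+q}{p}\le 2^{p+q}$. The bookkeeping you worry about does close up: for instance in (a), squaring your target inequality gives
\[
4^{n\wedge n'}(n\wedge n')!(n+n')!(n\vee n'-n\wedge n')!\le 4^{2(n+n')}(n!n'!)^2,
\]
and this follows from $(n\wedge n')!(n\vee n'-n\wedge n')!\le (n\vee n')!$, $(n\vee n')!(n\wedge n')!=n!n'!$, and $(n+n')!\le 2^{n+n'}n!n'!$. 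Parts (b) and (c) go through the same way with your bounds $a!b!\le (a+b)!\le (n\wedge n')!$ and $a!b!c!\le ((n\wedge n')!)^2$.

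The paper's proof is the same in spirit but uses a slightly slicker factorization: instead of your $a!\sqrt{(N-a)!}=\sqrt{a!}\cdot\sqrt{a!(N-a)!}$, it writes $a!\sqrt{(N-a)!}=\dfrac{a!(N-a)!}{\sqrt{(N-a)!}}$ and bounds the numerator by $(n+n')!$ and the denominator below by $\sqrt{(n+n'-n\wedge n')!}=\sqrt{(n\vee n')!}$. This puts a factorial in the \emph{denominator} from the outset, so the target $\sqrt{(n\vee n'-n\wedge n')!}$ appears immediately (via $(n\vee n')!\ge (n\vee n'-n\wedge n')!$) and the final step is just $(n+n')!\le 2^{n+n'}n!n'!$ together with $n\wedge n'+1\le 2^{n+n'}$. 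This eliminates essentially all of the ``absorption'' step you flagged as the main obstacle. The analogous trick works verbatim for (b) and (c) using the trinomial and quadrinomial coefficients. So: your route is fine, but the paper's factorization avoids the extra chain of binomial bounds you were trying to assemble.
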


\begin{proof}
Since the multinomial coefficients $\frac{k!}{a!(k-a)!}, \frac{k!}{a!b!(k-a-b)!}, \frac{k!}{a!b!c!(k-a-b-c)!}$ are each at least $1$, we find that 
\beqq \begin{split}
    &(a) = \sum_{0\le a\le n \wedge n'} \frac{a! (n+n'-a)!}{ \sqrt{(n+n'-a)!}}
    \le \frac{(n+n')!}{\sqrt{(n+n'-n\wedge n')!}} (n\wedge n'+1) \\
    &(b) = \sum_{0\le a+b \le n \wedge n'} 
    \frac{a!b! (n+n'-a-b)!}{\sqrt{(n+n'-a-b)!}} 
    \le \frac{(n+n')!}{\sqrt{(n+n'-n\wedge n')!}} (n\wedge n'+1)^2 \\
    &(c) = \sum_{0\le  a+b, c \le n \wedge n'} \frac{a!b!c! (n+n'-a-b-b)!}{ \sqrt{(n+n'-a-b-c)!}}
    \le \frac{(n+n')!}{\sqrt{(n+n'-2(n\wedge n'))!}} (n\wedge n'+1)^3. 
\end{split} \eeqq 
Since $\binom{n+n'}{n}\le 2^{n+n'}$, it follows that $(n+n')!\le 2^{n+n'}n! n'!$. 
Additionally, $n\wedge n'+1 \le n+n' \le 2^{n+n'}$ and 
$n+n'-n\wedge n'\ge  n\vee n'-n\wedge n'$. Putting these together we obtain the result. 
\end{proof}

We are now ready to prove Proposition~\ref{cor:decomposition}.

\begin{proof}[Proof of Proposition \ref{cor:decomposition}] 
We use Lemma \ref{lem:integralordering}, Lemma \ref{lem:integralordering2}, Proposition \ref{cor:incor}, 
and Lemma \ref{lem:sumsss}.
Let $L_0, C$, and $c$ be the positive constants appearing in Proposition \ref{cor:incor}. 
Note that $\dc^{m,n}_i\le 2^{m+n}i!$. 

Suppose $\w\in \rgo\cup \rgt$. 
From Lemma \ref{lem:integralordering} (a), we have 
\beqq
    |\ing^{\bsigma}_{\btau} |\le 
    2^{2|\vecn|}
    \sum_{i=0}^{n_1 \wedge n_{22}} \sum_{j=0}^{i \wedge n_{32}}  \sum_{k=0}^{(n_2-i) \wedge n_{31}} 
    i!j!k! \, |\ing^{\bsigma_{ijk}}_{\tau}|, 
    \quad \bsigma_{ijk} =2^{n_{2}-i-k}(23)^k3^{n_{3}-j-k}(123)^j(12)^{i-j}1^{n_{1}-i}. 
\eeqq
If $\type (\bsigma_{ijk})=(j, i-j, k, n_1-i, n_2-i-k, n_3-j-k)= (1,0,0,0,0,0)$, then it necessarily follows that $n_1=n_2=n_3=1$ (and $i=j=1$ and $k=0$), which contradicts the assumption that $\vecn\neq (1,1,1)$. 
Hence, Proposition \ref{cor:incor} (a) applies to each $\ing^{\bsigma_{ijk}}_{\tau}$, and we find that
\beqq
    |\ing^{\bsigma}_{\btau} |\le 
    (2^2C)^{|\vecn|} e^{- cL} \cK_L \sqrt{n_1!n_3!} 
    \sum_{i=0}^{n_1 \wedge n_{22}} \sum_{j=0}^{i \wedge n_{32}}  \sum_{k=0}^{(n_2-i) \wedge n_{31}} 
    i!j!k! \sqrt{(n_1+ n_2-i)! (n_2+n_3-j-k)!}. 
\eeqq
Note that $j+k\le i+ (n_2-i)= n_2$ and $j+k\le n_{32}+n_{31}=n_3$. Hence, the triple sum is bounded by 
\beqq
    \sum_{i=0}^{n_1 \wedge n_{2}} \sum_{0\le  j+k\le n_2 \wedge n_3}
    i!j!k! \sqrt{(n_1+ n_2-i)! (n_2+n_3-j-k)!}. 
\eeqq
Applying Lemma \ref{lem:sumsss} (a) and (b), and possibly adjusting the value of the constant $C$, we obtain the desired result \eqref{eq:decap}.

Suppose $\w\in  \rgth\cup \rgf$. From Lemma \ref{lem:integralordering} (b), we have
\beqq
    |\ing^{\bsigma}_{\btau} | 
    \le 2^{2|\vecn|} \sum_{i=0}^{n_1 \wedge n_{22}}    \sum_{j=0}^{i \wedge n_{32}} \sum_{k=0}^{(n_2-i) \wedge (n_{32}-j)}
    i!j!k!
    \, | \ing^{\bsigma_{ijk}}_{\tau} |, 
    \quad \bsigma_{ijk} =3^{n_{3}-j-k}(23)^k2^{n_{2}-i-k}(123)^j(12)^{i-j}1^{n_{1}-i}.
\eeqq
Again, $\type (\bsigma_{ijk})=(j, i-j, k, n_1-i, n_2-i-k, n_3-j-k)$ is not equal to $(1,0,0,0,0,0)$, and thus Proposition \ref{cor:incor} (b) applies to each $\ing^{\bsigma_{ijk}}_{\tau}$. 
Noting that $j+k\le i+ (n_2-i)= n_2$ and  $j+ k\le j+ (n_{32}-j)\le n_3$, we find that
\beqq
    |\ing^{\bsigma}_{\btau} |\le 
    (2^2C)^{|\vecn|} e^{- cL} \cK_L \sqrt{n_1!n_3!} 
    \sum_{i=0}^{n_1 \wedge n_{2}} \sum_{0\le j+k\le n_2 \wedge n_3}
    i!j!k! \sqrt{(n_1+ n_2-i)! (n_2+n_3-j-k)!}. 
\eeqq
The result\eqref{eq:decap} then follows from Lemma\ref{lem:sumsss} (a) and (b).

Suppose $\w\in \rgfi$. From Lemma \ref{lem:integralordering} (c), we have
\beqq
    |\ing^{\bsigma}_{\btau} | 
    \le 2^{2|\vecn|} \sum_{i=0}^{n_{22} \wedge n_{32}}  \sum_{j=0}^{n_{21} \wedge (n_{32}-i)}  \sum_{k=0}^{n_{1} \wedge i} \sum_{l=0}^{(n_{21}-j) \wedge (n_1-k)} 
    i!j!k!l! 	
    \, | \ing^{\bsigma_{ijkl}}_{\tau} |
\eeqq
where 
\beqq
    \bsigma_{ijkl} =  3^{n_{3}-i-j}(23)^{i+j-k} (123)^{k} 1^{n_{1}-k-l} (12)^l 2^{n_{2}-i-j-l}.
\eeqq 
We again see that $\veca:= \type (\bsigma_{ijkl})=(k, l, i+j-k, n_1-k-l, n_2-i-j-l, n_3-i-j)$ is not $(1,0,0,0,0,0)$ since $\bn\neq (1,1,1)$. 
Moreover, $a_{123}+a_1=k+ (n_1-k-l)= n_1-l\ge n_1- (n_{21}-j)\ge n_1-n_{21}\ge 1$ where the final inequality uses the assumption $n_1 \geq n_{21} + 1$.  
Hence, Proposition \ref{cor:incor} (c) applies to $\ing^{\bsigma_{ijkl}}_{\tau}$. 
Noting that $i+j\le n_2\wedge n_3$ and $k+l\le n_1\wedge n_2$, 
\beqq
    |\ing^{\bsigma}_{\btau} |\le 
    (2^2C)^{|\vecn|} e^{- cL} \cK_L \sqrt{n_1!n_3!} 
    \sum_{\substack{i,j=0\\ i+j\le n_2 \wedge n_3}}^{n_2 \wedge n_{3}}  
    \sum_{\substack{k,l=0\\ k+l\le n_1 \wedge n_2}}^{n_1 \wedge n_{2}} 
    i!j!k! l! \sqrt{(n_1+ n_2-k-l)! (n_2+n_3-i-j)!}.
\eeqq
We obtain the result \eqref{eq:decap} from Lemma \ref{lem:sumsss} (b). 

Suppose $\w\in  \rgs\cup \rgse$. 
From Lemma \ref{lem:integralordering} (c) and Lemma \ref{lem:integralordering2}, we have 
\beqq \begin{split} 
    |\ing^{\bsigma}_{\btau} |\le 
    &2 ^{4|\vecn|} \sum_{i=0}^{n_{22} \wedge n_{32}}  \sum_{j=0}^{n_{21} \wedge (n_{32}-i)}  \sum_{k=0}^{n_{1} \wedge i} \sum_{l=0}^{(n_{21}-j) \wedge (n_1-k)} \sum_{p=0}^{n_1 \wedge n'_{22}} 
    \sum_{q=0}^{(n_2-p) \wedge n'_{31}}  i!j!k!l!p!q!
    \, | \ing^{\bsigma_{ijkl}}_{\tau_{pq}} |
\end{split} \eeqq
where     
\beqq \begin{split}
    &\bsigma_{ijkl} =  3^{n_{3}-i-j}(23)^{i+j-k} (123)^{k} 1^{n_{1}-k-l} (12)^l 2^{n_{2}-i-j-l} ,\qquad \btau_{pq} = 2^{n_{2}-p-q}(23)^{q}3^{n'_{31}-q}(12)^{p}1^{n_{1}-p}3^{n'_{32}}.
\end{split} \eeqq
Let $\veca=\type(\bsigma_{ijkl})$ and $\vecb=\type(\btau_{pq})$. 
If $(a_{123}, a_{12}, a_{23}, a_1, b_2, a_3)= (k, l, i+j-k, n_1-k-l, n_2-p-q, n_3-i-j)= (1,0,0,0,0,0)$, 
then $n_1=n_3=1$, $n_2=p+q$. 
Since $b_1= n_1-p\ge 0$ and $b_3=n'_{31}-q\ge 0$, we find that 
$n_2=p+q\le n_1+n'_{31}\le n_1+n_3=2$. 
Given our assumption that $\vecn\neq (1,1,1), (1,2,1)$, we see that $(a_{123}, a_{12}, a_{23}, a_1, b_2, a_3)\neq (1,0,0,0,0,0)$. 
Furthermore, $a_{123}+a_1=  n_1-l\ge n_1-n_{21}+j\ge n_1-n_{21}\ge 1$ by assumption.  
Hence, Proposition \ref{cor:incor} (d) applies to each $\ing^{\bsigma_{ijkl}}_{\tau_{pq}}$. 
Noting that $i+j\le n_2\wedge n_3$,  $k+l\le n_1\wedge n_2$, $p\le n_1\wedge n_2$, and $q\le n_2\wedge n_3$, we find 
\beqq
    |\ing^{\bsigma}_{\btau} |\le 
    (2^4C)^{|\vecn|} e^{- cL} \cK_L  \sqrt{n_1!n_3!} 
    \sum_{\substack{i,j, q=0\\ i+j\le n_2 \wedge n_3}}^{n_2 \wedge n_{3}}  
    \sum_{\substack{k,l, p=0\\ k+l\le n_1 \wedge n_2}}^{n_1 \wedge n_{2}} 
    i!j!k! l! p! q!\sqrt{(n_1+ n_2-k-l-p)! (n_2+n_3-i-j-q)!}
\eeqq
Applying Lemma \ref{lem:sumsss} (c), we obtain the result \eqref{eq:decap}. 
\end{proof}


\subsection*{Conflicts of Interest}

There are no conflicts of interest relevant to this article. Data sharing is not applicable as no data was involved.


\end{document}